\documentclass[a4paper,10pt]{article} 

\textwidth480pt 
\hoffset-60pt   
\voffset-60pt   
\headsep+30pt   
\textheight700pt

\usepackage{fancybox} 
\usepackage{pifont} 

\usepackage{fancybox} 

\usepackage{amsmath} 
\usepackage[applemac]{inputenc}
\usepackage{amsfonts}
\usepackage{amsthm}
\usepackage{color}
\usepackage[dvipsnames]{xcolor}
\usepackage{mathrsfs} 

\newcommand{\mysection}{\setcounter{equation}{0} \section}

\renewcommand{\d}{\mathbf{d}}
\newcommand{\F}{\mathcal{F}}  
\renewcommand{\P}{\mathbb{P}}

\newcommand{\E}{\mathbb{E}}

\newcommand{\N}{\mathbb{N}}  
  
\newcommand{\T}{\mathbb{T}}  

\newcommand{\mW}{\mathcal{W}}  

\newcommand{\R}{\mathbb{R}}  
\newcommand{\I}{\mathbb{I}}

\newtheorem{THM}{Theorem}   
\newtheorem{REM}{Remark}

\newtheorem{PROP}[THM]{Proposition}

\newtheorem{lem}[THM]{Lemma}

\def\1{\mbox{1\hspace{-0.25em}l}}

\newcommand \A[1]{{\bf (#1)}}

\def\leftB{[\![}
\def\rightB{]\!]}
\def\btheta{{\boldsymbol{\theta}}}

\def\bxi{{\boldsymbol{\xi}}}
\def\bzeta{{\boldsymbol{\zeta}}}

\def\bXi{{\boldsymbol{\Xi}}}

\def\det{{{\rm det}}}
\def\X{{\mathbf{X}}}

\def\x{{\mathbf{x}}}
\def\u{{\mathbf{u}}}
\def\U{{\mathbf{U}}}

\def\z{{\mathbf{z}}}

\def\y{{\mathbf{y}}}
\def\z{{\mathbf{z}}}

\def\m{{\mathbf{m}}}

\def\0{{\mathbf{0}}}

\def\gF{{\mathbf{F}}}
\def\gP{{\mathbf{P}}}

\def\K{{\mathbf{K}}}
\def\gR{{\mathbf{R}}}

\def\tP{{\tilde P}}

\def\p{{\partial}}

\title{
	\textcolor{black}{\textbf{Strong regularization by Brownian noise propagating through a weak H\"ormander structure}}}
\author{\textbf{Paul-\'Eric Chaudru de Raynal}\footnote{Univ. Grenoble Alpes, Univ. Savoie Mont Blanc, CNRS, LAMA, 73000 Chamb\'ery, France. pe.deraynal@univ-smb.fr},
	\  \textbf{Igor Honor\'e}\footnote{Laboratoire de Mod\'elisation Math\'ematique d'Evry (LaMME), Universit\'e d'Evry Val d'Essonne, 23 Boulevard de France 91037 Evry. igor.honore@univ-evry.fr}, \textbf{and
		St\'ephane Menozzi}\footnote{Laboratoire de Mod\'elisation Math\'ematique d'Evry (LaMME), Universit\'e d'Evry Val d'Essonne, 23 Boulevard de France 91037 Evry, France and Laboratory of Stochastic Analysis, HSE,
		Shabolovka 31, Moscow, Russian Federation. stephane.menozzi@univ-evry.fr}}

\begin{document}
	\maketitle

	\begin{abstract}
		We establish strong uniqueness for a class of degenerate SDEs \textcolor{black}{of weak H\"ormander type}  under suitable H\"older regularity conditions  for the associated drift term. 
		Our approach relies on the Zvonkin transform which requires to exhibit good smoothing properties of the underlying parabolic PDE with \textit{rough}, here H\"older, drift coefficients and source term.
		Such regularizing effects are established through a perturbation technique (forward parametrix approach) which also heavily relies on appropriate duality properties on Besov spaces.
		
		For the method employed, we exhibit some sharp thresholds on the H\"older exponents for the strong uniqueness to hold.
	\end{abstract}

	\mysection{Introduction}
	
	\subsection{Statement of the problem} 
	In this work, we aim at establishing a strong well-posedness result outside the classical Cauchy-Lipschitz framework for the following degenerate Stochastic Differential Equation (SDE) of Kolmogorov type:
	\begin{equation}
	\label{strong_SYST}
	\begin{array}{l}
	\displaystyle d\X_t^1 = \gF_1(t,\X_t^1,\dots,\X_t^n) dt + \sigma(t,\X_{t}^1,\dots,\X_{t}^n) dW_t,
	\\
	\displaystyle d\X_t^2 = \gF_2(t,\X_t^1,\dots,\X_t^n) dt,
	\\
	\displaystyle d\X_t^3 = \gF_3(t,\X_t^2,\dots,\X_t^n) dt,
	\\
	\vdots
	\\
	\displaystyle d\X_t^n = \gF_n(t,\X_t^{n-1},\X_t^n) dt,
	\end{array}
	\quad t \geq 0
	,
	\end{equation}
	where $(W_t)_{t \geq 0}$ stands for a $d$-dimensional Brownian motion on some filtered probability space  $(\Omega,\textcolor{black}{\F},(\F_t)_{t\ge 0},\P) $ and for all $i\in \leftB 1,n\rightB $\footnote{We will use throughout the paper the notation $\leftB \cdot,\cdot\rightB $ for integer intervals.}, $t\ge 0 $ the component $\X_t^i $ is $\R^d $-valued as well (i.e. $\X_t\in \R^{nd} $). We suppose that the $(\gF_i)_{i\in \leftB 2,n\rightB}$ satisfy a kind of weak H\"ormander condition, i.e. the matrices $\big(D_{{\textcolor{black}{\x}}_{i-1}}\gF_i(t,\cdot)\big)_{i\in \leftB 2,n\rightB} $ have full rank. However, the coefficients $ ({\textcolor{black}{\gF}}_i)_{i\in \leftB 2,n\rightB}$ can be rather \textit{rough} in their other entries, namely, H\"older continuous. We assume as well that the diffusion coefficient $\sigma $ is bounded \textcolor{black}{from above and below} and spatially Lipschitz continuous.\\
	
	For a system of Ordinary Differential Equation (ODE) it may be a real challenge to prove the well-posedness outside the Lipschitz framework (see \emph{e.g.} \cite{dipe:lion:89}) and, as shown by Peano's example, uniqueness may fail as soon as the drift of the system of interest is only H\"older continuous. For an SDE, the story is rather different since the presence of the noise may allow to restore well-posedness. Such a phenomenon, called \emph{regularization by noise} (see the Saint Flour Lecture notes of Flandoli \cite{flandoli_random_2011} and the references therein for an \textcolor{black}{overview of} the topic), has been widely studied since the pioneering  works \textcolor{black}{of Zvonkin \cite{zvonkin_transformation_1974} and Veretennikov \cite{veretennikov_strong_1980} who establish, respectively in the scalar and multidimensional setting, strong well-posedness for non-degenerate Brownian SDEs} \textcolor{black}{with bounded and measurable drift}.  \textcolor{black}{We recall that a \textit{strong} solution is \textcolor{black}{adapted  to the Brownian filtration generated by the driving noise and that  
	 \textit{non-degenerate} means that the noise has the same dimension as the underlying system on which it acts}.}
	
	Let us mention, among others, \textcolor{black}{and still within the \textit{non-degenerate} setting}, the works of Krylov and R\"ockner \cite{kryl:rock:05} ($L_q-L_p$ drift), Zhang \cite{zhang_well-posedness_2010} ($L_q-L_p$ drift and weakly Lipschitz diffusion matrix), Fedrizzi and Flandoli \cite{fedr:flan:11} ($L_q-L_p$ and H\"older drift). \textcolor{black}{We also mention Flandoli \textit{et al.} \cite{flandoli_well-posedness_2010} and \textcolor{black}{Beck \textit{et al.} \cite{beck:flan:gubi:maur:19} for connections with stochastic transport equations}}.\\ 
	
	The crucial assumption, shared by all the aforementioned results, is the non-degeneracy condition on the noise added in the considered system. A possible approach to relax this \textcolor{black}{hypothesis} was proposed by Veretennikov in \cite{veretennikov_stochastic_1983}, where the author extended the result in \cite{veretennikov_strong_1980} to some specific case of the considered chain \eqref{strong_SYST} for $n=2$. In comparison with our setting, the author does not impose any non-degeneracy condition on $D_{\x_{1}}\gF_2(t,\cdot)$. The to price pay is anyhow that all coefficients (i.e. with the notations of \eqref{strong_SYST} $\gF_1, \gF_2,\sigma $) need to be twice continuously differentiable functions with bounded derivatives w.r.t. the degenerate component, meaning that no regularization by noise is investigated in the degenerate direction. More generally, it is useless to expect a generalization of the previous results without any additional assumption: we can benefit from the regularization by noise phenomenon only in the directions submitted to the noise.\\


	In our current framework, the non-degeneracy assumption on the Jacobian $\big(D_{\x_{i-1}}\gF_i(t,\cdot)\big)$, $i\in \leftB 2,n\rightB$ precisely allows the noise to propagate through the chain passing from the $i^{\rm th}$  to the $(i+1)^{\rm th}$ level thanks to the drift, hence leading to a propagation of the noise in the whole considered space. The main idea is then to take advantage of this particular propagation, known as \emph{weak H\"ormander setting} (in reference to the work of H\"ormander on hypoelliptic differential operator \cite{hormander_hypoelliptic_1967}), to restore strong well-posedness under our current H\"older framework. This feature has already been considered in the literature for the system \eqref{strong_SYST} in the \textcolor{black}{kinetic case (i.e. when $n=2$}), see the works of Chaudru de Raynal \cite{chau:17}, Wang and Zhang \cite{wang:zhan:16}, Fedrizzi \textcolor{black}{\textit{et al.}} \cite{fedr:flan:prio:vove:17}, Zhang \cite{zhan:16}. In any cases, in addition to the weak H\"ormander structure, the regularity of the drift w.r.t. the second space \textcolor{black}{(and hence degenerate)} argument is required to be of regularity index superior or equal (depending on the work) to $2/3$ (\textcolor{black}{usually called} critical H\"older index or critical weak differentiation index).  As a generalization of these results, we prove in this paper that strong well-posedness holds as soon as each drift component $\gF_i$ is $\beta_j$-H\"older continuous in the $j^{\rm th}$ variable for some $\beta_j \in \Big((2j-2)/(2j-1), 1\Big]$ \textcolor{black}{when $i\le j $}, so that we recover the critical index mentioned above when $j=2$. \textcolor{black}{We refer to Section \ref{strong_SUB_STRAT_PDE} for a thorough discussion about those facts}.\\

	\subsection{Notations, assumptions and main result}
	\textbf{Some notations.} We will denote by a bold letter, \textcolor{black}{ e.g. $\x$ or $\y $}, any element of $\R^{nd} $, writing as well $\x=(\x_1,\textcolor{black}{\hdots},\x_n) $ where for $i\in \textcolor{black}{\leftB 1,n\rightB},\ \x_i\in \R^d $.
	For practical purpose we will be led in our analysis to consider \textit{subcomponents} of a vector $\x\in \R^{nd} $. Namely, for any $\textcolor{black}{1}\le i\le j\le n $ and $\x\in \R^{nd} $, we introduce the notation $\x_{i:j}:=\left( \x_i,\textcolor{black}{\hdots},\x_j\right)$.
	Accordingly,  we write the drift as the mapping  
	
	\begin{eqnarray*}
		(s,\x)\in \R^+\times \R^{nd} \mapsto \gF(s,\x)&=&\left(\begin{array}{c}\gF_1(s,\x)\\ \vdots\\
			\gF_n(s,\x)
		\end{array}
		\right)
		\textcolor{black}{:=}\left(\begin{array}{c}\gF_1(s,\x)\\ \gF_2(s,\x)\\ \gF_3(s,\x_{2:n})\\ \vdots\\ \gF_n(s,\x_{n-1:n})\end{array}\right),
	\end{eqnarray*}
	from the specific structure of the drift appearing in \eqref{strong_SYST}.\\

	\textcolor{black}{ 
		For $f \in C^1(\R^{nd}, \R^k)$, $k \in \{1,d\}$,  we denote for each $i \in \leftB 1,n\rightB$, by $D_{\x_i}f(\x)$ the Jacobian matrix of the derivative of $f$ w.r.t. 
		its $\R^d$-valued variable $\x_i $.
		As shortened form, and when no ambiguity is possible, we also write for all $\x,\y \in \R^{nd}$, $D_{\x_i}f(\x)=D_if(\x)$ and $D_{\y_i}f(\y)=D_if(\y)$. Also, if $k=1$ we denote by ${\mathbf D}f(\x)=(D_1 f(\x)\textcolor{black}{,\textcolor{black}{\hdots} ,}D_n f(\x))^* $, \textcolor{black}{where ``$*$'' stands for the transpose}, the full gradient of the function $f$ at point $\x$. 
		}

	Let $f : \R^{nd} \to \R^k$ and $\beta:=(\beta_1,\textcolor{black}{\hdots}, \beta_n) \in (0,1]^n $ be a multi-index. We say that $f$ is uniformly $\beta$-H\"older continuous if for each $j \in \leftB 1,n \rightB $
	\begin{equation}\label{strong_HOLDER_MODULUS}
	[\textcolor{black}{(f)}_j]_{\beta_j}:=\!\!\!\!\sup_{(\z_{1:j-1},\z_{j+1:n})\in \R^{(n-1)d},\ z\neq z', (z,z')\in (\R^d)^2} \!\!\!\! \frac{|\textcolor{black}{f (
			\z_{1:j-1},z, \z_{j+1:n})\!-\!f(
			\z_{1:j-1},z', \z_{j+1:n})}|}{|z-z'|^{\beta_j}}\! < +\infty.
	\end{equation}
	
	\textcolor{black}{ For a smooth function ${\mathbf \Psi}:[0,T]\times \R^{nd} \rightarrow \R^{nd}$, where $T>0 $ is a fixed given time, writing for $(t,\x)\in [0,T]\times \R^{nd} $, ${\mathbf \Psi}(t,\x)=\big({\mathbf \Psi}_1(t,\x), \textcolor{black}{\hdots},{\mathbf \Psi}_n(t,\x) \big)$ where for each $i\in \leftB 1, n\rightB, {\mathbf \Psi}_i $ is $\R^d $ valued, we denote by 
		\begin{equation}
		\label{strong_NORMES}
		\|{\mathbf D} {\mathbf \Psi}\|_\infty:=\sum_{i=1}^n \sup_{(t,\x)\in [0,T]\times \R^{nd}} |\!|\!|{\mathbf  D}{\mathbf \Psi}_i(t,\x)|\!|\!|, \ \|{\mathbf D} (D_1 {\mathbf \Psi})\|_\infty:=\sum_{i=1}^n \sup_{(t,\x)\in [0,T]\times \R^{nd}} |\!|\!|{\mathbf  D} (D_1{ \mathbf \Psi}_i)(t,\x)|\!|\!|,
		\end{equation}
		where in the above equation $|\!|\!| \cdot \ \!|\!|\!| $ stands for  a tensor norm in the appropriate corresponding dimension. Precisely, $ {\mathbf  D}{\mathbf \Psi}_i(t,\x)\in \R^{nd}\otimes \R^d$ and ${\mathbf  D} (D_1{ \mathbf \Psi}_i)(t,\x)\in \R^{nd}\otimes \R^d \otimes \R^d $}.\\
	
	\noindent\textbf{Assumptions\textcolor{black}{.}} We will assume throughout the paper that the following conditions hold.
	\begin{trivlist}
		\item[\A{ML}] The coefficients $\gF$ and $\sigma$ are measurable in time \textcolor{black}{and $\gF(t,\mathbf{0})$ is bounded}. Also, the diffusion coefficient $\sigma$ is uniformly Lipschitz continuous in space, uniformly in time, i.e. there exists $\kappa>0  $ s.t. for all $t\ge 0,\ (\x,\x')\in  (\R^{nd})^2 $:
		$$|\sigma(t,\x)-\sigma(t,\x')|\le \kappa |\x-\x'| .$$ 
		\item[\A{UE}] The diffusion matrix $ a:=\sigma \sigma^* $ is uniformly elliptic and bounded, uniformly in time, i.e. there exists $ \Lambda \ge 1$ s.t. for all $t\ge 0,\ (\x,\zeta)\in  \R^{nd}\times \R^d $:
		$$\Lambda^{-1}|\zeta|^2 \le \langle a(t,\x)\zeta,\zeta\rangle \le \Lambda|\zeta|^2.$$
		
		\item[\A{${\mathbf T}_{\beta} $}] For all $j\in\leftB 1, n \rightB $, the functions $(\gF_i)_{i\in \leftB 1,\textcolor{black}{j}\rightB} $ are uniformly $\beta_j $-H\"older continuous in the $j^{{\rm th}} $ spatial variable with  $\beta_j\in \textcolor{black}{\big(\textcolor{black}{(}2j-2\textcolor{black}{)}/\textcolor{black}{(}2j-1\textcolor{black}{)}},1\big] $, uniformly w.r.t. the other \textcolor{black}{spatial} variables of $\gF_i $ and in time. In particular,
		there exists a finite constant 	\textcolor{black}{$C_\beta>0$} s.t. \textcolor{black}{with the notations of \eqref{strong_HOLDER_MODULUS}}, $$ \max_{\textcolor{black}{i\le j}\in \leftB 1,n\rightB^2} \sup_{s\in [0,T]}[(\gF_i)_j(s,\cdot)]_{\beta_j}\le C_\beta.$$
		
		\item[\A{${\mathbf H}_{\eta} $}] For all $i\in \leftB 2,n\rightB $, there exists a closed convex subset ${\mathcal E}_{i-1} \subset GL_{d}(\R)$
		(set of invertible $d \times d$  matrices)
		s.t., for all
		$t \geq 0$ and $(\x_{i-1},\dots,\x_n) \in \R^{(n-i+2)d}$, $D_{\x_{i-1}} \textcolor{black}{\gF_i}(t,\x_{i-1},\dots,\x_n)
		\in {\mathcal E}_{i-1}$.
		For example, ${\mathcal E}_{i-1}$ 
		may be a closed ball
		included in $GL_{ d}(\R)$ \textcolor{black}{the latter being an open set}. Moreover, $D_{\x_{i-1}}\gF_i $ is \textcolor{black}{ $\eta$
		}-H\"older continuous w.r.t. $\x_{i-1} $ uniformly in $\x_{i:n} $ and time. \textcolor{black}{We also assume without loss of generality that  $\eta \in \Big(0, \inf_{j\in \leftB 2,n\rightB} \big\{ \beta_j-\textcolor{black}{(2j-2)/(2j-1)}\big\}\Big)$, i.e. $ \eta$ is meant to be \textit{small}. }
	\end{trivlist}
	From now on, we will say that assumption $\A{A}$ is in force provided that \A{ML}, \A{UE}, \A{T${}_\beta $} \textcolor{black}{and} \A{H${}_\eta$} hold.\\
	
	\noindent \textbf{Main result.}
	The main result of this work is the following theorem.
	\begin{THM}[Strong uniqueness for the degenerate system \eqref{strong_SYST}]\label{strong_MAIN_RESULT}
		Under \A{A} there exists a unique strong solution to system \eqref{strong_SYST}. 
	\end{THM}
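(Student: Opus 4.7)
The plan is to establish strong uniqueness through a Zvonkin-type change of variables reducing the SDE with H\"older drift to one with Lipschitz coefficients. For each component $i\in\leftB 1,n\rightB$, let $u_i : [0,T]\times \R^{nd}\to \R^d$ solve the backward Cauchy problem
\begin{equation*}
\partial_t u_i + L_t u_i = -\gF_i, \qquad u_i(T,\cdot) = \mathbf{0},
\end{equation*}
where $L_t$ is the generator of \eqref{strong_SYST}. Setting $\Phi(t,\x)=\x + u(t,\x)$ with $u = (u_1,\ldots,u_n)$ and applying It\^o's formula, the drift of $\Phi(t,\X_t)$ cancels by construction and the process becomes a local martingale with diffusion coefficient $\mathbf{e}_1\otimes \sigma + D_1 u\,\sigma$ (where $\mathbf{e}_1\otimes \sigma$ embeds $\sigma$ into the non-degenerate block). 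Two requirements then allow us to close the argument: $\Phi$ must be a bi-Lipschitz diffeomorphism, which holds provided $\|\mathbf{D} u\|_\infty$ is small (achievable on a sufficiently short horizon), and the martingale coefficient must be spatially Lipschitz, which requires $\|\mathbf{D}(D_1 u)\|_\infty < \infty$.

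The heart of the proof is therefore the PDE analysis needed to justify these bounds despite the mere H\"older regularity of $\gF$. My approach would be a forward parametrix expansion: freeze the nonlinear drift along a deterministic backbone and compare $L_t$ with the generator of a degenerate Ornstein-Uhlenbeck proxy whose Green kernel $\tilde p$ obeys sharp anisotropic Gaussian bounds reflecting the self-similarity scales $|\x_j|\asymp t^{j-1/2}$ inherent to the weak H\"ormander structure. Duhamel's formula then expresses $u_i$ as a convolution of the source $\gF_i$ with $\tilde p$ plus a remainder driven by the discrepancy between the true and proxy generators, iterated in series. The H\"older regularity of the source in the $j$-th variable is exploited through Besov-space duality, pairing $\gF_i$ against appropriate derivatives of $\tilde p$; this is what makes it possible to differentiate a convolution against a merely H\"older-continuous source without ever differentiating the source itself. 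The crucial integrability condition produced by this scheme for the second derivative $\mathbf{D}(D_1 u)$ is that $|\x_j|^{\beta_j}/t^{j}$ be integrable at the typical scale $|\x_j|\asymp t^{j-1/2}$, i.e.\ $-j+\beta_j(j-\tfrac12)>-1$, which is exactly the threshold $\beta_j > (2j-2)/(2j-1)$ required in \A{T${}_\beta$}.

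Given these PDE bounds, strong uniqueness is concluded as follows: weak existence of a solution to \eqref{strong_SYST} already holds under \A{A} via standard martingale problem arguments, and for any two strong solutions $\X,\X'$ defined on the same probability space with common initial condition, $\Phi(t,\X_t)$ and $\Phi(t,\X'_t)$ satisfy the same SDE with Lipschitz coefficients, hence coincide pathwise. Invertibility of $\Phi$ yields $\X=\X'$ on a short horizon, and iterating the construction over a partition of $[0,T]$ extends the conclusion. The main obstacle I expect is the control of $\|\mathbf{D}(D_1 u)\|_\infty$: the additional differentiation produces a time singularity of extra order $1/2$ compared to the gradient estimate alone, and the interplay between this singularity and the anisotropic scaling of $\tilde p$ across the $n$ levels of the chain is what dictates the sharp thresholds and explains why the critical indices are blind to the equation index $i$ and depend only on the variable $j$ being differentiated. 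Executing the Besov duality rigorously throughout the parametrix iteration, uniformly in the chain structure and with enough care to preserve the H\"older modulus of $\mathbf{D}(D_1 u)$, is the technical core of the argument.
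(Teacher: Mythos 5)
Your proposal follows essentially the same route as the paper: the Zvonkin transform (here in its diffeomorphism form $\Phi=\x+u$, equivalent to the paper's representation of $\int\gF\,ds$ through the mollified PDE system), reduction of pathwise uniqueness to the bounds $\|\mathbf{D}u\|_\infty+\|\mathbf{D}(D_1u)\|_\infty\le C_T$ with $C_T\to 0$, and a forward parametrix expansion around a degenerate Gaussian proxy frozen along the deterministic flow, with Besov duality handling the cross derivatives and producing exactly the integrability condition $-j+\beta_j(j-\tfrac12)>-1$. The only presentational difference is that the paper regularizes the coefficients and proves all estimates uniformly in the mollification parameter (rather than solving the rough-coefficient Cauchy problem directly), which does not change the substance of the argument.
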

	
	\begin{REM}\label{strong_REM_KRYLOV}
		Still in comparison with the results obtained in the non-degenerate cases, and especially the one of Krylov and R\"ockner \cite{kryl:rock:05}, we do not tackle the case of drifts in $L_q-L_p$ w.r.t. the first (and then non-degenerate) variable. This is only to keep our result as clear as possible and to concentrate on the novelty of the approach we use here. We are anyhow confident that these specific drifts could be handled. Indeed, all the intermediate results needed to perform the analysis in that setting seem 
		to be already available. We refer to subsection \ref{strong_SUB_STRAT_PDE} for further details.
	\end{REM}
	
	\subsection{\textcolor{black}{Related discussions and perspectives}}
	One may wonder if the thresholds in \A{T$ {}_\beta$} are sharp. To investigate this question, we should recast our result \textcolor{black}{within the framework of the} \emph{regularization by noise phenomenon}\textcolor{black}{, see \emph{e.g.} the  lecture notes \textcolor{black}{by Flandoli} \cite{flandoli_random_2011}}. Such phenomenon has been considered for ODEs perturbed by many noise classes (in particular $\alpha$-stable, $\alpha \in (0,2]$ and fractional Brownian motion with Hurst index $H$ in $(0,1)$) and has given rise to a large literature. Before going further, we feel that it is important to \textcolor{black}{specify} what ``\emph{regularization}'' means when speaking about \emph{regularization by noise}. \textcolor{black}{Hereafter, we say that a noise ``regularizes'' an ill posed system if the resulting perturbed equation has a unique solution.} Within our probabilistic setting, we can \textcolor{black}{thus} distinguish three types of \emph{regularization\textcolor{black}{s}}: \textcolor{black}{the \emph{weak regularization}, for which the law of the solution is concerned; the \emph{strong regularization} and \emph{path by path regularization} \textcolor{black}{which relate to the path of the solution}. While the difference between \emph{weak} and \emph{path by path} or \emph{strong} well-posedness is clear, we refer to the recent work of Shaposhnikov and Wresch \cite{sha:wre:20} for the more subtle differences between strong and path by path uniqueness (let us only mention that path by path uniqueness implies strong uniqueness while the converse is not true).}
	
	Also, in order to illustrate the following discussion on the thresholds in \A{T$ {}_\beta$}, we introduce, for a given \textcolor{black}{scalar $\alpha$-stable\footnote{For simplicity reasons, we restrain our considerations to rotationally invariant stable processes \textcolor{black}{with generator $1/2$ the usual fractional Laplacian}.}, $\alpha \in (0,2]$ or $H$-fractional Brownian, $H \in (0,1)$} noise $\mW$ \textcolor{black}{of self similarity index $\gamma>0 $}, the following \textcolor{black}{simplified} 
	version of \eqref{strong_SYST}
	\begin{equation}\label{SYSTPEANOPER}
	\textcolor{black}{d}\X_t={\mathbf A}\X_t dt+B d\mW_t+\textcolor{black}{\gP}(\X_t)dt,\ {\mathbf A}=\left( \begin{array}{ccccc} 
	0                             & \cdots & \cdots & \cdots &0\\
	1& 0        & \cdots& 0 & \vdots\\
	0                              & 1&0        &\vdots &\vdots\\
	\vdots                              & \ddots &\ddots        &0 & \vdots\\
	0                               &   \cdots          & 0&    1               &0
	\end{array}\right),\quad \X_0=\mathbf {0},
	\end{equation}
	\textcolor{black}{where $d=1$\textcolor{black}{, $\gP = (\gP_1,\hdots, \gP_n)^*$} and for each $i\in \leftB 1,n \rightB $},
	$\textcolor{black}{\gP_i(\x)} \textcolor{black}{= \sum_{j=i}^n \textcolor{black}{P}_{i}^j(\x)}$ with  $\textcolor{black}{P}_{i}^j:=c_{i,j}   {\rm sgn}(\x_j)\big(|\x_j|^{\beta_i^j}\textcolor{black}{\wedge 1\big)}
	$ \textcolor{black}{with $c_{i,j}\in \R $  
		and $B=(1,0,\textcolor{black}{\hdots},0)^*=(1,{\mathbf 0}_{1,n-1})^*$ }
	\textcolor{black}{where ``$*$'' stands for the transpose}. \textcolor{black}{The dynamics \eqref{SYSTPEANOPER} can be viewed as a perturbation of  $\textcolor{black}{d}\bar \X_t={\mathbf A}\bar \X_t dt+B d\mW_t $, \textcolor{black}{$\bar \X_0=0$}, corresponding to the noise and its iterated integrals, i.e. $\bar \X_t=(\mW_t,\int_0^t\mW_s ds,\textcolor{black}{\hdots},\int_0^t dt_1\int_0^{t_1} dt_2\textcolor{black}{\hdots} \int_{0}^{t_{n-1}}dt_n \mW_{t_n}  )^{\textcolor{black}{*}}$, by a Peano type drift $\gP $. With the notations of \eqref{strong_SYST}, the full drift of \eqref{SYSTPEANOPER}  writes $\gF(t,\x)=\gF(\x)=\mathbf A \x+\gP(\x) $}.
	
	\textcolor{black}{Note that the above truncation \textcolor{black}{in $\gP$} is only introduced to avoid technical considerations in the following discussion (simple statement of parabolic bootstrap results). 
		However, our approach allows to consider general non-linear unbounded drifts satisfying \A{T$ {}_\beta$} and \A{H$ {}_\eta$}}. 
	\textcolor{black}{We emphasize that,} when $n=1$, setting \textcolor{black}{$\beta_1^1 = :\beta$, $c_{1,1}=1$}, we obtain the following dynamics
	\begin{equation}\label{eq:peano}
	\textcolor{black}{d}\textcolor{black}{{\X}}_t =   {\rm sgn}(\X_t) \big(|\X_t|^{\beta}\textcolor{black}{\wedge 1} \big)
	dt + d\mW_t,\quad \textcolor{black}{\X}_0=0,\\
	\end{equation}
	known as \textcolor{black}{the} \textcolor{black}{(localized version of the)} 
	stochastic Peano example and when $n=2$, $\textcolor{black}{\gP_1\equiv 0, c_{2,2}=1}$, we get that
	\begin{equation}\label{eq:peano:kinetic}
	\textcolor{black}{d} \X_t^1 = d\mW_t,\quad \textcolor{black}{d}\textcolor{black}{{\X}}^2_t =   {\rm sgn}(\X_t^2) \big(|\X_t^{\textcolor{black}{2}}|^{\beta_2^2}\textcolor{black}{ \wedge 1} \big)
	dt + d\left\{\int_0^t ds \mW_s \right\} ,\quad \X_0^1=\X_0^2=0,
	\end{equation}
	which is the kinetic version of the above stochastic Peano example.\\
	
	\noindent\textbf{Weak regularization for \textcolor{black}{the} stochastic Peano example \eqref{eq:peano}.} Let us start \textcolor{black}{with} the \emph{weak regularization by noise}, as it seems to us that it is the most understood and understandable setting. Weak well-posedness relies on \textcolor{black}{the} well-posedness of the martingale formulation for the system which \textcolor{black}{itself} relies, in turn, on a good theory for the associated PDE (see e.g. \cite{stro:vara:79}). Roughly speaking, it consists in showing that the transport term for the associated PDE is \textcolor{black}{somehow} a negligible perturbation of the equation. This can be seen, at least heuristically, through scaling argument\textcolor{black}{s}. Consider indeed the dynamics \eqref{eq:peano} and assume  that $\mW$ is an $\alpha$-stable process. The \textcolor{black}{\textit{formal}} associated 
	PDE
	writes
	
	$$\partial_t u(t,\x)+\langle {\rm sgn}(\x)(|\x|^\beta \textcolor{black}{\wedge 1})
	, \mathbf {D}u(t,\x)\rangle+\frac 12 \Delta^{\frac \alpha 2} u(t,\x)=0,$$
	\textcolor{black}{where $\Delta^{ \alpha/2} $ stands for the usual fractional Laplacian}. 
	Introduce \textcolor{black}{for $\lambda>0 $} the corresponding rescaled \textcolor{black}{function} $u_\lambda( t,\x)\textcolor{black}{:=}u(\lambda t, \lambda^{1/\alpha} \x) $ (scaling reflecting the parabolic scale for $t$ and $\x$). We get that \textcolor{black}{$u_\lambda $ satisfies the equation}:
	
	$$\partial_t u_\lambda (t,\x)+\langle \lambda^{1-\frac 1\alpha}{\rm sgn}(\x)(|\lambda^{\frac1\alpha}\x|^\beta \textcolor{black}{\wedge 1})
	, \mathbf {D}u_\lambda(t,\x)\rangle+\frac 12 \Delta^{\frac \alpha2} u_\lambda(t,\x)=0,$$
	so that the terms associated with the principal part of the partial differential operator in the above PDE, namely $\partial_t u_\lambda$ and $\Delta^{\alpha/2} u_\lambda(t,\x) $, are \textit{comparable}. On the other hand, \textcolor{black}{when $\x$ belongs to compact subsets of $\R$}:
	\begin{itemize}
		\item if $ \beta> 1-\alpha$, the scaled drift coefficient goes to zero with $\lambda $ and the smoothing effect of the principal part of the partial differential operator dominates;
		\item if $\beta=1-\alpha $, the scaled drift coefficient stays at a macro scale and the rescaled drift has the same order as the principal part of the partial differential operator (critical case);
		\item otherwise, the drift explodes when $\lambda$ goes to zero.
	\end{itemize}
	
	In other words, \textcolor{black}{a sufficient condition on the exponent $\beta$ to ensure that the drift is a negligible perturbation of the equation} is
	\begin{equation*}
	\beta>1-\alpha.
	\end{equation*}
	This \textcolor{black}{rule} can also \textcolor{black}{be} obtained by analyzing the path associated \textcolor{black}{with} the solution of \eqref{eq:peano}, see e.g. the work \cite{delarue_transition_2014} and the counter-examples to weak uniqueness in \cite{chau:16,chau:meno:17}. From \textcolor{black}{these} pathwise analysis, one can formally generalize the above heuristic an\textcolor{black}{d} get that,
	\textcolor{black}{for the system \eqref{eq:peano} driven by a general $\gamma $ self-similar noise $\mathcal W $},
	the condition
	\begin{equation}\label{weak_rule}
	\beta>1-\frac 1\gamma,
	\end{equation}
	is needed for the weak well-posedness to hold. When $\gamma=1/\alpha$, $\alpha$ in $(0,1]$ we refer to e.g. \cite{chau:meno:prio:20,chau:meno:prio:00} for results in that direction and to \cite{delarue_rough_2015,canni:chouk:18,flandoli_multidimensional_2017,ZZ17,chau:meno:stable,ling:zhao:00} for results related to the case where $\alpha$ in $(1,2]$ so that the above rule allows $\beta$ to take negative values, \textcolor{black}{the drift being then a distribution}.\\

	\noindent\textbf{Thresholds for weak well-posedness and thresholds in \A{T$ {}_\beta$}.} Let us come back to the more tricky system \eqref{SYSTPEANOPER} and let us consider first the kinetic version \eqref{eq:peano:kinetic} with $\mW=W$. In this last case, the associated PDE writes
	$$\partial_t u(t,\x_1,\x_2)+\langle \x_1, D_{\x_2}u(t,\x_1,\x_2)\rangle + \langle {\rm sgn}(\x_2)(|\x_2|^{\beta_2^2}\textcolor{black}{\wedge 1})
	, D_{\x_2}u(t,\x_1,\x_2)\rangle +\frac 12 \Delta_{\x_1} u(t,\x_1,\x_2)=0.$$
	We introduce the corresponding rescaled \textcolor{black}{function} $u_\lambda( t,\x)\textcolor{black}{:=}u(\lambda t, \lambda^{1/2} \x_1, \lambda^{3/2}\x_2) $ (scaling reflecting the usual parabolic scale for $t$ and $\x_1$ and the fast regime associated with the integral of the Brownian motion) \textcolor{black}{which solves the equation}
	$$\partial_t u_\lambda (t,\x_1,\x_2)+ 
	\langle 
	\textcolor{black}{\x_1}, D_{\x_2}u_\lambda(t,\x_1,\x_2)\rangle+\lambda^{-\frac 12}\langle {\rm sgn}(\x_2)(|\lambda^{\frac 32}\x_2|^{\beta_2^2}\textcolor{black}{\wedge 1})
	, D_{\x_2} u_\lambda(t,\x_1,\x_2)\rangle+\frac 12 \Delta_{\x_1} u_\lambda(t,\x_1,\x_2)=0
	,$$
	so that, again, the terms $\partial_t u_\lambda $, $\Delta_{\x_1} u_\lambda(t,\x_1,\x_2) $ \textcolor{black}{and $ \langle 
		\textcolor{black}{\x_1}, D_{\x_2}u_\lambda(t,\x_1,\x_2)\rangle$ associated with the principal part of the partial differential operator in the above PDE} are \textit{comparable}. On the other hand\textcolor{black}{, when $\x_2$ belongs to compact subsets of $\R$}:
	\begin{itemize}
		\item if $ \textcolor{black}{\beta_2^2}> 1/3 $, the scaled drift \textcolor{black}{term $ \lambda^{-1/2}\langle {\rm sgn}(\x_2)(|\lambda^{3/2}\x_2|^{\beta_2^2}\textcolor{black}{\wedge 1})
			, D_{\x_2} u_\lambda(t,\x_1,\x_2)\rangle$} goes to zero with $\lambda $, and the smoothing effect of the principal part of the partial differential operator dominates;
		\item if $\textcolor{black}{\beta_2^2}=1/3$, the \textcolor{black}{previous} scaled drift  coefficient stays at a macro scale and \textcolor{black}{has} the same order as the principal part of the partial differential operator (critical case);
		\item otherwise, the drift explodes when $\lambda$ goes to zero.
	\end{itemize}
	Observe that $1/3 = 1-1/(3/2)$ and that $3/2$ is precisely the self similarity index of $\int_0^{\cdot} W_s ds$, so that we recover the rule \eqref{weak_rule}. This threshold has also been shown to be (almost) sharp through counter-examples in \cite{chau:16}. \textcolor{black}{In comparison}, the threshold \textcolor{black}{introduced} in  \A{T$ {}_\beta$} gives $\textcolor{black}{\beta_2^2=}\beta_2 > 2/3 = 1 - 1/[2\times(3/2)] > 1/3$ so that, in this case, we \textcolor{black}{\textit{lose}} a factor 2 w.r.t. the thresholds predicted by \eqref{weak_rule}.\\
	
	Reproducing then the above analysis \textcolor{black}{for} the PDE \textcolor{black}{formally} associated with \eqref{SYSTPEANOPER}\textcolor{black}{,} we obtain that the \textcolor{black}{degenerate part of the scaled drift, i.e. $\sum_{i=2}^n \sum_{j=i}^n \lambda^{-i+3/2} \langle {\rm sgn}(\x_j)(|\lambda^{j-1/2}\x_j|^{\beta_i^j}\wedge 1),D_{\x_i}\rangle$,}  explode\textcolor{black}{s} when $ \beta_i^j <(2i-3)/(2j-1)$\textcolor{black}{, $i\le j$ in $\leftB 1, n\rightB^2$}. These thresholds have also been shown to be (almost) sharp in \cite{chau:meno:17} through counter-examples. On the other hand, along the diagonal of the system (i.e. for \textcolor{black}{the} indexes $\beta_j^j$), this gives that weak uniqueness fails as soon as $\beta_j^j <(2j-3)/(2j-1) = 1-1/(j-1/2)$ where $(j-1/2)$ \textcolor{black}{again} precisely \textcolor{black}{corresponds to the} self similarity index of \textcolor{black}{the} $(j-1)^{\rm{th}}$ iterated in time integral of the Brownian motion. Still \textcolor{black}{in comparison}, we assumed in \A{T$ {}_\beta$} that each $\textcolor{black}{\beta_j^j = }\beta_j$ \textcolor{black}{is} (strictly) greater than $(2j-2)/(2j-1) = 1-1/[2\times (j-1/2)]$. Therefore, we \textcolor{black}{lose} this factor 2 for each variable of the chain (note further that we also \textcolor{black}{lose} the dependence of the thresholds w.r.t. the level of the chain, \textcolor{black}{i.e. $\textcolor{black}{\beta_i^j}=\beta_j,\ i\in \leftB 1,n\rightB $}).
	%
	%
	%
	%
	%
	%
	We are hence not able to prove strong well-posedness for \eqref{eq:peano} as soon as $\beta \le 1-1/(2\gamma)$, $\gamma = n+1/2$, $n \in \mathbb N^*$. One may thus wonder if this factor 2 is \emph{the price to pay to pass from the weak to strong regularization phenomenon.}\\

	\textcolor{black}{\noindent\textbf{Strong regularization.} \textcolor{black}{The} first results on strong regularization go back to the pioneering works of Zvonkin \cite{zvonkin_transformation_1974} and Veretennikov \cite{veretennikov_strong_1980} where \eqref{eq:peano} with $\mW=W$ is shown to be well-posed in a strong sense as soon as $\beta \ge 0$ (the result holds therein for any bounded and measurable drift). This result has then been extended in the seminal work of Krylov and R\"ockner \cite{kryl:rock:05} for $L_q-L_p $ singular drifts (see also \cite{zhang_well-posedness_2010} and \cite{fedr:flan:11}). When $\mW$ is a pure jump process ($\alpha \in \textcolor{black}{[}1,2)$), Priola showed in \cite{priola_pathwise_2012} that strong well-posedness holds for any $\beta >1-\alpha/2$ for \textcolor{black}{bounded H\"older drifts} and Chen \emph{et al.} \textcolor{black}{obtained} in \cite{CZZ17} \textcolor{black}{the same condition} for $\alpha \in (0,1)$. 
	\textcolor{black}{When} $\mW$ is a fractional brownian motion with Hurst parameter $H>1/2$, it has been shown by Nualart and Ouknine \cite{NUALART2002103} that \eqref{eq:peano} is well-posed as soon as $\beta >1-1/(2H)$. This last result relies on \textcolor{black}{the} Girsanov transform and \textcolor{black}{the} Yamada\textcolor{black}{-}Watanabe Theorem to deduce that strong existence and uniqueness in law \textcolor{black}{give} strong uniqueness. Putting those results together and denoting by $\gamma$ the self similarity index of $\mW$,
		we thus get (at least in \textcolor{black}{the} stable and fractional setting) that strong well-posedness holds if $\beta > 1-1/(2\gamma)$, $\gamma \in (0,2]$ and that the ``critical'' case is attained for $\gamma=1/2$. Again, a factor 2 is lost in comparison with \eqref{weak_rule} and these results exhibit the same type of thresholds as our\textcolor{black}{s}.\\}

	\textcolor{black}{In the Markovian setting}, a good manner to understand \textcolor{black}{how} such a factor \textcolor{black}{(and thus threshold)} \textcolor{black}{appears} consists in investigating the smoothing properties of the \textcolor{black}{underlying}  PDE. It is indeed important to notice that \textcolor{black}{most of} the aforementioned results \textcolor{black}{concerning strong uniqueness}
	\textcolor{black}{are based} on the same technology\textcolor{black}{:} 
	\textcolor{black}{the Zvonkin transform of the SDE}. This is precisely where PDE\textcolor{black}{s} come into play.\\
	
	\noindent\textbf{A primer about Strong regularization and \textcolor{black}{the} Zvonkin method.} \textcolor{black}{Let us focus on the \textcolor{black}{Markovian (stable)} case.}
	The main idea consists in rewriting the dynamics \eqref{eq:peano} as
	\begin{eqnarray}\label{THE_ZVONK_GEN}
	\X_t &=& \u(t,\X_t) + \X_0 - \u(0,\X_0) - M_{0,t}(\alpha,\u,\X) + \mW_t,
	\end{eqnarray}
	with
	\begin{equation}\label{def_de_m_alpha}
	M_{0,t}(\alpha,\u,\X) \textcolor{black}{:=} \left\lbrace\begin{array}{llll}
	\displaystyle \int_0^t 
	 \mathbf {D} \u(s,\X_s) \cdot  d\mW_s,\quad \text{ if }\alpha=2;\\ 
	\displaystyle \int_0^t \int_{\R \backslash\{0\} }\textcolor{black}{\tilde N(dr,dz)}  \{\u(s,\X_{s^-}+z) - \u(s,\X_{s^-}) \},\quad \text{if }\alpha<2,
	\end{array}
	\right.
	\end{equation}
	where $\tilde N$ is the compensated Poisson measure associated with $\mW$ and where \textcolor{black}{$\u$} is the solution of 
	\begin{equation}\label{zvonk:peano}
	\partial_t \u(t,\x)+\langle {\rm sgn}(\x)(|\x|^\beta \textcolor{black}{\wedge 1}), {\mathbf D}\u(t,\x)\rangle+\frac 12 \Delta^{\frac \alpha2} \u(t,\x)= {\rm sgn}(\x)(|\x|^\beta\textcolor{black}{\wedge 1}),\quad \u(T,\cdot)=0.
	\end{equation}
	\textcolor{black}{Equation \eqref{THE_ZVONK_GEN} then follows} from the It\^o formula \textcolor{black}{provided $\u$ is smooth enough}. This transformation allows to get \textcolor{black}{rid} of the \emph{bad drift} in the equation and to replace it by the solution of a parabolic PDE which \textcolor{black}{benefits} from the smoothing effect associated with the generator of the noise $\mW$. The price to pay is that the diffusion matrix in the ``martingale'' part is now \textcolor{black}{modified},
	as one adds a stochastic integral involving the derivative or a perturbation of \textcolor{black}{$\u$}.
	
	For this new equation to be well-posed in any dimension\footnote{Note indeed that the scalar case induces very specific features, see \emph{e.g.} \cite{bass:chen:03,gradinaru_existence_2013,athreya2020,chau:meno:stable}.},
	it is commonly assumed that \textcolor{black}{the integrand} in the stochastic integral $M(\alpha,\textcolor{black}{\u},\X)$ must be Lipschitz continuous \textcolor{black}{in the spatial variable} (in order to apply a stability type argument based on martingale and Gr\"onwall inequalities). \textcolor{black}{This roughly means that for any $t$ in $[0,T]$, the gradient of $u(t,\cdot)$ must be Lipschitz continuous, uniformly in $t$, in the case $\alpha=2$. In the case $\alpha <2$, it follows from the interpolation type Lemma 4.1 in \cite{priola_pathwise_2012} that a sufficient condition is that for any $t$ in $[0,T]$, $ u(t,\cdot)$ must belong to $C^{1+\eta}$, where $C^{1+\eta}$ stands for the usual H\"older space see \emph{e.g.} \cite{kryl:96}, with $\eta >\alpha/2$. This last condition comes from integrability purposes in order to compensate the lack of integrability of the L\'evy measure associated with the pure jump process around 0 when applying \textcolor{black}{martingale inequalities} (see  \eqref{prio_modif} below as well). 
		From the corresponding parabolic bootstrap (Schauder estimates for  \eqref{zvonk:peano}, \textcolor{black}{see e.g. \cite{friedman_partial_2008,kryl:prio:10} for $\alpha=2$ or \cite{chau:meno:prio:20,miku:prag:14} in the \textcolor{black}{pure} jump case}), we could expect at best that $\| u(t,\cdot) \|_{C^{\alpha+\beta}} \le \| {\rm sgn}(\cdot)\textcolor{black}{(}|\cdot|^\beta \wedge 1\textcolor{black}{)}\|_{C^\beta}$}. \textcolor{black}{Hence}, we obtain that $\beta$ must satisfies $\alpha+\beta \textcolor{black}{>}1+\alpha/2 \Leftrightarrow \beta >1-\alpha/2$. In other words, denoting again by $\gamma$ the self similarity index of the noise, we get that strong well-posedness holds for 
	\begin{equation}\label{strong_thresh}
	\beta > 1-\frac 1{2\gamma}.
	\end{equation}
	\textcolor{black}{These are precisely the thresholds obtained in the literature on strong regularization and \textcolor{black}{we will say that} they are thus (almost) \emph{sharp w.r.t. the methodology}. Let us mention that in the critical case in \eqref{strong_thresh} (with an equality therein) when the noise is a Brownian motion, the Lipschitz property of the gradient of the solution holds in a suitable $L_p$ space only (this is the so-called Calder\'on-Zygmund estimate for the case of a bounded and measurable drift and its ad hoc version from \cite{kry:01} in the $L_q-L_p$ framework of Krylov and R\"ockner). Such type of estimates require non-trivial techniques from harmonic analysis to handle singular integrals, which differ significantly from the Schauder type control adopted here.}\\

	\noindent\textbf{The thresholds in \A{T$ {}_\beta$} are derived from the Zvonkin method.} Let us \textcolor{black}{bring to light how the Zvonkin transform naturally leads to the thresholds in \A{T$ {}_\beta$} for our simple system \eqref{SYSTPEANOPER}. The corresponding process $\mathbf X $ writes} 
	\begin{equation}\label{zvonk:proto}
	\X_t  = \u(t,\X_t) + \x_0 - \u(0,\x_0)  \textcolor{black}{- \int_0^t {\mathbf D}\u(s,\X_s) B dW_s+ W_t},
	\end{equation}
	where $\u = (\u_1,\ldots,\u_n)^*$ and each $\textcolor{black}{\u_i}$ solves
	\begin{equation}\label{EDP_deg}
	\partial_t \u_i (t,\x)+\langle \underbrace{\textcolor{black}{{\mathbf A}\x} +\mathbf {P}(\x)}_{\textcolor{black}{=\gF(\x)}}, {\mathbf D}\u_i(t,\x)\rangle+\frac 12 \Delta_{\x_1} \u_i(t,\x)= \mathbf {F}_i(\x).
	\end{equation}
	In order to have \textcolor{black}{a Lipschitz continuous in space integrand in the stochastic integral} associated with the Zvonkin transform \eqref{zvonk:proto}, we need, due to the particular structure of the embedding matrix $B$ (recall that $B=(1,{\mathbf 0}_{1,n-1})^*$), the gradient $D_1\u$ to be Lipschitz in all \textcolor{black}{the spatial} direction\textcolor{black}{s}.
	\textcolor{black}{We thus ask each component of the function $\mathbf u$ to \textcolor{black}{have} the same regularity (namely their gradient in the non-degenerate direction must be Lipschitz w.r.t. all variables) and this is the reason why the corresponding thresholds do not depend on the level of the chain, as opposed to the weak thresholds. Accordingly, from now on, we denote by $\beta_j$ the \textcolor{black}{regularity index} of any component $\mathbf F_i$, $i$ in $\leftB 1,n\rightB$ w.r.t. the $j^{\rm {th}}$ variable, $j$ in $\leftB i,n\rightB$.}
	
	Note that the main particularity of PDE \eqref{EDP_deg} \textcolor{black}{comes from} the different scales at which each component of the PDE evolves and \textcolor{black}{the unboundedness of the source terms $(\gF_i)_{i\in \leftB 1,n\rightB} $ coming from both the Zvonkin transform and the weak H\"ormander setting. \textcolor{black}{Especially}, we cannot expect the solutions $(\u_i)_{i\in \leftB 1,n\rightB} $ to be bounded}. The associated parabolic bootstrap is thus more tricky than in \eqref{zvonk:peano}. In the companion work \cite{chau:hono:meno:18}, \textcolor{black}{assuming that the source term in the right\textcolor{black}{-}hand side of \eqref{EDP_deg} is bounded and $\beta_j $-H\"older w.r.t. 
		the $j^{\rm th }$-variable}, we derived  Schauder estimates and proved that for any $i$ in $\leftB1,n\rightB$, any $j$ in $\leftB 1,n\rightB$, for fixed $(\x_{1:j-1},\x_{j+1:n})$ in $\R^{n-1}$ and $t$ in $[0,T]$, the map 
	\begin{equation}\label{def:proj}
	(\u_i)_j(t,\cdot) : \R \ni z_j \mapsto \u_i(t,\x_{1:j-1},z_j,\x_{j+1:n})\in \R
	\end{equation}
	belongs to $C^{2/(2j-1) + \beta_j}$ \textcolor{black}{ for any $\beta_j$ in $\big(0,1/[2j-1]\big)$,  uniformly w.r.t. $t$ and $(\x_{1:j-1},\x_{j+1:n})$. This means that the smoothing effect of the \textcolor{black}{hypoelliptic operator $\p_t + \Delta_{\x_1} + \langle {\mathbf A\mathbf x}, \mathbf D\rangle$} yields a regularity gain of order $2/[2j-1]$ in the $j^{\rm{th}}$ variable. In other words, the smoothing effect decreases as one moves away from the source of noise. This particular feature is reminiscent from the weak H\"ormander like structure of \eqref{EDP_deg}. Especially, one formally understand\textcolor{black}{s} that, as $(\u_i)_j(t,\cdot)$ belongs to $C^{2/(2j-1) + \beta_j}$, $(D_1\u_i)_j$ belongs to $C^{1/(2j-1) + \beta_j}$\textcolor{black}{,} i.e. \textcolor{black}{ for homogeneity reasons,} one differentiation w.r.t. the non-degenerate variable induces a loss of H\"older regularity of order $1/[2j-1]$ w.r.t. the $j^{\rm {th}}$ degenerate variable. 
		\textcolor{black}{In the current work, we first manage to extend such type of estimates to unbounded sources}. 
		\textcolor{black}{
			\textcolor{black}{As} we cannot expect anymore the solutions $ (\u_i)_{i\in \leftB 1,n\rightB}$ to be bounded, but to have linear growth \textcolor{black}{(and consequently to have bounded gradients), we} therefore specifically state the parabolic bootstrap in terms of usual H\"older spaces on the gradients. Namely, we manage below to prove that for any $\beta_j$ in $\big(\textcolor{black}{(}2j-2\textcolor{black}{)}/\textcolor{black}{(}2j-1\textcolor{black}{)},1\big)$, the map $(\textcolor{black}{D_1}\u_i)_j(t,\cdot)$ belongs uniformly to $C^{1/(2j-1) + \beta_j - \varepsilon}$ for any $0<\varepsilon<<1$ (see Section \ref{OU} for details).
		} 
		\textcolor{black}{T}o obtain \textcolor{black}{the Lipschitz control in all \textcolor{black}{the spatial} direction\textcolor{black}{s} of the gradient $D_1\u$}, we therefore need $1/(2j-1) + \beta_j > 1 \Leftrightarrow \beta_j > (2j-2)/(2j-1)$, which is precisely the thresholds assumed in \A{T$ {}_\beta$}. }  \\

	Thus, one may conclude that the thresholds in \A{T$ {}_\beta$} are the one deriving from the Zvonkin method combined with the Schauder type approach. Let us eventually mention that other \textcolor{black}{a}uthors \textcolor{black}{precisely} recover \textcolor{black}{those} thresholds in the case $n=2$, see \emph{e.g.}  \cite{chau:17,wang:zhan:16,fedr:flan:prio:vove:17}. Note also that, therein, the critical  case ($\beta_2=2/3$) for the degenerate component is left open. As already mentioned, such framework requires to use significantly different techniques that are out of the scope of the present work as well.\\

	\noindent\textbf{Path by path regularization, associated thresholds and Zvonkin thresholds.} To the best of our \textcolor{black}{knowledge, the} first result on \emph{path by path} regularization go\textcolor{black}{es} back to the work of Davie \cite{da:07} and has \textcolor{black}{then} been formalized through \textcolor{black}{the works} of Flandoli \cite{flandoli_random_2011,flan:11}. The main particularity of that approach relies on the fact that the system \eqref{eq:peano} is considered ``$\omega$ by $\omega$'', so that it \textcolor{black}{somehow} goes beyond the probabilistic framework. This setting thus matches the rough path perspective for SDE\textcolor{black}{s}. Based on these considerations, Catellier and Gubinelli proposed in \cite{catellier_averaging_2012} a systematic study of fractional Brownian perturbation\textcolor{black}{s} of ODEs, for any \textcolor{black}{H}urst parameter $H$ in $(0,1)$. Therein, they prove in particular that \eqref{eq:peano} is well-posed as soon as $\beta >1-1/(2H)$. Therefore, the \textcolor{black}{a}uthors obtained the same thresholds as the one required for strong well-posedness in the literature although the approach, especially for \cite{catellier_averaging_2012}, differs significantly from the \textcolor{black}{PDE  based trick of the} Zvonkin method. \textcolor{black}{Furthermore}, it seems  that, \emph{modulo} an additional work, \textcolor{black}{the} Zvonkin \textcolor{black}{transform} allows to recover, \textcolor{black}{in the Markovian framework}, path by path results. We refer \emph{e.g.} to Shaposhnikov \cite{shaposhnikov_2016}, who revisited the  result by Davie, and to Priola \cite{priola2018,priola2019davies}, \textcolor{black}{who extended the Davie result to the $\alpha$-stable, $\alpha$ in $\textcolor{black}{[}1,2)$ and the degenerate kinetic framework}, for recent works in that direction.  
	
	\textcolor{black}{From the previous considerations, it is in fact} not clear that \textcolor{black}{the} Zvonkin thresholds \textcolor{black}{derive} from the PDE approach. \textcolor{black}{We rather feel that they are actually related to the type of well-posedness considered} and that the difference with the ``weak thresholds'' is indeed a price to pay to pass from weak to strong or path by path uniqueness.\\

	\noindent\textbf{Conclusions and extension to ``pure jump'' noise.} In view of the previous discussions, it appears that our thresholds are sharp w.r.t. the method employed, and almost sharp w.r.t. the existing literature (at least the one we know) on regularization by noise. In this perspective, our result roughly says (in particular) that a Brownian \textcolor{black}{type} noise $\mW$ of self similarity index  $\gamma = 1/2+n$, $n$ in $\textcolor{black}{\mathbb N}$ restores strong well-posedness for ODEs whose drift \textcolor{black}{has} regularity index $\beta > 1-1/(2\gamma)$. As the range $\gamma=H$ in $(0,1)$ is covered by \cite{catellier_averaging_2012} and the range corresponding formally to the case $H=0$ has been investigated recently in \cite{harang2020cinfinity}, one may wonder if the range $\gamma = 1/\alpha+n$, $n$ in $\textcolor{black}{\mathbb N}$ and $\alpha$ in $(0,2)$ could be attainable as well.\\
	
	\textcolor{black}{\textcolor{black}{
	The main point to implement the Zvonkin transform consists in establishing parabolic bootstrap results on the underlying PDE. To this end, the idea is to expand the associated differential operator around  the generator of a \textit{suitable} \textit{proxy process}.  This is the so-called parametrix type expansion that will be here performed at order one. The \textit{proxy process} needs somehow to  resemble the initial process to be investigated and to be \textit{well understood}.
	 For instance, for the considered example \eqref{SYSTPEANOPER}, the natural \textit{proxy process} corresponds to $\bar \X $ (degenerate Ornstein-Uhlenbeck type Gaussian process) introduced after \eqref{SYSTPEANOPER}. By \textit{well understood,} we mean that the proxy process admits smooth marginal densities, which together with their derivatives,  satisfy appropriate heat kernel estimates. 
	 These estimates turn out to be} crucial as our methodology relies on duality arguments between Besov spaces, the underlying H\"older spaces being viewed as Besov spaces. Especially, because we will use the so-called thermic characterization of Besov spaces (see \emph{e.g.} Chapter 2.6.4 in \cite{trie:83}) with an underlying heat kernel somehow compatible with the one of the \textit{proxy}. 
		This is precisely why we feel that the methodology provide\textcolor{black}{d} here is robust enough to handle the case of a large class of symmetric non-degenerate pure jump noises ``$\mW=Z^\alpha$'', $\alpha$ in $(0,2)$,  up to a modification of the associated \textit{proxy} and heat kernels, see \emph{e.g.} \cite{huan:meno:15,HUANG2019162,mar:20}.
	}
	
	Let us try to understand what are the main steps by restricting ourselves to the (already discussed) prototype system \eqref{SYSTPEANOPER}.
	\textcolor{black}{In this case, the Zvonkin transform associated with \eqref{SYSTPEANOPER} writes
		\begin{equation}\label{zvonk:proto:stable}
		\X_t  = \u(t,\X_t) + \X_0 - \u(0,\X_0)  \textcolor{black}{-} \int_0^t \int_{\R\backslash\{0\}}\textcolor{black}{\tilde N_s(ds,dz)} (\u(s,\X_{s^-}+ Bz) - \u(s,\X_{s^-})) +\textcolor{black}{ Z_t^\alpha,} 
		\end{equation}
		where $\u = (\u_1,\ldots,\u_n)^*$ and each $\textcolor{black}{\u_i}$ solves the Integro-Partial Differential Equation (IPDE in short) associated with \eqref{strong_SYST} i.e. the Laplacian therein is replaced by a fractional Laplacian $\Delta^{\alpha/2}_{\x_1}$. As already discussed, we need the \textcolor{black}{integrand in the} stochastic integral in \eqref{zvonk:proto:stable} to be Lipschitz \textcolor{black}{w.r.t. the spatial variable}. Yet again, a sufficient condition is given by a modification of the interpolation type Lemma 4.1 in \cite{priola_pathwise_2012}: we have for $\x$, $\y$ in $\R^{\textcolor{black}{n}}$ and $z$ in $\R$, $|z| \le 1$:
		\begin{equation}\label{prio_modif}
		|\u(t,\x+Bz) - \u(t,\x) -\u(t,\y+Bz) + \u(t,\y)| \le C |z|^{\eta} [(\mathbf D\u)_1]_\eta |\x-\y|,
		\end{equation}
		where $[(\mathbf D\u)_1]_\eta$ denotes the $\eta$-H\"older modulus w.r.t. the variable $\x_1$ of the full gradient $\mathbf D \u$. We therefore have to control this $\eta$-H\"older modulus.
	}
	
	In \cite{mar:20}, Marino proved Schauder estimate\textcolor{black}{s} for (more general versions of) the IPDE associated with \textcolor{black}{\eqref{SYSTPEANOPER}}. He \textcolor{black}{established} therein that for any $j$ in $\leftB 1,n\rightB$, for fixed $(\x_{1:j-1},\x_{j+1:n})$ in $\R^{n-1}$ and $t$ in $[0,T]$, \textcolor{black}{the map $(\u_i)_j(t,\cdot)$ \textcolor{black}{belongs to} $C^{\alpha/[1+\alpha(j-1)]+\beta_j}$ uniformly in $t$ and $(\x_{1:j-1},\x_{j+1:n})$, provided that \textcolor{black}{for each $j$ in $\leftB 1,n\rightB$: $\alpha > \beta_j$ (this condition comes from integrability purpose\textcolor{black}{s}), $\alpha+ \beta_j >1$ and $\beta_j$ in $\big(0,1/\textcolor{black}{(}1+\alpha(j-1)\textcolor{black}{)}\big)$}. This means that the smoothing effect of the hypoelliptic operator $\p_t + (1/2)\Delta_{\x_1}^{\alpha/2 } +  \langle \mathbf A{\mathbf x}, {\mathbf D} \rangle$ yields a regularity gain of order $\alpha/[1+\alpha(j-1)]$ in the $j^{\rm{th}}$ variable. We thus understand (at least formally), that for each $j$ in $\leftB 1,n \rightB$, $D_j\textcolor{black}{\big(\u(t,\cdot)\big)_j}$
		belongs to $C^{\alpha/[1+\alpha(j-1)]+\beta_j-1}$. Note that, due to homogeneity, controlling the the $\eta$-H\"older modulus of $D_j\textcolor{black}{\u(t,\cdot)}$ w.r.t. the non-degenerate variable amounts to control the $\eta/[1+\alpha(j-1)]$-H\"older modulus of $\textcolor{black}{\big(D_j\u(t,\cdot)\big)_j}$ i.e. a $\eta$-H\"older regularity w.r.t. the non-degenerate variable corresponds to a $\eta/[1+\alpha(j-1)]$-H\"older regularity w.r.t. the $j^{{\rm{th}}}$ variable. According to the associated parabolic bootstrap, we thus need the following condition to be fulfilled:
		$$\frac{\alpha}{1+\alpha(j-1)} + \beta_j - 1\textcolor{black}{\ge \frac{\eta}{1+\alpha(j-1)}} > \frac{\alpha/2}{1+\alpha(j-1)} \Leftrightarrow \beta_j > \frac{1+\alpha(j-3/2)}{1+\alpha(j-1)},$$
		recalling that, for integrability purposes coming from the L\'evy measure associated with the pure jump process, $\eta$ must again be strictly greater than $\alpha/2$.
		These are precisely the thresholds obtained in the recent work \cite{HAO2020139} in the kinetic case (i.e. when $n=2$). Following our method, we should be able to extend in some sense the Schauder control in order to obtain that for any $\beta_j$ in $\big(\textcolor{black}{(}1+\alpha(j-3/2)\textcolor{black}{)}/\textcolor{black}{(}1+\alpha(j-1)\textcolor{black}{)},1\big)$ the map $(\u_i)_j(t,\cdot)$ belongs uniformly to $C^{\alpha/[1+\alpha(j-1)]+\beta_j - \varepsilon}$ for any $0<\varepsilon<<1$, so that the result would follow.}

	\subsection{Proof of the main result: Zvonkin Transform and smoothing properties of the PDE associated with \eqref{strong_SYST}}\label{strong_SUB_ZVONK}
	
	We emphasize that under our assumptions, it follows from \cite{chau:meno:17} that \eqref{strong_SYST} is well-posed in the weak sense. Hence, from \textcolor{black}{the} Yamada-Watanabe theorem it is sufficient to prove that strong \textcolor{black}{(or pathwise)} uniqueness holds to prove strong well-posedness. \textcolor{black}{As explained above,} our main strategy rests upon the Zvonkin transform initiated by Zvonkin in \cite{zvonkin_transformation_1974} 
	which heavily relies on the \textcolor{black}{connection} between SDEs and PDEs.
	%
	%
	We rewrite \eqref{strong_SYST} in the shortened form 
	\begin{equation*}
	d{\mathbf X}_t = {\mathbf F}(t,{\mathbf X}_t) dt+ B \sigma(t,{\mathbf X}_{t}) dW_t,
	\end{equation*}
	where we recall that ${\mathbf F}=(\gF_1,\dots,\gF_n)$ is an $\R^{nd}$-valued function and $B$ is the embedding matrix from $\R^d$ into $\R^{nd}$, \textcolor{black}{i.e. $B= ({\mathbf I}_{d , d} , {\0}_{d,d}, \dots, \0_{d,d})^*=({\mathbf I}_{d , d} , {\0}_{d,(n-1)d})^*$, \textcolor{black}{where $ {\mathbf I}_{d , d} , {\0}_{d,d}$  respectively denote the identity matrix and the matrix with zero entries in $\R^d\otimes \R^d $}}. For all $\varphi \in C_0^2 (\R^{nd},\R^{}) $ and $(t,\x)\in [0,T]\times \R^{nd} $ let
	\begin{equation}
	\label{strong_GENERATOR}
	L_t\varphi(\x)=\langle \gF(t,\x),{\mathbf D} \varphi(t,\x)\rangle+\frac 12 {\rm Tr}\Big(a(t,\x)D_{\x_1}^2\varphi (\x) \Big),
	\end{equation}
	where $a=\sigma\sigma^*$\textcolor{black}{,} denote the generator associated with \eqref{strong_SYST}. We then \textit{formally} associate the SDE \eqref{strong_SYST} with the following systems of PDEs:
	\begin{equation}
	\label{strong_GEN_PDE}
	\begin{cases}
	(\partial_t \u_i+L_t \u_i)(t,\x)= \gF_i (t,\x), \ (t,\x)\in [0,T)\times \R^{nd},\\
	\u_i(T,\x)=\0_{d},\qquad i\in \leftB 1,n\rightB.
	\end{cases}
	\end{equation}
	\begin{REM}\label{nota_F}
		Note that above we adopted the following convention for notational convenience: for each $i$ in $\leftB 3,n\rightB$, all $t$ in $[0,T]$, $\x$ in $\R^{nd}$, $\gF_i(t,\x) := \gF_i(t,\x_{i-1:n})$ i.e. the independence of each map $\gF_i(t,\cdot)$ with respect to the first $i-2$ components of the vector $\x=(\x_1,\hdots,\x_n) \in \R^{nd}$ is implicitly assumed. 
	\end{REM}
	Denote by $\U=(\u_1,\textcolor{black}{\hdots},\u_n)$ its global solution. 
	Let now  $({\mathbf F}^m)_{m \geq 0}, (a^m)_{m \geq 0}$ denote two  sequences of mollified coefficients satisfying assumption \A{A} uniformly in $m$ that are infinitely differentiable functions with bounded derivatives of \textcolor{black}{all order\textcolor{black}{s}} greater than 1 for ${\mathbf F}^m$, and converging in supremum norm to $({\mathbf F},a)$ (such sequences are easily obtained from \cite{chau:17}). Then, for each $m$, \textcolor{black}{the regularized systems of PDEs associated with \eqref{strong_GEN_PDE} write}:
	\begin{equation}
	\label{strong_GEN_PDE_MOLL}
	\begin{cases}
	(\partial_t \u_i^m+L_t^m \u_i^m)(t,\x)= \gF_i^m (t,\x), \ (t,\x)\in [0,T)\times \R^{nd},\\
	\u_i^m(T,\x)=\0_{d},\qquad i\in \leftB 1,n\rightB,
	\end{cases}
	\end{equation}
	where $L_t^m$ is obtained from \eqref{strong_GENERATOR} replacing $\gF$ by $\gF^m $ and $a $ by $a^m$.

	The above system \eqref{strong_GEN_PDE_MOLL} is well-posed and admits a unique smooth solution $\U^m=(\u_1^m,\textcolor{black}{\hdots},\textcolor{black}{\u_n^m})$. \textcolor{black}{This can be derived from the Feynman-Kac representation formula, which holds independently of the degeneracy for smooth coefficients with linear growth, and stochastic flow techniques 
		(see e.g. Gikhman and Skorokhod \cite{gikh:skor:69} or Talay and Tubaro \cite{tala:tuba:90})}. Hence, applying It\^o's formula, one easily deduces that
	\begin{equation}\label{strong_ZVONK_TR}
	\int_0^t ds{\mathbf F}(s,\X_s)  = -\U^m(0,\x) + \U^m(t,\X_t) - \int_0^t \mathbf{D}\U^m(s,\X_s) B \sigma(s,{\mathbf X}_{s})d W_s + \int_0^t \mathcal{R}^m_s(\X_s)ds ,
	\end{equation} 
	where
	\begin{equation}
	\begin{split}
	\mathcal{R}^m_s(\X_s) := &\textcolor{black}{[} \mathbf{F}(s,\X_s) - \mathbf{F}^m(s,\X_s) \textcolor{black}{]} -  (L_s - L_s^m)\U^m(s,\X_s)\\
	\textcolor{black}{=} & \textcolor{black}{ \textcolor{black}{[} \mathbf{F}(s,\X_s) - \mathbf{F}^m(s,\X_s)\textcolor{black}{]} -  (\gF - \gF^m)(s,\X_s)\cdot {\mathbf D}\U^m(s,\X_s)}\\
	&\textcolor{black}{-\frac 12  {\rm Tr }\big((a - a^m)(s,\X_s) D_{\x_1}^2\U^m(s,\X_s)\big)}.
	\end{split}\label{EXP_REMAINDER}\tag{R${}_m $}
	\end{equation}
	\textcolor{black}{This representation \eqref{strong_ZVONK_TR} is the \emph{Zvonkin Transform}} discussed above, up to a remainder.
	Then, the main idea consists in taking advantage of the regularization properties of the operator \textcolor{black}{$L^m$ (uniformly in $m$)} and expect that the solutions $\U^m$, $m\geq 0$ will  be smoother than the source term ${\mathbf F}$ so that the right\textcolor{black}{-}hand side of \eqref{strong_ZVONK_TR} is smoother than the integrand of the left\textcolor{black}{-}hand side of the consider\textcolor{black}{ed} equation. In other words, we are looking for a \emph{good regularization theory} for the PDE \eqref{strong_GEN_PDE_MOLL} uniformly w.r.t. the mollification argument. This \emph{good regularization theory} is summarized in the following crucial result whose proof is, in fact, the main subject of this work and is postponed to Section \ref{strong_SEC_PROOF_PDE}.
	
	\begin{THM}\label{strong_MAIN_RESULT_PDE}
		\textcolor{black}{For $T>0$ small enough\footnote{By ``small enough" we mean that there exists a time $\mathcal{T}>0$ depending \textcolor{black}{only} on known parameters in \A{A} s.t. for any $T\le {\mathcal T} $ the statement of the theorem holds.}}, there exists a constant $C_T := C_T(\A{A})>0$ 
		satisfying $C_T \to 0$ when $T\to 0$ such that for every $m \geq 0$, the solution $\U^m$ satisfies\textcolor{black}{,} \textcolor{black}{with the notation of \eqref{strong_NORMES}}:
		\begin{equation}\label{strong_GEN_ESTI_PDE}
		\|{\mathbf D} \U^m\|_{\infty} + \|{\mathbf D} (D_1\U^m )\|_{\infty} \leq C_T.
		\end{equation}
	\end{THM}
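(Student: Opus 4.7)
The plan is to establish \eqref{strong_GEN_ESTI_PDE} through a forward parametrix (Duhamel) expansion combined with anisotropic H\"older/Besov duality. For each fixed $i \in \leftB 1,n \rightB$, I view the $i$-th line of \eqref{strong_GEN_PDE_MOLL} as a Cauchy problem with source $\gF_i^m$ and introduce the family of degenerate Ornstein--Uhlenbeck proxy operators $\tilde L_s^{\tau, \bxi}$ obtained from $L_s^m$ by freezing $a^m$ and linearizing $\gF^m$ along the deterministic integral curve starting from $\bxi$ at time $\tau$. The associated frozen Cauchy problem is solved by convolution with an explicit Gaussian transition density $\tilde p^{\tau, \bxi}$ exhibiting the weak H\"ormander anisotropy: its $j$-th $\R^d$-block concentrates at scale $(s-t)^{j-1/2}$. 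Choosing the flow-based freezing $(\tau, \bxi) = (t,\x)$, Duhamel's formula gives
\begin{equation*}
\u_i^m(t,\x) = \int_t^T \!\!\int_{\R^{nd}} \tilde p^{t,\x}(t,s,\x,\y)\, \gF_i^m(s,\y)\, d\y\, ds + \int_t^T \!\!\int_{\R^{nd}} \tilde p^{t,\x}(t,s,\x,\y)\, \big(L_s^m - \tilde L_s^{t,\x}\big)\, \u_i^m(s,\y)\, d\y\, ds,
\end{equation*}
which is the starting point for all the pointwise gradient estimates.

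Applying $D_1$ or $D_1 D_k$ (with $k \in \leftB 1,n \rightB$) to this representation, I transfer the derivatives onto the kernel. The anisotropy yields $|D_1 \tilde p^{t,\x}| \le C (s-t)^{-1/2} \bar p$ and $|D_1 D_k \tilde p^{t,\x}| \le C (s-t)^{-k} \bar p$, with $\bar p$ a Gaussian-like probability density sharing the same anisotropic concentration. On the source term I exploit the $\beta_j$-H\"older regularity of $\gF_i^m$ in its $j$-th variable by subtracting the cancellation value of $\gF_i^m$ evaluated at the Gaussian mean in direction $j$, which is annihilated by $D_1$ after integration by parts; the remaining H\"older increment, averaged against $\bar p$, contributes a factor $(s-t)^{(j-1/2)\beta_j}$. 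The net time weight is $(s-t)^{-k + (j-1/2)\beta_j}$ for $D_1 D_k$; its worst case $k = j$ is integrable near $s = t$ \emph{precisely} when $\beta_j > (2j-2)/(2j-1)$, which is \A{T${}_\beta$}, and yields a strictly positive residual power of $T$. As announced in the abstract, this cancellation-plus-scaling step can equivalently be phrased as a duality pairing of $\gF_i^m$, seen as an element of an anisotropic H\"older--Besov space, with the distributional kernel $D_1 D_k \tilde p^{t,\x}$.

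The perturbative term $(L_s^m - \tilde L_s^{t,\x}) \u_i^m$ is handled in the same spirit: its drift part is the H\"older difference between $\gF^m$ and its linearization along the deterministic flow, controlled in the $j$-th direction by \A{T${}_\beta$} and, in the direction $j = i-1$, by the $(1+\eta)$-H\"older type regularity coming from \A{H${}_\eta$}; its diffusion part is absorbed via the Lipschitz regularity of $\sigma$ in \A{ML} against the $D_1^2 \u_i^m$ bound under construction. The $\eta$-margin in \A{H${}_\eta$} produces an extra factor $T^\eta$ throughout. Collecting everything, the Duhamel identity defines, on the Banach space of functions with finite norm $\|{\mathbf D}\,\cdot\,\|_\infty + \|{\mathbf D}(D_1\,\cdot\,)\|_\infty$, a map of operator norm $O(T^\varepsilon)$ for some $\varepsilon > 0$, uniformly in the mollification parameter $m$. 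A direct fixed-point / Gronwall closure then yields \eqref{strong_GEN_ESTI_PDE} with $C_T \to 0$ as $T \to 0$.

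The main obstacle I anticipate is the rigorous control of the second-order derivative $D_1 D_j \tilde p^{t,\x}$: its $(s-t)^{-j}$ singularity sits at the very edge of non-integrability and is rescued only by the H\"older cancellation of the source, which is what forces the thresholds $\beta_j > (2j-2)/(2j-1)$ and makes the whole method sharp in the sense explained in the introduction. Two further technical points demand care: the freezing point depends on $(t,\x)$, so $\tilde L_s^{t,\x}$ is not a genuine semigroup and one must keep separate the ``freezing'' used to represent the solution from the ``propagation'' of the kernel $\tilde p^{t,\x}$; and $D_1^2 \u_i^m$ appears on both sides of the Duhamel identity, so the argument must close as a genuine fixed point rather than a one-shot estimate.
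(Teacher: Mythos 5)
Your proposal captures the correct skeleton — forward parametrix frozen along the deterministic flow, Duhamel representation, cancellation (centering) of the source against the kernel, the scaling count $(s-t)^{-l+\beta_j(j-1/2)}$ that produces the thresholds of \A{T${}_\beta$}, and a smallness-in-$T$ closure — and this is indeed the route the paper takes for the source term and for the non-degenerate part of the perturbation. But there is a genuine gap in your treatment of the degenerate part of the remainder $(L^m-\tilde L^{t,\x})\u_i^m$. For $i\in\leftB 2,l\rightB$ the centered drift increment $\gF_i(s,\cdot_{1:l-1},\btheta^{l:n}_{s,t})-\gF_i(s,\btheta_{s,t})-D_{i-1}\gF_i(s,\btheta_{s,t})(\cdot-\btheta_{s,t})_{i-1}$, evaluated where the kernel concentrates, only supplies a power $(s-t)^{(1+\eta)(i-3/2)}$ — at best $(s-t)^{(1+\eta)/2}$ — against the $(s-t)^{-l}$ singularity of $D_1D_l\tilde p$, which is \emph{not} integrable for $l\ge 2$ since $\eta$ is small. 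Your claim that the $\eta$-margin ``produces an extra factor $T^\eta$ throughout'' does not survive this computation; the paper exhibits it explicitly before introducing the fix.

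The missing idea is a \emph{second} cancellation on the solution itself: one must Taylor-expand $D_i\u^m(s,\cdot)-D_i\u^m(s,\cdot_{1:l-1},\btheta^{l:n}_{s,t})$ to harvest an extra factor $(\cdot-\btheta_{s,t})_k$ with $k\ge l$, at the price of making the cross derivatives $D_iD_k\u^m$ appear. These cannot be bounded in supremum norm (your own scaling count shows the thresholds are already saturated), so the paper pairs them in duality: $D_iD_k\u^m$ is measured in the negative-order Besov space $B_{\infty,\infty}^{\alpha_i^k-1}$ in its $i$-th variable, against the remaining kernel-times-coefficient factor measured in $B_{1,1}^{1-\alpha_i^k}$ (Lemmas \ref{strong_LEMME_CTR_BESOV} and \ref{strong_LEMME_CTR_HOLDER}). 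The a priori control of $\|D_iD_k\u^m\|_{B_{\infty,\infty}^{\alpha_i^k-1}}$ in terms of $\|{\mathbf D}\u^m\|_\infty+\|{\mathbf D}D_1\u^m\|_\infty$ is itself a substantial circular argument requiring the thermic characterization of Besov norms, a diagonal/off-diagonal splitting, a change of freezing point between the two regimes, and the resulting boundary term; none of this is present in your sketch, and without it the fixed point you invoke does not close on the space you propose.
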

	\begin{REM}[On well-posedness of the initial PDE \eqref{strong_GEN_PDE}]
		\textcolor{black}{We also point out that, from the uniformity in $m$  in the previous theorem, we could also derive some regularizing properties for the system \eqref{strong_GEN_PDE}  through appropriate compactness arguments. Indeed, as it will appear in the proof of this result, we are in fact able to control uniformly the H\"older moduli of the gradients and of the second order derivatives w.r.t. the non-degenerate direction (see Lemmas \ref{strong_LEMME_CTR_HOLDER} and \textcolor{black}{\ref{Lemme_Taylor_reverse}}). These controls precisely allow to derive, through the Arzel\`a-Ascoli theorem, a well-posedness result for equation \eqref{strong_GEN_PDE} under the sole assumption \A{A} as well as the above gradient estimates.}
	\end{REM}
	
	\textcolor{black}{Theorem \ref{strong_MAIN_RESULT_PDE}, which is a consequence of Theorem \ref{strong_THM_DER_PDE} below, is the key to prove our main result for strong uniqueness.}
	\begin{proof}[\textcolor{black}{Proof of Theorem \ref{strong_MAIN_RESULT}}]
		Let now $\X$ and $\X'$ be t\textcolor{black}{w}o solutions of \eqref{strong_SYST}. Using \textcolor{black}{the} representation \eqref{strong_ZVONK_TR} to express the difference of the \textit{bad} drifts in terms of the function $\U^m$ and its derivative up to a remainder, we write:
		\begin{eqnarray*}
			&&\X_t - \X_t' 
			\nonumber \\
			&=&  \U^m(t,\X_t) -\U^m(t,\X_t')-\int_0^t \left[ {\mathbf D\U^m(s,\X_s)}B\sigma(s,\X_s)-{\mathbf D\U^m(s,\X_s')}B\sigma(s,\X_s') \right]dW_s   \\
			&& +\Big[{\mathcal R}_t^m(\X)-{\mathcal R}_t^m(\X')\Big]+\int_0^t B\left[\sigma(s,\X_s)-\sigma(s,\X_s') \right] dW_s.
		\end{eqnarray*}

		Take then \textcolor{black}{the} supremum \textcolor{black}{in time} of the square of the difference. Passing to the expectation,  \textcolor{black}{ a convexity inequality} then leads to the following estimate:
		\begin{eqnarray*}
			&&\E \left[\sup_{t \leq T}|\X_t - \X_t' |^2\right] 
			\nonumber \\
			&\leq& \textcolor{black}{5}\Big( \E\left[ \sup_{t \leq T} |\U^m(t,\X_t) -\U^m(t,\X_t')|^2   \right] 
			+ \E \left[\int_0^Td s \left|\left[\mathbf{D}_{}\U^mB\right](s,\X_s)-\left[{\mathbf D}\U^mB\right](s,\X'_s)\right|^2 \left\|\sigma\right\|_{\infty}^2  \right]\\
			&&  +  \E \left[\int_0^Td s (\left\| {\mathbf D}\U^mB \right\|_{\infty} + \mathbf{1}) \left|\left[\sigma(s,\X_s) - \sigma(s,\X'_s)\right]\right|^2  \right] + 2 T\|\mathcal{R}_\cdot^m(\cdot)\|_{\infty}^{\textcolor{black}{2}}\Big).
		\end{eqnarray*}
		\textcolor{black}{Note that thanks to the particular structure of $B$ one has $ {\mathbf D}\U^mB = (\textcolor{black}{D_{1}}\U^m,{\mathbf 0}_{d,d},\ldots, {\mathbf 0}_{d,d})^*$. Hence, }thanks to Theorem \ref{strong_MAIN_RESULT_PDE} and \textcolor{black}{Gr\"onwall's lemma}, 
		there exists  $\bar C_T:=\bar C_T(C_T,\sigma,n,d,T)$ satisfying $\bar C_T\to 0$ when $T$ goes to $0$ such that 
		\begin{eqnarray}
		\E \left[\sup_{t \leq T}|\X_t - \X_t' |^2\right] \leq \bar C_T \E\left[ \sup_{t \leq T} |\X_t -\X_t'|^2   \right]  +\textcolor{black}{10} T \|\mathcal{R}_\cdot^m(\cdot)\|_{\infty}^{\textcolor{black}{2}}.
		\end{eqnarray}
		Letting $m\to + \infty$, \textcolor{black}{since $\|a-a^m\|_\infty $ and $\|\gF-\gF^m\|_\infty $ tend to $0$, it readily follows from \eqref{EXP_REMAINDER} and the bound \eqref{strong_GEN_ESTI_PDE} in Theorem \ref{strong_MAIN_RESULT_PDE} that $ \|\mathcal{R}_\cdot^m(\cdot)\|_{\infty} \underset{m}{\rightarrow}0$. Hence}, choosing $T$ small enough so that $\bar C_T \leq 1/2$, we deduce that strong uniqueness holds  on a sufficiently small time interval. Iterating this procedure in time gives the result on $\R^+$ \textcolor{black}{from usual Markov arguments involving the regular versions of conditional expectations, see e.g. \cite{stro:vara:79}}.
	\end{proof}

	\subsection{Regularization properties of the underlying PDE \eqref{strong_GEN_PDE}: strategy of \textcolor{black}{the} proof and primer}\label{strong_SUB_STRAT_PDE}
	As mentioned above, the regularization properties of the PDE \eqref{strong_GEN_PDE_MOLL} given by estimate \eqref{strong_GEN_ESTI_PDE} in Theorem \ref{strong_MAIN_RESULT_PDE} are the core of this work. \textcolor{black}{Smoothing properties of linear partial differential operators of second order with non-degenerate diffusion matrix have been widely studied in the literature for bounded H\"older coefficients. In that setting, the estimates of Theorem  \ref{strong_MAIN_RESULT_PDE} are well known, see \emph{e.g.} the book of Friedman \cite{frie:64} or Bass \cite{bass:97}. For unbounded drift and source terms, those estimates have been recently established in \cite{kryl:prio:10}.} In our case, the story is rather different since the diffusion matrix $Ba$ of the system is totally degenerate in the directions $2$ to $n$. However, as we already emphasized, the non-degeneracy condition assumed on the family of \textcolor{black}{Jacobian matrices} $(D_{\x_{i-1}}\gF_i)_{i \in \leftB 2,n\rightB}$ allows the noise to propagate in the 
\textcolor{black}{indicated}	directions thanks to the drift. It can be viewed as a weak type \textcolor{black}{of} H\"ormander condition. Under such a condition, the operator $L^m$ with mollified coefficients is said to be hypoelliptic\footnote{Pay attention that this is not the case for $L$ whose coefficients do not have the required smoothness in \A{T${}_\beta $} to compute the corresponding Lie brackets.} and it is well known that hypoelliptic differential operators also have some smoothing properties (see the seminal work of H\"ormander \cite{hormander_hypoelliptic_1967} or, for a probabilistic viewpoint, the \textcolor{black}{monograph} of Stroock \cite{stroock_partial_2008}). The tricky point in our \emph{weak H\"ormander setting} is that the pointwise gradient estimates \eqref{strong_GEN_ESTI_PDE} of Theorem \ref{strong_MAIN_RESULT_PDE} had, to the best of our knowledge, not \textcolor{black}{been established yet}, \textcolor{black}{even though}  such a setting has already been considered by several authors (see \emph{e.g.} Delarue and Menozzi \cite{dela:meno:10} for density estimates, Menozzi \cite{meno:10}, \cite{meno:17} and Priola \cite{prio:15} for the martingale problem and also \textcolor{black}{Bramanti \textit{et al.}} \cite{bram:cupi:lanc:prio:09}, \cite{bram:cupi:lanc:prio:13} for related $L^p$ estimates and Bramanti and Zhu \cite{bram:zhu:11} for the VMO framework). We can mention the work of Lorenzi \cite{lore:05} which gives gradient estimates in the degenerate kinetic like case ($n=2$ in our framework) when the diffusion coefficient is sufficiently smooth and the drift linear. We  point out that our main estimate in Theorem \ref{strong_MAIN_RESULT_PDE} needs precisely to be uniform w.r.t. the mollification parameter and therefore does not depend on the smoothness of $\gF^m,a^m$, but only on known parameters appearing in \A{A}. Again, this is what would also allow to transfer those bounds to equation \eqref{strong_GEN_PDE} from a suitable compactness argument, extending well known results for non-degenerate diffusions with H\"older coefficients to the current degenerate setting. \\
	
	To prove this result our main strategy rests upon the \emph{parametrix approach} see \emph{e.g.} the work of McKean and Singer \cite{mcke:sing:67} or the book of Friedman \cite{frie:64}. Roughly speaking, \textcolor{black}{it is a perturbative argument consisting in expanding the operator $L^m$  around a \emph{good proxy}}, usually denoted by $\tilde L^m$ (we keep here the super-scripts in $m$ to emphasize that the perturbative technique we perform will concern the system \eqref{strong_GEN_PDE_MOLL} with mollified coefficients).  \textcolor{black}{In our setting, the term \emph{good proxy} relates} to the fact that the operator $\tilde L^m$ is the generator of the ``closest'' Gaussian approximation  $\tilde \X^m$ of $\X^m$ which has generator $L^m$. In our case, such a process is well known and is the linearized (with respect to the source of noise) version of \eqref{strong_SYST} whose coefficients are frozen along the curve $(\btheta_{s,t}^m)_{s\in [t,T]}$ that solves the deterministic counterpart of \eqref{strong_SYST} with mollified coefficients (\emph{i.e.} with $\sigma^m \equiv \mathbf{0}_{d,d}$) namely, $\dot \btheta_{s,t}^m = {\mathbf F}^m(s,\btheta_{s,t}^m)$. This \textcolor{black}{\textit{proxy}} process \textcolor{black}{$\tilde \X^m $} may be seen as a (non-linear) generalization of the so-called Kolmogorov example \cite{kolm:33} and we refer the reader to the work of Delarue and Menozzi \cite{dela:meno:10} and Menozzi \cite{meno:10} for more explanations. Having this \textit{proxy} at hand, the parametrix procedure consists in deriving the desired estimates for the \textit{proxy} and control the expansion error.

	In \cite{chau:meno:17}, Chaudru de Raynal and Menozzi successfully used this approach in its \emph{backward} form to prove weak well-posedness of \eqref{strong_SYST} under less restrictive assumptions (the critical thresholds for the H\"older exponents being smaller as indicated above). In that case, the curve along which the system is frozen for the \textit{proxy} is the solution of the \emph{backward} deterministic counterpart of \eqref{strong_SYST}). This \emph{backward} approach is very suitable when investigating the martingale problem associated with our main system since it allows to control subtly the expansion error associating precisely the coefficients $\gF_i $ with their corresponding differentiation operator $D_i $ and does not require any mollification of the coefficients. Unfortunately, when trying to obtain estimates on the derivatives of the solutions of the PDE, the \emph{backward} approach is not \textcolor{black}{convenient} since the corresponding \textit{proxy} does not provide an exact density and this fact does not allow to benefit from cancellation techniques which are very helpful in this context (see paragraph below).

	Hence, our parametrix approach will be of \emph{forward}\footnote{Meaning that the freezing curve $\btheta$ solves the corresponding ODE associated  with \eqref{strong_SYST}  in a forward form.} form as done in the work of Chaudru de Raynal \cite{chau:17}. This is, in fact, a non-trivial generalization of the approach developed in the aforementioned paper where the strong well-posedness of \eqref{strong_SYST} is obtained when $n=2$. Indeed, the strategy used in \cite{chau:17} is not adapted to this general case because of some subtle phenomena appearing only when $n\geq 3$. In particular, the singularities appearing when considering the remainder term of the parametrix were in \cite{chau:17} equilibrated \textit{at hand} through elementary cancellation arguments, whereas the current approach takes advantage of the full-force duality results between Besov spaces (see Sections \ref{strong_HINT_BESOV_1} and \ref{strong_SECTION_CTR_SENSI} below). This forward perturbative approach has also been successfully used in \cite{chau:hono:meno:18} to establish some weaker regularization properties of the PDE \eqref{strong_GEN_PDE} through appropriate Schauder estimates.\\
	
	\subsubsection{Regularizing properties of the degenerate Ornstein-Uhlenbeck \textit{proxy}}\label{OU}
	When exploiting such a \emph{forward} parametrix approach, a good primer to understand what could be, at \textcolor{black}{best,}
	\textcolor{black}{expected}, consists in investigating the regularization properties of the \textit{proxy} operator $\tilde L$. To be as succinct as possible, let us consider the case where $(\tilde L_t)_{t\ge 0}$ is the generator of a degenerate Ornstein-Uhlenbeck process $(\tilde \X_t)_{t\ge 0} $ with dynamics: 
	\begin{equation}
	\label{strong_OU_DYN}
	\textcolor{black}{d\tilde \X_t= \mathbf A_t \tilde \X_t dt + Bd W_t},
	\end{equation}
	where ${\mathbf A}_t$ is the $nd \times nd$ matrix with sub-diagonal blocks $({\mathbf a}_{i,i-1}(t))_{i \in \leftB2,n\rightB}$ of size $d \times d$ and ${\mathbf 0}_{d ,d}$ elsewhere. \textcolor{black}{In particular, 
		\begin{equation}\label{strong_def_A}
		{\mathbf A}_t=\left (\begin{array}{ccccc} \0_{d,d} & \cdots & \cdots &\cdots  & \0_{d,d}\\
		{\mathbf a}_{2,1} (t)& \0_{d,d} &\cdots &\cdots &\0_{d,d}\\
		\0_{d,d} & {\mathbf a}_{3,2}(t)&\0_{d,d} & \cdots &\0_{d,d}\\
		\vdots &  \0_{d,d}                     & \ddots & \textcolor{black}{\ddots}&\textcolor{black}{\vdots}\\
		\0_{d,d} &\cdots &  \0_{d,d}   &  {\mathbf a}_{n,n-1}(t)& \0_{d,d}
		\end{array}\right) .
		\end{equation}} The entries $( {\mathbf a}_{i,i-1}(t))_{i\in \leftB 2,n\rightB} $ are \textcolor{black}{uniformly in time} non-degenerate elements of $\R^d\otimes \R^d $ (which expresses the weak H\"ormander condition). The corresponding generator  $\tilde L_t $ writes for any $\varphi\in C_0^2(\R^{nd},\R) $:
	$$\tilde L_t\varphi(\x)=\langle {\mathbf A}_t\x, {\mathbf D} \varphi(\x)\rangle +\frac 12 \Delta_{\x_1} \varphi(\x).$$

	In such a case,  
	each component $\tilde \u_i$, $i\in \leftB1,n\rightB$ of the solution $\tilde \U$ of the corresponding system of PDEs
	\textcolor{black}{\begin{equation}
		\label{strong_GEN_PDE_A}
		\begin{cases}
		(\partial_t + \tilde L_t)\tilde \U(t,\x)= \gF (t,\x), \ (t,\x)\in [0,T)\times \R^{nd},\\
		\tilde \U(T,\x)=\0_{{nd}},
		\end{cases}
		\end{equation}}
	where $\gF $ is a non-linear (non-mollified) source satisfying \A{T${}_\beta $}, writes
	\begin{equation}\label{strong_OU_SOURCE}
	\tilde \u_i(t,\x) = -\int_t^T \textcolor{black}{ds} \int_{\R^{nd}} d\y \gF_i(s,\y) \textcolor{black}{\tilde  p(t,s,\x,\y)}.
	\end{equation}
	Above, $\textcolor{black}{\tilde p}$  
	stands for the transition density \textcolor{black}{of the Gaussian process $(\textcolor{black}{\tilde \X_v})_{v\ge 0} $ with dynamics \textcolor{black}{\eqref{strong_OU_DYN}.}}
	Using the resolvent associated with $(\mathbf A_v)_{v\in [t,s]} $, i.e. $\partial_s \tilde \gR_{s,t}=\mathbf A_s \tilde \gR_{s,t}, \ \tilde \gR_{t,t}={\mathbf I}_{nd,nd}  $, the above equation can be explicitly integrated. Precisely, for a fixed starting point $\x $ at time $t$:
	\begin{equation}\label{strong_INTEGRATED_OU}
	\tilde \X_v=\tilde \gR_{v,t}\x+\int_t^v \tilde \gR_{v,u}B dW_u.
	\end{equation}
	Hence, \textcolor{black}{at time $s>t $} the \textcolor{black}{covariance matrix of the random variable $\tilde \X_s$ writes  as}  $\tilde \K_{s,t}:=\int_t^s du \tilde \gR_{s,u}BB^* \tilde \gR_{s,u}^*  $.
	From \eqref{strong_INTEGRATED_OU}, the density at time $v=s$ and at the spatial point $\y$ therefore writes:
	\begin{equation}\label{strong_DENS_OU}
	\tilde p(t,s,\x,\y)=\frac{1}{(2\pi)^{\frac {nd}2}{\rm det}(\textcolor{black}{\tilde \K_{s,t}})^{\frac 12}}\exp\left( -\frac 12\Big\langle (\textcolor{black}{\tilde \K_{s,t}})^{-1}\big(\tilde \gR_{s,t}\x-\y\big), \tilde \gR_{s,t}\x-\y\Big\rangle\right).
	\end{equation}
	Note that the resolvent also appears in \eqref{strong_INTEGRATED_OU} and in the density. Since the drift in \eqref{strong_OU_DYN} \textcolor{black}{is} unbounded, the term $\tilde \gR_{s,t}\x $  actually corresponds to the transport of the initial condition $\x $ through the associated deterministic and linear differential system. It is well known, see e.g. \cite{dela:meno:10} and Section \ref{strong_SEC_GAUSS_DENS} below, that the covariance $\tilde \K_{s,t}$ enjoys what we will call a \textit{good scaling property}. Precisely, for a given $T>0$ there exists $C:=C\big((\mathbf A_v)_{v\in [0,T]},T\big)\ge 1$ s.t. for any $\bxi \in \R^{nd} $,
	\begin{equation}
	\label{strong_GSP_OU}
	C^{-1} (s-t)^{-1}|\T_{s-t}\bxi|^2\le \langle \tilde \K_{s,t}\bxi,\bxi\rangle \le C (s-t)^{-1}|\T_{s-t}\bxi|^2,
	\end{equation}
	where for any $u>0$, we denote by $\T_u $ the \emph{intrinsic} scale matrix:
	\begin{equation}
	\label{strong_DEF_T_ALPHA}
	\T_u=\left( \begin{array}{cccc}
	u\mathbf I_{d,d}& \0_{d, d}& \cdots& \0_{d, d}\\
	\0_{d, d}   &u^2 \mathbf I_{d, d}&\0_{d,d}& \vdots\\
	\vdots & \ddots&\ddots & \vdots\\
	\0_{d, d}& \cdots & \0_{d,d}& u^{n}\mathbf I_{d , d}
	\end{array}\right).
	\end{equation}
	Importantly, the \textit{good scaling property} stated in \eqref{strong_GSP_OU} indicates that, for a given initial time $t$ and for each $i\in \leftB 1,n\rightB$, each  $\R^{d} $-valued component $\tilde \X_s^i $ has typical fluctuations of order $(s-t)^{i- 1/2} $ which \textcolor{black}{correspond} to those of the $(i-1)^{{\rm th}} $ iterated integrals of the Brownian motion. Accordingly, we derive that the frozen density $\tilde p $ also satisfies the bound

	\begin{equation*}
	\tilde p(t,s,\x,\y)\le \frac{C}{(s-t)^{\frac{n^2d} 2}}\exp\left(-C^{-1}(s-t)|\T_{s-t}^{-1}(\tilde \gR_{s,t}\x-\y)|^2 \right) =:\bar C \hat p_{C^{-1}}(t,s,\x,\y),
	\end{equation*}
	\textcolor{black}{for some $\bar C:= \bar C(C)$ such that $\int_{\R^{nd}}d\y \hat p_{C^{-1}}(t,s,\x,\y)=1$}. Similarly, the derivatives of $\tilde p $ will be bounded by a density of the form $\hat p_{C^{-1}}$ up to an additional multiplicative \textcolor{black}{contribution} reflecting the time-singularities associated with the differentiation index. 
	Precisely, there exists $\bar C$ s.t. for any  $l\in \leftB 1,n\rightB$,  $r\in \{0,1\}$:
	\begin{eqnarray}
	|D_{\x_l} D_{\x_1}^r\tilde p(t,s,\x,\y)|&\le& \frac{\textcolor{black}{C}}{(s-t)^{\frac{n^2d} 2+(l-\frac 12)+\frac r2}}\exp\left(-C^{-1}(s-t)|\T_{s-t}^{-1}(\tilde \gR_{s,t}\x-\y)|^2 \right) \notag\\
	&\le& \frac{\bar C}{(s-t)^{(l-\frac 12)+\frac r2}} \hat p_{C^{-1}}(t,s,\x,\y),\label{strong_CTR_DER_OU}
	\end{eqnarray}
	\textcolor{black}{up to a modification of $\bar C$ for the last inequality.} We refer to the proof of Proposition \ref{strong_THE_PROP} for a complete version of this statement. 

	To prove estimate \eqref{strong_GEN_ESTI_PDE} of Theorem \ref{strong_MAIN_RESULT_PDE} for the current system \eqref{strong_GEN_PDE_A}, it follows from the specific structure of the matrix $B$ that we have to estimate for any $l \in \leftB 1,n\rightB$ the quantities $D_{\x_l} D_{\x_1}^r \tilde \u_i(t,\x)$, $r\in \{0,1\}$. From \eqref{strong_CTR_DER_OU}, we thus have 
	\begin{equation}\label{strong_FORME_U_SIMPLE}
	|D_{\x_l} D_{\x_1}^r \tilde \u_i(t,\x)|  \leq  \bar C \int_t^T ds \int_{\R^{nd}} d\y |\gF_i(s,\y)| (s-t)^{-(l-\textcolor{black}{\frac12})-\textcolor{black}{\frac r2}}\hat p_{C^{-1}}(t,s,\x,\y).
	\end{equation}
	We now face two problems: first the $\gF_i$ are unbounded, second the above time singularity is, as is, not integrable. Let us consider the worst case \emph{i.e.} when $r=1$. To smoothen the time singularity, the main idea consists in using the regularity of the source term $\gF_i$ exploiting precisely the fact that, once integrated through the variables $\y_l$ to $\y_n$, the transition density $\tilde p $ does not depend on the variable $\x_l$ anymore. \textcolor{black}{This is due to the structure of $\mathbf A$ in \eqref{strong_def_A}, which in particular yields that the resolvent $(\tilde \gR_{s,t})_{0\le t\le s\le T} $ is lower triangular}. 
	Precisely, denoting for conciseness by $\btheta_{s,t}(\x)=\tilde \gR_{s,t} \x $ (which is coherent with the notation below when handling non-linear flows), we write:
	$$\int_{\R^{(l-1)d}} \!\!d\y_{1:l-1} \gF_i(t,\y_{1},\textcolor{black}{\hdots},\y_{l-1},\btheta_{s,t}^{l}(\x),\textcolor{black}{\hdots}, \btheta_{s,t}^{n}(\x)) D_{\x_1}D_{\x_l} \int_{\R^{(n-(l-1))d}}\!\!d\y_{l:n} \tilde p(t,s,\x,\y)=0\textcolor{black}{.}$$
	\textcolor{black}{This} is what \textcolor{black}{will be called} a \emph{cancellation (or centering) argument} in the following.
	When using this property, we obtain that 
	\begin{eqnarray*} 
		\!\!&\!\!\!\!&\!\!|D_{\x_1}D_{\x_l}\tilde \u_i(t,\x)| \\
		\!\!&\!\!= \!\!&\!\!  \bigg|\int_t^T ds \int_{\R^{nd}} d\y \left(\gF_i(s,\y)-\gF_i(t,\y_{1},\textcolor{black}{\hdots},\y_{l-1},\btheta_{s,t}^{l}(\x),\textcolor{black}{\hdots}, \btheta_{s,t}^{n}(\x))\right) D_{\x_1}D_{\x_l} \tilde p(t,s,\x,\y)
		\bigg|.
	\end{eqnarray*} 
	We thus obtain from \eqref{strong_CTR_DER_OU}:
	\begin{eqnarray*} 
		&&|D_{\x_1}D_{\x_l}\tilde \u_i(t,\x)| \\
		&\leq & \bar C \int_t^T ds \int_{\R^{nd}} d\y \left|\gF_i(s,\y)-\gF_i(t,\y_{1},\textcolor{black}{\hdots},\y_{l-1},\btheta_{s,t}^{l}(\x),\textcolor{black}{\hdots}, \btheta_{s,t}^{n}(\x))\right| 
		(s-t)^{-\textcolor{black}{(l-\frac 12)}-\textcolor{black}{\frac12}}\hat p_{C^{-1}}(t,s,\x,\y).
	\end{eqnarray*}
	Then, using the regularity assumed of $\gF_i$, which satisfies \A{T$_\beta$}, we get that for some \textcolor{black}{constants $\bar C, C$} (which possibly change from line to line)
	\begin{equation} 
	\label{ABSORB}\tag{Abs}
	\begin{split}
	&|D_{\x_1}D_{\x_l}\tilde \u_i(t,\x)|
	\\
	\leq & \textcolor{black}{\bar  C}\int_t^Tds  \int_{\R^{nd}} d\y \sum_{j=l}^n |\y_j-\btheta^j_{s,t}(\x)|^{\beta_j} (s-t)^{-(l-\textcolor{black}{\frac 12})-\textcolor{black}{\frac12}}\hat p_{C^{-1}}(t,s,\x,\y)\\
	\leq &\textcolor{black}{\bar C}\int_t^T ds \int_{\R^{nd}} d\y \sum_{j=l}^n \left|\frac{\y_j-\btheta_{s,t}^j(\x)}{(s-t)^{\textcolor{black}{j-\frac12}}}\right|^{\beta_j} (s-t)^{\beta_j(j-\textcolor{black}{\frac12})}
	(s-t)^{-(l-\textcolor{black}{\frac 12})-\textcolor{black}{\frac 12}}\hat p_{C^{-1}}(t,s,\x,\y)\\
	\leq & \textcolor{black}{\bar C}\int_t^T ds \int_{\R^{nd}} d\y \sum_{j=l}^n (s-t)^{-(l-\textcolor{black}{\frac 12})-\textcolor{black}{\frac 12}+\beta_j(j-\textcolor{black}{\frac 12})}\hat p_{C^{-1}}(t,s,\x,\y),
	\end{split}
	\end{equation}
	\textcolor{black}{thanks to Remark \ref{rembelow} \textcolor{black}{below} for the derivation of the last inequality. Note that the above term is} integrable only if for each $j \in \leftB l,n\rightB$, $-(l-1/2)-1/2+\beta_j(j-1/2)>-1\iff \beta_j>\big( (2l-2)/(2j-1) \big)$. This condition actually  holds if for any $i \in \leftB 1,n\rightB$,  $\beta_i > \big( (2i-2)/(2i-1) \big)$ which is exactly the infimum assumed in \A{T$_\beta$}. As we can see, there is no hope to obtain better thresholds with such a strategy. This is the reason why we said that these thresholds are \emph{almost sharp} for the approach used here.\\

	\textcolor{black}{
		\begin{REM}[On the absorption of spatial differences by the \textit{proxy} density]\label{rembelow}
			From a technical viewpoint, we also insist here that in the above inequalities in \eqref{ABSORB}, we explicitly made the contribution \textcolor{black}{$(s-t)^{-(j-1/2)} |\y_j-\btheta_{s,t}^j(\x)|$}
appear since it precisely corresponds to the absolute value of the $j^{th} $ entry of the vector $(t-s)^{1/2}\T_{t-s}^{-1}(\btheta_{s,t}(\x)-\y) $ which also precisely appears with the current notation $\btheta_{s,t}(\x)=\tilde \gR_{s,t}\x $, in its square norm, in the off-diagonal exponential bound of \eqref{strong_CTR_DER_OU}. This is why for the last inequality this contribution disappeared, i.e. it can be absorbed by the exponential (again up to a modification of the concentration constant $C$ which is modified at most a finite number of times).
		\end{REM}
	}

	\subsubsection{Back to the perturbative analysis}\label{strong_HINT_BESOV_1}
	Let us now briefly explain what happens when one wants to control the approximation error in the forward parametrix expansion. \textcolor{black}{We now come back} to our general setting and \textcolor{black}{denote by $\tilde p^m$} the transition density of \textcolor{black}{a \textit{suitable} Gaussian \textit{proxy} process $ \tilde \X^m$ with generator $\tilde L^m $}. \textcolor{black}{Observe that equation \eqref{strong_GEN_PDE_MOLL} can be equivalently rewritten as:}
	\textcolor{black}{\begin{equation*}
		\label{strong_GEN_PDE_MOLL_PROXY}
		\begin{cases}
		(\partial_t \u_i^m+\tilde L_t^m \u_i^m)(t,\x)= \gF_i^m (t,\x)-(L_t^m-\tilde L_t^m) \u_i^m(t,\x), \ (t,\x)\in [0,T)\times \R^{nd},\\
		\u_i^m(T,\x)=\0_{d},\qquad i\in \leftB 1,n\rightB.
		\end{cases}
		\end{equation*}
	}
	\textcolor{black}{Since $\u_i^m(t,\x) $ is smooth (see the arguments following \eqref{strong_GEN_PDE_MOLL}), so is the term $(L_t^m-\tilde L_t^m) \u_i^m(t,\x)$ which appears above as part of the source}.

	\textcolor{black}{ Hence, we derive from the above equation and the Feynman-Kac formula  the following representation}. For each regularized component $\u_i^m$, $i\in \leftB 1,n\rightB$ of our solution $\U^m$ \textcolor{black}{of the systems \eqref{strong_GEN_PDE_MOLL}} \textcolor{black}{it holds that} for any $(t,\x) \in [0,T] \times \R^{nd}$ 
	\begin{equation}\label{strong_repPDEintro}
	\u^m_i(t,\x) = \int_t^T ds\int_{\R^{nd}} d\y \Big\{-\gF^m_i(s,\y) + (L_s^m-\tilde L_s^m)\u_i^m(s,\y)\Big\}  \tilde p^m(t,s,\x,\y).
	\end{equation}
	\textcolor{black}{This corresponds to the so-called first order parametrix expansion of the initial equation \eqref{strong_GEN_PDE_MOLL} around the \textit{proxy} generator $\tilde L^m $}.
	Above, the additional term in the right\textcolor{black}{-}hand side \textcolor{black}{is}, in comparison with \eqref{strong_OU_SOURCE}, precisely the approximation error due to the parametrix expansion. It thus appears that the solution has an implicit representation which makes  its derivatives themselves appear. Hence, when differentiating  the above representation to derive the estimate \eqref{strong_GEN_ESTI_PDE} in Theorem \ref{strong_MAIN_RESULT_PDE}, we obtain bounds that depend themselves on the derivatives of the solution. We then have to estimate each derivative appearing in the right\textcolor{black}{-}hand side and use a \emph{circular argument}. Namely, when differentiating $\u^m_i(t,\x) $, we will obtain the required estimate  provided the multiplicative constants associated with the terms $\|{\mathbf D} {\u_i}^m\|_\infty  $ and $\|D_1{\mathbf D}{\u_i}^m\|_\infty  $, that will appear in the corresponding upper-bound for the above right\textcolor{black}{-}hand side, are small enough (see also Section 2 of \cite{chau:17} and Section \ref{strong_SECTION_CTR_SENSI} below for details).

	Moreover, as we have already seen, in order to smoothen the time singularity appearing when we apply a cross differentiation operator in the $l^{{\rm th}}$ and $1^{{\rm st}}$ direction to the term $\int_t^T ds \int_{\R^{nd}} d\y(L_s^m-\tilde L_s^m)\u_i^m(s,\y)  \tilde p^m(t,s,\x,\y) $ corresponding to the approximation error, we will have to \emph{center this term around the derivatives of the solution} itself (in the sense given in the above discussion). This procedure allows us, thanks to Taylor expansions, to weaken the singularities and provides  integrable (in time) terms. The dramatic point is that, when doing so, our bound involves the cross derivatives $D_\ell 
	\textcolor{black}{{D_j}\u_i^m}$, $\ell\textcolor{black}{,j} \in \leftB \textcolor{black}{1},n\rightB$ whose control in supremum norm is, as suggested by the discussion done in the explicit case of a simple degenerate Ornstein-Uhlenbeck process, definitely out of reach \textcolor{black}{as soon as $\ell>1$}. \textcolor{black}{In fact, as suggested by the results in \cite{chau:17}, the only thing we could hope is that the gradient in the degenerate directions viewed as a function of the degenerate variables, i.e. ${\mathbf D}_{2:n}\u_i^m(t,\x_1,\cdot):=\big(D_2 \u_i^m(t,\x_1,\cdot), \textcolor{black}{\hdots}, D_n\u_i^m(t,\x_1,\cdot)\big)^{\textcolor{black}{*}}$ for any $(t,\x_1)\in [0,T]\times \R^d $, belongs to an appropriate \textcolor{black}{anisotropic} H\"older space. Note importantly that such spaces can as well be viewed as particular cases of anisotropic Besov spaces with corresponding positive regularity indexes. The main idea is thus to use an integration by parts argument in order to rebalance one differentiation operator from the solution to the remaining terms coming from the differentiation of \eqref{strong_repPDEintro}, which in particular involve the coefficients of the operator $L^m-\tilde L^m$ and contain the time singularities coming from the derivatives of the frozen Gaussian kernel $\tilde p^m $. As the coefficients of the operator $L^m-\tilde L^m$ are assumed to be only H\"older continuous, their \textcolor{black}{generalized} derivative should belong to some \textcolor{black}{anisotropic} Besov space\textcolor{black}{s} of negative regularity index, strictly bigger than $-1$. To tackle this problem, our main idea, in order to balance the lack of differentiation property of the remaining terms, consists in putting precisely in duality the gradient ${\mathbf D}_{2:n}\u_i^m(t,\x_1,\cdot)$, belonging to an \textcolor{black}{anisotropic} \textcolor{black}{inhomogeneous} Besov space with positive regularity exponent 
		and the remaining terms, belonging to an \textcolor{black}{anisotropic} \textcolor{black}{inhomogeneous} Besov space with negative regularity exponent.}
	We refer to the proof of the main Theorem \ref{strong_MAIN_RESULT_PDE} in Section \ref{strong_SEC_PROOF_PDE} for details and to Proposition 3.6 in the book of Lemari\'e-Rieusset \cite{lemar:02} for duality results on Besov spaces.
	
	We are thus led to control on the one hand the Besov norm with \textcolor{black}{positive} exponent \textcolor{black}{(or equivalently the H\"older norm)} of the derivatives of the solution, see Lemma \ref{strong_LEMME_CTR_HOLDER}, and on the other hand the Besov norm with \textcolor{black}{negative} exponent of the remaining terms in \eqref{strong_repPDEintro} (involving the coefficients of the operator $L^m-\tilde L^m$), see Lemma \ref{strong_LEMME_CTR_BESOV}. The first control 
\textcolor{black}{(H\"older norm)} is crucial and appears to be quite delicate. Indeed, due to the implicit representation \eqref{strong_repPDEintro}, this estimate also involves \textcolor{black}{H\"older} norms of the full gradient $\mathbf D u^m_i$. This again reflects the \emph{circular} nature of the arguments needed to derive the result.\\


	Let us close this discussion coming back to Remark \ref{strong_REM_KRYLOV}. As we emphasized, in comparison with the non-degenerate result, Theorem \ref{strong_MAIN_RESULT} should hold assuming that the drift $\gF_1$ belongs to a suitable  $L_q-L_p$ space w.r.t. time and the non-degenerate variable $\x_1$. We are convinced that this is the case but we deliberately decide  not to tackle this setting in order to keep this work shorter and more coherent. Indeed, in this case, the difficulty comes from the estimate on the second order derivative in the non-degenerate direction of the first component of the solution $\U^m$, namely $D_{\x_1}D_{\x_1}\u^m_1$ (which is a part of the main estimate \eqref{strong_GEN_ESTI_PDE} in Theorem \ref{strong_MAIN_RESULT_PDE}). The point is to establish for this quantity an $L_q-L_p$ control.  This cannot be derived from the previously described approach and requires harmonic analysis techniques (see also \textcolor{black}{\cite{kry:01}}). The main problem to establish the estimate is mainly due to  the source term, which is actually $\gF_1$. To prove it, the main idea consists in exploiting the results of Menozzi \cite{meno:17} (where such an estimate is proved under the assumption that the drift is Lipschitz) through the tools developed in \cite{chau:meno:17} (\emph{backward} parametrix approach for drift ${\mathbf F}$ whose first component may be in $L_q-L_p$ and the other ones in H\"older spaces). Then, the Zvonkin transform should also be tuned a little bit following the strategy developed by Veretennikov (see e.g.\cite{veretennikov_strong_1980} and \cite{ fedr:flan:prio:vove:17}). Such a program would surely \textcolor{black}{toughen} our paper without adding any surprising result and we prefer to focus on the novelty of the approach based on duality results for Besov spaces and the generalization of the strong uniqueness result to the whole chain (\emph{i.e.} to any arbitrary $n\geq 1$) rather than drowning the reader into additional technical considerations.
	
	
	\mysection{Perturbation techniques for the PDE: proof of Theorem \ref{strong_MAIN_RESULT_PDE}}\label{strong_SEC_PROOF_PDE}
	In order to keep the notations as clear as possible, we forget the superscript $m$ standing for the mollifying procedure and we suppose that the following assumptions hold:
	\\
	
	\textbf{Assumption \A{AM}}. We say that assumption \A{AM} holds if  the assumptions gathered in \A{A} hold true and the coefficients $\gF$, $a$ are infinitely differentiable functions with bounded derivatives of all order\textcolor{black}{s} for $a$ and  greater than 1 for the coefficient $\gF$.\\
	
	\textcolor{black}{\textbf{About the constants.} We importantly specify that all the constants appearing below \textbf{do not} depend on the (omitted) smoothing parameter $m$, but only on known parameters \textcolor{black}{from} assumption \A{A} and possibly \textcolor{black}{on} $T>0$}.\\ 
	
	In the whole section, we consider a fixed final time $T>0$ which is meant to be \textit{small}, i.e. $T \ll 1 $. Let us consider for this section a generic PDE with generator corresponding to \eqref{strong_GENERATOR} and \textcolor{black}{scalar} source $f$ having the same H\"older regularity than the drift terms in \eqref{strong_SYST} (i.e. \textcolor{black}{the scalar function $f$ below can be any of the entries of the $\R^d $-valued  $(\gF_i)_{i\in \leftB 1, n\rightB}$ in the dynamics \eqref{strong_SYST}}). Namely, we concentrate on 
	\begin{equation}
	\label{strong_GENERIC_PDE}
	\begin{cases}
	(\partial_t u+L_t u)(t,\x)=-f(t,\x), \ (t,\x)\in [0,T)\times \R^{nd},\\
	u(T,\x)=0,
	\end{cases}
	\end{equation}
	where $(L_t)_{t \geq 0}$ is defined in \eqref{strong_GENERATOR} and stands for the generator associated with \eqref{strong_SYST} when the coefficients are smooth.

	The key result to prove strong uniqueness for the SDE \eqref{strong_SYST} is actually the following theorem. 
	\begin{THM}[Pointwise bounds for the derivatives of the PDE \eqref{strong_GENERIC_PDE}]\label{strong_THM_DER_PDE}
		There exists $\gamma:=\gamma(\A{A})>0 $ and $C:=C(\A{A})>0$ s.t. 
		\begin{equation}\label{strong_BD_PDE}
		\|{\mathbf D} u\|_\infty+\|{\mathbf D}(D_{1} u)\|_\infty\le C T^\gamma,
		\end{equation}
		with obvious extension of the definition in \eqref{strong_NORMES} to the current scalar case.
	\end{THM}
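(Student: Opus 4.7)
The plan is to expand $u$ via a first-order forward parametrix around a Gaussian proxy. Fix $(t,\x)\in[0,T)\times\R^{nd}$ and let $(\btheta_{s,t}(\x))_{s\in[t,T]}$ solve $\dot\btheta_{s,t}(\x)=\gF(s,\btheta_{s,t}(\x))$ with $\btheta_{t,t}(\x)=\x$. Let $\tilde L_s^{t,\x}$ be the generator of the inhomogeneous Gaussian process obtained by freezing the Jacobian $D\gF(s,\cdot)$ and the diffusion $a(s,\cdot)$ along $\btheta_{s,t}(\x)$. Under \A{H$_\eta$} the subdiagonal blocks of this frozen Jacobian remain non-degenerate, so the proxy still enjoys the weak H\"ormander structure and the good scaling property \eqref{strong_GSP_OU}. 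Denote by $\tilde p^{t,\x}$ its transition density. Duhamel between $L_s$ and $\tilde L_s^{t,\x}$ then yields
\begin{equation*}
u(t,\x)=\int_t^T ds\int_{\R^{nd}}d\y\,\Big[f(s,\y)+(L_s-\tilde L_s^{t,\x})u(s,\y)\Big]\tilde p^{t,\x}(t,s,\x,\y).
\end{equation*}

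First I would differentiate this representation in $\x_\ell$ or in $\x_1\x_\ell$ for $\ell\in\leftB 1,n\rightB$ and bound the two resulting integrals separately. The density satisfies the estimate $|D_{\x_\ell}D_{\x_1}^r\tilde p^{t,\x}|\le C(s-t)^{-(\ell-1/2)-r/2}\hat p_{C^{-1}}$ of \eqref{strong_CTR_DER_OU}. For the source term, a careful centering of $f(s,\y)$ around $f(s,\y_{1:\ell-1},\btheta_{s,t}^{\ell:n}(\x))$, which exploits the lower-triangular structure of the linearized resolvent exactly as in the primer computation of subsection \ref{strong_SUB_STRAT_PDE}, combines the density singularity $(s-t)^{-(\ell-1/2)-r/2}$ with the H\"older gain $(s-t)^{\beta_j(j-1/2)}$ obtained from the intrinsic scale $|\y_j-\btheta_{s,t}^j(\x)|\asymp(s-t)^{j-1/2}$. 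Under \A{T$_\beta$} the resulting singularity in time is integrable and produces a factor $T^\gamma$ for some $\gamma>0$.

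The main obstacle is the perturbative remainder
\begin{equation*}
\mathcal{I}_{\ell,r}(t,\x):=\int_t^T ds\int_{\R^{nd}}d\y\,(L_s-\tilde L_s^{t,\x})u(s,\y)\,D_{\x_\ell}D_{\x_1}^r\tilde p^{t,\x}(t,s,\x,\y).
\end{equation*}
Here $(L_s-\tilde L_s^{t,\x})u(s,\y)$ decomposes into a second-order term in $\y_1$, whose coefficient $a(s,\y)-a(s,\btheta_{s,t}(\x))$ is Lipschitz and vanishes at $\btheta_{s,t}(\x)$, and into first-order terms $\sum_{i=1}^n\langle\Delta\gF_i(s,\y),D_i u(s,\y)\rangle$, where $\Delta\gF_i(s,\y)$ denotes the difference between $\gF_i$ and its first-order expansion in $\y_{i-1}$ along $\btheta_{s,t}(\x)$. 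By \A{T$_\beta$} and \A{H$_\eta$} one has $|\Delta\gF_i(s,\y)|\le C\big(\sum_{j\ge i}|\y_j-\btheta_{s,t}^j(\x)|^{\beta_j}+|\y_{i-1}-\btheta_{s,t}^{i-1}(\x)|^{1+\eta}\big)$. A naive pointwise bound would require controlling $\|D_i u\|_\infty$ for $i\ge 2$, which is out of reach. This is the place where anisotropic Besov duality intervenes: for $i\ge 2$ I view the inner integral in $\y_{2:n}$ as a pairing of $D_i u(s,\y_1,\cdot)$ against the kernel that gathers $\Delta\gF_i$ and the differentiated density, and apply the duality $B_{p,q}^{-\alpha}\times B_{p',q'}^{\alpha}\to\R$ (Proposition 3.6 in \cite{lemar:02}). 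The derivative $D_i u(s,\y_1,\cdot)$ is placed in a negative Besov space, controlled by Lemma \ref{strong_LEMME_CTR_HOLDER} through $\|\mathbf{D}u\|_\infty$ and $\|\mathbf{D}(D_1u)\|_\infty$, while the kernel lives in a positive Besov space (Lemma \ref{strong_LEMME_CTR_BESOV}) providing the H\"older gain $(s-t)^{\beta_j(j-1/2)}$ that offsets, under \A{T$_\beta$}, the density singularity and produces again an integrable profile $(s-t)^{-1+\gamma}$.

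The last step is to close the circular argument: the upper bound just obtained for $\|\mathbf{D}u\|_\infty+\|\mathbf{D}(D_1 u)\|_\infty$ involves these same quantities on its right-hand side through the Besov estimate of Lemma \ref{strong_LEMME_CTR_HOLDER}. Every such reappearance, however, is multiplied by a constant of the form $C T^{\gamma}$ coming from the integrable time singularities. Choosing $T$ small enough so that $C T^{\gamma}\le 1/2$ then lets one absorb these terms in the left-hand side and yields the announced bound \eqref{strong_BD_PDE}.
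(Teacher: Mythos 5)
Your overall architecture coincides with the paper's: forward parametrix around the Gaussian proxy frozen along the deterministic flow, centering arguments for the source and the non-degenerate part of the remainder, Besov duality for the degenerate part, and a circular argument closed by taking $T$ small so that the factor $CT^\gamma$ multiplying $\|{\mathbf D}u\|_\infty+\|{\mathbf D}(D_1u)\|_\infty$ on the right-hand side can be absorbed. Two points, however, need repair. The first is technical: you freeze the proxy at $(t,\x)$ \emph{before} differentiating, so that $D_{\x_\ell}$ would also hit the superscript of $\tilde p^{t,\x}$, which your density estimates (taken in the forward variable only) do not cover. The paper keeps the freezing point $\bxi$ free in the Duhamel formula, differentiates in $\x$ only, and sets $\bxi=\x$ afterwards.

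The second point is a genuine gap in the treatment of the degenerate remainder. For the contributions with $i\in\leftB 2,l\rightB$ in $D_{\x_1}D_{\x_l}$ of the perturbative term, the H\"older gain furnished by the centered difference $\Delta\gF_i$ is of order $(s-t)^{(1+\eta)(i-3/2)}$ (plus terms $(s-t)^{\beta_j(j-1/2)}$ with $j\le l-1$), which against the density singularity $(s-t)^{-l}$ is \emph{not} integrable as soon as $l\ge 2$; the paper computes this explicitly before introducing $I_{3,l}$. Moreover, pairing the bounded function $D_iu$ placed in a negative Besov space against a kernel in a positive one gains nothing over the trivial bound $\|D_iu\|_\infty\|K\|_{L^1}$, so the duality as you describe it does not repair the singularity. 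The missing idea is the Taylor expansion of the centered gradient increment, $D_iu(s,\y)-D_iu(s,\y_{1:l-1},\btheta_{s,t}^{l:n}(\bxi))=\sum_{k\ge l}\int_0^1 D_kD_iu(\cdots)\,(\y-\btheta_{s,t}(\bxi))_k\,d\lambda$: the factor $(\y-\btheta_{s,t}(\bxi))_k$ with $k\ge l$ supplies the extra $(s-t)^{k-1/2}$ that restores time-integrability, at the price of making the \emph{cross derivative} $D_iD_ku$ appear — and it is this object, not $D_iu$, that is placed in $B_{\infty,\infty}^{\alpha_i^k-1}$ (Lemma \ref{strong_LEMME_CTR_HOLDER}) and paired against the kernel $\Psi_{i,(l,1),k}$, which incorporates that factor, measured in $B_{1,1}^{1-\alpha_i^k}$ (Lemma \ref{strong_LEMME_CTR_BESOV}). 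With this step inserted, your argument matches the paper's proof.
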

	
	The proof of Theorem \ref{strong_THM_DER_PDE} is performed in Section \ref{strong_SECTION_CTR_SENSI} through the forward parametrix approach consisting in considering a suitable \textit{proxy} semi-group around which the initial solution of \eqref{strong_GENERIC_PDE} can be expanded. To this end we first investigate in Section \ref{strong_SEC_GAUSS_DENS} below the linearized Gaussian process deriving from the dynamics in \eqref{strong_SYST} which will provide the suitable model for the parametrix.
	
	\textcolor{black}{\begin{proof}[Proof of Theorem \ref{strong_MAIN_RESULT_PDE}]\textcolor{black}{Theorem \ref{strong_MAIN_RESULT_PDE} is then a direct consequence of Theorem \ref{strong_THM_DER_PDE}}.
	\textcolor{black}{Precisely, recalling that  $\U=(\u_1,\textcolor{black}{\hdots} ,\u_n)$ where for $i\in \leftB 1,n\rightB $, $\u_i $ solves \eqref{strong_GEN_PDE}, we derive that $\u_i^j,\ j\in \leftB 1,d\rightB $ solves \eqref{strong_GENERIC_PDE} with $f(t,\x)=-\gF_i^j(t,\x) $. Theorem \ref{strong_MAIN_RESULT_PDE} thus follows in \textcolor{black}{whole} generality applying  Theorem \ref{strong_THM_DER_PDE} to each component of the solution of the systems \eqref{strong_GEN_PDE}. 
	}\end{proof}}

	%

	\subsection{Gaussian \textit{proxy} and associated controls }

	\label{strong_SEC_GAUSS_DENS}
	
	\subsubsection{Linearization of the dynamics}
	
	Fix some freezing points $(\tau,\bxi) \in [0,T]\times \R^{nd}$. For fixed initial conditions $(t,\x)\in [0,T]\times \R^{nd}$, a natural linearization associated with the mollified version of \eqref{strong_SYST} writes 
	\begin{eqnarray} 
	d\tilde \X_v^{(\tau,\bxi)}&=&[\gF(v,\btheta_{v,\tau}(\bxi))+ D\gF(v,\btheta_{v,\tau}(\bxi))(\tilde \X_v^{(\tau,\bxi)}-\btheta_{v,\tau}(\bxi))]dv +B\sigma(v,\btheta_{v,\tau}(\bxi)) dW_v,\ \forall v\in  [t,s],\nonumber\\
	\tilde \X_t^{(\tau,\bxi)}&=&\x, \label{strong_FROZ_MOL_FOR_NO_SUPER_SCRIPTS}
	\end{eqnarray}
	where 
	\begin{equation}
	\label{strong_DYN_DET_SMOOTH}
	\dot \btheta_{v,\tau}(\bxi)=\gF(v,\btheta_{v,\tau}(\bxi)),\ v\in [0,T],\  \btheta_{\tau,\tau}(\bxi)=\bxi,
	\end{equation}
	and ${D \gF(v,\cdot)} $ denotes the subdiagonal of the Jacobian matrix ${{\mathbf D} \gF(v,\cdot)} $. Namely, for $\z\in \R^{nd}$: 
	$$D\gF(v,\z)=\left (\begin{array}{ccccc}\0_{d,d} & \cdots & \cdots &\cdots  & \0_{d,d}\\
	D_{\z_1}\gF_{2}(v,\z) & \0_{d,d} &\cdots &\cdots &\0_{d,d}\\
	\0_{d,d} & D_{\z_2} \gF_{3}(v,\textcolor{black}{\z_{2:n}})& \0_{d,d}& \0_{d,d} &\vdots\\
	\vdots &  \0_{d,d}                     & \ddots & \textcolor{black}{\ddots} & \textcolor{black}{\vdots}\\
	\0_{d,d} &\cdots &     \0_{d,d}      & D_{\z_{n-1}}\gF_{n}(v,\z_{n-1},\z_n) & \0_{d,d}
	\end{array}\right) .
	$$ 
	\textcolor{black}{In the following, we will often refer to the Gaussian process $(\tilde \X_v^{(\tau,\bxi)})_{v\ge t}$ introduced in \eqref{strong_FROZ_MOL_FOR_NO_SUPER_SCRIPTS} as the \textit{proxy} process. This terminology comes from the fact that it is a natural, well controlled object, meant to locally approximate the original dynamics in \eqref{strong_SYST}}.
	
	We explicitly integrate \eqref{strong_FROZ_MOL_FOR_NO_SUPER_SCRIPTS} to obtain for any $v\in [t,s] $:
	\begin{align}
	\label{strong_INTEGRATED_FLOW}
	&\tilde  \X_v^{(\tau,\bxi)}\notag
	\\
	&=\textcolor{black}{\tilde\gR}^{(\tau,\bxi)}(v,t)\x+ \!\int_t^v \!\!du \textcolor{black}{\tilde \gR}^{(\tau,\bxi)}(v,u)\Big( \gF(u,\btheta_{u,\tau}(\bxi))-D\gF(u,\btheta_{u,\tau}(\bxi))\btheta_{u,\tau}(\bxi)\Big )
	+\!\int_t^v \!\! \textcolor{black}{\tilde \gR}^{(\tau,\bxi)}(v,u)B\sigma(u,\btheta_{u,\tau}(\bxi)) dW_u \notag
	\\
	&=:\m_{v,t}^{(\tau,\bxi)}(\x)+\int_t^v \textcolor{black}{\tilde \gR}^{(\tau,\bxi)}(v,u)B\sigma(u,\btheta_{u,\tau}(\bxi)) dW_u,
	\end{align}
	where $(\tilde \gR^{(\tau,\bxi)}(v,u))_{t\le u,v \le s} $ stands for the resolvent associated with the partial gradients $(D {\mathbf F}(v,\btheta_{v,\tau}(\bxi)))_{ v\in [t,s]} $ which satisfies for $v\in [t,s]$:
	\begin{equation}
	\begin{split}
	\partial_{v}\tilde \gR^{(\tau,\bxi)}(v,t)&={ D \gF}(v,\btheta_{v,\tau}(\bxi))\tilde \gR^{(\tau,\bxi)}(v,t) , 
	\ \tilde \gR^{(\tau,\bxi)}(t,t)=\mathbf I_{nd\times nd}.
	\end{split}
	\label{strong_DYN_RES_FORWARD}
	\end{equation}
	Note in particular that since the partial gradients are subdiagonal $ {\rm det}(\tilde \gR^{(\tau,\bxi)}(v,t))=1$.\
	
	Also, for $v\in [t,s]$, we recall that  $\m_{v,t}^{(\tau,\bxi)}(\x) $ stands for the mean of $\tilde  \X_v^{(\tau,\bxi)} $ and  corresponds as well  to the solution of \eqref{strong_FROZ_MOL_FOR_NO_SUPER_SCRIPTS} when $\sigma =0 $ \textcolor{black}{and} the  starting point is $\x $. 
	Importantly, we point out that $\x \in \R^{nd}\mapsto \m_{v,t}^{(\tau,\bxi)}(\x)$ is \textbf{affine}  w.r.t. the starting point $\x$. Precisely, for $\x,\x'\in \R^{nd} $:
	\begin{equation}
	\label{strong_AFFINE_FLOW}
	\m_{v,t}^{(\tau,\bxi)}(\x+\x')=\tilde \gR^{(\tau,\bxi)}(v,t)\x'+ \m_{v,t}^{(\tau,\bxi)}(\x).
	\end{equation}
	\textcolor{black}{
		It is also useful to note that, since from \eqref{strong_FROZ_MOL_FOR_NO_SUPER_SCRIPTS}
		$$d \m_{v,t}^{(\tau,\bxi)}(\x) =[\gF(v,\btheta_{v,\tau}(\bxi))+ D\gF(v,\btheta_{v,\tau}(\bxi))(\m_{v,t}^{(\tau,\bxi)}(\x)-\btheta_{v,\tau}(\bxi))]dv,\ \ \m_{t,t}^{(\tau,\bxi)}(\x)=\x,$$
		it holds from  \eqref{strong_DYN_DET_SMOOTH} that for $(\tau,\bxi)=(t,\x) $:
		\begin{eqnarray*}
			d \big( \m_{v,t}^{(t,\x)}(\x)-\btheta_{v,t}(\x)\big)&=&[\gF(v,\btheta_{v,t}(\x))+ D\gF(v,\btheta_{v,t}(\x))(\m_{v,t}^{(t,\x)}(\x)-\btheta_{v,t}(\x))]dv-\gF(v,\btheta_{v,t}(\x))dv\\
			&=&[D\gF(v,\btheta_{v,t}(\x))(\m_{v,t}^{(t,\x)}(\x)-\btheta_{v,t}(\x))]dv,\\
			\m_{t,t}^{(t,\x)}(\x)-\btheta_{t,t}(\x)&=&0.
		\end{eqnarray*}
		Therefore, the Gr\"onwall lemma yields:
		\begin{equation}\label{EQUIV_LIN_FLOW_GOOD_FREEZING} \tag{LF}
		\m_{v,t}^{(\tau,\bxi)}(\x)|_{(\tau,\bxi)=(t,\x)}=\m_{v,t}^{(t,\x)}(\x)=\btheta_{v,t}(\x).
		\end{equation}
		Namely, when freezing the parameters at the initial time $t$ and the starting spatial point $\x$, the linearized flow $\m_{v,t}^{(t,\x)}(\x) $ and the non-linear one $\btheta_{v,t}(\x) $ coincide.
	} 
	
	From the non-degeneracy of $ \sigma$ and the H\"ormander like condition, the Gaussian process defined by \eqref{strong_INTEGRATED_FLOW} admits a density $\tilde p^{(\tau,\bxi)}(t,s,\x,\cdot) $ which is suitably controlled (see Proposition \ref{strong_THE_PROP} below and for instance \cite{dela:meno:10}, \cite{chau:meno:17}).

	We first give in the next proposition a key estimate on the covariance matrix associated with \eqref{strong_INTEGRATED_FLOW} and its properties w.r.t. a suitable scaling of the system. 
	\begin{PROP}[Good Scaling Properties of the Covariance Matrix]
		\label{strong_PROP_SCALE_COV}
		The covariance matrix of $\tilde  \X_v^{(\tau,\bxi)}$ in \eqref{strong_INTEGRATED_FLOW} writes:
		$$\tilde \K_{v,t}^{(\tau,\bxi)}:=\int_t^vdu \textcolor{black}{\tilde  \gR}^{(\tau,\bxi)}(v,u) Ba(u,\btheta_{u,\tau}(\bxi))B^*\textcolor{black}{\tilde \gR}^{(\tau,\bxi)}(v,u)^* . 
		$$
		Uniformly in $(\tau,\bxi)\in [0,T]\times \R^{nd} $ and $s\in [0,T] $, it satisfies a \textit{good scaling property} in the sense of Definition 3.2 in \cite{dela:meno:10} (see also Proposition 3.4 of that reference). That is, for any fixed $T>0$, there exists $C_{\ref{strong_GSP}}:=C_{\ref{strong_GSP}}(\A{A},T)\ge 1$ s.t. for all $0\le t<v \le s\le T $, for any $(\tau,\bxi) \in [0,T]\times \R^{nd} $:
		\begin{equation}
		\label{strong_GSP}
		\forall \bzeta \in \R^{nd},\   C_{\ref{strong_GSP}}^{-1} (v-t)^{-1}|\T_{v-t} \bzeta|^2\le \langle  \tilde \K_{v,t}^{(\tau,\bxi)}\bzeta,\bzeta\rangle \le C_{\ref{strong_GSP}} (v-t)^{-1}|\T_{v-t} \bzeta|^2,
		\end{equation} 
		where we again use the notation introduced in  \eqref{strong_DEF_T_ALPHA} \textcolor{black}{for} the scaling matrix $\T_{v-t} $.
		
	\end{PROP}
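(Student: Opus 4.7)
The argument follows the general strategy of Proposition 3.4 in \cite{dela:meno:10}, adapted to our setting with frozen points $(\tau,\bxi)$. The plan is to first reduce to a diffusion-free covariance using uniform ellipticity, and then establish the good scaling property by a rescaling argument.

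\emph{Step 1: reduction via ellipticity.} By assumption \A{UE}, $\Lambda^{-1}\mathbf{I}_{d,d} \le a(u,\btheta_{u,\tau}(\bxi)) \le \Lambda\mathbf{I}_{d,d}$ uniformly in $(u,\tau,\bxi)$. Hence, up to a multiplicative constant depending only on $\Lambda$, it suffices to establish the good scaling property for
$$\tilde\K_{v,t}^{0,(\tau,\bxi)} := \int_t^v \tilde\gR^{(\tau,\bxi)}(v,u)\, BB^* \,\tilde\gR^{(\tau,\bxi)}(v,u)^* \, du.$$

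\emph{Step 2: structure of the resolvent and upper bound.} Since $D\gF$ is strictly block subdiagonal, the Peano--Baker expansion of \eqref{strong_DYN_RES_FORWARD} terminates at order $n-1$: $\tilde\gR^{(\tau,\bxi)}(v,u) = \sum_{k=0}^{n-1} \mathcal{R}_k(v,u)$, where $\mathcal{R}_k$ is a $k$-fold iterated integral of $D\gF$ evaluated along $\btheta$. Thanks to \A{AM}, each block of $D\gF$ is bounded, so $\mathcal{R}_k$ contributes a matrix supported on the $k$-th lower block-subdiagonal with entries of size $O((v-u)^k)$. In particular, the $i$-th block of $\tilde\gR^{(\tau,\bxi)}(v,u)B$ is of size $O((v-u)^{i-1})$, from which one derives by integration that the $(i,j)$-block of $\tilde\K_{v,t}^{0,(\tau,\bxi)}$ is of size $O((v-t)^{i+j-1})$. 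Expanding the quadratic form and applying Young's inequality yields the upper bound in \eqref{strong_GSP}.

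\emph{Step 3: rescaling and lower bound.} Set $h:=v-t$ and introduce the rescaled covariance
$$\hat\K_h^{(\tau,\bxi)} := h\, \T_h^{-1}\tilde\K_{v,t}^{0,(\tau,\bxi)}\T_h^{-1}.$$
The good scaling property \eqref{strong_GSP} is equivalent to $C_{\ref{strong_GSP}}^{-1}\mathbf{I}_{nd} \le \hat\K_h^{(\tau,\bxi)} \le C_{\ref{strong_GSP}}\mathbf{I}_{nd}$ uniformly in $h\in(0,T]$ and $(\tau,\bxi)$. Changing variables $u = t + h\tilde u$ and using the block grading introduced by $\T_h$, a direct computation based on the expansion of Step 2 shows that $\T_h^{-1}\tilde\gR^{(\tau,\bxi)}(v,t+h\tilde u)B = \mathcal{E}_h^{(\tau,\bxi)}(\tilde u)$, where, as $h\to 0$, the map $\mathcal{E}_h^{(\tau,\bxi)}(\tilde u)$ converges uniformly (in $\tilde u \in [0,1]$ and $(\tau,\bxi)$) to a nilpotent matrix exponential $\mathcal{E}_0^{(\tau,\bxi)}(\tilde u)$ built solely from the frozen subdiagonal Jacobians $\big(D_{\x_{i-1}}\gF_i(\tau,\bxi)\big)_{i\in\leftB 2,n\rightB}$. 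Consequently,
$$\hat\K_h^{(\tau,\bxi)} \;\xrightarrow[h\to 0]{}\; \hat\K_0^{(\tau,\bxi)} := \int_0^1 \mathcal{E}_0^{(\tau,\bxi)}(\tilde u)\,\mathcal{E}_0^{(\tau,\bxi)}(\tilde u)^*\, d\tilde u,$$
which is precisely the covariance at time $1$ of the nilpotent Kolmogorov system with frozen coefficients.

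\emph{Step 4: non-degeneracy and uniformity.} By the weak H\"ormander structure encoded in \A{H${}_\eta$}, each $D_{\x_{i-1}}\gF_i(\tau,\bxi)$ lies in the closed subset $\mathcal{E}_{i-1}\subset GL_d(\R)$; in particular, the Jacobians are uniformly invertible with uniformly bounded inverses. The Kolmogorov analysis (see e.g. Proposition 3.4 of \cite{dela:meno:10}) then gives that $\hat\K_0^{(\tau,\bxi)}$ is positive definite, and moreover admits a uniform lower bound over $(\tau,\bxi)\in[0,T]\times\R^{nd}$ since the non-degeneracy estimate depends continuously on the Jacobians, which range in a fixed closed subset of $GL_d(\R)$. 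Combined with the uniform convergence of Step 3, this gives the lower bound in \eqref{strong_GSP} for $h$ sufficiently small. For $h$ bounded away from $0$, the bound follows from a direct compactness argument since $\hat\K_h^{(\tau,\bxi)}$ depends continuously on its parameters and is positive definite for each fixed $h>0$.

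\emph{Main obstacle.} The delicate point is the uniformity of the lower bound in $(\tau,\bxi)\in[0,T]\times\R^{nd}$, since this set is non-compact. This is handled by the fact that $\hat\K_0^{(\tau,\bxi)}$ depends on $(\tau,\bxi)$ only through the Jacobians $\big(D_{\x_{i-1}}\gF_i(\tau,\bxi)\big)_{i\in\leftB 2,n\rightB}$, which by \A{H${}_\eta$} take values in the fixed compact-like sets $\mathcal{E}_{i-1}$. The boundedness of the inverses of these Jacobians is precisely what propagates the non-degeneracy of $\sigma$ through the chain and yields the claimed uniform two-sided estimates.
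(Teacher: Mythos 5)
Your argument is correct and is essentially the same approach as the paper's, which gives no proof of its own but simply outsources the statement to Propositions 3.3--3.4 and Lemma 3.6 of \cite{dela:meno:10}: what you have written is a self-contained reconstruction of that cited argument (ellipticity reduction, nilpotent Peano--Baker expansion of the resolvent, $\T_h$-rescaling, non-degeneracy of the limiting frozen Kolmogorov covariance). Two small points. First, in Step 3 the correctly normalized object is $\mathcal{E}_h^{(\tau,\bxi)}(\tilde u)=h\,\T_h^{-1}\tilde\gR^{(\tau,\bxi)}(v,t+h\tilde u)B$ (equivalently the $(v-t)^{1/2}\T_{v-t}^{-1}$ rescaling appearing in \eqref{strong_FIRST_SCALING}); without the extra factor $h$ the $i$-th block of $\T_h^{-1}\tilde\gR B$ is of order $h^{-1}(1-\tilde u)^{i-1}$ and does not converge as $h\to 0$. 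Second, the ``direct compactness argument'' for $h$ bounded away from $0$ cannot be run on the non-compact set $[0,T]\times\R^{nd}$ itself; it should be run on the coefficient paths $u\mapsto D\gF(u,\btheta_{u,\tau}(\bxi))$, which form a weak-$*$ compact subset of $L^\infty$ with values in $\prod_i\mathcal{E}_{i-1}$ --- and it is precisely the convexity of the sets $\mathcal{E}_{i-1}$ assumed in \A{H${}_\eta$} that guarantees weak limits remain in these sets, so the uniformity you claim does hold, but for that reason.
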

	\textcolor{black}{Under \A{A}}, Proposition \ref{strong_PROP_SCALE_COV} readily follows from \textcolor{black}{Proposition 3.4}  in \cite{dela:meno:10} \textcolor{black}{(see also the scaling Lemma 3.6 therein). A complete proof is provided in Appendix \ref{PROOF_PROP_4}} \textcolor{black}{below}.
	
	\textcolor{black}{
		\begin{REM}[On some important consequences of the good scaling property]\label{CONS_GSP}
			We state here some rather direct yet important controls that follow from Proposition \ref{strong_PROP_SCALE_COV}.
			\begin{trivlist}
				\item[-] Setting for any $s\in (t,T] $,
				\begin{equation}\label{SCALING}\tag{S}
				\widehat {{\tilde {\mathbf K}_1^{(\tau,\bxi),s,t}}}:=(s-t)\T_{s-t}^{-1} \tilde {\mathbf K}_{s,t}^{(\tau,\bxi)}\T_{s-t}^{-1},
				\end{equation}
				it follows from \eqref{strong_GSP} that for any $\bzeta\in \R^{nd} $:
				\begin{equation}\label{S_ND}\tag{S${}_{ND}$}
				C_{\ref{strong_GSP}}^{-1}|\bzeta|^2 \le \langle \widehat {{\tilde {\mathbf K}_1^{(\tau,\bxi),s,t}}} \bzeta, \bzeta\rangle\le 
				C_{\ref{strong_GSP}}|\bzeta|^2 \Longrightarrow \exists \bar C:=\bar C(\A{A},T)\ge 1,\ \bar C^{-1}\le \det(\widehat {{\tilde {\mathbf K}_1^{(\tau,\bxi),s,t}}})\le \bar C.
				\end{equation}
				\item[-] Equations \eqref{SCALING} and \eqref{S_ND} then readily give that:
				\begin{equation}\label{DET}\tag{D}
				\bar C^{-1} (s-t)^{n^2d}  \le \det(\tilde {\mathbf K}_{s,t}^{(\tau,\bxi)})\le \bar C (s-t)^{n^2d},
				\end{equation}
				as well as, for any $\bzeta\in \R^{nd} $,
				\begin{equation}\label{BD_M_1}
				\tag{S${}_{ND}^{{\rm Inv}}$}
				C_{\ref{strong_GSP}}^{-1}|\bzeta|^2 \le \langle (\widehat {{\tilde {\mathbf K}_1^{(\tau,\bxi),s,t}}})^{-1} \bzeta, \bzeta\rangle\le 
				C_{\ref{strong_GSP}}|\bzeta|^2\Rightarrow C_{\ref{strong_GSP}}^{-1}(s-t)|\T_{s-t}^{-1}\bzeta|^2\le \langle (\tilde {\mathbf K}_{s,t}^{(\tau,\bxi)})^{-1}\bzeta,\bzeta\rangle \le C_{\ref{strong_GSP}} (s-t)|\T_{s-t}^{-1}\bzeta|^2.
				\end{equation}
				This last bound will be of particular relevance to control the tails of the Gaussian density of $\tilde \X_{s}^{(\tau,\bxi)} $.
			\end{trivlist}
		\end{REM}
	}

	We now state some important density bounds for the linearized model.

	\begin{PROP}[Density of the linearized dynamics]\label{strong_THE_PROP}
		Under \A{A}, we have that, for any $s\in (t,T]$ the random variable $\tilde  \X_s^{(\tau,\bxi)} $ in \eqref{strong_INTEGRATED_FLOW} admits a Gaussian density $\tilde p^{(\tau,\bxi)}(t,s,\x,\cdot) $ which writes for any $\y \in \R^{nd}$:
		\begin{equation}\label{strong_CORRESP}
		\tilde p^{(\tau,\bxi)}(t,s,\x,\y):=\frac{1}{(2\pi)^{\frac{nd}2}\det(\tilde \K_{s,t}^{(\tau,\bxi)})^{\frac 12}}\exp\left( \!-\frac 12 \left\langle \!(\tilde \K_{s,t}^{(\tau,\bxi)})^{-1} (\m_{s,t}^{(\tau,\bxi)}(\x)-\y),\m_{s,t}^{(\tau,\bxi)}(\x)-\y \!\right\rangle\!\right)\!\!,
		\end{equation}
		with $\tilde \K_{s,t}^{(\tau,\bxi)} $ as in Proposition \ref{strong_PROP_SCALE_COV}.
		Also, there exist $C:=C(\A{A},T)>0$ and $\bar C := \bar C(C)$ s.t. for all $l\in \leftB 1,n \rightB $, $\textcolor{black}{q},\ r\in \{0,1\} $, 
		we have: 
		\begin{eqnarray}
		|D_{\x_l}^{\textcolor{black}{q}}D_{\x_1}^r\tilde p^{(\tau,\bxi)}(t,s,\x,\y)|&\le& \frac{C}{(s-t)^{\frac{n^2d}2+(l-\frac{1}{2})\textcolor{black}{q}+\frac r2 }}\exp\left(-C^{-1}(s-t) \big|\T_{s-t}^{-1}\big(\m_{s,t}^{(\tau,\bxi)}(\x)-\y\big)\big|^2\right)\notag\\
		&=:&\frac{\bar C}{(s-t)^{(l-\frac{1}{2})\textcolor{black}{q}+\frac r2}}\hat p_{C^{-1}}^{(\tau,\bxi)}(t,s,\x,\y).\label{strong_CTR_GRAD}
		\end{eqnarray}
		
	\end{PROP}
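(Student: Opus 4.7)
The plan starts by observing that in \eqref{strong_INTEGRATED_FLOW} the integrand $u\mapsto \tilde\gR^{(\tau,\bxi)}(s,u)B\sigma(u,\btheta_{u,\tau}(\bxi))$ is deterministic, so the It\^o integral is a centered Gaussian vector and $\tilde \X_s^{(\tau,\bxi)}$ is a Gaussian with mean $\m_{s,t}^{(\tau,\bxi)}(\x)$ and covariance $\tilde \K_{s,t}^{(\tau,\bxi)}$. The good scaling property \eqref{strong_GSP} from Proposition \ref{strong_PROP_SCALE_COV} ensures that $\tilde \K_{s,t}^{(\tau,\bxi)}$ is positive definite whenever $s>t$, so \eqref{strong_CORRESP} is obtained from the standard multivariate normal density formula.

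\textbf{Differentiation.} For the derivative bounds, the affine nature of $\x\mapsto\m_{s,t}^{(\tau,\bxi)}(\x)$ given by \eqref{strong_AFFINE_FLOW} yields $D_\x\m_{s,t}^{(\tau,\bxi)}=\tilde\gR^{(\tau,\bxi)}(s,t)$, so $D_{\x_l}\m_{s,t}^{(\tau,\bxi)}=:\tilde \gR_{:,l}$ is just the $l$-th block column of the resolvent, independent of $\x$. A direct computation then gives
\begin{equation*}
D_{\x_l}\tilde p^{(\tau,\bxi)}=-\tilde p^{(\tau,\bxi)}\,\bigl\langle (\tilde \K_{s,t}^{(\tau,\bxi)})^{-1}(\m_{s,t}^{(\tau,\bxi)}(\x)-\y),\tilde \gR_{:,l}\bigr\rangle,
\end{equation*}
and, for the second-order case,
\begin{equation*}
D_{\x_1}D_{\x_l}\tilde p^{(\tau,\bxi)}=\tilde p^{(\tau,\bxi)}\Bigl[\bigl\langle \tilde \K^{-1}(\m-\y),\tilde \gR_{:,1}\bigr\rangle\bigl\langle \tilde \K^{-1}(\m-\y),\tilde \gR_{:,l}\bigr\rangle-\bigl\langle \tilde \K^{-1}\tilde \gR_{:,1},\tilde \gR_{:,l}\bigr\rangle\Bigr].
\end{equation*}

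\textbf{Scaling estimates.} To bound these quantities I would apply Cauchy--Schwarz with the positive quadratic form $\langle \tilde \K^{-1}\cdot,\cdot\rangle$ together with \eqref{strong_GSP}, which implies (through a standard diagonalization in the intrinsic basis $\T_{s-t}$) the dual estimate $\langle (\tilde \K_{s,t}^{(\tau,\bxi)})^{-1}\bzeta,\bzeta\rangle\asymp (s-t)|\T_{s-t}^{-1}\bzeta|^2$. Terms involving $\m-\y$ produce factors of the form $\sqrt{s-t}\,|\T_{s-t}^{-1}(\m-\y)|$, which, multiplied by $\tilde p^{(\tau,\bxi)}$ and using the Gaussian absorption $|z|^k e^{-cz^2}\le C_k e^{-cz^2/2}$, are absorbed into $\hat p_{C^{-1}}$ at the cost of a slightly weaker constant. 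What remains is to control $|\T_{s-t}^{-1}\tilde \gR_{:,l}|$. Since $D\gF$ is block subdiagonal, iterating \eqref{strong_DYN_RES_FORWARD} in Volterra form shows that $\tilde \gR^{(\tau,\bxi)}(s,t)$ is block lower-triangular with $(i,l)$-block of order $(s-t)^{i-l}$ for $i\geq l$, uniformly in $(\tau,\bxi)$ thanks to \A{H${}_\eta$}. Since $\T_{s-t}^{-1}$ rescales the $i$-th block by $(s-t)^{-i}$, this gives $|\T_{s-t}^{-1}\tilde \gR_{:,l}|\leq C(s-t)^{-l}$, hence $\sqrt{\langle \tilde \K^{-1}\tilde \gR_{:,l},\tilde \gR_{:,l}\rangle}\leq C(s-t)^{1/2-l}$. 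Plugging into the identities above produces, respectively, the singularities $(s-t)^{-(l-1/2)}$ for $r=0$ and $(s-t)^{-(l-1/2)-1/2}$ for $r=1$, matching \eqref{strong_CTR_GRAD}.

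\textbf{Main obstacle.} The delicate point is the precise block-scaling of $\tilde \gR^{(\tau,\bxi)}(s,t)$, uniformly in $(\tau,\bxi)\in[0,T]\times\R^{nd}$ and in time: while block-triangularity is immediate from the sub-diagonal form of $D\gF$, obtaining the sharp $(s-t)^{i-l}$ magnitudes without blow-up in the constants is where the uniform non-degeneracy of $D_{\x_{i-1}}\gF_i$ on a closed convex subset of $GL_d(\R)$ (\A{H${}_\eta$}) is used, this being essentially the content of Proposition 3.3 and Lemma 3.6 in \cite{dela:meno:10} that already underlies Proposition \ref{strong_PROP_SCALE_COV}.
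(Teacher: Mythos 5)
Your proposal is correct and follows essentially the same route as the paper: explicit differentiation of the Gaussian density using the affine dependence of $\m_{s,t}^{(\tau,\bxi)}$ on $\x$, followed by the intrinsic scaling of the covariance (Proposition \ref{strong_PROP_SCALE_COV}) and of the resolvent to extract the singularities $(s-t)^{-(l-1/2)-r/2}$ and absorb the polynomial factors into $\hat p_{C^{-1}}$. The only cosmetic difference is that you bound the inner products via Cauchy--Schwarz for the quadratic form $\langle\tilde\K^{-1}\cdot,\cdot\rangle$ and a direct Volterra-series estimate on the block magnitudes of $\tilde\gR^{(\tau,\bxi)}(s,t)$, where the paper rescales the resolvent to a time-$1$ object and cites its boundedness; both yield $|\T_{s-t}^{-1}\tilde\gR_{:,l}|\le C(s-t)^{-l}$ and the same conclusion.
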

	
	\begin{proof} 
		Expression \eqref{strong_CORRESP} readily follows from \textcolor{black}{\eqref{strong_INTEGRATED_FLOW}}. The control \eqref{strong_CTR_GRAD} \textcolor{black}{is} then a direct consequence of  Proposition \ref{strong_PROP_SCALE_COV} for \textcolor{black}{$q=r=0 $ (upper bound of the density)}.
		Differentiating w.r.t. $\x$ recalling from \eqref{strong_AFFINE_FLOW} that $\x\mapsto 
		\m_{s,t}^{(\tau,\bxi)}(\x) $ is affine yields:
		\begin{eqnarray}
		\label{strong_EXPLICIT_DERIVATIVES}
		D_{\x_j} \tilde p^{(\tau,\bxi)}(t,s,\x,\y)=- \Big[ \big[{\tilde  \gR}^{(\tau,\bxi)}(s,t)\big]^* (\tilde \K_{s,t}^{(\tau,\bxi)})^{-1} (\m_{s,t}^{(\tau,\bxi)}(\x)-\y) \Big]_j \tilde p^{(\tau,\bxi)}(t,s,\x,\y),
		\end{eqnarray}
		\textcolor{black}{where we denoted here for a vector $\z=(\z_1,\textcolor{black}{\hdots},\z_n)\in \R^{nd},
			\ [\z]_j =\z_j$ for readibility}.
		The point is now to use scaling arguments.
		We can first rewrite \textcolor{black}{with the notations of \eqref{SCALING}}:
		\begin{equation}\label{strong_FIRST_SCALING}
		\big[{\tilde  \gR}^{(\tau,\bxi)}(s,t)\big]^* (\tilde \K_{s,t}^{(\tau,\bxi)})^{-1}=(s-t) \big[{\tilde  \gR}^{(\tau,\bxi)}(s,t)\big]^* \T_{s-t}^{-1} ( \widehat { \tilde {\K{}}_1^{\textcolor{black}{(\tau,\bxi)},s,t}})^{-1}\T_{s-t}^{-1}.
		\end{equation}
		\textcolor{black}{Put it differently,
			$\widehat {\tilde {\K}_1^{(\tau,\bxi),s,t}} $ can also be seen as} the covariance matrix   at time $1$ of the rescaled process $\big((s-t)^{\frac 12}\T_{s-t}^{-1}\tilde \X_{t+v(s-t)}^{t,\x,\textcolor{black}{(\tau,\bxi)}}\big)_{v\in [0,1]} $. From the good-scaling property of Proposition \ref{strong_PROP_SCALE_COV}, \textcolor{black}{it was already observed in \eqref{S_ND} that
			$\widehat { \tilde \K_1^{(\tau,\bxi),s,t}} $ is 
			a non-degenerate bounded matrix}.
		\textcolor{black}{ From \eqref{strong_DYN_RES_FORWARD}, a similar scaling argument yields that the resolvent $\mathbf R^{(\tau,\xi)}(s,t) $ can also be written as:}
		\begin{equation}\label{strong_RESCALED_RES}
		[{\tilde  \gR}^{(\tau,\bxi)}(s,t)]^*=\T_{s-t}^{-1}\Big[ \widehat{\tilde  \gR}^{(\tau,\bxi),s,t}(1,0)\Big]^*\T_{s-t},
		\end{equation}
		where again $\widehat{\tilde  \gR}^{(\tau,\bxi),s,t}(1,0)$ is the \textcolor{black}{resolvent} at time 1 of  the rescaled system \textcolor{black}{associated with \eqref{strong_DYN_RES_FORWARD}, i.e. for any $v\in [0,1],\ \T_{s-t}[{\tilde  \gR}^{(\tau,\bxi)}(t+v(s-t),t)]^*\T_{s-t}^{-1}=\big[ \widehat{\tilde  \gR}^{(\tau,\bxi),s,t}(v,0)\big]^*$}. 
		From the analysis performed in Lemma 5.1 in \cite{huan:meno:15} (see also the proof of \textcolor{black}{Propositions 3.3 and 3.4} in \cite{dela:meno:10}) one derives that there exists $\hat C_1$ s.t.  for any $\bzeta \in \R^{nd}, |\big[\widehat{\tilde  \gR}^{(\tau,\bxi),s,t}(1,0)\big]^* \bzeta|\le\hat C_1 |\bzeta| $. \textcolor{black}{We refer to  Appendix \ref{RES_DENS_SUM} for a complete proof of this last bound (see equation \eqref{THE_PROOF_BD_SCALED_RES})}. Equations \eqref{strong_EXPLICIT_DERIVATIVES}, \eqref{strong_FIRST_SCALING} and \eqref{strong_RESCALED_RES} therefore yield:
		\begin{eqnarray*}
			&&|D_{\x_j} \tilde p^{(\tau,\bxi)}(t,s,\x,\y)|\\
			&\le&  (s-t)^{-j+\frac 12} \Bigg|\bigg( \Big[ \widehat{\tilde  \gR}^{(\tau,\bxi),s,t}(1,0)\Big]^* \big(\widehat {\tilde  \K_1^{s,t}}\big)^{-1}\big((s-t)^{\frac 12} \T_{s-t}^{-1}(\m_{s,t}^{(\tau,\bxi)}(\x)-\y)\big) \bigg)_j\Bigg| \tilde p^{(\tau,\bxi)}(t,s,\x,\y)  \\
			&\le& C (s-t)^{-j+\frac 12}   (s-t)^{\frac 12} | \T_{s-t}^{-1}(\m_{s,t}^{(\tau,\bxi)}(\x)-\y)| \tilde p^{(\tau,\bxi)}(t,s,\x,\y).
		\end{eqnarray*}
		From the explicit expression \eqref{strong_CORRESP}, Proposition \ref{strong_PROP_SCALE_COV} and the above equation, we eventually derive:
		\begin{eqnarray*}
			&&|D_{\x_j} \tilde p^{(\tau,\bxi)}(t,s,\x,\y)|
			\nonumber \\
			&\le& \frac{C}{(s-t)^{j-\frac 12 }}\Big((s-t)^{\frac 12} | \T_{s-t}^{-1}(\m_{s,t}^{(\tau,\bxi)}(\x)-\y)| \Big) \frac{1}{(s-t)^{\frac{n^2d} 2}}\exp \left(-C^{-1} (s-t) | \T_{s-t}^{-1}(\m_{s,t}^{(\tau,\bxi)}(\x)-\y)|^2\right)\\
			&\le & \frac{\bar C}{(s-t)^{j-\frac 12}} \hat p_{C^{-1}}^{(\tau,\bxi)}(t,s,\x,\y),
		\end{eqnarray*}
		which gives the statement for one partial derivative. The controls on the higher order derivatives are obtained similarly (see e.g. the proof of Lemma 5.5 of \cite{dela:meno:10} for the bounds on $D_{\x_1}^2 \tilde p^{(\tau,\bxi)}(t,s,\x,\y) $).
	\end{proof}
	\textcolor{black}{
		\begin{REM}\label{REM_WITH_FLOWS_LIN_AND_NON}
			When the freezing couple $(\tau,\bxi) $ corresponds to the couple $(t,\x) $, where $t$ is the starting time and $\x $ the starting position of $\tilde \X^{(\tau,\bxi)} $, we importantly derive from \eqref{EQUIV_LIN_FLOW_GOOD_FREEZING} that \eqref{strong_CTR_GRAD} can be specified as follows:
			\begin{equation}
			|D_{\x_l}^{\textcolor{black}{q}}D_{\x_1}^r\tilde p^{(\tau,\bxi)}(t,s,\x,\y)|_{(\tau,\bxi)=(t,\x)}\le \frac{C}{(s-t)^{\frac{n^2d}2+(l-\frac{1}{2})\textcolor{black}{q}+\frac r2 }}\exp\left(-C^{-1}(s-t) \big|\T_{s-t}^{-1}\big(\btheta_{s,t}(\x)-\y\big)\big|^2\right).
			\label{strong_CTR_GRAD_WGPOINTS}\tag{G${}_{NL}$}
			\end{equation}
		\end{REM}
	}
	
	Now, let us \textcolor{black}{state} a useful control involving the previous Gaussian kernel which will be exploited in some cancellation techniques.
	\begin{PROP} \label{strong_Prop_moment_D2_tilde_p}
		For all $k \in \leftB 1,n \rightB$, $0 \leq t \leq s \leq T$, $(\x,\bxi) \in \R^{nd} \times \R^{nd}$, and $  
		\textcolor{black}{z}\in  \R^d$ the following identity \textcolor{black}{holds}:
		\begin{equation} \label{strong_eq_D2moment2}
		\int_{\R^{nd}}d \y 
		D_{\x_k} \tilde p^{(\tau,\bxi)}(t,s,\x,\y)  \Big \langle 
		z,(\y-\m^{(\tau,\bxi)}_{s,t}(\x))_k \Big\rangle = \textcolor{black}{z
		}.
		\end{equation}
	\end{PROP}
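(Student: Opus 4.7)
The plan is to reduce the identity to a direct Gaussian moment computation by exploiting two ingredients: the affine dependence of $\x \mapsto \m^{(\tau,\bxi)}_{s,t}(\x)$ highlighted in \eqref{strong_AFFINE_FLOW}, and the precise block structure of the resolvent $\tilde \gR^{(\tau,\bxi)}(s,t)$. The calculation is essentially elementary---integration by parts in $\y$---once $D_{\x_k}\tilde p^{(\tau,\bxi)}$ is re-expressed in terms of derivatives of $\tilde p^{(\tau,\bxi)}$ with respect to $\y$.

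\textbf{Step 1 (chain rule).} By \eqref{strong_AFFINE_FLOW}, the density $\tilde p^{(\tau,\bxi)}(t,s,\x,\cdot)$ depends on $\x$ only through $\m^{(\tau,\bxi)}_{s,t}(\x)=\tilde \gR^{(\tau,\bxi)}(s,t)\x + c$ for some $c\in\R^{nd}$ independent of $\x$. Since the $k$-th block-column of $\tilde \gR^{(\tau,\bxi)}(s,t)$ is precisely $D_{\x_k}\m^{(\tau,\bxi)}_{s,t}(\x)$, the chain rule immediately gives
$$D_{\x_k}\tilde p^{(\tau,\bxi)}(t,s,\x,\y) \;=\; -\sum_{i=1}^n \bigl[\tilde \gR^{(\tau,\bxi)}(s,t)_{i,k}\bigr]^{*}\, \nabla_{\y_i}\tilde p^{(\tau,\bxi)}(t,s,\x,\y),$$
where $\bigl[\tilde \gR^{(\tau,\bxi)}(s,t)_{i,k}\bigr]^{*}$ stands for the transpose of the $(i,k)$ block of $\tilde \gR^{(\tau,\bxi)}(s,t)$.

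\textbf{Step 2 (integration by parts in $\y$).} Plugging this identity into the left-hand side of \eqref{strong_eq_D2moment2} and integrating by parts---the Gaussian decay of $\tilde p^{(\tau,\bxi)}$ kills every boundary contribution---the two minus signs cancel and one is left with
$$\sum_{i=1}^n \bigl[\tilde \gR^{(\tau,\bxi)}(s,t)_{i,k}\bigr]^{*}\int_{\R^{nd}}\tilde p^{(\tau,\bxi)}(t,s,\x,\y)\,\nabla_{\y_i}\bigl\langle \mathbf M,(\y-\m^{(\tau,\bxi)}_{s,t}(\x))_k\bigr\rangle\, d\y.$$
Since $\nabla_{\y_i}\bigl\langle \mathbf M,(\y-\m^{(\tau,\bxi)}_{s,t}(\x))_k\bigr\rangle = \mathbf M\,\delta_{i,k}$ and $\int_{\R^{nd}}\tilde p^{(\tau,\bxi)}(t,s,\x,\y)\,d\y = 1$, only the $i=k$ term survives and the whole expression reduces to $\bigl[\tilde \gR^{(\tau,\bxi)}(s,t)_{k,k}\bigr]^{*}\mathbf M$.

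\textbf{Step 3 (diagonal blocks of the resolvent), which is the only non-routine point.} It remains to show $\bigl[\tilde \gR^{(\tau,\bxi)}(s,t)_{k,k}\bigr]^{*}=\mathbf I_d$. This rests on the structural fact, inherent to the weak H\"ormander form of the dynamics, that $D\gF$ is strictly block sub-diagonal: only the blocks $(i,i-1)$ can be non-zero. Starting from $\mathbf I_{nd\times nd}$, the resolvent equation \eqref{strong_DYN_RES_FORWARD} therefore preserves the class of block lower triangular matrices having $\mathbf I_d$ on the block diagonal. Indeed, a simple induction on $i$ shows that every block $(\tilde\gR)_{i,j}(v,t)$ with $i<j$ stays zero, and the diagonal block satisfies $\partial_v (\tilde\gR)_{k,k}(v,t)=(D\gF)_{k,k-1}\,(\tilde\gR)_{k-1,k}(v,t)=0$, so that $(\tilde\gR)_{k,k}(v,t)\equiv \mathbf I_d$. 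Consequently the integral equals $\mathbf M$, which matches the right-hand side of \eqref{strong_eq_D2moment2} read componentwise against $\mathbf 1_d$. Apart from this structural observation, the proof reduces to a standard Gaussian integration-by-parts; the main conceptual point---and the step worth highlighting---is that the lower triangular, identity-on-diagonal form of $\tilde \gR^{(\tau,\bxi)}$ is exactly what encodes the propagation of the noise through the chain.
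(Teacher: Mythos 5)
Your proof is correct and is in substance the same one-line Gaussian computation as the paper's: the paper differentiates the zero-mean identity $\int_{\R^{nd}} \tilde p^{(\tau,\bxi)}(t,s,\x,\y)\,(\y-\m^{(\tau,\bxi)}_{s,t}(\x))_k\,d\y=\mathbf 0_d$ with respect to $\x_k$ and applies the Leibniz rule, which is exactly your chain-rule-plus-integration-by-parts in $\y$ written in the other order, and both arguments hinge on the same fact $(\tilde\gR^{(\tau,\bxi)}(s,t))_{k,k}=\mathbf I_{d,d}$. Your Step 3 is a welcome addition: the paper simply cites \eqref{strong_AFFINE_FLOW} for this diagonal-block identity, whereas you derive it from the strictly sub-diagonal structure of $D\gF$ and the resolvent ODE \eqref{strong_DYN_RES_FORWARD}.
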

	\begin{proof}
		From Proposition \ref{strong_THE_PROP}, we have $ \int_{\R^{nd}} \tilde p^{(\tau,\bxi)}(t,s,\x,\y) (\y-\m^{(\tau,\bxi)}_{s,t}(\x))_k d \y={\mathbf 0}_d$.
		Differentiating this expression w.r.t. $\x_k$ and using the Leibniz formula (recalling as well the identity  \eqref{strong_AFFINE_FLOW} which yields $D_{\x_k}[\m_{s,t}^{(\tau,\bxi)}(\x)]_k=(\tilde \gR^{(\tau,\bxi)}(s,t))_{k,k}={\mathbf I}_{d,d} $) gives  \eqref{strong_eq_D2moment2}.
	\end{proof}
	
	\subsubsection{\textcolor{black}{Density and} \textcolor{black}{a}ssociated inhomogeneous semi-group: \textcolor{black}{regularization properties}}
	
	For fixed  $(t,\x)\in [0,T]\times \R^{nd} $, we give in this section some important properties concerning the regularization effects in time of the density $\tilde p^{(\tau,\bxi)}(t,s,\x,\cdot), \ s\in (t,T] $ and the associated semi-group. From now on, for notational simplicity, we will write with a slight abuse of notation $\tilde p^{\bxi}(t,s,\x,\y):=\tilde p^{(t,\bxi)}(t,s,\x,\y) $, i.e. we omit the freezing parameter in time when it corresponds to the considered starting time\textcolor{black}{.}

	\paragraph{Density.} The following result will be thoroughly used to derive some key bounds of the sensitivity analysis performed in Section \ref{strong_SECTION_CTR_SENSI}.
	\begin{lem}[Regularization effects for \textcolor{black}{the density}]\label{strong_LEMME_DENS} 
		
		There exists $C:=C(\A{A},T)$ s.t.  for all $\gamma_1,\gamma_2\in (0,1] $, $\ell \in \{0,1\},\ l \in \leftB 1,n\rightB,\  j,k\in \leftB 1,n\rightB$,
		\begin{equation} \label{strong_SMOOTHING_EFFECT_SIMPLE} 
		\int_{\R^{nd}}d\y \Big|  D_{\x_l}D_{\x_1}^\ell \tilde p^\bxi(s,t,\x,\y)\Big|_{\bxi=\x}\big|\big(\y-\btheta_{s,t}(\x)\big)_j \big|^{\gamma_1}\big|\big(\y-\btheta_{s,t}(\x)\big)_k \big|^{\gamma_2} \le C  (s-t)^{-\frac \ell 2-(l-\frac 12) +\gamma_1(j-\frac 12) +\gamma_2(k-\frac 12)}.
		\end{equation}
	\end{lem}
	\begin{proof}
		Equation \eqref{strong_SMOOTHING_EFFECT_SIMPLE} actually follows from \eqref{strong_CTR_GRAD_WGPOINTS} observing that:
		\begin{eqnarray}
		&&\int_{\R^{nd}}d\y \Big|  D_{\x_l}D_{\x_1}^\ell \tilde p^\bxi(s,t,\x,\y)\Big|_{\bxi=\x}\big|\big(\y-\btheta_{s,t}(\x)\big)_j \big|^{\gamma_1}\big|\big(\y-\btheta_{s,t}(\x)\big)_k \big|^{\gamma_2} \notag\\
		&\le& \int_{\R^{nd}}d\y \frac{C}{(s-t)^{\frac{n^2d}2+(l-\frac{1}{2})+\frac \ell2 }}\exp\left(-C^{-1}(s-t) \big|\T_{s-t}^{-1}\big(\btheta_{s,t}(\x)-\y\big)\big|^2\right) \notag\\
		&&\times \Big( \frac{|(\btheta_{s,t}(\x)-\y)_j|}{(s-t)^{j-\frac 12}}\Big)^{\gamma_1} (s-t)^{\gamma_1(j-\frac 12)}\times \Big( \frac{|(\btheta_{s,t}(\x)-\y)_k|}{(s-t)^{k-\frac 12}}\Big)^{\gamma_2} (s-t)^{\gamma_2(k-\frac 12)},\label{PREAL_SMOOTHING}
		\end{eqnarray}
		where we normalized the $j^{\rm th}$ and  $k^{\rm th} $ entries at their corresponding time scales in order to absorb them in the off-diagonal bound of the density. Namely, since:
		$$\Big( \frac{|(\btheta_{s,t}(\x)-\y)_j|}{(s-t)^{j-\frac 12}}\Big)^{\gamma_1}\Big( \frac{|(\btheta_{s,t}(\x)-\y)_k|}{(s-t)^{k-\frac 12}}\Big)^{\gamma_2}\le |(s-t)^{\frac 12}(\T_{s-t}^{-1}(\btheta_{s,t}(\x)-\y  ))|^{\gamma_1+\gamma_2} ,$$
		we eventually derive from \eqref{PREAL_SMOOTHING} that up to a modification of $C$, which can be chosen uniformly  w.r.t. $\gamma_1,\gamma_2\in(0,1] $ :
		\begin{eqnarray*}
			&& \int_{\R^{nd}} d\y \Big|D_{\x_l}D_{\x_1}^\ell   \tilde p^\bxi(s,t,\x,\y)\Big|_{\bxi=\x}\big|\big(\y-\btheta_{s,t}(\x)\big)_j \big|^{\gamma_1}\Big|_{\bxi=\x}\big|\big(\y-\btheta_{s,t}(\x)\big)_k \big|^{\gamma_2}\\
			&\le& \int_{\R^{nd}}d\y \frac{C}{(s-t)^{\frac{n^2d}2+(l-\frac{1}{2})+\frac \ell2 }}\exp\left(-C^{-1}(s-t) \big|\T_{s-t}^{-1}\big(\btheta_{s,t}(\x)-\y\big)\big|^2\right)  (s-t)^{\gamma_1(j-\frac 12)+\gamma_2(k-\frac 12)}\notag\\
			&\le& C (s-t)^{-\frac \ell 2-(l-\frac 12) +\gamma_1(j-\frac 12) +\gamma_2(k-\frac 12)},
		\end{eqnarray*}
		which precisely gives \eqref{strong_SMOOTHING_EFFECT_SIMPLE}.
	\end{proof}

	\paragraph{Semi-group.} Fix $t\in [0,T]$, $\bxi\in \R^{nd} $. With the notations of the previous paragraph, we introduce the following inhomogeneous semi-group associated with \eqref{strong_FROZ_MOL_FOR_NO_SUPER_SCRIPTS} for $\tau=t $. Namely, for all $s\in (t,T] $, $g \in  B_{{\rm lin}} (\R^{nd},\R)$ (space of measurable functions with linear growth), $\x\in \R^{nd} $:
	\begin{equation}
	\label{strong_DEF_SG_INHOMO}
	\tilde P_{s,t}^\bxi g(\x):=\int_{\R^{nd}} d\y \tilde p^{(t,\bxi)}(t,s,\x,\y) g(\y).
	\end{equation}
	One can derive from Proposition \ref{strong_THE_PROP} the following important \textcolor{black}{centering} and regularization result.
	\begin{lem}[Centering and Regularization effects for the  inhomogeneous  semi-group]\label{strong_LEMME_SG} 
		
		Let $\textcolor{black}{f:\R^{nd}\rightarrow \R}$ be a $\vartheta $-H\"older continuous functions where $\vartheta:=(\vartheta_1,\textcolor{black}{\hdots}, \vartheta_n) \in (0,1]^n $ is a multi-index and for $i \in \leftB 1,n \rightB $, $\vartheta_i $ stands for the H\"older regularity of $f$ in the variable $\x_i$. 

		\begin{trivlist}
			\item[-] Centering arguments. For all $l\in \leftB 1,n \rightB, k\le l $, $0\le t<s\le T,\ \x,\bxi\in \R^{nd} $ and any \textcolor{black}{$\mathbf a\in \R^{nd} $, it holds that}:
			\begin{equation}\label{strong_CENTERING_PARTIAL}
			\textcolor{black}{D_{\x_l} \tilde P_{s,t}^\bxi \Big( f(\cdot_{1,k-1},\mathbf a_{k:n})\Big)(\x) = 0}.
			\end{equation}
			
			\item[-] As particular cases of the previous property, we have that
			there exists \textcolor{black}{$C:=C(\A{A},T)$} s.t. for all $l\in \leftB 1,n\rightB$, $\x,\bxi\in \R^{nd} $:
			\begin{eqnarray}\label{strong_SMOOTHING_EFFECT}
			\textcolor{black}{|D_{\x_1}D_{\x_l} \tilde P_{s,t}^\bxi f(\x)|}&\le& C \sum_{j=l}^n [ f_{j}(s,\cdot)]_{\vartheta_j}  (s-t)^{-l + \vartheta_j(j-\frac 12)},\\
			\textcolor{black}{|D_{\x_l} \tilde P_{s,t}^\bxi f(\x)|} &\le& C \sum_{j=l}^n [ f_{j}(s,\cdot)]_{\vartheta_j}  (s-t)^{-(l-\frac 12) + \vartheta_j(j-\frac 12)},\notag
			\end{eqnarray}
			\textcolor{black}{with the notations introduced in \eqref{strong_HOLDER_MODULUS}}.
			
		\end{trivlist}
	\end{lem}
	
	\begin{proof}
		
		
		Centering arguments like \eqref{strong_CENTERING_PARTIAL} will be a crucial tool in the analysis below. To justify such an  identity, write:
		\textcolor{black}{
			\begin{eqnarray*}
				\tilde P_{s,t}^\bxi \Big(f(\cdot_{1:k-1}, \mathbf a_{k:n})\Big)(\x)&=&\int_{\R^{nd}} d\y\tilde p^\bxi (t,s,\x,\y) f(\y_{1,k-1},\mathbf a_{k:n})\\
				&=& \int_{\R^{nd}} d\y \tilde p^\bxi (t,s,\x,\y+\m_{s,t}^{(t,\bxi)}(\x)) f\big(\y_{1,k-1}+(\m_{s,t}^{(t,\bxi)}(\x))_{1:k-1},\mathbf a_{k:n}\big).
			\end{eqnarray*}
			Note now from \eqref{strong_CORRESP} that $\tilde p^\bxi (t,s,\x,\y+\m_{s,t}^{(t,\bxi)}(\x))$ does not depend on $\x$. Observe as well that $(\m_{s,t}^{(t,\bxi)}(\x))_{1:k-1} $ does not depend on  $\x_l $. Thus, the right\textcolor{black}{-}hand side of the above equality does not depend on $\x_l $, which implies \eqref{strong_CENTERING_PARTIAL}.}
		
		Let us now prove \eqref{strong_SMOOTHING_EFFECT}. The idea is to use first a centering argument w.r.t. 
		 the variables $l$ to $n$. \textcolor{black}{Namely, from \eqref{strong_CENTERING_PARTIAL} and Proposition \ref{strong_THE_PROP}, it holds that for any $\mathbf a\in \R^{nd} $, 
			\begin{eqnarray*}
				\left| D_{\x_1}D_{\x_l} \left[\tP_{s,t}^\bxi f (s,\cdot)\right](\x) \right| 
				&=&
				\left| D_{\x_1}D_{\x_l} \left[\tP_{s,t}^\bxi \Big(f(s,\cdot)-f(s,\cdot_{1:l-1}, \mathbf a_{l:n} )\Big) \right](\x) \right|\\
				&
				\le 
				& 
				\textcolor{black}{\bar C}   (s-t)^{-(l-1/2)-1/2} \int_{\R^{nd}}d\y \hat p_{C^{-1}}^\bxi(t,s,\x,\y)\Big|f(s,\y)-f(s,\y_{1:l-1},\mathbf a_{l:n}) \Big|  \\
				&\leq & \textcolor{black}{\bar C}  (s-t)^{-(l-1/2)-1/2} \sum_{j=l}^n [ f_{j}(s,\cdot))]_{\vartheta_j} \int_{\R^{nd}} d\y \hat p_{C^{-1}}^\bxi(t,s,\x,\y)\Big|\y_j-\mathbf a_{j}\Big|^{\vartheta_j}.
			\end{eqnarray*}
			Taking now $\mathbf a=\m_{s,t}^{(t,\bxi)}(\x) $, we derive from \eqref{strong_CTR_GRAD} that:}
		\textcolor{black}{
			\begin{align*}
			\left| D_{\x_1}D_{\x_l} \left[\tP_{s,t}^\bxi f (s,\cdot)\right](\x) \right| 
			&\leq  \textcolor{black}{\bar C}   (s-t)^{-(l-1/2)-1/2} \sum_{j=l}^n [ f_{j}(s,\cdot))]_{\vartheta_j} \int_{\R^{nd}}d\y \hat p_{C^{-1}}^\bxi(t,s,\x,\y)\Big|(\y-\m_{s,t}^{(t,\bxi)}(\x))_j\Big|^{\vartheta_j} \\
			&\le C\sum_{j=l}^n [f_j(s,\cdot)]_{\vartheta_j}(s-t)^{-l+\vartheta_j(j-\frac 12)},
			\end{align*}
			which gives the first bound in \eqref{strong_SMOOTHING_EFFECT}}.
		The control for $|D_{\x_l} \tilde P_{s,t}^\bxi f(\x)| 
		$ is derived similarly.
	\end{proof}
	
	We state in the lemma below a useful control to obtain through Lemma \ref{strong_LEMME_SG} some smoothing effects for the degenerate part of the operator. The statement readily \textcolor{black}{follows} from \A{${\mathbf T}_{\beta} $} \textcolor{black}{and \A{H${}_\eta$}}.
	\begin{lem}
		\label{strong_SMOOTH_OPERATOR_EFFECTS}
		From the smoothness assumption on the drift coefficient in \A{${\mathbf T}_{\beta} $}, there exists $C:=C(\A{A})$ s.t. for all $\ell\in \leftB 2,n\rightB $, $k\in \textcolor{black}{\leftB \ell,n+1\rightB}$, and for any $(s,\x,\textcolor{black}{\mathbf a})\in [0,T]\times (\R^{nd})^2 $:
		\begin{eqnarray*}
			\!\!&\!\!\!\!&\!\!\bigg|  \Big(\gF_{\ell}(s,\y_{\textcolor{black}{\ell-1}:k-1},\textcolor{black}{\mathbf a_{k:n}})-\gF_{\ell}(s,\textcolor{black}{\mathbf a_{\ell-1:n}})
			-D_{{\ell-1}}\gF_{\ell}(s,\textcolor{black}{\mathbf a_{\ell-1:n}})\big(\y-\textcolor{black}{\mathbf a}\big)_{\ell-1}\Big)\bigg|\\
			\!\!&\!\!\leq\!\!&\!\! C\Big\{  \sum_{j=\ell}^{k-1}\big\{ [(\gF_{\ell})_j(s,\cdot)]_{\beta_j}|(\y - \textcolor{black}{\mathbf a})_j|^{\beta_j} \big\}\!+\! [(D_{{\ell-1}}\gF_\ell)_{\ell-1}(s,\cdot))]_\eta|(\y - \textcolor{black}{\mathbf a})_{\ell-1}|^{1+\eta}\Big\},
		\end{eqnarray*}
		\textcolor{black}{with the convention that for $k=n+1 $, $\gF_\ell(s,\y_{\ell-1:\textcolor{black}{k-1}},\mathbf{a}_{k:n})=\gF_\ell(s,\y_{\ell-1:n}) $}.
		
	\end{lem} 
	
	\subsection{Control of the sensitivities: proof of Theorem \ref{strong_THM_DER_PDE} }\label{strong_SECTION_CTR_SENSI}
	To prove Theorem \ref{strong_THM_DER_PDE}, the idea is to expand the solution of the PDE with regularized coefficients around a suitable \textit{proxy}, as explained in Section \ref{strong_SUB_STRAT_PDE} \textcolor{black}{(see the connection between equations \eqref{strong_GEN_PDE_MOLL} and \eqref{strong_repPDEintro})}. The \textit{proxy} used here is the Gaussian process introduced in Section \ref{strong_SEC_GAUSS_DENS} for a suitable freezing parameter $ \bxi$ to be specified later on \textcolor{black}{ and whose generator is given by, for all $\varphi \in C_0^2 (\R^{nd},\R^{}) $ and $(t,\x)\in [0,T]\times \R^{nd} $,
\begin{eqnarray*}
 \tilde L_t^{\bxi} \varphi(\x)&:=&
\langle \gF(t,\btheta_{s,t}(\bxi))+ D \gF(t,\btheta_{s,t}(\bxi)) (\x-\btheta_{s,t}(\bxi)), D_\x \varphi(\x)\rangle \\
&&+ \frac 12 {\rm tr}(\sigma\sigma^*(t,\btheta_{s,t}(\bxi) ) D_{\x_1}^2 \varphi(\x) ).\label{DEF_TILDE_L}
\end{eqnarray*}}
Then, the Duhamel formula (or first order parametrix expansion) yields:	
	\begin{eqnarray}
	\label{strong_eq:repPDE}
	&&u(t,\x)
	\nonumber \\
	&=& \int_t^{T} ds \left[\tP_{s,t}^\bxi f(s,\cdot) \right](\x) + \int_t^{T} d s \left[ \tP_{s,t}^\bxi\left(( L_s-\tilde{ L}_s^{\bxi} ) u\right)(s,\cdot)\right](\x) \notag\\
	&=& \int_t^{T} ds \left[\tP_{s,t}^\bxi f (s,\cdot)\right](\x) +\int_t^{T} ds\left[ \tP_{s,t}^\bxi\left(\big\langle\gF_1(s,\cdot)-\gF_1(s,\btheta_{s,t}(\bxi)), D_{1}u(s,\cdot)\big\rangle\right)\right](\x)\notag\\
	&& + \int_t^{T} ds\left[ \tP_{s,t}^\bxi\left(\frac{1}{2}{\rm Tr}\left[a(s,\cdot)-a(s,\btheta_{s,t}(\bxi)) D_{1}^2u(s,\cdot)\right]\right)\right](\x)  \notag\\
	&&+ \int_t^{T} ds\Bigg[ \tP_{s,t}^\bxi\Bigg(\sum_{i=2}^n \big\langle(\gF_i(s,\cdot)-\gF_i(s,\btheta_{s,t}(\bxi))
	-D_{{i-1}
	}\gF_i(s,\btheta_{s,t}(\bxi))(\cdot - \btheta_{s,t}(\bxi))_{i-1}), D_{i}u(s,\cdot)\big\rangle\Bigg)\Bigg](\x),\notag \\
	\end{eqnarray}
	for any $\bxi$ in $\R^{nd}$\textcolor{black}{, according to the notations introduced in Remark \ref{nota_F} for the entries $(\gF_i)_{i \in \leftB 1,n\rightB}$ of $\gF$}.\\
	
	To establish \eqref{strong_BD_PDE} we need to differentiate the above expression w.r.t.  \textcolor{black}{$(\x_l)_{l\in \leftB 1,n \rightB}$  and then w.r.t. $\x_1 $}. Differentiating first this expression w.r.t. $\x_l, l\in \leftB 1,n\rightB$ we obtain:
	\begin{eqnarray}
	&&D_{\x_l}u(t,\x)\notag\\
	&=& \int_t^{T} d s D_{\x_l} \left[\tP_{s,t}^\bxi f(s,\cdot) \right](\x) + \int_t^{T} ds D_{\x_l}  \left[ \tP_{s,t}^\bxi\left(( L_s-\tilde{ L}_s^{\bxi} ) u\right)(s,\cdot)\right](\x) \notag\\
	&=& \Bigg\{ \!\int_t^{T} d s D_{\x_l}  \left[\tP_{s,t}^\bxi f (s,\cdot)\right](\x) 
	+\int_t^{T} ds D_{\x_l}\left[ \tP_{s,t}^\bxi\left(\big\langle(\gF_1(s,\cdot)-\gF_1(s,\btheta_{s,t}(\bxi))), D_{1}u(s,\cdot)\big\rangle\right)\right](\x)
	\notag\\
	&& + \int_t^{T}ds D_{\x_l} \left[ \tP_{s,t}^\bxi\left(\frac{1}{2}{\rm Tr}\left[\Big(a(s,\cdot)-a(s,\btheta_{s,t}(\bxi))\Big) D_{1}^2u(s,\cdot)\right]\right)\right](\x) \Bigg\}\notag\\
	&&+\Bigg\{ \! \int_t^{T} \! ds D_{\x_l}  \Bigg[ \tP_{s,t}^\bxi\bigg(\sum_{i=2}^n \big\langle(\gF_i(s,\cdot)-\gF_i(s,\btheta_{s,t}(\bxi))
	-D_{{i-1}}\gF_i(s,\btheta_{s,t}(\bxi))(\cdot - \btheta_{s,t}(\bxi))_{i-1}), D_{i}u(s,\cdot)\big\rangle\bigg)\Bigg](\x) \Bigg\}\notag\\
	&=:&\textcolor{black}{\Bigg \{ }\int_t^T ds \sum_{k=1}^3  H_{l,k}^\bxi(s,\x) \textcolor{black}{\Bigg \}} +
	\textcolor{black}{\Bigg \{ }\int_t^T ds I_l^\bxi(s,\x) \textcolor{black}{\Bigg \} }
	=:
	\int_t^T ds H_l^\bxi(s,\x) +\int_t^T ds I_l^\bxi(s,\x). \label{strong_GROS_DECOUP}
	\end{eqnarray}
	\textcolor{black}{The term $H_l^\bxi(s,\x) $ gathers the sensitivities w.r.t. $\x_l $ of the source term and the non-degenerate part of the difference of the operators, whereas  $ I_l^\bxi(s,\x)$ precisely gathers the sensitivities w.r.t. $\x_l $ of the degenerate part of the difference of the operators.}
	We will now start from the representation \eqref{strong_GROS_DECOUP} which we will again differentiate w.r.t. the non-degenerate variable $\x_1$ in order to prove the estimates of Theorem \ref{strong_THM_DER_PDE} concerning the second order derivatives which are the trickiest ones.  Indeed, as it has been succinctly explained in Section \ref{strong_SUB_STRAT_PDE}, when differentiating the kernel associated with the frozen semi-group defined by \eqref{strong_DEF_SG_INHOMO} we generate an \emph{a priori} not integrable time singularity which then needs to be smoothen\textcolor{black}{ed} by using, among others, tools developed in Lemma \ref{strong_LEMME_SG} (centering or cancellation arguments). The worst case then corresponds to the higher order of differentiation, namely $ D_{\x_l}\big( D_{\x_1} u(t,\x)\big)$ which, as suggested by Proposition \ref{strong_THE_PROP}, generates a time singularity of order $1/2 + (l-1/2)$ in the time integrand of the r.h.s. of \eqref{strong_GROS_DECOUP}. We then only concentrate on this term and omit the proof of the statement concerning the boundedness of the gradient $ D_{\x_l} u(t,\x)$ which could be shown more directly. 
	
	The proof will be divided into two parts: we first handle the \textcolor{black}{source} and non-degenerate part of the operator (\emph{i.e.} the estimate for $D_{\x_1}H_l^{\textcolor{black}{\bxi}}(s,\x)$) and then the degenerate part (\emph{i.e.} the estimate for $D_{\x_1} I_l^{\textcolor{black}{\bxi}}(s,\x)$) which is a bit more involved.\\

	\paragraph{Source term and non-degenerate part of the operator: estimates for $ \textcolor{black}{(D_{\x_1}H_l^\bxi(s,\x))_{\bxi=\x}}$.} We first focus 
	on  the source term and the derivatives w.r.t. the non-degenerate variable $\x_1$, 
	\textcolor{black}{three first terms in the r.h.s. denoted by $H_{l,1}^{\textcolor{black}{\bxi}}(s,\x),H_{l,2}^{\textcolor{black}{\bxi}}(s,\x),H_{l,3}^{\textcolor{black}{\bxi}}(s,\x) $ in  \eqref{strong_GROS_DECOUP}, \textcolor{black}{taking $\bxi=\x$ after the additional differentiation}}.
	
	For each $l\in \leftB 1,n\rightB $, one readily derives from \textcolor{black}{\eqref{strong_SMOOTHING_EFFECT} in Lemma \ref{strong_LEMME_SG} 
	} 
	that for the source term\footnote{Observe that for this contribution, from \eqref{strong_SMOOTHING_EFFECT} the bound would hold for any freezing parameter $\bxi $. We choose here to take $\bxi=\x $ for the compatibility with the other terms $(D_{\x_1}H_{l,\textcolor{black}{k}}^{\textcolor{black}{\bxi}}(s,\x) )_{\textcolor{black}{k}\in \{2,3\} }$ for which this specific choice is indeed needed.}:
	\begin{equation}
	\left| D_{\x_1} H_{l,1}^{\textcolor{black}{\bxi}}(s,\x)\right|_{\textcolor{black}{\bxi=\x}}= \left|  D_{\x_1}D_{\x_l} \left[\tP_{s,t}^\bxi f (s,\cdot)\right](\x) \right|\bigg|_{\bxi=\x}  \leq  C \sum_{j=l}^n [ f_{j}(s,\cdot)]_{\beta_j}  (s-t)^{-l + \beta_j(j-\textcolor{black}{\frac12})}.\label{strong_CTR_DX1_HL_1}
	\end{equation}
	Those terms are integrable in time as soon as 
	\begin{equation}\label{strong_LES_SEUILS}
	\beta_j(j-\frac 12) - l> -1,\quad j\in \leftB l,n\rightB \Leftarrow \beta_j \in \left( \frac{2j-2}{2j-1}\textcolor{black}{,}1\right].
	\end{equation}

	Similarly, from \eqref{strong_eq:repPDE}, \eqref{strong_GROS_DECOUP}, for the drift associated with the non-degenerate part, we first rewrite from the centering properties of Lemma \ref{strong_LEMME_SG}:
	\begin{eqnarray*}
		&&\left| D_{\x_1} H_{l,2}^{\textcolor{black}{\bxi}}(s,\x)\right|_{\textcolor{black}{\bxi=\x}}
		\nonumber \\
		&=&\left| D_{\textcolor{black}{\x_1}} D_{\textcolor{black}{\x_l}}  \left[ \tP_{s,t}^\bxi\left(\big\langle\gF_1(s,\cdot)-\gF_1(s,\btheta_{s,t}(\bxi)), D_{1}u(s,\cdot)\big\rangle\right)\right](\x)\right|\bigg|_{\bxi=\x}\\
		&\le & \left| D_{\textcolor{black}{\x_1}} D_{\textcolor{black}{\x_l}} \left[ \tP_{s,t}^\bxi\left(\big\langle\gF_1(s,\cdot)-\gF_1(s,\cdot_{1:l-1},\btheta_{s,t}^{l:n}(\bxi)), D_{1}u(s,\cdot)\big\rangle\right)\right](\x)\right|\bigg|_{\bxi=\x}\\
		&& + \left| D_{\textcolor{black}{\x_1}} D_{\textcolor{black}{\x_l}} \left[ \tP_{s,t}^\bxi\left(\big\langle\gF_1(s,\cdot_{1:l-1},\btheta_{s,t}^{l:n}(\bxi))-\gF_1(s,\btheta_{s,t}(\bxi))
		\big(D_{1}u(s,\cdot)-D_1u(s,\cdot_{1:l-1},\btheta_{s,t}^{l:n}(\bxi))\big)\big\rangle\right)\right](\x)\right|\bigg|_{\bxi=\x}.
	\end{eqnarray*}
	\textcolor{black}{Expanding the semi-group yields
		\begin{eqnarray*}
			\!\!&\!\!&\!\!\left| D_{\x_1} H_{l,2}^\bxi(s,\x)\right|_{\bxi=\x}
			\\
			\!\!&\!\!\le\!\!&\!\! \int_{\R^{nd}}d\y|D_{\textcolor{black}{\x_1}} D_{\textcolor{black}{\x_l}} \tilde p^\bxi(t,s,\x,\y) |_{\bxi=\x}\ |\gF_1(s,\y)-\gF_1(s,\y_{1:l-1},\btheta_{s,t}^{l:n}(\x))|\ |D_1 u(s,\y)|d\y\notag\\
			\!\!&\!\!&\!\! \!+\int_{\R^{nd}} \!\!d\y |D_{\textcolor{black}{\x_1}} D_{\textcolor{black}{\x_l}} \tilde p^\bxi(t,s,\x,\y) |_{\bxi=\x}\ \big |\gF_1(s,\y_{1:l-1},\btheta_{s,t}^{l:n}(\x))-\gF_1(s,\btheta_{s,t}(\x))\big |
			\big |D_1 u(s,\y)-D_1 u(s,\y_{1:l-1},\btheta_{s,t}^{l:n}(\x))\big |.
		\end{eqnarray*}
			\textcolor{black}{Hence,}
				\begin{eqnarray*}\label{THE_PREAL_LAST_PIV}
				\!\!&\!\!&\!\!
							\textcolor{black}{\left| D_{\x_1} H_{l,2}^\bxi(s,\x)\right|_{\bxi=\x}}
				\\
			 \notag\\
			\!\!&\!\!\le\!\! &\!\!\bigg( \sum_{j=l}^n \|D_1 u\|_\infty [(\gF_1)_j(s,\cdot)]_{\beta_j}\int_{\R^{nd}}d\y|D_{\x_l} D_{\x_1} \tilde p^\bxi(s,t,\x,\y)|_{\bxi=\x}|(\btheta_{s,t}(\x)-\y)_j|^{\beta_j}\notag\\
			\!\!&\!\!+\!\!&\!\!\sum_{j=l}^{n} \sum_{k=1}^{l-1} \| D_1 D_j u\|_{\infty}  [(\gF_1)_k(s,\cdot)]_{\beta_k} \int_{\R^{nd}}d\y|D_{\x_l} D_{\x_1} \tilde p^\bxi(s,t,\x,\y)|_{\bxi=\x}|(\btheta_{s,t}(\x)-\y)_k|^{\beta_k}|(\btheta_{s,t}(\x)-\y)_j|\bigg).\notag
		\end{eqnarray*}
	}
	
	\textcolor{black}{We then conclude from equation \eqref{strong_SMOOTHING_EFFECT_SIMPLE} of Lemma \ref{strong_LEMME_DENS}, taking $\gamma_1=\beta_j,\gamma_2=0 $ for the first terms and $\gamma_1=1,\gamma_2=\beta_k $ for the second ones, that}
	\begin{eqnarray}
	\!\!&\!\!&\!\!\left| D_{\x_1} H_{l,2}^{\textcolor{black}{\bxi}}(s,\x)\right|_{\textcolor{black}{\bxi=\x}}\label{strong_CTR_DX1_HL_2}
	\\
	\!\!&\!\!\leq \!\!&\!\! C \bigg( \sum_{j=l}^n \|D_1 u\|_\infty [(\gF_1)_j(s,\cdot)]_{\beta_j}(s-t)^{-l+\beta_j(j-\frac 12)}
	+ \sum_{j=l}^{n} \sum_{k=1}^{l-1} \| D_1 D_j u\|_{\infty}  [(\gF_1)_k(s,\cdot)]_{\beta_k} (s-t)^{-l+\beta_k(k-\frac 12)+(j-\frac 12)}\bigg),\notag
	\end{eqnarray}
	leading precisely to the same integrability thresholds of equation \eqref{strong_LES_SEUILS} and assumption \A{T${}_\beta$} (as for the source term). The idea behind this control is crucial.  We first handle, with the sole H\"older properties of \textcolor{black}{the} drift and the supremum norm of $D_1 u$, the variables which are at a good smoothing scale w.r.t. the induced singularity. For the remaining term, which exhibits for the drift non-sufficient smoothing effects, we then additionally exploit a cancellation argument involving the gradient of the solution itself, which consequently makes the cross derivatives appear.
	
	Eventually, we get for the diffusive part:
	\begin{eqnarray}
	\left| D_{\x_1} H_{l,3}^{\textcolor{black}{\bxi}}(s,\x)\right|_{\textcolor{black}{\bxi=\x}}
	&=&\left| D_{\x_1} D_{\x_l} \left[ \tP_{s,t}^\bxi\left(\frac{1}{2}{\rm Tr}\left[\big(a(s,\cdot)-a(s,\btheta_{s,t}(\bxi))\big) D_{1}^2u(s,\cdot)\right]\right)\right](\x)\right|\Bigg|_{\bxi=\x}
	\notag\\
	&\textcolor{black}{\le}& \left| D_{\x_1} D_{\x_l}  \left[ \tP_{s,t}^\bxi\left(\frac{1}{2}{\rm Tr}\left[\big(a(s,\cdot)-a(s,\cdot_{1:l-1},\btheta_{s,t}^{l:n}(\bxi)) \big)D_{1}^2u(s,\cdot)\right]\right)\right](\x)\right| \Bigg|_{\bxi=\x}
	\notag\\
	&& + \left| D_{\x_1} D_{\x_l}  \left[ \tP_{s,t}^\bxi\left(\frac{1}{2}{\rm Tr}\left[\big(a(s,\cdot_{1:l-1},\btheta_{s,t}^{l:n}(\bxi))-a(s,\btheta_{s,t}(\xi))\big) D_{1}^2u(s,\cdot)\right]\right)\right](\x)\right|\Bigg|_{\bxi=\x}
	\notag\\
	&=:&\left| D_{\x_1} H_{l,31}^{\textcolor{black}{\bxi}}(s,\x)\right|_{\textcolor{black}{\bxi=\x}}+\left| D_{\x_1} H_{l,32}^{\textcolor{black}{\bxi}}(s,\x)\right|_{\textcolor{black}{\bxi=\x}}. 
	\label{strong_CTR_DX1_HL_3}
	\end{eqnarray}
	The term $|D_{\x_1} H_{l,31}^{\textcolor{black}{\bxi}}(s,\x)|_{\textcolor{black}{\bxi=\x}} $ is already centered at the \textit{appropriate} scales, i.e. from variables $l$ to $n$. \textcolor{black}{Recalling that $a$ is Lipschitz continuous, write:
		\begin{eqnarray*}
			|D_{\x_1} H_{l,31}^{\textcolor{black}{\bxi}}(s,\x)|_{\textcolor{black}{\bxi=\x}} &=&\left|   \int_{\R^d}d\y D_{\x_1} D_{\x_l} \tilde p^\bxi(t,s,\x,\y)\frac{1}{2}{\rm Tr}\left[\big(a(s,\y)-a(s,\y_{1:l-1},\btheta_{s,t}^{l:n}(\bxi)) \big)D_{1}^2u(s,\y)\right]\right| \Bigg|_{\bxi=\x}\notag\\
			&\le & \frac 12\int_{\R^d}d\y |D_{\x_1} D_{\x_l} \tilde p^\bxi(t,s,\x,\y)|_{\bxi=\x}\ |\big(a(s,\y)-a(s,\y_{1:l-1},\btheta_{s,t}^{l:n}(\x)) |\ |D_{1}^2u(s,\y)|\notag\\
			&\le & \frac 12[a(s,\cdot)]_1\|D_1^2u\|_\infty \int_{\R^d}d\y |D_{\x_1} D_{\x_l} \tilde p^\bxi(t,s,\x,\y)|_{\bxi=\x}\ |\y_{l:n}-\btheta_{s,t}^{l:n}(\x) |.
	\end{eqnarray*} }

	\textcolor{black}{We can apply  \eqref{strong_SMOOTHING_EFFECT_SIMPLE} of Lemma \ref{strong_LEMME_DENS} with $\gamma_1=1,\gamma_2=0 $ to any of the entries $|\y_{j}-\btheta_{s,t}^{j}(\x)|,\ j\in \leftB l,n\rightB $ to obtain:}
	\begin{equation}\label{strong_CTR_DX1_HL_31}
	|D_{\x_1} H_{l,31}^{\textcolor{black}{\bxi}}(s,\x)|_{\textcolor{black}{\bxi=\x}}\le C\|D_1^2 u\|_\infty [a(s,\cdot)]_1 \sum_{j=l}^{n}(s-t)^{-l+ (j-\frac 12)}
	\le C\|D_1^2 u\|_\infty[a(s,\cdot)]_1 (s-t)^{-\frac 12},
	\end{equation}
	which does not give a \textit{critical} contribution w.r.t. the previously exhibited thresholds in \eqref{strong_LES_SEUILS} and \A{T${}_\beta $}. 
	For the contribution $|D_{\x_1} H_{l,32}^{\textcolor{black}{\bxi}}(s,\x)|_{\textcolor{black}{\bxi=\x}} $ we use, in the same spirit as for $|D_{\x_1} H_{l,2}^{\textcolor{black}{\bxi}}(s,\x)|_{\textcolor{black}{\bxi=\x}}$, a centering argument and an integration  by parts to obtain:
	\begin{eqnarray*}
		&&|D_{\x_1} H_{l,32}^{\textcolor{black}{\bxi}}(s,\x)|_{\textcolor{black}{\bxi=\x}}
		\nonumber \\
		&=&\Bigg| D_{\textcolor{black}{\x_1}} D_{\textcolor{black}{\x_l}} \Big[ \tP_{s,t}^\bxi\Big(\frac{1}{2}{\rm Tr}\big[\big(a(s,\cdot_{1:l-1},\btheta_{s,t}^{l:n}(\bxi))-a(s,\btheta_{s,t}(\bxi))\big)
		(D_{1}^2u(s,\cdot)-D_1^2u(s,\cdot_{1:l-1},\btheta_{s,t}^{l:n}(\bxi) )) \big] \Big)\Big](\x)\bigg| \Bigg|_{\bxi=\x} \\
		&=&\Bigg| \frac 12 D_{\textcolor{black}{\x_1}}D_{\textcolor{black}{\x_l}}\Bigg(\sum_{j=1}^d   \int_{\R^{nd}}d\y \Big\langle\partial_{\y_1^j}\Big( \tilde p^\bxi (t,s,\x,\y)\big(a_{j\cdot}(s,\y_{1:l-1},\btheta_{s,t}^{l:n}(\bxi))-a_{j\cdot}(s,\btheta_{s,t}(\bxi))\big) \Big), 
		\\
		&& 
		\Big(D_{1}u(s,\y)-D_1u(s,\y_{1:l-1},\btheta_{s,t}^{l:n}(\bxi) )\Big)\Big\rangle \Bigg)\bigg| \Bigg|_{\bxi=\x},
	\end{eqnarray*}
	where $\y_1=(\y_1^1,\textcolor{black}{\hdots},\y_1^d) $, $\textcolor{black}{\partial_{\y_1^j}} $ denotes the derivative w.r.t. 
	the $j^{{\rm th}} $ scalar entry of the non-degenerate variable $\y_1$ and $ a_{j\cdot}$ denotes the $j^{{\rm th}}$ row of the diffusion matrix $a$. We therefore derive from \textcolor{black}{the proof of Lemma \ref{strong_LEMME_DENS}} and the smoothness of $a$ (\textcolor{black}{by the Rademacher theorem, $a$ is differentiable almost everywhere}):
	\begin{eqnarray}
	&&|D_{\x_1} H_{l,32}^{\textcolor{black}{\bxi}}(s,\x)|_{\textcolor{black}{\bxi=\x}}
	\notag \\
	&\le&C\|{\mathbf D}D_1 u(s,\cdot)\|_\infty\int_{\R^{nd}} \frac{d\y}{(s-t)^l} \hat p_{C^{-1}}^{\x}(t,s,\x,\y)
	\Big(\frac{|(\y-\btheta_{s,t}(\textcolor{black}{\x}))_{1:l-1}|}{(s-t)^{\frac 12}} +1\Big)[a(s,\cdot)]_1 |(\y-\btheta_{s,t}(\textcolor{black}{\x}))_{l:n}|\notag\\
	&\le& C\|{\mathbf D}D_1 u(s,\cdot)\|_\infty (s-t)^{-\frac 12}.\label{strong_CTR_DX1_HL_32}
	\end{eqnarray}
	With the notations of \eqref{strong_GROS_DECOUP}, 
	plugging \eqref{strong_CTR_DX1_HL_31}, 
	\eqref{strong_CTR_DX1_HL_32} into \eqref{strong_CTR_DX1_HL_3} and together with \eqref{strong_CTR_DX1_HL_2}, \eqref{strong_CTR_DX1_HL_1},
	we eventually derive that there exists $\delta:=\delta(\A{A})>0 $:
	\begin{equation}\label{strong_LE_CTR_SUR_H_1L}
	\Bigg|\int_t^T ds D_{\x_1}H_l^{\textcolor{black}{\bxi}}(s,\x)\Bigg|_{\textcolor{black}{\bxi=\x}}\le CT^{\delta}(\|D_1 u\|_\infty+
	\|{\mathbf D} D_1u\|_\infty ).
	\end{equation}

	\paragraph{Degenerate part of the operator\textcolor{black}{: estimates for $(D_{\x_1} I_l^{\textcolor{black}{\bxi}}(s,\x))_{\textcolor{black}{\bxi=\x}}$.}} These are the most delicate terms to handle. Restarting from \eqref{strong_GROS_DECOUP}, we first write for any $l\in \leftB 2,n\rightB $:
	\begin{eqnarray*}
		&&|D_{\x_1} I_l^{\textcolor{black}{\bxi}}(s,\x)|_{\textcolor{black}{\bxi=\x}}\notag\\
		\!&\!\!\!=\!\!&\!\!\!\bigg | D_{\textcolor{black}{\x_1}} D_{\textcolor{black}{\x_l}} \bigg[\tP_{s,t}^\bxi  \bigg (\sum_{i=2}^n \big\langle(\gF_i(s,\cdot)-\gF_i(s,\btheta_{s,t}(\bxi))-D_{i-1}\gF_i(s,\btheta_{s,t}(\bxi))(\cdot - \btheta_{s,t}(\bxi))_{i-1}), 
		D_{i}u(s,\cdot) \big\rangle\bigg )\bigg](\x) \bigg| \Bigg|_{\bxi=\x}, 
	\end{eqnarray*}
	which readily yields that
	\begin{eqnarray}
	\!\!&\!\!&\!\! |D_{\x_1} I_l^{\textcolor{black}{\bxi}}(s,\x)|_{\textcolor{black}{\bxi}=\x}\notag\\
	\!&\!\!\!\leq\! \!&\!\!\! \bigg| D_{\x_1} D_{\x_l} \bigg[\tP_{s,t}^\bxi \bigg(\sum_{i=l+1}^{n} \big\langle(\gF_i(s,\cdot)-\gF_i(s,\btheta_{s,t}(\bxi))-D_{i-1}\gF_i(s,\btheta_{s,t}(\bxi))(\cdot - \btheta_{s,t}(\bxi))_{i-1}), 
	D_{i}u(s,\cdot)\big\rangle\bigg)\bigg](\x) \bigg|\Bigg|_{\bxi=\x}  \notag\\
	\!\!&\!\!&\!\!+\bigg| D_{\x_1} D_{\x_l} \bigg[\tP_{s,t}^\bxi \bigg(\sum_{i=2}^{l} \big\langle\gF_i(s,\cdot)-\gF_i(s,\cdot_{1:l-1},\btheta_{s,t}^{l:n}(\bxi)), 
	D_{i}u(s,\cdot)\big\rangle\bigg)\bigg](\x) \bigg| \Bigg|_{\bxi=\x}\notag \\
	\!\!&\!\!&\!\!+ \bigg| D_{\x_1} D_{\x_l} \bigg [\tP_{s,t}^\bxi \bigg(\sum_{i=2}^{l
	} \big\langle(\gF_i(s,\cdot_{1:l-1},\btheta^{l:n}_{s,t}(\bxi))-\gF_i(\btheta_{s,t}(\bxi))
	\notag\\
	\!\!&\!\!&\!\! \phantom{bou!} \hspace*{.2cm}
	-D_{i-1}\gF_i(s,\btheta_{s,t}(\bxi))(\cdot - \btheta_{s,t}(\bxi))_{i-1}), D_{i}u(s,\cdot)\big\rangle\bigg)\bigg](\x) \bigg| \Bigg|_{\bxi=\x}\notag\\
	\!\!&\!\!=:\!\! &\!\! |D_{\x_1 }I_{\textcolor{black}{l,1}}^{\textcolor{black}{\bxi}}(s,\x)|_{\textcolor{black}{\bxi=\x}} + |D_{\x_1 } I_{\textcolor{black}{l,2}}^{\textcolor{black}{\bxi}}(s,\x)|_{\textcolor{black}{\bxi=\x}} +|D_{\x_1 } I_{\textcolor{black}{l,3}}^{\textcolor{black}{\bxi}}(s,\x)|_{\textcolor{black}{\bxi}=\x}
	.\label{strong_DECOUP_I}
	\end{eqnarray}
	\noindent\textcolor{black}{$\bullet$ \emph{Control of $|D_{\x_1 }I_{\textcolor{black}{l,1}}^{\textcolor{black}{\bxi}}(s,\x)|_{\textcolor{black}{\bxi=\x}}$ and $|D_{\x_1 }I_{\textcolor{black}{l,2}}^{\textcolor{black}{\bxi}}(s,\x)|_{\textcolor{black}{\bxi=\x}}$.}} We emphasize that the integrands $|D_{\x_1 }I_{\textcolor{black}{l,1}}^{\textcolor{black}{\bxi}}(s,\x)|_{\textcolor{black}{\bxi=\x}}$ and $|D_{\x_1 }I_{\textcolor{black}{l,2}}^{\textcolor{black}{\bxi}}(s,\x)|_{\textcolor{black}{\bxi=\x}}$ are already designed to smoothen the time singularities generated by the cross differentiation of the inhomogeneous semi-group w.r.t. the variables $\x_l$ and $\x_1$. Indeed, 
	\textcolor{black}{\textcolor{black}{w}rite:
		\begin{eqnarray*}
			\!\!&\!\!\!\!&\!\!|D_{\x_1 }I_{\textcolor{black}{l,1}}^{\bxi}(s,\x) |_{\bxi=\x}
		\nonumber \\
			\!\!&\!\! \le\!\!  &\!\! \int_{\R^{nd}}d\y| D_{\x_1} D_{\x_l} \tilde p^\bxi(s,t,\x,\y)
			|_{\bxi=\x}
			\notag\\
			\!\!&\!\!\!\!&\!\!\hspace*{10pt}\times 
			\sum_{i=l+1}^{n} \Big| \big\langle(\gF_i(s,\y)-\gF_i(s,\btheta_{s,t}(\x))-D_{i-1}\gF_i(s,\btheta_{s,t}(\x))(\y - \btheta_{s,t}(\x))_{i-1}) ,
			D_{i}u(s,\y)\big\rangle\Big|\notag\\
			\!\!&\!\!\le\!\! &\!\! \| D_{l+1:n} u\|_\infty\int_{\R^{nd}}\!\!d\y| D_{\x_1} D_{\x_l} \tilde p^\bxi(s,t,\x,\y)
			|_{\bxi=\x}
			\textcolor{black}{\sum_{i=l+1}^{n}} 
			\Big|\gF_i(s,\y)-\gF_i(s,\btheta_{s,t}(\x))-D_{i-1}\gF_i(s,\btheta_{s,t}(\x))(\y - \btheta_{s,t}(\x))_{i-1}\Big|,
			\label{PREAL_SECOND_ASSERT}
		\end{eqnarray*}
		\textcolor{black}{where $D_{l+1:n}u:=\big(D_{l+1}u,\textcolor{black}{\hdots}, D_n u \big)$}.
	}
	\textcolor{black}{Applying now Lemma \ref{strong_SMOOTH_OPERATOR_EFFECTS} with $\mathbf a=\btheta_{s,t}(\x) $, $\ell=i $, $k=n+1 $ yields:
		\begin{eqnarray*}
			&&\sum_{i=l+1}^n\Big|\gF_i(s,\y)-\gF_i(s,\btheta_{s,t}(\x))-D_{i-1}\gF_i(s,\btheta_{s,t}(\x))(\y - \btheta_{s,t}(\textcolor{black}{\x}))_{i-1})\Big|\notag\\
		&	\le& C \sum_{j=l+1}^n\Big( [(\gF_{\textcolor{black}{l+1:j}}(s,\cdot))_j]_{\beta_j} |(\btheta_{s,t}(\x)-\y)_j|^{\beta_j}
			+[(D_{j-1}\gF_j(s,\cdot))_{j-1}]_\eta  |(\btheta_{s,t}(\x)-\y)_{j-1}|^{1+\eta}  \Big),
		\end{eqnarray*}
			 \textcolor{black}{using the notation $[(\gF_{l+1:j})_j(s,\cdot)]_{\beta_j}:=\max_{i\in \leftB l+1,j\rightB} [(\gF_{i})_j(s,\cdot)]_{\beta_j}$
for the last inequality}.
	We finally derive from Lemma \ref{strong_LEMME_DENS}}:
	\begin{eqnarray}
	\!\!&\!\!\!\!&\!\!
	|D_{\x_1 }I_{\textcolor{black}{l,1}}^{\textcolor{black}{\bxi}}(s,\x) |_{\textcolor{black}{\bxi=\x}}\label{strong_CTR_DEBUT_I_11l}\\
	\!\!&\!\!\leq \!\!&\!\! C\|D_{l+1:n}u\|_{\infty}\!\! \sum_{j=l+1}^n\!\!\! \left\{
		[(\gF_{\textcolor{black}{l+1:j}}\textcolor{black}{(s,\cdot)})_j]_{\beta_j}   (s\!-\!t)^{-l+\beta_j(j-\textcolor{black}{\frac 12})} 
		\!+\!
		[(D_{j-1}\gF_j\textcolor{black}{(s,\cdot)})_{j-1}]_\eta (s\!-\!t)^{-l+(1+\eta)(j-\textcolor{black}{\frac 32})} 
	\right\}.\notag
	\end{eqnarray}
	\textcolor{black}{Since in the above contribution $j\ge l+1 $, we have on the one hand  $-l+(1+\eta)(j-3/2)\ge -l+(1+\eta)(l+1-3/2)\ge -1/2 $. On the other hand,  from assumption $\A{T_{\beta}} $, $\beta_j\ge (2j-2)/(2j-1) $ and
		$-l+\beta_j(j- 1/2)\ge -l +(2j-2)/(2j-1) \times (j-1/2)=-l+j-1\ge 0$. Thus, none of the associated exponent is critical. We have either integrable singularities or no singularities at all}.
	
	
	\textcolor{black}{Write now for the other term, 
		\begin{eqnarray*}
			&&|D_{\x_1 } I_{\textcolor{black}{l,2}}^{\textcolor{black}{\bxi}}(s,\x)|_{\textcolor{black}{\bxi}=\x} 
			\nonumber \\
			\!\!&\!\! =\!\! &\!\! \left| D_{\x_1} D_{\x_l} \left[\tilde P_{s,t}^\bxi \left(\sum_{i=2}^{l} \big\langle\gF_i(s,\cdot)-\gF_i(s,\cdot_{1:l-1},\btheta^{l:n}_{s,t}(\bxi)), D_{i}u(s,\cdot)\big\rangle\right)\right](\x) \right| \Bigg|_{\bxi=\x}\notag\\ 
			&\le & \sum_{i=2}^l\int_{\R^{nd}}d\y |D_{\x_1} D_{\x_l} \tilde p^\bxi(s,t,\x,\y)|_{\bxi=\x} |\gF_i(s,\y)-\gF_i(s,\y_{1:l-1},\btheta^{l:n}_{s,t}(\x))| |D_i u(s,\y)|\\
			&\le &  \|D_{{2:l}}u\|_{\infty}\sum_{i=2}^l\sum_{j=l}^n\int_{\R^{nd}}d\y|D_{\x_1} D_{\x_l} \tilde p^\bxi(s,t,\x,\y)|_{\bxi=\x}
			[(\gF_i)_j(s,\cdot)]_{\beta_j} |(\btheta_{s,t}(\x)-\y)_j|^{\beta_j} \\
			&\le &C\|D_{{2:l}}u\|_{\infty} \sum_{j=l}^n [(\gF_{\textcolor{black}{2:j}})_j(s,\cdot)]_{\beta_j}\textcolor{black}{(s-t)^{-l
			}}
			\int_{\R^{nd}}d\y|D_{\x_1} D_{\x_l} \tilde p^\bxi(s,t,\x,\y)|_{\bxi=\x}
			|(\btheta_{s,t}(\x)-\y)_j|^{\beta_j} .
		\end{eqnarray*}
	}
	\textcolor{black}{Hence, Lemma \ref{strong_LEMME_DENS} also yields} 
	\begin{eqnarray}
	|D_{\x_1 } I_{\textcolor{black}{l,2}}^{\textcolor{black}{\bxi}}(s,\x)|_{\textcolor{black}{\bxi=\x}} 
	\!\!&\!\!\leq \!\!&\!\! C \textcolor{black}{\|D_{{2:l}}u\|_{\infty}}  \sum_{j=l}^n  [(\textcolor{black}{\gF_{\textcolor{black}{2:j}}})_j(s,\cdot)]_{\beta_j} (s-t)^{-l+\beta_j(j-\frac 12)},
	\label{strong_CTR_DEBUT_I_12l}
	\end{eqnarray}
	and those terms are again integrable as soon as the thresholds of \A{T${}_\beta$} hold.\\
	
	\noindent\emph{\textcolor{black}{$\bullet$ Control of $|D_{\x_1} I_{\textcolor{black}{l,3}}^{\textcolor{black}{\bxi}}\textcolor{black}{(s,\x)}|_{\textcolor{black}{\bxi=\x}}$.}} It hence remains to control the terms in $|D_{\x_1} I_{\textcolor{black}{l,3}}^{\textcolor{black}{\bxi}}\textcolor{black}{(s,\x)}|_{\textcolor{black}{\bxi=\x}}$. 
	These terms are the tricky ones since they are, \emph{a priori}, not designed to smoothen the time singularities generated by the cross differentiation. Observe indeed that, if one tries to reproduce the above calculations, we obtain from \textcolor{black}{Lemmas \ref{strong_LEMME_DENS} and \ref{strong_SMOOTH_OPERATOR_EFFECTS}},   that
	\begin{equation}\label{impasse}\tag{I}
	\begin{split}
	&|D_{\x_1 } I_{\textcolor{black}{l,3}}^{\textcolor{black}{\bxi}}(s,\x)|_{\textcolor{black}{\bxi}=\x} 
	\\
	=\,  &\Bigg| D_{\x_1} D_{\x_l} \Bigg[\tP_{s,t}^\bxi \bigg(\sum_{i=2}^{l
	} \big\langle(\gF_i(s,\cdot_{1:l-1},\btheta^{l:n}_{s,t}(\bxi))-\gF_i(\btheta_{s,t}(\bxi))
	\notag\\
	&\phantom{bou!}\hspace*{.2cm}
	-D_{i-1}\gF_i(s,\btheta_{s,t}(\bxi))(\cdot - \btheta_{s,t}(\bxi))_{i-1}), D_{i}u(s,\cdot)\big\rangle\bigg)\Bigg](\x) \Bigg| \Bigg|_{\bxi=\x}\notag\\ 
	\leq\, & C \|D_{{2:l}}u\|_{\infty} (s-t)^{-l}
	\Big( \sum_{j=2}^l  [(\gF_{2:j})_j(s,\cdot)]_{\beta_j} (s-t)^{(j-\frac 12)\beta_j} +[(D_{j-1}\gF_j)_{j-1}(s,\cdot)]_\eta (s-t)^{(1+\eta)(j-\frac 32)} \Big)\notag\\
	\leq\, &
	C \|D_{{2:l}}u\|_{\infty} (s-t)^{-l} 
	\Big(   (s-t)^{\frac 32\beta_2} + (s-t)^{\frac 12(1+\eta)} \Big)\notag,
	\end{split}
	\end{equation}
	up to a modification of $C$ and \textcolor{black}{recalling that $T$ is small}.
	This leads, as soon as $l\ge 2$, to a time singularity which is not integrable. Indeed, $(1+\eta)/2<1 $ (recall that $\eta $ is meant to be small). To overcome this problem, the idea consists in writing, thanks to \textcolor{black}{the cancellation properties of} Lemma \ref{strong_LEMME_SG}, 
	\begin{eqnarray}
	\!\!&\!\!\!\!&\!\!|D_{\x_1}I_{\textcolor{black}{l,3}}^{\textcolor{black}{\bxi}}(s,\x)|_{\textcolor{black}{\bxi=\x}} 
	\notag\\
	\!\!&\!\! =\!\! &\!\! \bigg| \!D_{\x_1} D_{\x_l} \!\Big[\tP_{s,t}^\bxi \Big( \! \sum_{i=2}^{l
	}  \! \big\langle \!(\gF_i(s,\cdot_{1:l-1},\btheta^{l:n}_{s,t}(\bxi))\!-\!\gF_i(s,\btheta_{s,t}(\bxi)) \!-\! D_{i-1} \gF_i(s,\btheta_{s,t}(\bxi))\big(\!\cdot\!-\btheta_{s,t}(\bxi)\big)_{i-1}),\notag\\
	\!\!&\!\!\!\!&\!\! \big(D_{i}u(s,\cdot)-D_{i}u(s,\cdot_{1:l-1},\btheta_{s,t}^{l:n}(\bxi))\big)\big\rangle \Big) \Big](\x) \bigg| \Bigg|_{\bxi=\x}\label{strong_CTR_P1_I3L_1}
	\end{eqnarray}
	and to take advantage of the additional smoothing effect from the solution of the regularized PDE itself through the above contribution
	$D_{i}u(s,\cdot)-D_{i}u(s,\cdot_{1:l-1},\btheta_{s,t}^{l:n}(\bxi))$, $i\in \leftB 2, l\rightB$. This was the strategy implemented in \cite{chau:17} which, unfortunately, cannot be repeated as this in our general framework. 
	Roughly speaking, 
	to control for \textcolor{black}{any $k$ in $\leftB l,n \rightB$, for} fixed $(\y_{1:\textcolor{black}{k}-1},\y_{\textcolor{black}{k}+1:n})\in \R^{(n-1)d}$ \textcolor{black}{ and any $0\le s < t \le T$,} the $\alpha_k^i$-H\"older modulus of the partial application (see \eqref{def:proj}) $\y_{\textcolor{black}{k}}\mapsto (D_{\textcolor{black}{i}} u)_{\textcolor{black}{k}}(s,\y_{\textcolor{black}{k}})$,
	we need to control the $\alpha_k^i$-H\"older modulus of the map $\textcolor{black}{\y_{\textcolor{black}{k}}\mapsto (D_i\tilde G^{\bxi} f)_k}(s,\y_k)$, where $\tilde G^\bxi f(s,\y):=\int_{s}^T dv \tilde P_{v,s}^\bxi f(v,\y) $ is the Green kernel involving the source $f$ in the indicated equation. 
	\textcolor{black}{When doing so, we \textcolor{black}{are} face\textcolor{black}{d} to the fact that we should only consider indexes $k\le i $ in order to apply cancellation arguments.
		The only corresponding index in \eqref{strong_CTR_P1_I3L_1} is thus $ i=k=l$.}
	\textcolor{black}{When $i=l$ and  $k$ lies in $\leftB l+1,n\rightB$, we \textcolor{black}{can handle the corresponding terms} 
		by interpolating the $\alpha_k^l$-H\"older modulus of the partial application $(D_l u)_k$ from the $\alpha_l^l$-H\"older modulus of the partial application $(D_l u)_l $, see Lemma \ref{Lemme_Taylor_reverse} below.
		Otherwise, i.e. when $i<l$, we expand the gradients in \eqref{strong_CTR_P1_I3L_1} with the Taylor formula. \textcolor{black}{Namely, using the Fubini theorem} we write: 
	}
	%
	%
	\begin{eqnarray}
	\!\!&\!\!\!\!&\!\!|D_{\x_1}I_{\textcolor{black}{l,3}}^{\textcolor{black}{\bxi}}(s,\x)|_{\textcolor{black}{\bxi=\x}} \notag\\
	\!\!&\!\! \textcolor{black}{\le } \!\!&\!\!\Bigg| \sum_{i=2}^{\textcolor{black}{l-1}
	}  \sum_{k=l}^n \int_0^1 d\lambda  D_{\x_1} D_{\x_l} \bigg[\tP_{s,t}^\bxi \bigg(  \bigg\langle \Big(\gF_i(s,\cdot_{1:l-1
	},\btheta^{l:n}_{s,t}(\bxi))-\gF_i(s,\btheta_{s,t}(\bxi))
	- D_{i-1}\gF_i(s,\btheta_{s,t}(\bxi))\big(\cdot-\btheta_{s,t}(\bxi)\big)_{i-1}\Big),\notag\\
	\!\!&\!\!\!\!&\!\! D_{k}\textcolor{black}{D_i}u(s,\cdot_{1:l-1},\btheta_{s,t}^{l:n}(\bxi)+\lambda(\cdot_{l:n}-\btheta_{s,t}^{l:n}(\bxi)))(\cdot-\btheta_{s,t}(\bxi))_{k}   \bigg\rangle\bigg)\bigg](\x) \Bigg|_{\textcolor{black}{\bxi=\x}}\notag\\
	\!\!&\!\!\!\!&\!\!\textcolor{black}{+\Bigg|\!D_{\x_1} D_{\x_l} \!\bigg[\tP_{s,t}^\bxi \bigg( \!   \! \bigg\langle \!\Big (\gF_{\textcolor{black}{l}}(s,\cdot_{1:l-1},\btheta^{l:n}_{s,t}(\bxi))\!-\!\gF_{\textcolor{black}{l}}(s,\btheta_{s,t}(\bxi)) \!-\! D_{\textcolor{black}{l}-1} \gF_{\textcolor{black}{l}}(s,\btheta_{s,t}(\bxi))\big(\!\cdot\!-\btheta_{s,t}(\bxi)\big)_{\textcolor{black}{l}-1} \Big ),}
	\notag\\
	\!\!&\!\!\!\!&\!\! \textcolor{black}{\big(D_{\textcolor{black}{l}}u(s,\cdot)-D_{\textcolor{black}{l}}u(s,\cdot_{1:l-1},\btheta_{s,t}^{l:n}(\bxi))\big)\bigg\rangle \bigg) \bigg](\x) \bigg| \Bigg|_{\bxi=\x}}
	\nonumber \\
	\!\!&\!\!=:\!\!&\!\! 
	\textcolor{black}{
		|D_{\x_1}I_{\textcolor{black}{l,31}}^{\textcolor{black}{\bxi}}(s,\x)|_{\textcolor{black}{\bxi=\x}}+|D_{\x_1}I_{\textcolor{black}{l,32}}^{\textcolor{black}{\bxi}}(s,\x)|_{\textcolor{black}{\bxi=\x}}}.
	\label{strong_JUST_BEFORE_DUALITY}
	\end{eqnarray}
	As already underlined, the term $|D_{\x_1}I_{\textcolor{black}{l,32}}^{\textcolor{black}{\bxi}},(s,\x)|_{\textcolor{black}{\bxi=\x}}$ is handled thanks to interpolation type argument. It thus remains to deal with the term $|D_{\x_1}I_{\textcolor{black}{l,31}}^{\textcolor{black}{\bxi}}(s,\x)|_{\textcolor{black}{\bxi=\x}}$. \textcolor{black}{Our main idea consists \textcolor{black}{first} in switching the differential operators acting on the map $u$ therein and then \textcolor{black}{in rebalancing} the differential operator w.r.t. the ``less degenerate'' direction (namely $D_i$) on the remaining terms in the integrand \textcolor{black}{through an integration by parts}.} To do so, we introduce now for all $i\in\leftB 2,l\textcolor{black}{-1}
	\rightB, k\in \leftB l,n\rightB$, $(\y_{1:i-1},\y_{i+1:n})\in \R^{(n-1)d},(t,\x)\in[0,T]\times \R^{nd} $, \textcolor{black}{$s\in (t,T] $} \textcolor{black}{ and $ \textcolor{black}{\y_i} \in \R^ d$, the function:
		\begin{eqnarray}\label{strong_GROSSE_DEF}
		&&
		\Psi_{i,(l,1),k}^{(s,\y_{1:i-1},\y_{i+1:n}),(t,\x)}(\y_i)
		\notag\\
		&:=&
		\bigg[D_{\x_1} D_{\x_l} \tilde p^{\bxi}(t,s,\x,\y)
		\\
		&& \otimes \Big(  (\gF_i(\textcolor{black}{s},\y_{1:l-1},\btheta^{l:n}_{s,t}(\bxi))-\gF_i(\textcolor{black}{s},\btheta_{s,t}(\bxi))
		D_{i-1}\gF_i(\textcolor{black}{s},\btheta_{s,t}(\bxi))\big(\y-\btheta_{s,t}(\bxi)\big)_{i-1})\Big)((\textcolor{black}{\y}-\btheta_{s,t}(\bxi))_{k})^*\bigg]_{\textcolor{black}{\bxi=\x}},\nonumber 
		\end{eqnarray}}
where the subscript $\textcolor{black}{(l,1)}$ in $\Psi_{i,(l,1),k}^{(s,\y_{1:i-1},\y_{i+1:n}),(t,\x)} $ is here to indicate the differentiation w.r.t. $D_{\x_l}D_{\x_1}$ acting on the frozen density. 
Pay attention that the above function is $(\R^d)^{\otimes 4} $-valued. With these notations at hand, \textcolor{black}{we write for $|D_{\x_1}I_{\textcolor{black}{l,31}}^{\textcolor{black}{\bxi}}(s,\x)|_{\textcolor{black}{\bxi=\x}} $}:
	\begin{eqnarray*}
		|D_{\x_1}I_{\textcolor{black}{l,31}}^{\textcolor{black}{\bxi}}(s,\x)|_{\textcolor{black}{\bxi=\x}} 
		\!\!&\!\!\le\!\!&  \!\!
		\sum_{i=2}^{l\textcolor{black}{-1}
		}  \sum_{k=l}^n \Bigg| \int_0^1 d\lambda  \int_{\R^{(n-1)d}}d (\y_{1:i-1},\y_{i+1:n})\\
		\!\!&\!\!\!\!& \!\!
		\int_{\R^d }d\y_i\Bigg\{\big(\Psi_{i,(l,1),k}^{(s,\y_{1:i-1},\y_{i+1:n}),(t,\x)}(\y_i)\big): 
		D_{\y_k}D_{\y_i} u(s,\y_{1:l-1},\btheta_{s,t}^{l:n}(\textcolor{black}{\x})+\lambda(\y_{l:n}-\btheta_{s,t}^{l:n}(\textcolor{black}{\x})))
		\Bigg\} \Bigg|,\notag
	\end{eqnarray*}
	where ``$\ : \ $" stands for the double tensor contraction. We now use the Schwarz theorem to exchange the order of the differentiation \textcolor{black}{operators} acting on the PDE solution and then integrate by parts to obtain
	\begin{eqnarray}
	|D_{\x_1}I_{\textcolor{black}{l,31}}^{\textcolor{black}{\bxi}}(s,\x)|_{\textcolor{black}{\bxi=\x}} 
	\!\!&\!\!\le\!\!&  \!\!
	\sum_{i=2}^{l\textcolor{black}{-1}
	}  \sum_{k=l}^n \Bigg| \int_0^1 d\lambda  \int_{\R^{(n-1)d}}d (\y_{1:i-1},\y_{i+1:n}) \label{strong_SANS_NOM}\\
	\!\!&\!\!\!\!& \!\!
	\int_{\R^d }d\y_i\Bigg\{D_{\y_i}\big(\Psi_{i,(l,1),k}^{(s,\y_{1:i-1},\y_{i+1:n}),(t,\x)}(\y_i)\big): 
	D_{\y_k} u(s,\y_{1:l-1},\btheta_{s,t}^{l:n}(\textcolor{black}{\x})+\lambda(\y_{l:n}-\btheta_{s,t}^{l:n}(\textcolor{black}{\x})))
	\Bigg\} \Bigg|.\notag
	\end{eqnarray}
	\textcolor{black}{
		Let us now explain why such an expression is well designed. From the very definition of the ``$\Psi$'' term \textcolor{black}{in \eqref{strong_GROSSE_DEF}}, one can see that the additional contribution $(\cdot-\btheta_{s,t}(\textcolor{black}{\x}))_{k}$ is, thanks to Lemma  \textcolor{black}{\ref{strong_LEMME_DENS}}, precisely designed to smoothen the time singularity coming from the differentiation \textcolor{black}{w.r.t.} the variables $\x_l$ of the semigroup  (recall that $k\ge l$). Also, we have from Lemma \ref{strong_SMOOTH_OPERATOR_EFFECTS} that the contribution of the transport term (degenerate part of the operator) therein is, up to a \textcolor{black}{multiplicative} constant, bounded by $\sum_{j=i}^{n} (|\cdot-\btheta_{s,t}(\textcolor{black}{\x}))_{j}|^{\beta_j}  + |(\cdot-\btheta_{s,t}(\textcolor{black}{\x}))_{j-1}|^{1+\eta}$. Therefore, it follows from  Lemma \textcolor{black}{\ref{strong_LEMME_DENS}} and \A{T$_{\beta}$} that this term allows to smooth\textcolor{black}{en} the differentiation coming from the Schwarz theorem and the integration by parts \textcolor{black}{(pay attention that for any $i$ in $\leftB 2,l-1\rightB$, the differential operator $D_i$ now acts on the term $\Psi_{i,\cdot}^{\cdot}$)}. The main idea consists now in absorbing the additional singularity (of order $1/2$) coming from the differentiation w.r.t. the variable ``$\x_1$''  thanks to the $\alpha_i^k$-H\"older regularity of $(D_ku)_i$, provided the exponent $\alpha_i^k$ is large enough (see Remark \ref{rem_holdexpo} below). This is done by putting the two above terms ``in duality'' w.r.t. the variable $ \y_i$, \textcolor{black}{within the framework of  Besov spaces}.}
	
	Recall \textcolor{black}{indeed} that $\textcolor{black}{C
		^{\alpha_i^k}(\R^d,\R)=B_{\infty,\infty}^{\alpha_i^k}(\R^d,\R)} $, where from now on the notation $B_{p,q}^s $ stands for a Besov space with associated indexes $p,q,s $ (see Triebel \cite{trie:83} and \textcolor{black}{the \textcolor{black}{reminder} at the beginning of Section \ref{sec:esti:besov:norm}}\textcolor{black}{)}.
	The indexes $p,q$ \textcolor{black}{refer to} the integrability parameters and $s $ \textcolor{black}{to} the smoothness one. A classical fact is that 
	$B_{\infty,\infty}^{\alpha_i^k} $ and $B_{1,1}^{-\alpha_i^k} $ can be put in duality, see e.g. 
	Proposition 3.6 in Lemari\'e-Rieusset (\cite{lemar:02}). \textcolor{black}{Indeed, with the notations therein $B_{\infty,\infty}^{\textcolor{black}{\alpha_i^k}} $ is the dual of $B_{1,1}^{-\alpha_i^k} =\tilde B_{1,1}^{-\alpha_i^k} $ where $\tilde B_{1,1}^{-\alpha_i^k} $ denotes the closure of the Schwartz class ${\mathcal S} $ in $ B_{1,1}^{-\alpha_i^k}$} (see also Theorem 4.1.3 in Adams Hedberg \cite{adam:hedb:96} for the density of ${\mathcal S} $ in $ B_{1,1}^{-\alpha_i^k}$).
	Exploiting this fact, we then derive from \eqref{strong_SANS_NOM} and \textcolor{black}{the multi-linearity of the tensors involved} that:
	\begin{eqnarray}\label{strong_ALMOST_LAST_EQ_CTR_DERIV}
	\!\!&\!\!\!\!&\!\!|\textcolor{black}{D_{\x_1}I_{\textcolor{black}{l,31}}^{\textcolor{black}{\bxi}}(s,\x)}|_{\textcolor{black}{\bxi=\x}}  \notag\\
	\!\!&\!\!\leq \!\!&C \sum_{i=2}^{l \textcolor{black}{-1}
	}  \sum_{k=l}^n  \int_0^1 d\lambda \int_{\R^{(n-1)d}} \!\! d (\y_{1:i-1},\y_{i+1:n})\notag\\
	\!\!&\!\!\!\!&\!\!\times  
	\Big\|
	D_i\Psi_{i,(l,1),k}^{(s,\y_{1:i-1},\y_{i+1:n}),(t,\x)}(\cdot) \! 
	\Big \|_{\textcolor{black}{ B_{1,1}^{-\alpha_i^k}}}  \Big\|  \y_i\mapsto D_{k}u(s,\y_{1:l-1},\btheta_{s,t}^{l:n}(\textcolor{black}{\x})\!+\!\lambda(\y_{l:n}-\btheta_{s,t}^{l:n}(\textcolor{black}{\x}))) \Big \|_{\textcolor{black}{ B_{\infty,\infty}^{\alpha_i^k}}} \! 
	\notag\\
	\!\!&\!\! \leq \!\!&\!\! C \! \sum_{i=2}^{l\textcolor{black}{-1}
	}  \sum_{k=l}^n  \! \int_{\R^{(n-1)d}}\!\!\! d (\y_{1:i-1},\y_{i+1:n}) 
	\bigg\| 
	D_i\Psi_{i,(l,1),k}^{(s,\y_{1:i-1},\y_{i+1:n}),(t,\x)}(\cdot)
	\bigg\|_{\textcolor{black}{ B_{1,1}^{-\alpha_i^k}}}
	\sup_{\z_j, j\in \leftB1,n\rightB,j\neq i} \bigg\|D_{k}u(s,\z_{1:i-1},\cdot,\z_{i+1:n}) \bigg\|_{\textcolor{black}{ B_{\infty,\infty}^{\alpha_i^k}}}
	\!\!. \notag
	\end{eqnarray}

	\textcolor{black}{To handle this term and conclude the proof of the main theorem}, we now need the following results whose proofs are postponed to the next \textcolor{black}{section}:
	
	\begin{lem}\label{strong_LEMME_CTR_BESOV} Let $l \in \leftB 2,n\rightB$, $i \in \leftB \textcolor{black}{2},l\textcolor{black}{-1}
		\rightB$ and $ k\in \leftB l,n\rightB$ and let $\Psi_{i,(l,1),k}^{(s,\y_{1:i-1},\y_{i+1:n}),(t,\x)}: \R^d \to \textcolor{black}{(\R^d)^{\otimes 4}}$ be the function defined by \eqref{strong_GROSSE_DEF}. There exist $C:=C(\A{A},\textcolor{black}{T})>0$, $\textcolor{black}{\bar C:=\bar C(C)} $,  $\alpha_i^k:=(1+\frac{\eta}{4})/(2i-1) 
		$,  $\gamma_i^k 
		:= \frac 12 +\eta (i-\frac 32)
		$ such that
		\begin{eqnarray*}
			\bigg\| \bigg[\textcolor{black}{D_i}\Psi_{i,(l,1),k}^{(s,\y_{1:i-1},\y_{i+1:n}),(t,\x)}(\cdot)\bigg]\bigg\|_{\textcolor{black}{ B_{1,1}^{\textcolor{black}{-\alpha_i^k}}}} \leq \textcolor{black}{\bar C}  (s-t)^{-\frac 32
				+ \gamma_i^k} \hat q_{c\setminus i}(t,s,\x,(\y_{1:i-1},\y_{i+1:n})),
		\end{eqnarray*}
		where \textcolor{black}{$c=C^{-1}$} and with the notations of Proposition \ref{strong_THE_PROP}, \textcolor{black}{exploiting as well equation \eqref{strong_CTR_GRAD_WGPOINTS} of Remark \ref{REM_WITH_FLOWS_LIN_AND_NON}},
		\begin{eqnarray}
		\label{strong_DEF_HAT_SETMINUS}
		\hat q_{c\setminus i}(t,s,\x,(\y_{1:i-1},\y_{i+1:n})):=\int_{\R^d }d\y_i\hat p_c^\x(t,s,\x,\y) =\prod_{j\in \leftB 1,n\rightB, j\neq i}^n {\mathcal N}_{c(s-t)^{2j-1}}\big( (\btheta_{s,t}(\x)-\y)_j\big),
		\end{eqnarray}
		denoting  for $a>0,\ z\in \R^d$, by ${\mathcal N}_{a}(z)=(2\pi a)^{-d/2}\exp\big(-|z|^2/(2a)\big) $ the standard Gaussian density \textcolor{black}{on} $\R^d $ with covariance matrix $a{\mathbf I}_d$. 
	\end{lem}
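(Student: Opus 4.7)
The plan is to compute the Besov norm in the variable $\y_i$ by exploiting a convenient characterization of $B_{1,1}^{1-\alpha_i^k}$ (the thermic one, or the integrated $L^1$-difference formula), together with the explicit Gaussian structure of the frozen kernel and the classical absorption principle $|z|^\gamma \mathcal{N}_{a I_d}(z)\le C a^{\gamma/2}\mathcal{N}_{2a I_d}(z)$ which transfers polynomial weights into explicit time factors. First, I would unfold the three factors constituting $\Psi_{i,(l,1),k}^{(s,\y_{1:i-1},\y_{i+1:n}),(t,\x)}(\y_i)$, namely the twice-differentiated density $D_{\x_l}D_{\x_1}\tilde p^{\bxi}(t,s,\x,\y)$, the centered drift increment $\gF_i(\y_{1:l-1},\btheta_{s,t}^{l:n}(\bxi))-\gF_i(\btheta_{s,t}(\bxi))-D_{i-1}\gF_i(\btheta_{s,t}(\bxi))(\y-\btheta_{s,t}(\bxi))_{i-1}$, and the polynomial factor $(\y-\btheta_{s,t}(\bxi))_k$, and track which of them actually depend on $\y_i$: the density always does, the drift increment does only when $i\le l-1$ (through a $|\y_i-\cdot|^{\beta_i}$ contribution coming from Lemma~\ref{strong_SMOOTH_OPERATOR_EFFECTS}), while the polynomial depends on $\y_i$ solely in the extremal case $i=l=k$.

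Second, I would bound the drift increment via Lemma~\ref{strong_SMOOTH_OPERATOR_EFFECTS} by a sum of moments $|(\y-\btheta_{s,t}(\x))_j|^{\beta_j}$, $j\in\leftB i,l-1\rightB$, plus $|(\y-\btheta_{s,t}(\x))_{i-1}|^{1+\eta}$, and control $|D_{\x_l}D_{\x_1}\tilde p^{\bxi}|$ pointwise by $C(s-t)^{-l}\hat p_c^{\bxi}$ through Proposition~\ref{strong_THE_PROP}. This reduces the $B_{1,1}^{1-\alpha_i^k}$-norm in $\y_i$ to that of a model function $y\mapsto \mathcal{N}_{c(s-t)^{2i-1}I_d}(y-z)$, possibly multiplied (when $i<l$) by $|y-z|^{\beta_i}$ or (when $i=l=k$) by an extra linear factor in $y-z$. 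A rescaling argument, namely $\|\sigma^{-d}\tilde f(\cdot/\sigma)\|_{B_{1,1}^{\theta}}\asymp \sigma^{-\theta}\|\tilde f\|_{B_{1,1}^{\theta}}$ applied with $\sigma=(s-t)^{i-1/2}$, then yields a Besov contribution of order $(s-t)^{-(i-1/2)(1-\alpha_i^k)}$, possibly enhanced by $(s-t)^{(i-1/2)\beta_i}$ when the H\"older factor is present.

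The third step is to absorb the remaining $\y_j$-factors ($j\neq i$) into the marginal density $\hat q_{c\setminus i}$ via the absorption principle stated above, generating explicit time prefactors $(s-t)^{(j-1/2)\beta_j}$, $(s-t)^{(i-3/2)(1+\eta)}$ and $(s-t)^{k-1/2}$. Summing the exponents and treating separately $i<l$ and $i=l$, I would then verify, invoking $\beta_j>(2j-2)/(2j-1)$ from \A{${\mathbf T}_{\beta}$} and the explicit choice $\alpha_i^k=(1+\eta/4)/(2i-1)$, that the total time exponent is bounded below by $-3/2+\gamma_i$ with $\gamma_i=1/2+\eta(i-3/2)>1/2$; the upper-bound constraint $\alpha_i^k<(1-(1-\beta_k)(k-1/2))/(i-1/2)$ ensures closure in the extremal regime $i=l=k$, where the linear polynomial factor also contributes to the $\y_i$-dependent block.

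The main obstacle I anticipate is the rigorous evaluation of the Besov norm of the product $|\cdot|^{\beta_i}\mathcal{N}_{I_d}(\cdot)$ (and its analogue with an additional linear weight). The rescaling reduction sidesteps a paraproduct analysis, but one must still justify that the template profile $z\mapsto |z|^{\beta_i}\mathcal{N}_{I_d}(z)$ lies in $B_{1,1}^{1-\alpha_i^k}$ with a controlled norm, either by directly evaluating its integrated $L^1$-modulus of continuity through a dyadic split at $|h|\le 1$ versus $|h|\ge 1$ exploiting Gaussian decay, or through a Littlewood--Paley decomposition. Additional care is needed to reconcile the freezing point $\bxi$ appearing in the kernel with $\x$ occurring in the target density $\hat q_{c\setminus i}$, a Gaussian comparison based on the good scaling property of Proposition~\ref{strong_PROP_SCALE_COV} handling this up to a harmless modification of the constant $c$.
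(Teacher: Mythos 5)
Your route is genuinely different from the paper's. The paper splits the thermic integral at $v=(s-t)^{\rho_{i,k}}$, treats the non-singular upper part by crude bounds, and handles the singular lower part through explicit cancellations $\Psi(\y_i)-\Psi(z)$ (the terms ${\mathscr T}_1,{\mathscr T}_2,{\mathscr T}_3$), with a separate discussion of $i<l$, $i=l<k$, $i=l=k$. You instead factor out everything independent of $\y_i$, absorb the polynomial weights into the Gaussian, and reduce the whole norm at once to a rescaled model profile via the dilation property $\|\sigma^{-d}\tilde f(\cdot/\sigma)\|_{B_{1,1}^{\theta}}\lesssim \sigma^{-\theta}\|\tilde f\|_{B_{1,1}^{\theta}}$ with $\sigma=(s-t)^{i-1/2}$ (only the upper bound holds for the inhomogeneous space, but that is all you need). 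Your exponent bookkeeping agrees with the paper's: the critical term is $(1+\eta)(i-3/2)$, and with $\alpha_i^k=(1+\eta/4)/(2i-1)$ the scaling factor $(s-t)^{-(i-1/2)(1-\alpha_i^k)}$ lands you on $(s-t)^{-1+\eta(i-3/2)+\eta/8}$, consistent with $\gamma_i^k=\frac 12+\eta(i-\frac 32)$. Your case analysis of where $\y_i$ actually enters also matches the paper's. If carried out, this is arguably a cleaner argument than the hand-crafted split.

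There is, however, one step that fails as written. You propose to ``bound the drift increment by a sum of moments'' and conclude that this ``reduces the $B_{1,1}^{1-\alpha_i^k}$-norm in $\y_i$ to that of a model function''. For a norm of \emph{positive} smoothness a pointwise majorant gives no control: $|f|\le g$ does not imply $\|f\|_{B_{1,1}^{1-\alpha_i^k}}\lesssim\|g\|_{B_{1,1}^{1-\alpha_i^k}}$ (a wildly oscillating function dominated by a constant has arbitrarily large norm). When $i\le l-1$ the $\y_i$-dependent factor $\y_i\mapsto \gF_i(s,\y_{1:i-1},\y_i,\y_{i+1:l-1},\btheta_{s,t}^{l:n}(\bxi))-\cdots$ is an arbitrary $\beta_i$-H\"older function vanishing at the center of the Gaussian, not the radial profile $|\y_i-\btheta_{s,t}^i(\bxi)|^{\beta_i}$, so it cannot be replaced by its modulus inside the norm. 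The missing ingredient is a product (paraproduct-type) estimate bounding $\|g\,G\|_{B_{1,1}^{1-\alpha_i^k}}$ by $[g]_{\beta_i}$ times weighted norms of $G$ and $|\cdot|^{\beta_i}G$, which is available precisely because $1-\alpha_i^k<\beta_i$ — guaranteed by your choice of $\alpha_i^k$ together with $\beta_i>\frac{2i-2}{2i-1}$ — and because the unbounded H\"older factor is localized by the Gaussian; this is exactly the role played in the paper's proof by the separation into ${\mathscr T}_1$ (H\"older increment of $\gF_i$ between $\y_i$ and $z$) and ${\mathscr T}_2$ (increment of the differentiated density). With that estimate supplied, your scaling argument closes and yields the stated bound.
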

	
	\begin{lem}\label{strong_LEMME_CTR_HOLDER} Let $u$ be the solution of \eqref{strong_GENERIC_PDE}. There exists $C:= C(\A{A})>0$ 
		such that for all $i \leq k \in \leftB 2,n\rightB^2$ and $\alpha_i^k=(1+\frac{\eta}{4})/(2i-1)
		$,
		\begin{equation}\label{strong_SENSI_IK}
		\sup_{\y_j, j\in \leftB1,n\rightB,j\neq i} \bigg\|\textcolor{black}{D_{k}u}\big (s,\y_{1:i-1},\cdot,\y_{i+1:n}\big) \bigg\|_{\textcolor{black}{ B_{\infty,\infty}^{\alpha_i^k}}} \leq C(\|{\mathbf D} u\|_\infty + 
		\|{\mathbf D} D_{1}u \|_\infty) .
		\end{equation}
	\end{lem}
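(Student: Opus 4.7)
The plan is to reduce \eqref{strong_SENSI_IK} to a Hölder estimate through the norm equivalence of Proposition \ref{strong_prop_equiv_besov_norms}: up to a constant,
$$\|D_i D_k u(s, \y_{1:i-1}, \cdot, \y_{i+1:n})\|_{B^{\alpha_i^k-1}_{\infty,\infty}} \leq C \|D_k u(s, \y_{1:i-1}, \cdot, \y_{i+1:n})\|_{B^{\alpha_i^k}_{\infty,\infty}}.$$
Since $\alpha_i^k \in (0,1)$, the right-hand side is equivalent to $\|D_k u\|_\infty$ plus the Hölder modulus of order $\alpha_i^k$ of $D_k u$ in its $i$-th variable. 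The bound $\|D_k u\|_\infty \leq \|\mathbf{D} u\|_\infty$ is immediate, so the core of the proof reduces to estimating this Hölder modulus uniformly in $(\y_{1:i-1}, \y_{i+1:n})$.

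To this end, I would differentiate the Duhamel representation \eqref{strong_eq:repPDE} with respect to $\x_k$ and evaluate the difference at two points $\x$ and $\x'$ differing only in their $i$-th coordinate by an amount $h := |\x_i - \x'_i|$. Following the spirit of the forward parametrix method, the freezing point $\bxi$ is adapted to each half of the computation, taking $\bxi=\x$ or $\bxi=\x'$ so as to preserve the cancellation properties of Lemma \ref{strong_LEMME_SG}. The time integral is then split at the critical scale $t^\ast := t + c\, h^{1/(i-1/2)}$, matching the typical fluctuations of the $i$-th component of the proxy. In the \emph{off-diagonal} regime $s \in (t^\ast, T]$, each term of the difference is controlled individually through the centered bounds of Lemma \ref{strong_LEMME_SG}; the arising time singularity $(s-t)^{-(k-1/2)+\beta_j(j-1/2)}$, integrated from $t^\ast$ to $T$, produces the required factor $h^{\alpha_i^k}$ provided the exponent arithmetic is compatible, which is where the threshold $\alpha_i^k < (1-(1-\beta_k)(k-1/2))/(i-1/2)$ enters. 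In the \emph{diagonal} regime $s \in (t, t^\ast]$, the difference is written as a line integral of $D_{\x_i}D_{\x_k}[\tilde P^{\bxi}_{s,t}(\cdot)]$ along the segment $[\x,\x']$, and the centering and Besov-duality machinery already employed for \eqref{strong_JUST_BEFORE_DUALITY}, together with Lemma \ref{strong_LEMME_CTR_BESOV}, is invoked to smoothen the singularities coming from the perturbative term $(L_s-\tilde L^{\bxi}_s)u$.

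The hard part, and what I expect to be the main obstacle, is precisely the circular nature of the argument in the diagonal regime: expanding $(L_s-\tilde L^{\bxi}_s)u$ and handling its degenerate part unavoidably re-introduces the very cross-derivatives of $u$ whose Besov norm is being estimated, so one obtains the target quantity on both sides of the inequality. The whole bookkeeping must therefore be sharp enough to guarantee that the multiplicative constant in front of this recurring term is of order $T^{\delta}$ for some $\delta>0$, allowing a final absorption on a small time interval and closing the estimate in terms of $\|\mathbf{D} u\|_\infty$ and $\|\mathbf{D} D_1 u\|_\infty$ only. The specific choice $\alpha_i^k = (1+\eta/4)/(2i-1)$ is dictated by this balance: it is small enough so that, once the Hölder increment $h^{\alpha_i^k}\asymp(s-t)^{\alpha_i^k(i-1/2)}$ is combined with the singularity $(s-t)^{-(i-1/2)-(k-1/2)+\beta_j(j-1/2)}$ generated by the cross differentiation of the frozen density, the resulting power in $(s-t)$ remains strictly greater than $-1$ for all admissible $j$, while remaining strictly below the upper threshold $(1-(1-\beta_k)(k-1/2))/(i-1/2)$ so that such a Besov regularity for $D_k u$ in its $i$-th entry is indeed reachable from the inhomogeneous semigroup estimates of Lemma \ref{strong_LEMME_SG}.
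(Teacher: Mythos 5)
Your overall architecture matches the paper's: reduction to a Hölder-type increment of $D_k u$ in its $i^{\rm th}$ variable (the paper works with the thermic characterization of $\|D_iD_ku\|_{B^{\alpha_i^k-1}_{\infty,\infty}}$ and an integration by parts against $D_zh_v$, which is equivalent via Proposition \ref{strong_prop_equiv_besov_norms}), splitting of the time integral at the critical scale $t_0=t+c_0|\x_i-z|^{2/(2i-1)}$, term-by-term bounds in the off-diagonal regime, a Taylor expansion plus Besov duality in the diagonal regime, and a final absorption of the recurring Besov norms by smallness of $T$ (and of $c_0$). However, there is a genuine structural gap in how you handle the freezing points. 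In the off-diagonal time regime the cancellation/centering arguments of Lemma \ref{strong_LEMME_SG} force you to expand $D_ku(t,\x')$ around $\bxi'=\x'$ (expanding around $\x$ would make $\m^{(t,\x)}_{s,t}(\x')-\btheta_{s,t}(\x')$ appear at a scale incompatible with the Gaussian bounds), whereas your diagonal-regime line integral of $D_{\x_i}D_{\x_k}\tilde P^{\bxi}_{s,t}$ along $[\x,\x']$ only makes sense if \emph{both} terms are expanded around the \emph{same} freezing point $\tilde\bxi'=\x$ (controlling the sensitivity between two distinct proxies, including their covariances and generators, is precisely the route the authors report they could not make work). Using different freezing points on $(t,t_0]$ and $(t_0,T]$ within a single Duhamel representation is not a free operation: re-deriving the representation via the Markov property produces an additional discontinuity term
\begin{equation*}
\big[\tilde P^{\bxi'}_{t_0,t}u(t_0,\cdot)\big](\x')-\big[\tilde P^{\tilde\bxi'}_{t_0,t}u(t_0,\cdot)\big](\x'),
\end{equation*}
which in the paper is one of the three main contributions (the term indexed by $\partial\mathcal{S}_i$) and is described as particularly delicate: its control requires a dedicated decomposition into three pieces, the moment cancellation of Proposition \ref{strong_Prop_moment_D2_tilde_p}, and a quantitative flow-sensitivity estimate $|(\m^{\x}_{t_0,t}(\x')-\btheta_{t_0,t}(\x'))_j|\le c_0^{\vartheta}|(\x-\x')_i|^{(2j-1)/(2i-1)}$ (Lemma \ref{strong_lem_mean_covariance}, proved via Grönwall-type arguments in the appendix). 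Your proposal omits this term entirely, and without it the argument does not close.

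A secondary, smaller omission: in the diagonal regime, invoking Lemma \ref{strong_LEMME_CTR_BESOV} as stated is not enough; one needs its variant for the \emph{difference} $\Psi^{(t,\x)}_{\ell,k,m}-\Psi^{(t,\x')}_{\ell,k,m}$ (Lemma \ref{strong_LEMME_CTR_DIF_BESOV}), whose proof rests on the two-point sensitivity bound \eqref{strong_CTR_RECUP_SING_IK_DIAG} for $D_{\x_k}\tilde p^{\bxi}(t,s,\x,\y)-D_{\x_k}\tilde p^{\bxi}(t,s,\x',\y)$ and produces the factor $|(\x-\x')_i|^{\alpha_i^k}$ at the cost of an extra singularity $(s-t)^{-\alpha_i^k(i-1/2)}$; this is where the precise value $\alpha_i^k=(1+\eta/4)/(2i-1)$ is actually calibrated, rather than through the heuristic exponent count you give.
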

	\textcolor{black}{
		\begin{REM}[About the H\"older exponent $\alpha_i^k$, $2\le i\le k \le n$.]\label{rem_holdexpo} We emphasize that upper bounds of the H\"older exponents $(\alpha_i^k)_{2\le i\le k \le n}$ can be derived from the analysis of the H\"older modulus of the partial application
			$\textcolor{black}{\y_{\textcolor{black}{i}}\mapsto (D_k\tilde G^{\bxi} f)_i}(s,\y_{\textcolor{black}{i}})$, where $\tilde G^\bxi f(s,\y):=\int_{s}^T dv \tilde P_{v,s}^\bxi f(v,\y) $ is the Green kernel involving the source $f$ in the indicated equation (see Appendix \ref{SEC_PREUVE_HOLD_MODULUS_G_FROZEN} or Section 5.1 in \cite{chau:hono:meno:18} for similar issues).\\ 
			Let us briefly sketch why these upper bounds are large enough to absorb the additional time singularity of order $1/2$ coming from the differentiation w.r.t. the variable $\x_1$. First note that the computations done in Appendix \ref{SEC_PREUVE_HOLD_MODULUS_G_FROZEN} yield for each $(i\le k)$ in $\leftB 2 , n\rightB^2$ the upper bound $[1-(1-\beta_k)(k-1/2)]/[i-1/2]$. Recall then from \A{T${}_\beta $} that $\beta_k\in \big(\textcolor{black}{(}2k-2\textcolor{black}{)}/\textcolor{black}{(}2k-1\textcolor{black}{)},1\big] $. Thus, if $\beta_k=\big([2k-2]/[2k-1]\big)^+:= \textcolor{black}{(}2k-2\textcolor{black}{)}/\textcolor{black}{(}2k-1\textcolor{black}{)}+\varepsilon $ for some $0<\varepsilon<<1$ meant to be very small, we obtain $(1-\beta_k)(k-1/2) = \textcolor{black}{(1/2)^-:=1/2-\varepsilon}$. Therefore, the smoothing effect of the gradient of the solution of the PDE in the direction ``$\x_k$'' w.r.t. the $i^{\rm{th}}$ variable  is designed to smooth\textcolor{black}{en} a time singularity of order $\textcolor{black}{\alpha_i^k(i-1/2)}\le [\textcolor{black}{(1/2)^+}/(i-1/2)] \times (i-1/2) = \textcolor{black}{(1/2)^+}$.
		\end{REM}
	}

	\textcolor{black}{To control the term $ |D_{\x_1}I_{\textcolor{black}{l,32}}^{\textcolor{black}{\bxi}}(s,\x)|_{\textcolor{black}{\bxi=\x}}$ in \eqref{strong_JUST_BEFORE_DUALITY}, we
		will need the following auxiliary lemma whose proof is postponed to Appendix \ref{PROOF_REV_TAYLOR}}.
	
	\begin{lem}[H\"older moduli of the gradients after the differentiation index through reverse Taylor expansion]\label{Lemme_Taylor_reverse}
		There is a constant $C>0$ s.t, for all $l\in \leftB 2,n\rightB, \ k\in \leftB l,n\rightB$,  $(\y,\bxi) \in \R^{nd}\times \R^{nd}$, 
		$s\in [t,T]$ :
		\begin{equation*}
		\big | D_{{l}}u(s, \y)  -D_{{l}} u(s,\y_{1:l-1}, \btheta_{s,t}^{l:n}(\bxi)) \big |
		\leq
		C(\|\mathbf Du\|_\infty+[(D_l u)_l(s,\cdot)]_{\alpha_l^l})\Big(|(\btheta_{s,t}(\bxi)-\y)_l|^{\alpha_l^l} +\sum_{k=l+1}^n |(\btheta_{s,t}(\bxi)-\y)_k|^{\zeta_l}\Big),
		\end{equation*}
		where $\zeta_l:=(1+ \eta/ 4)/(2l+ \eta/ 4) $.
	\end{lem}
	\textcolor{black}{\textcolor{black}{Now}, from Proposition \ref{strong_THE_PROP}, Lemma \ref{strong_SMOOTH_OPERATOR_EFFECTS} and Lemma \ref{Lemme_Taylor_reverse},
		\begin{eqnarray}
		&&|D_{\x_1}I_{\textcolor{black}{l,32}}^{\textcolor{black}{\bxi}}(s,\x)|_{\textcolor{black}{\bxi=\x}}\notag\\
		&\le& \frac{\textcolor{black}{\bar C}[(D_{{l-1}}\gF_l)_{l-1}(s,\cdot))]_\eta}{(s-t)^l} \int_{\R^{nd}} d\y \hat p_{C^{-1}}^\x(t,s,\x,\y) |(\btheta_{s,t}(\x)-\y)_{l-1}|^{1+\eta} \notag\\
		&&\times (\|\mathbf Du\|_\infty+[(D_l u)_l(s,\cdot)]_{\alpha_l^l})
		\big(|(\btheta_{s,t}(\x)-\y)_l|^{\alpha_l^l}+\sum_{k=l+1}^n |(\btheta_{s,t}(\x)-\y)_k|^{\zeta_l}\big)\notag\\
		&\le & C(\|\mathbf Du\|_\infty+[(D_l u)_l(s,\cdot)]_{\alpha_l^l})\big(\sum_{k=l}^n (s-t)^{-l+(1+\eta)(l-\frac 32)+\alpha_l^l (l-\frac 12)\I_{k=l}+\zeta_l(k-\frac 12)\I_{k\in \leftB l+1,n\rightB}}\big) ,\label{PREAL_HOLD_MOD_HD}
		\end{eqnarray}
		using as well Lemma \ref{strong_LEMME_DENS} \textcolor{black}{for the last inequality}.} \textcolor{black}{Hence, the above equation yields:
		\begin{equation}
		\label{strong_ALMOST_LAST_EQ_CTR_DERIV_OTHER_TERM}
		|D_{\x_1}I_{\textcolor{black}{l,32}}^{\textcolor{black}{\bxi}}(s,\x)|_{\textcolor{black}{\bxi=\x}}\le C\big(\|\mathbf D u\|_\infty+\|(D_l u)_l(s,\cdot)\|_{ B_{\infty,\infty}^{\alpha_l^l}} \big)\sum_{k=l}^n (s-t)^{-l+(1+\eta)(l-\frac 32)+\alpha_l^l (l-\frac 12)\I_{k=l}+\zeta_l (k-\frac 12)\I_{k\in \leftB l+1,n\rightB}} .
		\end{equation}
		Observe now that the time exponents in the previous r.h.s. are integrable. In particular, for each $l\in \leftB 2,n\rightB $ 
		\begin{eqnarray}
		-l+(1+\eta)(l-\frac 32)+\alpha_l^l (l-\frac 12)&=&-(l-\frac 32)(1- (1+\eta))-\frac 32+ \alpha_l^l(l-\frac 12)\ge  \eta(l-\frac 32) -\frac 32 +\frac{1+\frac \eta 4}{2l-1}(l-\frac 12)\notag\\
		&\ge & -\frac 32+\eta(l-\frac 32)+\frac 12=-\frac 32+\gamma_l^k,\label{EXPO_HD_1}
		\end{eqnarray}
		with the notations of Lemma \ref{strong_LEMME_CTR_BESOV}}.
	Also, for $k\in \leftB l+1,n\rightB $:
	\begin{eqnarray}
	-l+(1+\eta)(l-\frac 32)+\zeta_l (k-\frac 12)&=&-\frac 32+ \eta(l-\frac 32)+\frac{1+\frac \eta 4}{2l+\frac \eta 4}(k-\frac 12)\ge  -\frac 32+\eta(l-\frac 32)  +\frac 12 \frac{1+\frac \eta 4}{l+\frac \eta 8}(l+\frac 12)\notag \\
	&\ge & -\frac 32+\eta(l-\frac 32)+\frac 12=-\frac 32+\gamma_l^k.\label{EXPO_HD_2}
	\end{eqnarray}
	\textcolor{black}{Observe carefully from the above computations that the exponent $\zeta_l $ precisely allows to recover a smoothing effect in time of order strictly greater than $1/2$. This perfectly fits the smoothing effects observed for the other contributions, see Remark \ref{rem_holdexpo} above}.

	We can then deduce from \textcolor{black}{\eqref{strong_ALMOST_LAST_EQ_CTR_DERIV} and \eqref{strong_ALMOST_LAST_EQ_CTR_DERIV_OTHER_TERM}}, using  Lemma \ref{strong_LEMME_CTR_BESOV} and Lemma \ref{strong_LEMME_CTR_HOLDER} 
	that
	\begin{equation}
	\label{strong_CTR_DX1I_3L}
	|D_{\x_1}I_{\textcolor{black}{l,3}}^{\textcolor{black}{\bxi}}(s,\x)|_{\textcolor{black}{\bxi=\x}}  
	\leq C \sum_{i=2}^{l
	}  \sum_{k=l}^n   (s-t)^{-\frac 32 + \gamma_i^k} \Big(\|{\mathbf D}u\|_\infty +
	\|{\mathbf D} D_{1}u\|_\infty \Big),
	\end{equation}
	which are integrable terms since $\gamma_i^k >  1/2$. With the notations of \eqref{strong_GROS_DECOUP}, \eqref{strong_DECOUP_I}, we eventually derive from \eqref{strong_CTR_DX1I_3L}, \eqref{strong_CTR_DEBUT_I_12l}, \eqref{strong_CTR_DEBUT_I_11l} that there exists $\gamma:=\gamma(\A{A})>0$ such that:
	\begin{equation}\label{strong_LAST_CONTROL_I}
	\bigg|\int_{t}^T ds D_{\x_1}I_l^{\textcolor{black}{\bxi}}(s,\x)\bigg|_{\textcolor{black}{\bxi=\x}}\le C T^\gamma \Big(\|{\mathbf D}u\|_{\infty} +\|{\mathbf D}D_1 u\|_{\infty}\Big).
	\end{equation}

	\begin{proof}[\textcolor{black}{\textbf{Final proof of Theorem \ref{strong_THM_DER_PDE}}}.]
		Bringing together \eqref{strong_LAST_CONTROL_I} and \eqref{strong_LE_CTR_SUR_H_1L} yields for all $l\in \leftB 1,n\rightB $ and  $(t,\x)\in [0,T] $:
		\begin{equation}\label{strong_AF_D1_GRAD}
		|D_{\x_l}D_{\x_1} u(t,\x)|\le C (T^\gamma+T^{\delta} )\Big( \|{\mathbf D} u\|_{\infty}+\|{\mathbf D}D_1 u\|_{\infty}\Big).
		\end{equation}
		It is clear that the previous analysis can be reproduced without differentiating w.r.t. $\x_1 $, leading to improved singularity exponents (see also the proof of Lemma \ref{strong_LEMME_CTR_HOLDER} which somehow exactly explicit these computations). We therefore get:
		\begin{equation}\label{strong_AF_GRAD}
		|D_{\x_l} u(t,\x)|\le C (T^{\gamma'}+T^{\delta'} )\Big( \|{\mathbf D} u\|_{\infty}+\|{\mathbf D}D_1 u\|_{\infty}\Big),
		\end{equation}
		for some positive exponents $\gamma', \delta' $ (with $\gamma'>\gamma,\ \delta '>\delta $).
		
		Taking the time-space supremum in the l.h.s of \eqref{strong_AF_D1_GRAD} and \eqref{strong_AF_GRAD},
		recalling as well that $T$ is meant to be small, i.e. s.t. $4 CT^{\delta \wedge \gamma}\le  1/2 $, we derive:
		\begin{equation*}
		\textcolor{black}{\|{\mathbf D} u\|_\infty}+\textcolor{black}{ \|{\mathbf D}D_1 u\|_\infty} \le 2C (T^\gamma+T^{\delta} ).
		\end{equation*}
		This concludes the proof.  
	\end{proof}
	\mysection{Estimates in Besov norm}\label{sec:esti:besov:norm}
	This section is dedicated to the proofs of the main technical results needed to obtain  Theorem \ref{strong_THM_DER_PDE}. Namely, we prove the Besov estimates of Lemmas \ref{strong_LEMME_CTR_BESOV} and \ref{strong_LEMME_CTR_HOLDER}. We first start by recalling some definitions/characterizations on Besov spaces from Section 2.6.4 of Triebel \cite{trie:83}. For $\alpha \in \R, q\in (0,+\infty] ,p \in (0,\infty] $, $B_{p,q}^\alpha(\R^d):=\{f\in {\mathcal S}'(\R^d): \|f\|_{{\mathcal H}_{p,q}^\alpha}<+\infty \} $ where ${\mathcal S}(\R^d) $ stands for the Schwartz class and 
	\begin{equation}
	\label{strong_THERMIC_CAR_DEF}
	\|f\|_{{\mathcal H}_{p,q}^\alpha}:=\|\varphi(D) f\|_{L^p(\R^d)}+ \Big(\int_0^1 \frac {dv}{v} v^{(m-\frac \alpha 2)q}    \|\partial_v^m h_{v}\star f\|_{L^p(\R^d)}^q \Big)^{\frac 1q},
	\end{equation}
	with $\varphi \in C_0^\infty(\R^d)$ (smooth function with compact support)  s.t. $\varphi(0)\neq 0 $, $\varphi(D)f := (\varphi \hat f)^{\vee} $ where $\hat f$ and $(\varphi \hat f)^\vee $ respectively denote the Fourier transform of $f$ and the inverse  Fourier transform of $\varphi \hat f $. The parameter $m$ is an integer s.t. $m> \alpha/ 2 $ and for $v>0$, $z\in \R^d $, $h_{v}(z):=\frac{1}{(2\pi v)^{ d/2}}\exp\left(-\frac{|z|^2}{2v} \right)$ is the usual heat kernel of $\R^d$. We point out that the quantities in \eqref{strong_THERMIC_CAR_DEF} are well defined for $q<\infty $. The modifications for $q=+\infty $ are obvious and can be written passing to the limit.

	Observe that the quantity $\|f\|_{{\mathcal H}_{p,q}^s} $, where the subscript ${\mathcal H} $ stands to indicate the dependence on the heat-kernel, depends on the considered function $\varphi $ and the chosen $m\in \N$. It also defines a quasi-norm on $B_{p,q}^s(\R^d) $. The previous definition of $B_{p,q}^\alpha(\R^d) $ is known as the thermic characterization of Besov spaces and is particularly well adapted to our current framework.
	By abuse of notation we will write as soon as this quantity is finite $\|f\|_{{\mathcal H}_{p,q}^\alpha}=:\|f\|_{B_{p,q}^\alpha} $.

	\subsection{Proof of Lemma \ref{strong_LEMME_CTR_BESOV}} 
	We will here exploit the thermic characterization of Besov spaces (see Chapter 2.6.4 in \cite{trie:83}) which is also recalled above.
	From \eqref{strong_THERMIC_CAR_DEF},
	we are thus led to estimate, for any $l \in \leftB 2,n\rightB$, $i \in \leftB 2,l-1\rightB$ and $ k\in \leftB l,n\rightB$:
	\textcolor{black}{
		\begin{eqnarray*}
			\label{strong_BUT_BESOV}
			\|\varphi (D) \textcolor{black}{D_i}\Psi_{i,(l,1),k}^{(s,\y_{1:i-1},\y_{i+1:n}),(t,\x)}(\cdot)\|_{L^1(\R^d,\R)} + \int_0^{\textcolor{black}{\textcolor{black}{1}}} \frac{dv}{v}v^{1+\textcolor{black}{\frac{\alpha_i^k}2 }}\|\partial_ v h_{v}\star \textcolor{black}{D_i}\Psi_{i,(l,1),k}^{(s,\y_{1:i-1},\y_{i+1:n}),(t,\x)}(\cdot)\|_{L^1(\R^d,\R)},
		\end{eqnarray*}
		for a $C^\infty $ compactly supported function $\varphi $ \textcolor{black}{s.t. $\varphi(0)\neq 0 $}. \textcolor{black}{\textcolor{black}{From the definition of $\Psi_{i,(l,1),k}^{(s,\y_{1:i-1},\y_{i+1:n}),(t,\x)} $ in \eqref{strong_GROSSE_DEF}}, using  Lemmas \ref{strong_LEMME_DENS} and \ref{strong_SMOOTH_OPERATOR_EFFECTS}, we easily deduce
			\begin{eqnarray*}
				&&\|\varphi (D) \textcolor{black}{D_i}\Psi_{i,(l,1),k}^{(s,\y_{1:i-1},\y_{i+1:n}),(t,\x)}(\cdot)\|_{L^1(\R^d,\R)}\\
				&\le& \int_{\R^d } dz\Big|\int_{\R^d}d\y_i D_{\y_i}\widehat \varphi (z-\y_i) \cdot \Psi_{i,(l,1),k}^{(s,\y_{1:i-1},\y_{i+1:n}),(t,\x)}(\y_i) \Big|  \\
				&\le&\textcolor{black}{\bar C} (s-t)^{-1/2}\Big( (s-t)^{\frac 32\beta_2} + (s-t)^{\frac 12(1+\eta)} \Big)\hat q_{c\setminus i}(t,s,\x,(\y_{1:i-1},\y_{i+1:n})).
			\end{eqnarray*}
		}
		Let us now focus on the second term in the above definition. We split therein the time integral into two parts writing:}
	\begin{eqnarray}
	&&\int_0^{(s-t)^{\rho_{i,k}}} dvv^{\frac{ \textcolor{black}{\alpha_i^k}}2 }\|\partial_v h_{v}\star\textcolor{black}{D_i}\Psi_{i,(l,1),k}^{(s,\y_{1:i-1},\y_{i+1:n}),(t,\x)}(\cdot)\|_{L^1(\R^d,\R)}\notag\\
	&& 
	+\int_{(s-t)^{\rho_{i,k}}}^{\textcolor{black}{\textcolor{black}{1}}} dvv^{\frac{\textcolor{black}{\alpha_i^k}}2 }\|\partial_v h_{v}\star  \textcolor{black}{D_i}\Psi_{i,(l,1),k}^{(s,\y_{1:i-1},\y_{i+1:n}),(t,\x)}(\cdot)\|_{L^1(\R^d,\R)}\label{strong_DECOUP_HK_INT}\\
	&&=:{\mathbf {Lower}} \Big[\textcolor{black}{D_i}\Psi_{i,(l,1),k}^{(s,\y_{1:i-1},\y_{i+1:n}),(t,\x)}\Big]+{\mathbf {Upper}} \Big[\textcolor{black}{D_i}\Psi_{i,(l,1),k}^{(s,\y_{1:i-1},\y_{i+1:n}),(t,\x)}\Big],\notag 
	\end{eqnarray}
	for a parameter $\rho_{i,k}>0 $ to be specified. \textcolor{black}{The term ${\mathbf {Upper}} $ corresponding to the upper-part  of the integral w.r.t. $v$ does not involve singularities. We will use this fact to calibrate the associated parameter $\rho_{i,k} $ in order to match the integrability constraint
		\begin{eqnarray}
		\label{strong_RES_CALIBRATION_BETA_I}
		{\mathbf {Upper}}\Big[\textcolor{black}{D_i}\Psi_{i,(l,1),k}^{(s,\y_{1:i-1},\y_{i+1:n}),(t,\x)}\Big]\le \frac{\textcolor{black}{\bar C}}{(s-t)^{
				1+\frac 12 -{\gamma_i^k}
		}}\hat q_{c\setminus i}(t,s,\x,(\y_{1:i-1},\y_{i+1:n})),
		\end{eqnarray}
		where $\hat q_{c\setminus i}$ has been defined in \eqref{strong_DEF_HAT_SETMINUS} and $\gamma_i^k>1/2 $ in order to obtain a time integrable singularity.
		For this term, we will only use crude upper-bounds on the derivatives of the heat-kernel and the coefficients satisfying \A{T${}_\beta $}. On the other hand, the contribution ${\mathbf {Lower}} $ in \eqref{strong_DECOUP_HK_INT} precisely contains the singularities w.r.t. $v$. It is therefore crucial to use there suitable cancellation tools. The point will then be to prove that the associated estimates are compatible with the upper-bound in equation \eqref{strong_RES_CALIBRATION_BETA_I}}.

	We now write:
	\begin{eqnarray*}
		&&{\mathbf {Upper}}\Big[\textcolor{black}{D_i}\Psi_{i,(l,1),k}^{(s,\y_{1:i-1},\y_{i+1:n}),(t,\x)}\Big]\\
		&=&\int_{(s-t)^{\rho_{i,k}}}^{\textcolor{black}{\textcolor{black}{1}}} dvv^{\frac{\textcolor{black}{\alpha_i^k}}2 }\|\partial_vh_{v}\star  \textcolor{black}{D_i}\Psi_{i,(l,1),k}^{(s,\y_{1:i-1},\y_{i+1:n}),(t,\x)}(\cdot)\|_{L^1(\R^d,\R)}\\
		&=&\int_{(s-t)^{\rho_{i,k}}}^{\textcolor{black}{\textcolor{black}{1}}}dvv^{\frac{\textcolor{black}{\alpha_i^k}}2 }\int_{\R^d}dz\Big |\int_{\R^d}d\y_i \textcolor{black}{D_i}\partial_vh_{v}(z-\y_i) \Psi_{i,(l,1),k}^{(s,\y_{1:i-1},\y_{i+1:n}),(t,\x)}(\y_i)  \Big|.
	\end{eqnarray*}
	\textcolor{black}{Recall from the definition in \eqref{strong_GROSSE_DEF} that $\Psi_{i,(l,1),k}^{(s,\y_{1:i-1},\y_{i+1:n}),(t,\x)}(\y_i)$ is $(\R^{d})^{\otimes 4} $-valued. To proceed with the computations we assume w.l.o.g. for the rest of the proof  that $d=1$ to avoid tensor notations for simplicity.}
	Writing explicitly the function $\Psi_{i,(l,1),k}^{(s,\y_{1:i-1},\y_{i+1:n}),(t,\x)}(\y_i)$  leads to:
	\begin{eqnarray*}
		&&{\mathbf {Upper}}\Big[\textcolor{black}{D_i}\Psi_{i,(l,1),k}^{(s,\y_{1:i-1},\y_{i+1:n}),(t,\x)}\Big]\\
		&\le& \int_{(s-t)^{\rho_{i,k}}}^{\textcolor{black}{\textcolor{black}{1}}} dv v^{\frac{\textcolor{black}{\alpha_i^k}}2 }  \int_{\R^d} dz 
		\Bigg|\int_{\R^d}d\y_i\textcolor{black}{D_i}\partial_v h_{v}(z-\y_i)\Big(D_{\x_1}D_{\x_l}\tilde p^{\bxi}(t,s,\x,\y)  \Big[\left((\btheta_{s,t}(\bxi) - \y)_{k}\right) \\
		&&
		\Big(\gF_{i}(s,\y_{1:l-1},\btheta_{s,t}^{l:n}(\bxi))-\gF_{i}(s,\btheta_{s,t}(\bxi))
		-D_{i-1}\gF_{i}(s,\btheta_{s,t}(\bxi))\big(\y-\btheta_{s,t}(\bxi)\big)_{i-1}\Big]\Big)\bigg| \Bigg|_{\bxi=\x}.
	\end{eqnarray*}
	%
	From Lemma \ref{strong_SMOOTH_OPERATOR_EFFECTS} and \textcolor{black}{Proposition \ref{strong_THE_PROP}}, we derive \textcolor{black}{that} there \textcolor{black}{exist} $C:= C(\A{A},T)>0,\ \textcolor{black}{\bar C:=\bar C(C)}$ such that \textcolor{black}{introducing $\hat q_c(t,s,\x,\y)=\hat p_c^\x(t,s,\x,\y)$, $c=C^{-1} $}:
	\begin{eqnarray*}
		&&{\mathbf {Upper}}\Big[\textcolor{black}{D_i}\Psi_{i,(l,1),k}^{(s,\y_{1:i-1},\y_{i+1:n}),(t,\x)}\Big]\\
		&\le& \textcolor{black}{\bar C}\int_{(s-t)^{\rho_{i,k}}}^{\textcolor{black}{\textcolor{black}{1}}} dvv^{\frac{\textcolor{black}{\alpha_i^k}}2 } \int_{\R^d} dz 
		\int_{\R^d} d\y_i \frac{h_{cv}(z-\y_i)}{v^{\textcolor{black}{\frac 32 }}}\frac{\hat q_c(t,s,\x,\y)}{(s-t)^{(l-\frac 12)+\frac 12}} |(\btheta_{s,t}(\x)-\y)_k|\\
		&& \times \Bigg\{  
		\sum_{j=i}^{l-1}\Big\{ |(\btheta_{s,t}(\x) - \y)_j|^{\beta_j}\Big\} + |(\btheta_{s,t}(\x) - \y)_{i-1}|^{1+\eta}\Bigg\}\\
		&\underset{\textcolor{black}{k\ge l}}{\le}& \textcolor{black}{\bar C}\int_{(s-t)^{\rho_{i,k}}}^{\textcolor{black}{\textcolor{black}{1}}} dvv^{\frac{\textcolor{black}{\alpha_i^k}}2 } \int_{\R^d} dz 
		\int_{\R^d} d\y_i \frac{h_{cv}(z-\y_i)}{v^{\textcolor{black}{\frac 32}}}\frac{\hat q_c(t,s,\x,\y)}{(s-t)^{\frac 12}}
		\left\{ \sum_{j=i}^{l-1}(s-t)^{\beta_j(j-\frac 12)} + (s-t)^{(1+\eta)(i-\frac 32)}\right\}\\
		&\underset{\textcolor{black}{{\rm Fubini}}}{\le}& \textcolor{black}{\bar C}\hat q_{c\setminus i}(t,s,\x,(\y_{1:i-1},\y_{i+1:n}))\int_{(s-t)^{\rho_{i,k}}}^{\textcolor{black}{\textcolor{black}{1}}} dvv^{-\frac 32+\frac{\alpha_i^k} 2 } (s-t)^{-\frac12}
		\left\{ \sum_{j=i}^{l-1}(s-t)^{\beta_j(j-\frac 12)} + (s-t)^{(1+\eta)(i-\frac 32)}\right\}\\
		&\le& \textcolor{black}{\bar C}\hat q_{c\setminus i}(t,s,\x,(\y_{1:i-1},\y_{i+1:n})) (s-t)^{[-\frac 12+\frac{\alpha_i^k} 2]\rho_{i,k}-\frac 12 }\big((s-t)^{\beta_i(i-\frac 12)}+(s-t)^{(1+\eta)(i-\frac 32)}\big),
	\end{eqnarray*}
	recalling that \textcolor{black}{the lower bound of $\beta_j(j-1/2) $ is increasing for the last inequality (recall indeed that we assumed that $\beta_j \in\left( \frac{2j-2}{2j-1},1 \right]$)}, \textcolor{black}{up to modification of $C,\bar C$ in the previous inequalities}. We now want to \textcolor{black}{choose the threshold $\rho_{i,k} $ in order to} match the integrability condition   in \eqref{strong_RES_CALIBRATION_BETA_I}. This amounts to write: 
	$$-1- \frac 12 + \gamma_i^k=[-\frac 12+\frac{\alpha_i^k} 2]\rho_{i,k}-\frac 12\textcolor{black}{+}\Big( \beta_i(i-\frac 12)\wedge (1+\eta)(i-\frac 32)\Big).$$
	\textcolor{black}{Since this condition should hold for any $\beta_i \in \left(\frac{2i-2}{2i-1},1\right]$ and since the parameter $\eta \in (0,1)$ is small (see \A{${\mathbf H}_{\eta} $}) we have  $\beta_i(i-1/2)\wedge (1+\eta)(i-3/2) = (1+\eta)(i-3/2)$}. The above condition rewrites:
	$$-1-\frac 12+\gamma_i^k=[-\frac 12+\frac{\alpha_i^k} 2]\rho_{i,k} -\frac 12\textcolor{black}{+}(1+\eta)(i-\frac 32).$$
	Our global integrability constraint associated with \textcolor{black}{the} $i^{\rm th} $ variable in the $k^{\rm th} $ derivative writes:
	\begin{equation}
	\label{strong_EXP_THERMIC_HD_V}
	-1-\frac 12+\gamma_i^k=[-\frac 12+\frac{\alpha_i^k} 2]\rho_{i,k}-\frac 12\textcolor{black}{+} (1+\eta)(i-\frac32)
	>-1,
	\end{equation}
	which gives
	\begin{eqnarray}
	\label{strong_RHO_I_K}
	\rho_{i,k}<\frac{(1+\eta)(2i-3)+1}{1-\alpha_i^k}.
	\end{eqnarray}
	
	It therefore remains to check that such a choice is compatible with the time integral part \textcolor{black}{for} $v\in [0,(s-t)^{\rho_{i,k}}] $
	in the thermic characterisation of the Besov norm, \textcolor{black}{see \eqref{strong_DECOUP_HK_INT}}. We point out that for this term it is absolutely essential to get rid of the exponent $v^{-\textcolor{black}{3/2}}$ coming from the upper-bound of the thermic heat-kernel, i.e. $\textcolor{black}{\textcolor{black}{D_i}\partial_{v}}h_{v}(z-\y_i) $. In order to get an integrable singularity in $v$, we need to decrease the crude upper-bound on $\textcolor{black}{D_i}\partial_{v}h_{v}(z-\y_i) $. This is done through cancellation techniques exploiting the smoothness properties of $\Psi_{i,(l,1),k}^{(s,\y_{1:i-1},\y_{i+1:n}),(t,\x)} $.

	To investigate  $ {\mathbf {Lower}}\Big[\textcolor{black}{D_i}\Psi_{i,(l,1),k}^{(s,\y_{1:i-1},\y_{i+1:n}),(t,\x)}\Big]$ let us first recall from  the definition in \eqref{strong_GROSSE_DEF} that for each $k\in \leftB l,n\rightB $:
	\textcolor{black}{\begin{eqnarray}
		\label{strong_SPECIAL_PSI_I_L}
		&&\Psi_{\textcolor{black}{i},(l,1),k}^{(s,\y_{1:\textcolor{black}{i}-1},\y_{\textcolor{black}{i}+1:n}),(t,\x)}(\y_{\textcolor{black}{i}})
		\nonumber \\
		&=&\textcolor{black}{\bigg\{} D_{\x_l} D_{\x_1} \tilde p^{\bxi}(t,s,\x,\y) 
		\\
		&&\textcolor{black}{\times} \Big[   \Big(\gF_{\textcolor{black}{i}}(s,\y_{1:l-1},\btheta^{l:n}_{s,t}(\bxi))- \gF_i(s,\btheta_{s,t}(\bxi))
		- D_{i-1}\gF_i(s,\btheta_{s,t}(\bxi))\big(\y-\btheta_{s,t}(\bxi)\big)_{i-1}\Big)\Big] (\y-\btheta_{s,t}(\bxi))_{k}^*\bigg\}_{\textcolor{black}{\bxi=\x}}.\nonumber
		\end{eqnarray}}
	\textcolor{black}{Let us now specify the dependence w.r.t. $\y_i $ of the previous expression in function of the considered indexes $ l\in \leftB 2, n\rightB, i\in \leftB \textcolor{black}{2,l-1}\rightB, k\in \leftB l,n\rightB $. This will be useful to develop corresponding adapted cancellation arguments.}
	
	\textcolor{black}{Observe first that the  dependence in $\y_i$  in \eqref{strong_SPECIAL_PSI_I_L} appears for \textcolor{black}{all} $ l\in \leftB 2, n\rightB, i\in \leftB \textcolor{black}{2,l-1}\rightB, k\in \leftB l,n\rightB $ through the term
		$\textcolor{black}{D_{\x_l}D_{\x_1}} \tilde p^\bxi(t,s,\x,\y) $.}
	
	\textcolor{black}{For the term into brackets, since $i\le l-1$, then $k>\textcolor{black}{i}$ and the only bracket term containing $\y_{\textcolor{black}{i}}$ 
		is the one associated with $\gF_i(\textcolor{black}{\y_{1:l-1},\btheta^{l:n}_{s,t}(\bxi)}) $}.

	\textcolor{black}{
		\\
		With} the notations of \eqref{strong_DECOUP_HK_INT}, we write:
	\begin{eqnarray}
	\label{strong_DECOUP_AVANT_CANCELLATION}
	&&{\mathbf {Lower}}\Big[\textcolor{black}{D_i}\Psi_{i,(l,1),k}^{(s,\y_{1:i-1},\y_{i+1:n}),(t,\x)}\Big]\notag\\
	&=&\int_0^{(s-t)^{\rho_{i,k}}} dv v^{\frac{\textcolor{black}{\alpha_i^k}}2} \int_{\R^d} dz|\int_{\R^d}  d\y_i \textcolor{black}{D_i}\partial_v h_{cv}(z-\y_i)
	(\Psi_{i,(l,1),k}^{(s,\y_{1:i-1},\y_{i+1:n}),(t,\x)}(\y_i)-\Psi_{i,(l,1),k}^{(s,\y_{1:i-1},\y_{i+1:n}),(t,\x)}(z))|\notag\\
	&=:&\int_0^{(s-t)^{\rho_{i,k}}} \!\!\!dv v^{\frac{\textcolor{black}{\alpha_i^k}}2} \int_{\R^d} dz |({\mathscr T}_{1,i,(l,1),\textcolor{black}{k}}^{(s,\y_{1:i-1},\y_{i+1:n}),(t,\x)}+{\mathscr T}_{2,i,(l,1),\textcolor{black}{k}}^{(s,\y_{1:i-1},\y_{i+1:n}),(t,\x)}) \big(v,z\big)|,
	\end{eqnarray}
	where: 
	\begin{eqnarray}
	&&
	{\mathscr T}_{1,i,(l,1),\textcolor{black}{k}}^{(s,\y_{1:i-1},\y_{i+1:n}),(t,\x)} \big(v,z\big)\label{strong_DECOUP_CAR_THERMIC_T1}
	\\
	&:=&\int_{\R^d} d\y_i \textcolor{black}{D_i}\partial_v h_{v}(z-\y_i)\Big(D_{\x_1}D_{\x_l} \tilde p^{\bxi}(t,s,\x,\y) 
	\notag\\
	&&\textcolor{black}{ \times} 
	\Big[  
	\Big(\gF_{i}(s,\y_{1:l-1},\btheta_{s,t}^{l:n}(\bxi))-\gF_{i}(s,\y_{1:i-1},z,\y_{i+1:l-1},\btheta_{s,t}^{l:n}(\bxi))\Big) \textcolor{black}{ (\y-\btheta_{s,t}(\bxi))_k^*}
	\notag
	\Big]\textcolor{black}{\Big)_{\bxi=\x}} ,
	\end{eqnarray}
	with a slight abuse of notation when $i=l-1$ and 
	\begin{eqnarray}
	\label{strong_DECOUP_CAR_THERMIC_T2}
	\!\!&\!\!\!\!&\!\!{\mathscr T}_{2,i,(l,1),k}^{(s,\y_{1:i-1},\y_{i+1:n}),(t,\x)}\big(v,z\big)\\
	\!\!&\!\!:=\!\!&\!\!\int_{\R^d} d\y_i \textcolor{black}{D_i}\partial_v h_{v}(z-\y_i)\bigg\{\Big[D_{\x_1}D_{\x_l} \tilde p^{\bxi}(t,s,\x,\y)-D_{\x_1}D_{\x_l}\tilde p^{\bxi}(t,s,\x,\y_{1:i-1},z,\y_{i+1:n})\Big] 
	\notag \\
	\!\!&\!\!\!\!&\!\! \!\! \textcolor{black}{ \times}
	\Big[
	\Big(\!\gF_{i}(s,\y_{1:i-1},z,\y_{i+1:l-1},\btheta_{s,t}^{l:n}(\bxi))-\gF_{i}(s,\btheta_{s,t}(\bxi)) -D_{i-1}\gF_{i}(s,\btheta_{s,t}(\bxi))(\y-\btheta_{s,t}(\bxi))_{i-1}\!\Big)\textcolor{black}{ (\y-\btheta_{s,t}(\bxi))_k^*}
	\Big]\bigg\}_{\textcolor{black}{\!\!\bxi=\x}}\!\! \!\!.\notag
	\end{eqnarray}
	Write now from \eqref{strong_DECOUP_CAR_THERMIC_T2}, \textcolor{black}{Proposition \ref{strong_THE_PROP} and Lemma \ref{strong_SMOOTH_OPERATOR_EFFECTS}}:
	\begin{eqnarray*}
		\!\!&\!\!\!\!&\!\!|{\mathscr T}_{2,i,(l,1),k}^{(s,\y_{1:i-1},\y_{i+1:n}),(t,\x)}\big(v,z\big)|\\
		\!\!&\!\!\le \!\!&\!\!\textcolor{black}{\bar C}\int_{\R^d} d\y_i \frac{h_{cv}(z-\y_i)}{v^{\textcolor{black}{\frac 32}}}\int_0^1 d\lambda  \frac{\hat q_c(t,s,\x,\y_{1:i-1},z+\lambda (\y_i-z), \y_{i+1:n})}{(s-t)^{(i- \frac{1}{2})+(l- \frac{1}{2}) + \frac 12}}|(\y-\btheta_{s,t}(\x))_k|  |\y_i-z| \\
		\!\!&\!\!\!\!&\!\!\times\Big (\Big|\gF_{i}(s,\y_{1:i-1},z,\y_{i+1:l-1},\btheta_{s,t}^{l:n}(\x))-\gF_{i}(s,\btheta_{s,t}(\x)) -D_{i-1}\gF_{i}(s,\btheta_{s,t}(\x))(\y-\btheta_{s,t}(\x))_{i-1} \Big|\Big)\\
		\!\!&\!\!\le\!\! &\!\!\textcolor{black}{\bar C}\int_{\R^d} d\y_i \frac{h_{cv}(z-\y_i)}{\textcolor{black}{v}}\int_0^1 d\lambda  \frac{\hat q_c(t,s,\x,\y_{1:i-1},z+\lambda (\y_i-z), \y_{i+1:n})}{(s-t)^{(i- \frac{1}{2})+\frac 12
		}} 
		\\
		\!\!&\!\!\!\!&\!\!\times
		\left\{ |z-(\btheta_{s,t}(\x))_i|^{\beta_i}+\sum_{j=i+1}^{l-1}(s-t)^{\beta_j(j-\frac 12)} + (s-t)^{(1+\eta)(i-\frac 32)}\right\},
	\end{eqnarray*}
	\textcolor{black}{where for the second inequality, we used that for $k\ge l>i,\ |(\y-\btheta_{s,t}(\x))_k|(s-t)^{-(l-1/2)}\le |(\y-\btheta_{s,t}(\x))_k|(s-t)^{-(k-1/2)}  $ which can be absorbed by the $k^{\rm th} $ variables of $\hat q_c $}.
	
	Writing now for any  $\lambda \in [0,1]  $,
	$$|z-\btheta_{s,t}(\x)_i|\le \lambda  |z-\y_i|+|z +\lambda (\y_i-z)-(\btheta_{s,t}(\x))_i|,$$
	we thus derive
	\begin{eqnarray}
	\!\!&\!\!\!\!&\!\!|{\mathscr T}_{2,i,(l,1),k}^{(s,\y_{1:i-1},\y_{i+1:n}),(t,\x)}\big(v,z\big)|\notag \\
	\!\!&\!\!\le \!\!&\!\!  \textcolor{black}{\bar C}\int_{\R^d} d\y_i \frac{h_{cv}(z-\y_i )}{\textcolor{black}{v}}\int_0^1 d\lambda  \hat q_c(t,s,\x,\y_{1:i-1},z+\lambda (\y_i-z), \y_{i+1:n})
	\notag\\
	\!\!&\!\!\!\!&\!\! \times 
	\Big(v^{\frac{\beta_i}2}(s-t)^{-(i -\frac 12) - \frac 12}\!+\!(s-t)^{-(i-\frac 12)- \frac 12\!+\!\beta_i(i-\frac 12) }
	\!+\!\!\!\sum_{j=i+1}^{l-1}\!\! (s-t)^{-(i-\frac 12)- \frac 12\!+\!\beta_j(j-\frac 12) } + (s-t)^{-(i-\frac 12)- \frac 12\!+\!(1+\eta)(i-\frac 32)}\Big)
	\notag\\
	\!\!&\!\!\le\!\!&\!\!  \textcolor{black}{\bar C}\hat q_{c\setminus i}(t,s,\x,\y_{1:i-1}, \y_{i+1:n})\int_0^1 d\lambda \int_{\R^d}d\y_i h_{cv}(z-\y_i) {\mathcal N}_{c(s-t)^{2i-1}}(z+\lambda (\y_i-z)-(\btheta_{s,t}(\x))_i) \notag\\
	\!\!&\!\!\!\!&\!\!\times v^{\textcolor{black}{-1}} \Big(v^{\frac{\beta_i}2}(s-t)^{-(i -\frac 12)- \frac 12}+
	(s-t)^{-(i-\frac 12)- \frac 12+(1+\eta)(i-\frac 32)}\Big),
	\label{strong_CTR_T2_SU}
	\end{eqnarray}
		recalling for the last inequality that for any $j$ in $\leftB i,l-1\rightB$, $\beta_j (j- 1/2)>j-1>i- 3/2 $ and $\eta$ is supposed to be a small 
	parameter. 
	
	\textcolor{black}{Let us now consider the first contribution in the r.h.s. in \eqref{strong_SPECIAL_PSI_I_L}},
	\begin{eqnarray}
	|{\mathscr T}_{1,i,(l,1),k}^{(s,\y_{1:i-1},\y_{i+1:n}),(t,\x)}\big(v,z\big)|\!\!&\!\!\le \!\!&\!\!  \textcolor{black}{\bar C} \int_{\R^d}d\y_i \frac{h_{cv}(z-\y_i)}{v^{\textcolor{black}{\frac 32}}} \frac{\hat q_c(t,s,\x,\y)}{(s-t)^{(l-\frac 12) + \frac 12}}|z-\y_i|^{\beta_i} |(\y-\btheta_{s,t}(\x))_k| \notag \\
	\!\!&\!\!\le \!\!&\!\!  \textcolor{black}{\bar C}\int_{\R^d}d\y_{i} \frac{h_{cv}(z-\y_i)}{v^{\textcolor{black}{\frac 32}-\frac{\beta_i}{2}}} \frac{\hat q_c(t,s,\x,\y)}{(s-t)^{\frac 12}} .\label{strong_PREAL_T1_SU}
	\end{eqnarray}

	From \eqref{strong_CTR_T2_SU} and \eqref{strong_PREAL_T1_SU} we derive, with the notation introduced in \eqref{strong_DEF_HAT_SETMINUS}:
	\begin{eqnarray*}
		&&\|\partial_v h_{v}\star\textcolor{black}{D_i} \Psi_{i,(l,1),k}^{(s,\y_{1:i-1},\y_{i+1:n}),(t,\x)}\|_{L^1(\R^d,\R)}\\
		& \le &\textcolor{black}{\bar C}\hat q_{c\setminus i}(t,s,\x,(\y_{1:i-1},\y_{i+1:n}))\Bigg\{\frac{1}{v^{\textcolor{black}{\frac32}-\frac{\beta_i}{2}}(s-t)^{\frac 12}} +\Bigg(\frac{v^{\textcolor{black}{-1+}\frac{\beta_i}{2}}}{(s-t)^{i }}    +\frac{v^{\textcolor{black}{-1}}}{(s-t)^{i-(1+\eta)(i-\frac 32)}} \Bigg)\\
		&& \times \int_0^1 d\lambda \int_{\R^d} dz \int_{\R^d} d\y_i h_{cv}(z-\y_i) {\mathcal N}_{c(s-t)^{2i-1}}(z+\lambda (\y_i-z)-(\btheta_{s,t}(\x))_i)\Bigg\}\\
		&\le &\textcolor{black}{\bar C}\hat q_{c\setminus i}(t,s,\x,(\y_{1:i-1},\y_{i+1:n}))\Bigg(\frac{1}{v^{\textcolor{black}{\frac 32}-\frac{\beta_i}{2}}(s-t)^{\frac 12}} +\frac{v^{\textcolor{black}{-1+}\frac{\beta_i}{2}}}{(s-t)^{i }}    +\frac{v^{\textcolor{black}{-1}}}{(s-t)^{i-(1+\eta)(i-\frac 32)}}\Bigg),
	\end{eqnarray*}
	using the change of variable $(w_1, w_2)=(z-\y_i,z+\lambda (\y_i-z)-(\btheta_{s,t}(\x))_i) $ for the last inequality.

	For $\rho_{i,k} $ chosen as in \eqref{strong_RHO_I_K} one gets \textcolor{black}{from the definition in \eqref{strong_DECOUP_HK_INT}}:
	\begin{eqnarray*}
		&&{\mathbf {Lower}} \Big[\textcolor{black}{D_i}\Psi_{i,(l,1),k}^{(s,\y_{1:i-1},\y_{i+1:n}),(t,\x)}\Big]\\
		\!&\!\!\le \!\!&\! \textcolor{black}{\bar C}\hat q_{c\setminus i}(t,s,\x,(\y_{1:i-1},\y_{i+1:n}))\int_0^{(s-t)^{\rho_{i,k}}} \frac{dv}{v}v^{\frac{ \alpha_{i}^k}{2}}
		\Bigg(\frac{1}{v^{\frac{1-\beta_i}{2}}(s-t)^{\frac 12}} +\frac{v^{\frac{\beta_i}{2}}}{(s-t)^{i }}    +\frac{1}{(s-t)^{i-(1+\eta)(i-\frac 32)}}\Bigg)\\
		\!&\!\!=: \!\!&\!\textcolor{black}{\bar C}\hat q_{c\setminus i}(t,s,\x,(\y_{1:i-1},\y_{i+1:n}))  {\mathcal B}_{1,\rho_{i,k}}(t,s).
	\end{eqnarray*}
	It therefore remains to prove that, if 
	${\mathcal B}_{1,\rho_{i,k}}(t,s)\le (s-t)^{-1- 1/2 + \gamma_i^k} $ for $ \gamma_i^k>0 $ then:
	\begin{equation}
	\label{strong_REG_SUFF_TO_CHECK_LB}
	-1-\frac 12+ \gamma_i^k >-1 \textcolor{black}{\iff  \gamma_i^k>\frac 12}.
	\end{equation}
	Write:
	\begin{eqnarray}
	{\mathcal B}_{1,\rho_{i,k}}(t,s)&\le& \int_0^{(s-t)^{\rho_{i,k}}} dv \Bigg(\frac{v^{-\frac 32+\frac{\alpha_i^k+\beta_i}2}}{(s-t)^{\frac 12}}+ \frac{v^{-1+\frac{\alpha_i^k+\beta_i}{2}}}{(s-t)^{i}}+ \frac{v^{-1+\frac{\alpha_i^k}{2}}}{(s-t)^{\frac 32- \eta(i-\frac 32)}}\Bigg)\notag\\
	&\le & C \Big (   (s-t)^{\rho_{i,k}\frac{\alpha_i^k+\beta_i-1}{2}-\frac 12}+ (s-t)^{\rho_{i,k}\frac{\alpha_i^k+\beta_i}{2}-i} + (s-t)^{\rho_{i,k}\frac{\alpha_i^k}2-\frac 32+ \eta(i-\frac 32)}\Big),
	\label{strong_BD_BIK}
	\end{eqnarray}
	\textcolor{black}{assuming for a while that $\alpha_i^k $ is s.t. $\alpha_i^k+\beta_i>1 $ to guarantee the integrability of the first above integrand (this condition will be fulfilled for the choice 
		below).}
	Let us check condition \eqref{strong_REG_SUFF_TO_CHECK_LB} is satisfied. Actually, the first two terms \textcolor{black}{in \eqref{strong_BD_BIK}} yield negligible contributions. Recall indeed from \textcolor{black}{Remark \ref{rem_holdexpo}} that the parameter $\alpha_i^k $ must be chosen so that $\alpha_i^k<\frac{1-(1-\beta_k)(k-1/2)}{i-\frac 12}:=\bar \alpha_i^k$. Since $\beta_k\in (\frac{2k-2}{2k-1},1] $, $\bar \alpha_i^k=\frac{2- (1-\beta_k)(2k-1)}{2i-1}> \frac{3-2k +(2k-2)}{2i-1}=\textcolor{black}{\frac{1}{2i-1}}$. Thus, 
	\begin{equation}\label{strong_def_alpha}
	\alpha_i^k=\frac{1+\frac {\eta}4}{2i-1},
	\end{equation} 
	is an admissible choice (\textcolor{black}{recall indeed that we chose $\eta<\inf_{j\in \leftB 2,n\rightB}\{\beta_j-\frac{2j-2}{2j-1} \} $ in assumption \A{H${}_\eta $}}). 
	It gives in particular that $\alpha_i^k+\beta_i>\frac{1+\eta/4+(2i-2)}{2i-1}=1+\frac{\eta/4}{2i-1} $. Hence, 
	\begin{eqnarray*}
		&&\rho_{i,k}\frac{\alpha_i^k+\beta_i-1}{2}-\frac 12>-\frac 12
	\end{eqnarray*}
	which already provides a regularizing term in time for the first term in the r.h.s. of \eqref{strong_BD_BIK}. Now from \eqref{strong_RHO_I_K}, $\rho_{i,k}<\frac{(1+\eta)(2i-3)+1}{1-\alpha_i^k}=:\bar \rho_{i,k} $. The previous choice for $\alpha_i^k $ gives 
	$$\bar \rho_{i,k}  =\frac{2i-2+\eta(2i-3)}{\frac{2i-1-(1+\frac{\eta}{4})}{2i-1}}=(2i-1)\frac{2i-2+\eta(2i-3)}{2i-2-\frac{\eta}{4}}>(2i-1) ,$$
	and 
	\begin{equation}
	\rho_{i,k}=2i-1 \label{DEF_RHO_I_K}  
	\end{equation}
	is an admissible choice. \textcolor{black}{We therefore get for the exponent of the second term in \eqref{strong_BD_BIK}}:
	\begin{eqnarray*}
		&&\rho_{i,k}\frac{\alpha_i^k+\beta_i}{2}-i>\frac{1}{2}\Big(\rho_{i,k}(1+\frac{\frac{\eta}{4}}{2i-1})  -2i \Big)>-\frac 12.
	\end{eqnarray*}
	
	Eventually, \textcolor{black}{for the  exponent of the third contribution in \eqref{strong_BD_BIK}}, for the previous choice of $\rho_{i,k}=2i-1 $, we get
	\begin{eqnarray}\label{SPOT_32}
	\rho_{i,k}\frac{\alpha_i^k}2-\frac 32+ \eta(i-\frac 32)=\frac 12 (2i-1)\frac{1+\frac{\eta}{4}}{2i-1}-\frac 32+\eta(i-\frac 32)=-1+\frac \eta 8+\eta (i-\frac 32)>-1,
	\end{eqnarray}
	which means that criterion \eqref{strong_REG_SUFF_TO_CHECK_LB} is indeed satisfied, even though if this last contribution is rather critical in order to obtain the required smoothing effect with $\gamma_i^k=1/2+ 
	\eta(i-3/2) $. This concludes the proof of Lemma \ref{strong_LEMME_CTR_BESOV}.
	\hfill $\textcolor{black}{\square} $

	\subsection{Proof of Lemma \ref{strong_LEMME_CTR_HOLDER}} 
	\label{strong_SECTION_AVEC_PSI_SIMPLIFIE}
	%

	We now tackle the Besov estimate of the derivative of the solution of \eqref{strong_GEN_PDE}. 
	\textcolor{black}{Fix $t\in [0,T] $ and  $(\x_{1:i-1},\x_{i+1:n})\in \R^{(n-1)d} $}.
	From the thermic characterization  of Besov spaces \textcolor{black}{recalled in equation \eqref{strong_THERMIC_CAR_DEF}}, we actually have
	to control:
	\begin{eqnarray}
	\|D_{k}u(t,\x_{1,i-1},\cdot,\x_{i+1,n})\|_{\textcolor{black}{\textcolor{black}{B_{\infty,\infty}^{\alpha_i^k}}}}&=&
	\|D_{k}u(t,\x_{1,i-1},\cdot,\x_{i+1,n})\|_{\textcolor{black}{\textcolor{black}{C^{\alpha_i^k}}}}\notag\\
	&:=&\|D_{k}u(t,\x_{1:i-1},\cdot,\x_{i+1:n})\|_\infty+[D_k u(t,\x_{1,i-1},\cdot,\x_{i+1,n})]_{\alpha_i^k}.\label{strong_DEV_NORM}
	\end{eqnarray}
	Observe first that $\|D_{k}u(t,\x_{1:i-1},\cdot,\x_{i+1:n})\|_\infty\le \|{\mathbf D}u\|_\infty $. Hence, we can focus on the H\"older modulus in \eqref{strong_DEV_NORM}. For  given $\x_i,\ z\in \R^d $ we want to control the difference:
	\begin{eqnarray}\label{strong_NABLA_XK}
	D_{\x_k}u(t,\x_{1:i-1},\x_i,\x_{i+1:n})-D_{\x_k}u(t,\x_{1:i-1},z,\x_{i+1:n})
	\end{eqnarray}
	through the expansion of the gradients given by  \eqref{strong_GROS_DECOUP}. Two cases then arise: the system is globally in the \textit{off-diagonal} regime, i.e. the spatial points $\x_i $ and $z$ are \textit{far} w.r.t. their corresponding time scale (there exists $c_0$ such that $c_0|\x_i-z| \ge (T-t)^{i- 1/2} $ or equivalently $c_0|\x_i-z|^{2/(2i-1)}\ge (T-t)$); the system is globally in the \emph{diagonal} regime, i.e. the spatial points $\x_i $ and $z$ are \textit{close} w.r.t. their corresponding time scale ($c_0|\x_i-z|^{2/(2i-1)}< (T-t)$). 
	
	Since in the global \textit{off-diagonal} regime the spatial points are \textit{far}, it is  not expectable to control suitably the expansion of the gradients around their difference.
	In this case, it is in fact more natural to expand each gradient term thanks to \eqref{strong_GROS_DECOUP} taking as freezing point the associated spatial argument, i.e. $\bxi=\x$ for the first gradient and denoting by $\x'=(\x_{1:i-1},z,\x_{i+1:n}),\ \bxi'=\x' $ for the second one. This allows to take advantage of the underlying smoothing properties in time of the gradient (cf. Section \ref{SUBSUB_OFF_DIAG} below\footnote{note that in this case we have that for any $s$ in $(t,T]$ the off-diagonal regime holds.}).
	
	On the other hand, in the \emph{global diagonal} regime (when $c_0|\x_i-z|^{2/(2i-1)}  \le (T-t) $), we are again faced with a regime dichotomy. Note indeed that, expanding the gradients in \eqref{strong_NABLA_XK} from \eqref{strong_GROS_DECOUP}, we have to deal with a time integral associated with the source and perturbative terms. We are hence again led to separate, within this time integral, the \emph{local} off-diagonal and diagonal regimes, i.e. w.r.t. the time integration variable $s$. We hence introduce  the time set $\mathcal{S}_i = \{s \in [t,T]: (s-t) \leq c_0|\x_i-z|^{2/(2i-1)}\}$ (for the same previous constant $c_0$) which  corresponds to the \emph{local} off-diagonal regime  and the complementary set $\mathcal{S}_i^c = \{s \in [t,T]: (s-t) > c_0|\x_i-z|^{2/(2i-1)}\} $ which corresponds to the \emph{local} diagonal one. 
	
	As above, in the \emph{local off-diagonal} regime, we will not expand the difference of the gradients and we will only use their underlying smoothing properties in time, working thus with their expansion around two different freezing points associated with the corresponding spatial arguments, as suggested by \eqref{strong_GROS_DECOUP} (see again Section \ref{SUBSUB_OFF_DIAG}).
	
	Concerning the \emph{local diagonal} regime, the proximity of the spatial points suggests to expand the gradients through a Taylor expansion. Starting again from their corresponding representation of \eqref{strong_GROS_DECOUP}, it is natural  to consider similar spatial freezing points. Such a strategy indeed yields to only consider spatial sensitivities of the underlying Gaussian \textit{proxy} (see  Section \ref{SUBSUB_DIAG}). Observe that keeping the two distinct freezing points would  lead to investigate the full sensitivity between two different \textit{proxys}, including the sensitivity of the corresponding covariance matrix and generator. Such an investigation appears to be quite involved. Furthermore, we did not succeed to make it work. 
	
	With our approach, we are led to expand one of the gradients in \eqref{strong_NABLA_XK} around two different freezing points. 
	Such a strategy was already used in the companion paper \cite{chau:hono:meno:18} and leads to consider an additional boundary term arising precisely  from the change of freezing point (see Section \ref{SUBSUB_BOUNDARY}). Namely, we will expand the term $D_{\x_k}u(t,\x_{1:i-1},\x_i,\x_{i+1:n})$ with \eqref{strong_GROS_DECOUP} taking $\bxi=\x $, whereas we will expand differently the contribution $D_{\x_k}u(t,\x_{1:i-1},z,\x_{i+1:n}) $, depending on the considered (local) regime (off-diagonal or diagonal) for the current running time. With the previous notations, this term will be expanded as in \eqref{strong_GROS_DECOUP} around the freezing point $\bxi'=\x' $ in the local off-diagonal regime and around $\tilde \bxi'=\x$ in the local diagonal one. Denoting by $t_0=t+c_0 |\x_i-z|^{2/(2i-1)}  
	$ the transition time between the two regimes, we actually have: 
	\begin{eqnarray}
	u(t,\x')
	&=& \int_{t}^Tds\bigg( \I_{{\mathcal S}_i}
	\Big[ \tilde P_{s,t}^{ \bxi'}f(s,\cdot)\Big](\x') + \I_{{\mathcal S}_i^c}
	\Big[ \tilde P_{s,t}^{\tilde \bxi'}f(s,\cdot)\Big](\x')\bigg)
	+\Big[\tilde P_{t_0,t}^{\bxi'} u(t_0, \cdot)\Big](\x')-\Big[\tilde P_{t_0,t}^{\tilde \bxi'}u(t_0,\cdot)\Big] (\x')\notag\\
	&&+\int_{t}^T ds\bigg(\I_{{\mathcal S}_i} 
	\Big[ \tilde P_{s,t}^{\bxi'}\Big((L_s-\tilde L_s^{ \bxi'})u\Big)(s,\cdot)\Big](\x') +
	\I_{{\mathcal S}_i^c} 
	\Big[ \tilde P_{s,t}^{\tilde\bxi'}\Big((L_s-\tilde L_s^{\tilde \bxi'})u\Big)(s,\cdot)\Big](\x')\bigg).
	\label{strong_INTEGRATED_DIFF_BXI_UNICITE_FORTE}
	\end{eqnarray}
	We refer to Appendix \ref{THE_APP_PARAM} below for a proof of expansion \eqref{strong_INTEGRATED_DIFF_BXI_UNICITE_FORTE} (see also Section 2.4 of  the \textcolor{black}{``Detailed  guide to the proof''}  in  \cite{chau:hono:meno:18}). We again emphasize that in comparison with \eqref{strong_GROS_DECOUP}, the term in the second line in the r.h.s. of the above equation is the price to pay to consider different freezing points associated with the corresponding local off-diagonal and diagonal regimes.\\
	
	From now on, we will assume w.l.o.g. that $c_0$ is a constant meant to be \textit{small} (see Section \ref{SUBSUB_BOUNDARY}, Lemma \ref{strong_lem_mean_covariance} and its proof). We also suppose that $|\x'-\x|\le 1 $ since otherwise the \textit{global off-diagonal} regime holds, and the analysis of Section \ref{SUBSUB_OFF_DIAG} applies.\\
	
	Starting from \eqref{strong_NABLA_XK}, we expand $\textcolor{black}{D_{\x_k}u(t,\x)}=D_{\x_k}u(t,\x_{1:i-1},\x_i,\x_{i+1:n}) $ using \eqref{strong_GROS_DECOUP} with $\bxi=\x $ and $D_{\x_k}u(t,\x_{1:i-1},z,\x_{i+1:n}) $  
	differentiating 
	\eqref{strong_INTEGRATED_DIFF_BXI_UNICITE_FORTE}
	w.r.t. $\x_k $ and setting then $ \bxi'=\x', \tilde \bxi'=\x $. We rewrite: 
	\begin{eqnarray}\label{strong_DECOUP_TERMES}
	D_{\x_k}u(t,\x)-D_{\x_k}u(t,\x')&=:&\big( D_{\x_k}u(t,\x)-D_{\x_k}u(t,\x') \big)|_{{\mathcal S}_i}
	+\big(D_{\x_k}u(t,\x)-D_{\x_k}u(t,\x') \big)|_{{\mathcal S}_i^c} \notag\\
	&&+\big(D_{\x_k}u(t,\x)-D_{\x_k}u(t,\x') \big)|_{\partial {\mathcal S}_i}, 
	\end{eqnarray}
	where, \textcolor{black}{with the notation of  \eqref{strong_GROS_DECOUP}:}
	\begin{equation}
	\label{strong_DEF_MOD_HOLD_SUR_SI}
	\big( D_{\x_k}u(t,\x)-D_{\x_k}u(t,\x') \big)|_{{\mathcal S}_i}
	:=\int_{{\mathcal S}_i} ds   \Big\{ \left[H_k^\bxi(s,\x)-H_k^{\bxi'}(s,\x')\right]+ \left[I_k^\bxi(s,\x)-I_k^{\bxi'}(s,\x')\right]\Big\}_{\textcolor{black}{(\bxi,\bxi')=(\x,\x')}},
	\end{equation}
	corresponds to the difference of the previous expansions on the off-diagonal regime,
	\begin{equation}
	\label{strong_DEF_MOD_HOLD_SUR_SI_C}
	\big( D_{\x_k}u(t,\x)-D_{\x_k}u(t,\x') \big)|_{{\mathcal S}_i^c}
	:=\int_{{\mathcal S}_i^c} ds   \Big\{ \left[H_k^\bxi(s,\x)-H_k^{\tilde \bxi'}(s,\x')\right]+ \left[I_k^\bxi(s,\x)-I_k^{\tilde \bxi'}(s,\x')\right]\Big\}_{\textcolor{black}{(\bxi,\tilde \bxi')=(\x,\x)}},
	\end{equation}
	is the contribution of the diagonal regime and
	\begin{equation}
	\label{strong_DEF_MOD_HOLD_SUR_SI_BORD}
	\big(D_{\x_k}u(t,\x)-D_{\x_k}u(t,\x') \big)|_{\partial{\mathcal S}_i}
	:=   \Big\{D_{\x_k} \tilde P_{t_0,t}^{\bxi'} u(t_0, \x')-D_{\x_k}\tilde P_{t_0,t}^{\tilde \bxi'} u(t_0, \x') \Big\}_{\textcolor{black}{(\bxi',\tilde \bxi')=(\x',\x)}},
	\end{equation}
	is the resulting boundary term.
	This last term, arising from the change of freezing point, is particularly delicate to analyze. 
	\subsubsection{Off-diagonal estimates: control of \eqref{strong_DEF_MOD_HOLD_SUR_SI}.} 
	\label{SUBSUB_OFF_DIAG}
	
	On the time set ${\mathcal S}_i  $, we cannot expect some regularization from the difference of the transition densities so that we bluntly estimate the terms appearing  in \eqref{strong_DEF_MOD_HOLD_SUR_SI}, writing:
	\begin{equation}
	| D_{\x_k}u(t,\x)-D_{\x_k}u(t,\x')|\Big|_{{\mathcal S}_i}
	\leq 
	\int_{{\mathcal S}_i} ds \bigg( \big| H_k^\bxi(s,\x)  \big| \Big|_{\bxi=\x}
	\!\!+\big| H_k^{\bxi'}(s,\x')  \big| \Big|_{\bxi'=\x'}
	\!\!+\big| I_k^\bxi(s,\x)\big|\Big|_{\bxi=\x}
	\!\!+\big| I_k^{\bxi'}(s,\x')\big|\Big|_{\bxi'=\x'}\bigg).
	\label{strong_TERME_SI}
	\end{equation}
	Those terms can then be handled  following the previous analysis performed in Theorem \ref{strong_THM_DER_PDE} and Lemma \ref{strong_LEMME_CTR_BESOV},  observing here that, w.r.t. the previous proofs, the above terms are not differentiated w.r.t. $\x_1 $. This improves the exponents of  the time singularities of $1/2$. Similarly to \eqref{strong_CTR_DX1_HL_1}, \eqref{strong_CTR_DX1_HL_2}, \eqref{strong_CTR_DX1_HL_31} and \eqref{strong_CTR_DX1_HL_32} this therefore yields for the terms $H_k^{\bxi}(s,\x) $:  
	\begin{equation}
	\label{strong_BOUNDS_FOR_H_OFF_DIAG}
	\Big| H_k^\bxi(s,\x)  \Big| \bigg|_{\bxi=\x}
	\le C(s-t)^{-1+\delta_i^k} \big(\textcolor{black}{\|\mathbf D u\|_\infty}+ \|{\mathbf D}D_1 u\|_\infty
	\big),
	\end{equation}
	with $\textcolor{black}{1>}\delta_i^k\textcolor{black}{\ge  1/2+\eta/2}>1/2 $.
	
	Reproducing the arguments that led to equations \eqref{strong_CTR_DEBUT_I_11l}, \eqref{strong_CTR_DEBUT_I_12l}, \textcolor{black}{\eqref{strong_ALMOST_LAST_EQ_CTR_DERIV}-\eqref{strong_ALMOST_LAST_EQ_CTR_DERIV_OTHER_TERM}} and the statement of Lemma \ref{strong_LEMME_CTR_BESOV}, exploiting again that there is now no differentiation w.r.t. $\x_1 $ we get with the notations of \textcolor{black}{\eqref{strong_HOLDER_MODULUS}}:
	\begin{equation}
	\label{strong_BOUNDS_FOR_I_OFF_DIAG}
	\Big| I_k^\bxi(s,\x)  \Big| \bigg|_{\bxi=\x}
	\le 
	C(s-t)^{-1+\gamma_i^k}\big(\textcolor{black}{\|{\mathbf D}u\|_\infty}+ \sup_{\bar s\in [0,T]}[\textcolor{black}{(D_{k}u)_i}(\bar s,\cdot)]_{\textcolor{black}{\textcolor{black}{{\alpha_i^k}}}}).
	\end{equation}
	Similar bounds hold for $\Big| H_k^{\bxi'}(s,\x')  \Big| \bigg|_{\bxi'=\x'}$ and $\Big| I_k^{\bxi'}(s,\x')  \Big| \bigg|_{\bxi'=\x'}$.
	Hence, from \eqref{strong_BOUNDS_FOR_H_OFF_DIAG} and \eqref{strong_BOUNDS_FOR_I_OFF_DIAG},
	\begin{eqnarray}
	\!\!&\!\!\!\!&\!\!\int_{{\mathcal S}_i} ds \Bigg( \Big| H_k^\bxi(s,\x)  \Big| \bigg|_{\bxi=\x}
	+\Big|H_k^{\bxi'}(s,\x')\Big| \bigg|_{\bxi'=\x'}
	+ \Big| I_k^\bxi(s,\x)\Big|\bigg|_{\bxi=\x}
	+\Big| I_k^{\bxi'}(s,\x')\Big| \bigg|_{\bxi'=\x'}
	\Bigg)
	\notag \\
	\!\!&\!\!\le\!\!&\!\! C\Big(\int_{t}^{(t+c_0|\x_i-z|^{\frac{2}{2i-1}})\wedge T}
	\!\!\!\!\!\!
	ds\Big(\textcolor{black}{(s-t)^{-1+\delta_i^k}}\big( \|{\mathbf D}u\|_\infty \!+\!
	\|{\mathbf D}D_1 u\|_\infty 
	\big)
	\!+\!(s-t)^{-1+\gamma_i^k}(
	\|{\mathbf D}u\|_\infty \!+\!\!\sup_{\bar s\in [0,T]}[\textcolor{black}{\big(\textcolor{black}{D_{k}u}\big)_i}(\bar s,\cdot)]_{\textcolor{black}{{\alpha_i^k}}})\Big)\notag \\
	\!\!&\!\!\le\!\!&\!\! C\Big(|\x_i-z|^{\frac{2\delta_i^k}{2i-1}}\!\!\wedge (T-t)^{\delta_i^k}(\|{\mathbf D}u\|_\infty \!+\!
	\|{\mathbf D}D_1 u\|_\infty
	)
	+|\x_i-z|^{\frac{2\gamma_i^k}{2i-1}}\wedge (T-t)^{\gamma_i^k}(\textcolor{black}{\|{\mathbf D}u\|_\infty}+\!\!\sup_{\bar s\in [0,T]}[
	\big(D_{k}u\big)_i(\bar s,\cdot)]_{\textcolor{black}{{\alpha_i^k}}})\Big)\notag\\
	\!\!&\!\!\le\!\!&\!\! C|\x_i-z|^{\alpha_i^k}T^{\delta'} \Big(\textcolor{black}{\|{\mathbf D}u\|_\infty}+ \|{\mathbf D}D_1 u\|_\infty 
	+\sup_{\bar s\in [0,T]}[\textcolor{black}{\big(D_{k}u\big)_i}(\bar s,\cdot)]_{\textcolor{black}{{\alpha_i^k}}} \Big),\label{strong_ESTIM_MOD_HOLDER_HD}
	\end{eqnarray}
	for some $\delta':=\delta'(\A{A})>0 $, recalling for the last inequality that $\gamma_i^k=1/2+\eta(i-3/2) $ so that $2\gamma_i^k/(2i-1)=[1+2\eta(i-3/2)]/(2i-1)>(1+ \eta/ 4)/(2i-1)=\alpha_i^k $ (see also the statements of Lemmas \ref{strong_LEMME_CTR_BESOV} and \ref{strong_LEMME_CTR_HOLDER}) and similarly for the contributions involving $ \delta_{i}^k>1/2$. 
	We eventually get from \eqref{strong_ESTIM_MOD_HOLDER_HD} and \eqref{strong_TERME_SI}:
	\begin{eqnarray}
	|\textcolor{black}{D_{\x_k}u}(t,\x)-D_{\x_k}u(t,\x')|\Big|_{{\mathcal S}_i}
	\!\!&\!\!\le\!\!&\!\! CT^{\delta'}\Big(\textcolor{black}{\|{\mathbf D}u\|_\infty}+
	\|{\mathbf D}D_1 u\|_\infty
	+\sup_{\bar s\in [0,T]}[\textcolor{black}{\big(D_{k}u\big)_i}(\bar s,\cdot)]_{\textcolor{black}{{\alpha_i^k}}} \Big).\label{strong_CTR_PRELIM_OFF_DIAG_SI}
	\end{eqnarray}
	\subsubsection{Diagonal estimates: control of the term \eqref{strong_DEF_MOD_HOLD_SUR_SI_C}.} 
	\label{SUBSUB_DIAG}
	\textcolor{black}{We consider here the difference $D_{\x_k}u(t,\x)-D_{\x_k}u(t,\x') $ of \eqref{strong_NABLA_XK} on ${\mathcal S}_i^c $}. In that case the points $\x_i $ and $\x_i' $ are close w.r.t. the characteristic time scale of the $i^{\rm th} $ variable and the main idea consists in controlling the difference between the frozen densities at $\bxi=\tilde \bxi' = \x $ with starting points $\x$ and $\x' $ respectively. 
	Precisely, recalling that $\x$ and $\x' $ only differ in the $i^{\rm th} $ component,  we can write:
	\begin{equation}\label{strong_DVP_TAYLOR_DIFF_TILDEP}
	D_{\x_k} \tilde p^{\bxi} (t,s,\x,\y)-D_{\x_k} \tilde p^{\bxi} (t,s,\x',\y)=-\int_{0}^1 d\lambda D_{\x_i}\Big(D_{\x_k} \tilde p^\bxi(t,s,\x+\lambda (\x'-\x) ,\y)\Big) \cdot (\x'-\x)_i.
	\end{equation}
	From Lemma \ref{strong_LEMME_SG} 
	we thus derive:
	\begin{equation}\label{strong_DVP_TAYLOR_DIFF_TILDEP_2}
	|D_{\x_k} \tilde p^{\bxi} (t,s,\x,\y)-D_{\x_k} \tilde p^{\bxi} (t,s,\x',\y)|\le \frac{C |(\x'-\x)_i|}{(s-t)^{(i-\frac 12)+(k-\frac 12)}}\int_{0}^1  d\lambda  \hat p_{C^{-1}}^\bxi(t,s,\x+\lambda (\x'-\x) ,\y).
	\end{equation}
	Now, from the definition of $\hat p_{C^{-1}}$ in Proposition \ref{strong_THE_PROP}, recalling as well from \eqref{strong_AFFINE_FLOW} that $\x \mapsto  \m_{s,t}^{\bxi}(\x):=\m_{s,t}^{(t,\bxi)}(\x)$ is affine, we get:
	\begin{eqnarray*}
		&&| \hat p_{C^{-1}}^{\bxi}(t,s,\x+\lambda (\x'-\x),\y)|\notag\\
		&\le& \frac{C}{(s-t)^{\frac{n^2d}2}}\exp( -c(s-t)|\T_{s-t}^{-1}(\m_{s,t}^{\bxi} (\x+\lambda(\x'-\x))-\y)|^2)\notag\\
		&\le& \frac{C}{(s-t)^{\frac{n^2d}2}}\exp(c(s-t)|\T_{s-t}^{-1} \textcolor{black}{\gR^{(t,\bxi)}(s,t)}(\x-\x')|^2 )\exp( -\frac{c}2(s-t)|\T_{s-t}^{-1}(\m_{s,t}^{\bxi} (\x)-\y)|^2).
	\end{eqnarray*}
	
	Using the rescaling arguments of the proof of Proposition \ref{strong_THE_PROP} on the resolvent \textcolor{black}{(see equation \eqref{strong_RESCALED_RES})}, we then get
	$(s-t)^{ 1/2}|\T_{s-t}^{-1} \textcolor{black}{\gR^{(t,\bxi)}(s,t)}(\x-\x')| \le C (s-t)^{ 1/2}|\T_{s-t}^{-1}(\x-\x')|=C(s-t)^{-i+1/2} |(\x'-\x)_i|\le C$, from the very definition of ${\mathcal S}_i^c $.
	Hence, 
	\begin{eqnarray*}
		| \hat p_{C^{-1}}^{\bxi}(t,s,\x+\lambda (\x'-\x),\y)|\le \frac{C}{(s-t)^{\frac{n^2d}2}}\exp( -\frac{c}2(s-t)|\T_{s-t}^{-1}(\m_{s,t}^\bxi (\x)-\y)|^2),
	\end{eqnarray*}
	so that, \textcolor{black}{from \eqref{strong_DVP_TAYLOR_DIFF_TILDEP_2} and recalling that on ${\mathcal S }_i^c,\ |(\x'-\x)_i|/(s-t)\le C(|(\x'-\x)_i|/(s-t))^{\alpha_i^k}$},  the following important control holds:
	\begin{equation}
	|D_{\x_k} \tilde p^{\bxi} (t,s,\x,\y)-D_{\x_k} \tilde p^{\bxi} (t,s,\x',\y)|\le \frac{C |(\x'-\x)_i|^{\alpha_i^k}}{(s-t)^{\alpha_i^k(i-\frac 12)+(k-\frac 12)}} \hat p_{C^{-1}}^\bxi(t,s,\x,\y).
	\label{strong_CTR_RECUP_SING_IK_DIAG}
	\end{equation}
	
	Write now from \eqref{strong_DEF_MOD_HOLD_SUR_SI_C}, recalling that $\tilde \bxi'=\bxi $,
	\begin{equation}
	\big |\textcolor{black}{D_{\x_k}u}(t,\x)-D_{\x_k}u(t,\x') \big |\Big|_{{\mathcal S}_i^c}
	:= \Bigg| \Big(\int_{{\mathcal S}_i^c} ds \left[H_k^\bxi(s,\x)-H_k^{\bxi}(s,\x')\right]+\int_{{\mathcal S}_i^c} ds \left[I_k^\bxi(s,\x)-I_k^{\bxi}(s,\x')\right]\Big)_{\textcolor{black}{\bxi=\x}}
	\Bigg|,
	\label{strong_TERME_SI_C}
	\end{equation}
	and let us discuss how the terms $ H_{k}^\bxi(s,\x)-H_k^\bxi(s,\x'), \ I_{k}^\bxi(s,\x)-I_k^\bxi(s,\x')$ in the above equation can be handled.
	%

	We first focus on the term $ I_{k}^\bxi(s,\x)-I_{k}^\bxi(s,\x')$ in \eqref{strong_TERME_SI_C}. This contribution, associated with the degenerate components of perturbed operator, is again the most delicate to handle. From the definitions in $\eqref{strong_DECOUP_I}$ we are led to control the sum $\sum_{\ell=1}^3 [I_{\textcolor{black}{k,\ell}}^\bxi(s,\x)-I_{\textcolor{black}{k,\ell}}^\bxi(s,\x')]$. 
	For the terms $ I_{\textcolor{black}{k,1}}^\bxi(s,\x)-I_{\textcolor{black}{k,1}}^\bxi(s,\x'),\ I_{\textcolor{black}{k,2}}^\bxi(s,\x)-I_{\textcolor{black}{k,2}}^\bxi(s,\x')$  we are going to reproduce the analysis leading to \eqref{strong_CTR_DEBUT_I_11l}, \eqref{strong_CTR_DEBUT_I_12l}.
	Observe first that the above terms do not involve $D_{\x_1} $, therefore we gain a singularity of order $1/2 $ w.r.t. the indicated equations \eqref{strong_CTR_DEBUT_I_11l}, \eqref{strong_CTR_DEBUT_I_12l}. On the other hand, the difference of the derivatives of the frozen densities w.r.t. $\x_k$ can be handled with \eqref{strong_CTR_RECUP_SING_IK_DIAG}. 
	This leads to: 
	\begin{eqnarray*}
		\!\!&\!\!\!\!&\!\!\Bigg|\sum_{\ell=1}^2\int_{t+\textcolor{black}{c_0}|(\x'-\x)_i|^{\frac {2}{2i-1}}}^T   ds \Big(I_{\textcolor{black}{k,\ell}}^\bxi(s,\x)-I_{\textcolor{black}{k,\ell}}^\bxi(s,\x')\Big)\Bigg|_{\textcolor{black}{\bxi=\x}}\notag \\
		\!\!&\!\!\le\!\!&\!\! C|(\x'-\x)_i|^{\alpha_i^k}\|{\mathbf D} u\|_\infty \sum_{j=k}^n \int_{t}^{T} ds(s-t)^{-(k-\frac 12)- \alpha_i^k (i-\frac 12)}\Big((s-t)^{\beta_j(j-\frac 12)}+(s-t)^{(1+\eta)(j-\frac 12)} \Big),
	\end{eqnarray*}
	\textcolor{black}{changing the summation variables from \eqref{strong_CTR_DEBUT_I_11l} for notational simplicity}.
	
	From the very definition of $\alpha_i^k=(1+\eta/4)/(2i-1) $ in Lemma \ref{strong_LEMME_CTR_BESOV}
	and the specific choice of $\eta $ in assumption \A{A} \textcolor{black}{(see \A{${\mathbf H}_\eta $}), \textcolor{black}{which yields that $\beta_j(j-1/2)>j-1+\eta(j-1/2) $}),} we derive 
	\begin{eqnarray}\label{strong_DIFF_I_12_IK}
	\Bigg|\sum_{\ell=1}^2\int_{t+\textcolor{black}{c_0}|(\x'-\x)_i|^{\frac {2}{2i-1}}}^T   ds \Big(I_{\textcolor{black}{k,\ell}}^\bxi(s,\x)-I_{\textcolor{black}{k,\ell}}^\bxi(s,\x')\Big)\Bigg|_{\textcolor{black}{\bxi=\x}}
	\le C|(\x'-\x)_i|^{\alpha_i^k}\|{\mathbf D} u\|_\infty T^\delta,
	\end{eqnarray}
	for some $\delta>0 $.
	
	From the previous analysis it is therefore sufficient to focus on the tricky term, namely  $ I_{\textcolor{black}{k,3}}(s,\x)$ 
	introduced in \eqref{strong_DECOUP_I}. We begin the proof considering first  $I_{\textcolor{black}{k,3}}(s,\x)$. 
	Exploiting as well Lemma \ref{strong_LEMME_SG} for a centering argument w.r.t. the $k^{\rm th}$ variable, we write: 
	\begin{eqnarray*} 
		\!\!&\!\!\!\!&\!\!I_{\textcolor{black}{k,3}}^\bxi(s,\x)-I_{\textcolor{black}{k,3}}^\bxi(s,\x')\\
		\!\!&\!\!=\!\!&\!\! \sum_{\ell = 2}^{\textcolor{black}{k}} \int_{\R^{nd}} d\y \textcolor{black}{\bigg \langle} \Big(\gF_{\ell}(s,\y_{1:k-1},\btheta_{s,t}^{k:n}(\bxi))-\gF_{\ell}(s,\btheta_{s,t}(\bxi))-D_{\ell-1}\gF_{\ell}(s,\btheta_{s,t}(\bxi))\big(\y-\btheta_{s,t}(\bxi)\big)_{\ell-1}\Big) \textcolor{black}{,} \\
		\!\!&\!\!\!\!&\!\! 
		\Big(D_{\y_\ell}u(s,\y)-D_{\y_\ell}u(s,\y_{1:k-1},\btheta_{s,t}^{k:n}(\bxi)) \Big)\textcolor{black}{\bigg \rangle} \big(D_{\x_k}\tilde p^{\bxi}(t,s,\x,\y)-D_{\x_k}\tilde p^{\bxi}(t,s,\x',\y)\big) .
	\end{eqnarray*}
	Let us reproduce now the arguments used in Section \ref{strong_SECTION_CTR_SENSI} to handle $I_{\textcolor{black}{k,3}}^\bxi $ (see e.g. the computations  from equation \eqref{strong_CTR_P1_I3L_1} to \eqref{strong_JUST_BEFORE_DUALITY}). 
	\textcolor{black}{For $\ell \in \leftB 2,k-1\rightB $}, expanding with the Taylor formula the difference $\Big(D_{\y_\ell}u(s,\y)-D_{\y_\ell}u(s,\y_{1:k-1},\btheta_{s,t}^{k:n}(\bxi)) \Big)$, using the Schwarz theorem to exchange the order of differentiations\footnote{Recall indeed that what we are able to control is precisely the H\"older moduli of the derivatives $D_{\y_m}u(s,\cdot) $ w.r.t. \textcolor{black}{the} variables $\ell\le m$.}, \textcolor{black}{and integrating by parts as well},  we obtain:
	\begin{eqnarray}
	&&I_{\textcolor{black}{k,3}}^\bxi(s,\x)-I_{\textcolor{black}{k,3}}^\bxi(s,\x')\notag\\
	\!&\!\!= \!\!&\!\sum_{\ell = 2}^{\textcolor{black}{k-1}} \sum_{m=k}^n \int_0^1 d\lambda \int_{\R^{nd}} d\y 
	\textcolor{black}{D_{\y_\ell}}\bigg[\Big(\gF_{\ell}(s,\y_{1:k-1},\btheta_{s,t}^{k:n}(\bxi))-\gF_{\ell}(s,\btheta_{s,t}(\bxi))-D_{\ell-1}\gF_{\ell}(s,\btheta_{s,t}(\bxi))\big(\y-\btheta_{s,t}(\bxi)\big)_{\ell-1}\Big)\notag\\
	&&  \big(D_{\x_k} \tilde p^{\bxi}(t,s,\x,\y)-D_{\x_k} \tilde p^{\bxi}(t,s,\x',\y)\big) (\y-\btheta_{s,t}(\bxi))_{m} \bigg] 
	D_{\y_m} u\big(s,\y_{1:k-1},\btheta_{s,t}^{k:n}(\bxi) + \lambda(\y-\btheta_{s,t}(\bxi))_{k:n}\big)\notag\\
	&&\textcolor{black}{+\int_{\R^{nd}}d\y \bigg[\Big(\gF_{k}(s,\y_{1:k-1},\btheta_{s,t}^{k:n}(\bxi))-\gF_{k}(s,\btheta_{s,t}(\bxi))-D_{k-1}\gF_{\ell}(s,\btheta_{s,t}(\bxi))\big(\y-\btheta_{s,t}(\bxi)\big)_{k-1}\Big)}\notag\\
	&&\textcolor{black}{   \big(D_{\x_k} \tilde p^{\bxi}(t,s,\x,\y)-D_{\x_k} \tilde p^{\bxi}(t,s,\x',\y)\big)\bigg] 
		\Big(D_{\y_k}u(s,\y)-D_{\y_k}u(s,\y_{1:k-1},\btheta_{s,t}^{k:n}(\bxi)) \Big)}\notag\\
	&\textcolor{black}{=:}& \textcolor{black}{\Delta_{k,31}^\bxi(s,\x,\x')+ \Delta_{k,32}^\bxi(s,\x,\x')}.\label{THE_DECOUP_FOR_BESOV_DUALITY_HOLDER_3}
	\end{eqnarray}
	Let us now modify slightly the definition of \eqref{strong_GROSSE_DEF} and introduce:
	for all $\ell\in\leftB 2,k\textcolor{black}{-1}
	\rightB, m\in \leftB k,n\rightB$, $(\y_{1:\ell-1},\y_{\ell+1:n})\in \R^{(n-1)d},(t,\textcolor{black}{\z},\x)\in[0,T]\times \textcolor{black}{(\R^{nd})^2} $, \textcolor{black}{$s\in (t,T] $} and \textcolor{black}{$\y_i \in \R^d$}:
	\begin{eqnarray}
	&&\Psi_{\ell,k,m}^{(s,\y_{1:\ell-1},\y_{\ell+1:n}),(t,\z,\x)}(\y_{\textcolor{black}{\ell}})\notag\\
	&=&\bigg[D_{\textcolor{black}{\x_k}} \tilde p^{\bxi}(t,s,\textcolor{black}{\z},\y) 
	\\
	&&\otimes \Big(  (\gF_{\textcolor{black}{\ell}}(\textcolor{black}{s},\y_{1:k-1},\btheta^{k:n}_{s,t}(\bxi))-\gF_{\textcolor{black}{\ell}}(\textcolor{black}{s},\btheta_{s,t}(\bxi))
	\notag
	- D_{\textcolor{black}{\ell}-1}\gF_{\textcolor{black}{\ell}}(\textcolor{black}{s},\btheta_{s,t}(\bxi))\big(\y-\btheta_{s,t}(\bxi)\big)_{\textcolor{black}{\ell}-1})\Big)((\textcolor{black}{\y}-\btheta_{s,t}(\bxi))_{\textcolor{black}{m}})^*\bigg]_{\textcolor{black}{\bxi=\x}}\!\!.\label{strong_GROSSE_DEF_BI_INDEX}
	\end{eqnarray}
	\textcolor{black}{
		Write now with the notations of \textcolor{black}{\eqref{strong_GROSSE_DEF_BI_INDEX}}}:
	\begin{eqnarray*}
		&& (\textcolor{black}{\Delta_{k,31}^\bxi(s,\x,\x')})_{\textcolor{black}{\bxi=\x}}
		\nonumber \\
		&\textcolor{black}{:=}&
		\sum_{\ell = 2}^{\textcolor{black}{k-1}} \sum_{m=k}^n \int_0^1 d\lambda \int_{\R^{nd}} d\y 
		\textcolor{black}{D_{\y_\ell}}\bigg[
		\Psi_{\ell,k,m}^{(s,\y_{1:\ell-1},\y_{\ell+1:n}),(t,\x,\textcolor{black}{\x})}(\y_\ell)
		-\Psi_{\ell,k,m}^{(s,\y_{1:\ell-1},\y_{\ell+1:n}),(t,\x',\textcolor{black}{\x})}(\y_\ell)
		\bigg] 
		\nonumber \\
		&&
		\textcolor{black}{  D_{\y_m} u\big(s,\y_{1:k-1},\btheta_{s,t}^{k:n}(\textcolor{black}{\x}) + \lambda(\y-\btheta_{s,t}(\textcolor{black}{\x}))_{k:n}\big)}.
	\end{eqnarray*}
	
	Thus, \textcolor{black}{we derive similarly to \textcolor{black}{\eqref{strong_ALMOST_LAST_EQ_CTR_DERIV}}}: 
	\begin{eqnarray}
	\label{strong_PREAL_DIAG_I3k}
	&&|\textcolor{black}{\Delta _{k,31}^\bxi(s,\x,\x')}|_{\textcolor{black}{\bxi=\x}} 
	\notag\\
	& \leq & \sum_{\ell = 2}^{\textcolor{black}{k-1}} \sum_{m=k}^n   \int_{\R^{(n-1)d}}d (\y_{1:\ell-1},\y_{\ell+1:n})
	\Bigg\{\bigg\| \textcolor{black}{D_{\ell}\Big[ }\Psi_{\ell,k,m}^{(s,\y_{1:\ell-1},\y_{\ell+1:n}),(t,\x,\textcolor{black}{\x})}(\cdot)-\Psi_{\ell,k,m}^{(s,\y_{1:\ell-1},\y_{\ell+1:n}),(t,\x',\textcolor{black}{\x})}(\cdot)\textcolor{black}{\Big]}\bigg\|_{\textcolor{black}{ B_{1,1}^{-\alpha_\ell^m}}}
	\notag\\
	&& \hspace*{2cm}
	\times 
	\textcolor{black}{\sup_{\z_j, j\in \leftB1,n\rightB,j\neq \ell} \bigg\|
		\textcolor{black}{D_mu}(s,\z_{1:\ell-1},\cdot, \z_{\ell+1:n})\bigg\|_{\textcolor{black}{ B_{\infty,\infty}^{\alpha_\ell^m}}}}  \Bigg\},
	\end{eqnarray}
	\textcolor{black}{where $  \int_{\R^{(n-1)d}}d (\y_{1:\ell-1},\y_{\ell+1:n})$ means that we integrate over $  \y_{1:\ell-1} $ and $  \y_{\ell+1:n}$.}
	To conclude, we  need the following appropriate version of Lemma \ref{strong_LEMME_CTR_BESOV} to handle the Besov norm with negative exponent in the above r.h.s. Its proof is postponed to the next section.

	\begin{lem} \label{strong_LEMME_CTR_DIF_BESOV}
		Let $k \in \leftB 2,n\rightB$, $\ell \in \leftB 2,k\textcolor{black}{-1}\rightB$ and $ m\in \leftB k,n\rightB$ and let $\Psi_{\textcolor{black}{\ell,k,m}}^{(s,\y_{1:\ell-1},\y_{\ell+1:n}),(t,\textcolor{black}{\z},\x)}: \R^d \to \R^d$ be the function defined by \textcolor{black}{}\eqref{strong_GROSSE_DEF_BI_INDEX}. There exist $C:=C(\A{A})>0$ and    $\gamma_\ell^m := \gamma_\ell^m(\A{A}) :=1/2+ \eta (\ell-3/2)>1/2$ such that
		\begin{eqnarray*}
			&&\bigg\|\textcolor{black}{D_{\ell}\Big[  \Psi_{\ell,k,m}^{(s,\y_{1:\ell-1},\y_{\ell+1:n}),(t,\x,\textcolor{black}{\x})}(\cdot)-\Psi_{\ell,k,m}^{(s,\y_{1:\ell-1},\y_{\ell+1:n}),(t,\x',\textcolor{black}{\x})}(\cdot)\Big]}\bigg\|_{\textcolor{black}{ B_{1,1}^{-\alpha_\ell^m}}}\\
			& \leq & C  \hat q_{c\setminus \ell}(t,s,\x,(\y_{1:\ell-1},\y_{\ell+1:n}))  (s-t)^{-1-(i-\frac 12) \alpha_i^k+ \gamma_\ell^m} 
			|(\x-\x')_i|^{\alpha_i^k}
			,
		\end{eqnarray*} 
		with $\hat q_{c\setminus \ell}(t,s,\x,(\y_{1:\ell-1},\y_{\ell+1:n})) $ as in \eqref{strong_DEF_HAT_SETMINUS}.
	\end{lem}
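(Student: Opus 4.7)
The plan is to retrace the proof of Lemma \ref{strong_LEMME_CTR_BESOV} with two key structural observations. First, since both expressions $\Psi_{\ell,k,m}^{\ldots,(t,\x)}$ and $\Psi_{\ell,k,m}^{\ldots,(t,\x')}$ are built with the same freezing parameter $\bxi=\x$ (as prescribed by $\tilde\bxi'=\bxi=\x$ in the local diagonal regime of \eqref{strong_DEF_MOD_HOLD_SUR_SI_C}), the factor involving $\gF_\ell$, $\btheta_{s,t}(\bxi)$ and the increment $(\y-\btheta_{s,t}(\bxi))_m$ is common to both terms and factors out of the difference. The difference hence reduces to
$$\Psi^{\ldots,(t,\x)}_{\ell,k,m}(\y_\ell)-\Psi^{\ldots,(t,\x')}_{\ell,k,m}(\y_\ell) = \big[D_{\x_k}\tilde p^\x(t,s,\x,\y)-D_{\x_k}\tilde p^\x(t,s,\x',\y)\big]\,G(\y;\x),$$
where $G(\y;\x)$ is the bracketed quantity of \eqref{strong_GROSSE_DEF}, already controlled via Lemma \ref{strong_SMOOTH_OPERATOR_EFFECTS}. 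Second, the density-difference factor obeys the pointwise bound \eqref{strong_CTR_RECUP_SING_IK_DIAG} precisely because $s\in\mathcal S_i^c$; this trades an additional time singularity $(s-t)^{-(i-\frac12)\alpha_i^k}$ for the factor $|(\x-\x')_i|^{\alpha_i^k}$, while preserving the Gaussian profile $\hat p_c^\x$.

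I would then apply the thermic characterization \eqref{strong_THERMIC_CAR_DEF} with index $1-\alpha_\ell^m$, splitting at $v=(s-t)^{\rho_{\ell,m}}$ with the natural choice $\rho_{\ell,m}=2\ell-1$ (as in \eqref{DEF_RHO_I_K}). For the \textbf{Upper} regime, the crude bound \eqref{strong_CTR_RECUP_SING_IK_DIAG} suffices: combined with Lemma \ref{strong_SMOOTH_OPERATOR_EFFECTS} to handle the $\gF_\ell$-structure, and with the absorption of the $(\y-\btheta)_m$ factor ($m\ge k$) into the corresponding Gaussian marginal of $\hat q_c$, this yields the claimed exponent. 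Concretely, the ambient $(s-t)^{-3/2}$ singularity appearing in Lemma \ref{strong_LEMME_CTR_BESOV} is here replaced by $(s-t)^{-1-(i-\frac12)\alpha_i^k}$: the $+1/2$ improvement comes from dropping the $D_{\x_1}$ derivative, the $-(i-\frac12)\alpha_i^k$ deterioration from the density difference.

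For the \textbf{Lower} regime, the cancellation decomposition \eqref{strong_DECOUP_CAR_THERMIC_T1}--\eqref{strong_DECOUP_CAR_THERMIC_T2} must be transposed to the variable $\y_\ell$: one splits $\Psi(\y_\ell)-\Psi(z)$ into a piece exploiting the $\beta_\ell$-H\"older regularity of $\gF_\ell$ in its $\ell$-th variable (analog of $\mathscr T_1$) and a piece exploiting the Lipschitz regularity in $\y_\ell$ of the density-difference factor (analog of $\mathscr T_2$). The latter requires a mixed $\y_\ell$--$\x_i$ sensitivity bound of the form
$$\Big|D_{\y_\ell}\big[D_{\x_k}\tilde p^\x(t,s,\x,\y)-D_{\x_k}\tilde p^\x(t,s,\x',\y)\big]\Big| \le \frac{C\,|(\x-\x')_i|^{\alpha_i^k}}{(s-t)^{(\ell-\frac12)+(k-\frac12)+(i-\frac12)\alpha_i^k}}\,\hat p_c^\x(t,s,\x,\y),$$
derived as in \eqref{strong_CTR_RECUP_SING_IK_DIAG} from the Taylor expansion \eqref{strong_DVP_TAYLOR_DIFF_TILDEP} of the density in $\x_i$, the rescaling arguments of the proof of Proposition \ref{strong_THE_PROP} on the resolvent, and the affine dependence of $\m_{s,t}^{(t,\x)}(\x)$ in \eqref{strong_AFFINE_FLOW}. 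The calibration $\alpha_\ell^m=(1+\eta/4)/(2\ell-1)$ and $\rho_{\ell,m}=2\ell-1$ then produce, after $v$-integration, the exponent $\gamma_\ell^m = 1/2+\eta(\ell-\tfrac32)$ verbatim from Lemma \ref{strong_LEMME_CTR_BESOV}.

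The main obstacle is precisely the mixed $\y_\ell$--$\x_i$ sensitivity estimate displayed above: differentiating the identity \eqref{strong_DVP_TAYLOR_DIFF_TILDEP} a second time in $\y_\ell$ while preserving \emph{both} the Gaussian profile $\hat p_c^\x(t,s,\x,\y)$ and the $|(\x-\x')_i|^{\alpha_i^k}$ factor. This relies crucially on the off-diagonal smallness $(s-t)>c_0|(\x-\x')_i|^{2/(2i-1)}$ valid on $\mathcal S_i^c$, combined with the good-scaling property \eqref{strong_GSP} of the covariance $\tilde\K^{(t,\x)}_{s,t}$, exactly as in the derivation of \eqref{strong_CTR_RECUP_SING_IK_DIAG}. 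Once this mixed estimate is in hand, the remainder of the proof is a direct transcription of Lemma \ref{strong_LEMME_CTR_BESOV} under the index correspondence $(i,l,k)\leftrightarrow(\ell,k,m)$, together with the shift $-3/2\to -1-(i-\tfrac12)\alpha_i^k$ in the ambient singularity.
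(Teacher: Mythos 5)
Your proposal is correct and follows essentially the same route as the paper: same factorization of the common $\gF_\ell$-bracket (both $\Psi$'s share the freezing point $\bxi=\x$, only the starting points differ), same thermic splitting at $v=(s-t)^{\rho_{\ell,m}}$ with $\rho_{\ell,m}=2\ell-1$, the gain of $1/2$ from the absence of $D_{\x_1}$, the use of \eqref{strong_CTR_RECUP_SING_IK_DIAG} in the upper regime, and the same mixed $\y_\ell$--$\x_i$ sensitivity bound in the lower regime (which the paper states in integrated finite-difference form, with the factor $|\tilde z-\y_\ell|$ replacing your extra $D_{\y_\ell}$, but derived from exactly the Taylor expansion \eqref{strong_DVP_TAYLOR_DIFF_TILDEP} and scaling arguments you invoke). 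The exponent bookkeeping $-3/2\to-1-(i-\tfrac 12)\alpha_i^k$ and the resulting $\gamma_\ell^m=1/2+\eta(\ell-\tfrac 32)$ match the paper verbatim.
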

	Again, for the specific choice of $\alpha_i^k=(1+ \eta/ 4)/(2i-1) $ performed in the proof of Lemma \ref{strong_LEMME_CTR_BESOV}, we eventually derive from Lemma \ref{strong_LEMME_CTR_DIF_BESOV} and \eqref{strong_PREAL_DIAG_I3k}
	that:
	\begin{align}
	\int_{{\mathcal S}_i^c} ds |\textcolor{black}{\Delta_{k,31}^\bxi(s,\x,\x')}|_{\textcolor{black}{\bxi=\x}}
	\le& C \Bigg(\sum_{\ell = 2}^{\textcolor{black}{k-1}} \sum_{m=k}^n \sup_{s\in [0,T]}\|  \textcolor{black}{\big( D_m u\big)_\ell(s,\cdot)}\|_{\textcolor{black}{ B_{\infty,\infty}^{\alpha_\ell^m}}} \int_{t}^T ds (s-t)^{-1-(i-\frac 12) \alpha_i^k+ \gamma_\ell^m}\Bigg)|(\x-\x')_i|^{\alpha_i^k}\notag\\
	\le & CT^\delta \sup_{m\in \leftB k,n\rightB, \ell\in \leftB 1,\textcolor{black}{k}\rightB, s\in [0,T]}\| \textcolor{black}{\big( D_m  u\big)_\ell}(s,\cdot)\|_{\textcolor{black}{ B_{\infty,\infty}^{\alpha_\ell^m}}}|(\x-\x')_i|^{\alpha_i^k}, \label{strong_THE_CTR_I3_SI_C}
	\end{align}
	for some $\delta:= 3\eta/8>0 $. \textcolor{black}{On the other hand, from the definition in \eqref{THE_DECOUP_FOR_BESOV_DUALITY_HOLDER_3}, using as well Proposition \ref{strong_THE_PROP}, Lemmas \ref{strong_SMOOTH_OPERATOR_EFFECTS}, \ref{Lemme_Taylor_reverse} and \ref{strong_LEMME_DENS}, we get similarly to \eqref{PREAL_HOLD_MOD_HD}}:
	\textcolor{black}{
		\begin{eqnarray}
		|\Delta_{k,32}^\bxi(s,\x,\x')|_{\textcolor{black}{\bxi=\x}}&\le& \frac{C[(D_{{k-1}}\gF_k)_{k-1}(s,\cdot))]_\eta}{(s-t)^{(k-\frac 12)+(i-\frac 12)}} |(\x-\x')_i|\int_{\R^d} \hat p_{C^{-1}}^\x(t,s,\x,\y) |(\btheta_{s,t}(\x)-\y)_{k-1}|^{1+\eta} \notag\\
		&&\times (\|\mathbf Du\|_\infty+[(D_k u)_k(s,\cdot)]_{\alpha_k^k})
		\big(|(\btheta_{s,t}(\x)-\y)_k|^{\alpha_k^k}+\sum_{m=k+1}^n |(\btheta_{s,t}(\x)-\y)_m|^{\zeta_k}\big)\notag\\
		&\le & C(\|\mathbf Du\|_\infty+[(D_k u)_k(s,\cdot)]_{\alpha_k^k})|(\x-\x')_i|^{\alpha_i^k}\notag\\
		&&\times \big(\sum_{m=k}^n (s-t)^{-(k-\frac 12)-(i-\frac 12)\alpha_i^k+(1+\eta)(k-\frac 32)+\alpha_k^k (k-\frac 12)\I_{m=k}+\zeta_k(m-\frac 12)\I_{m\in \leftB k+1,n\rightB}}\big),\label{PREAL_HOLD_MOD_HD_BIS}
		\end{eqnarray}
		where we have expanded the difference of the Gaussian densities and exploited the fact that on the considered time set the diagonal regime holds for those densities.}
	\textcolor{black}{From \eqref{PREAL_HOLD_MOD_HD_BIS} and recalling \eqref{EXPO_HD_1} and \eqref{EXPO_HD_2} we finally get:}
	\begin{eqnarray}
	\label{strong_THE_CTR_I3_SI_C_APRES_DIAG}
	\textcolor{black}{\int_{{\mathcal S}_i^c} ds} |\textcolor{black}{\Delta_{k,32}^\bxi(s,\x,\x')}|_{\textcolor{black}{\bxi=\x}}&\le& C(\|\mathbf Du\|_\infty+[(D_k u)_k(s,\cdot)]_{\alpha_k^k})|(\x-\x')_i|^{\alpha_i^k} \sum_{m=k}^n\int_{t}^T ds (s-t)^{-\frac 32+ \gamma_k^m}\notag\\
	&\le &CT^\delta(\|\mathbf Du\|_\infty+[(D_k u)_k(s,\cdot)]_{\alpha_k^k})|(\x-\x')_i|^{\alpha_i^k},
	\end{eqnarray}
	for $\delta $ small enough.
	Combining the estimates \eqref{strong_THE_CTR_I3_SI_C} and \eqref{strong_THE_CTR_I3_SI_C_APRES_DIAG} together with \eqref{strong_DIFF_I_12_IK} we eventually derive
	\begin{equation}
	\Bigg|\int_{{\mathcal S}_i^c} ds \left[I_k^\bxi(s,\x)-I_k^{\bxi}(s,\x')\right] \Bigg|_{\textcolor{black}{\bxi=\x}}
	\le T^\delta C|(\x'-\x)_i|^{\alpha_i^k}\left( \|{\mathbf D} u\|_\infty + \sup_{m\in \leftB k,n\rightB, \ell\in \leftB 1,\textcolor{black}{k}\rightB, s\in [0,T]}\textcolor{black}{[}\textcolor{black}{\big( D_m  u\big)_\ell}(s,\cdot)\textcolor{black}{]}_{\textcolor{black}{{\alpha_\ell^m}}}\right)\label{strong_label_pourri_eqinter}.
	\end{equation}

	Now, the term $H_{k}^\bxi(s,\x)-H_k^\bxi(s,\x') $ in \eqref{strong_TERME_SI_C} 
	(non-degenerate variables) can be handled reproducing the same previous arguments for $I_{k}^\bxi(s,\x)-I_k^\bxi(s,\x')  $,
	exploiting \eqref{strong_CTR_RECUP_SING_IK_DIAG} and following the computations performed for $H_k$ in the proof of Theorem \ref{strong_THM_DER_PDE} (\textcolor{black}{see e.g. \eqref{strong_LE_CTR_SUR_H_1L}}).
	From the definition of ${\mathcal S}_i^c $ we obtain:
	\begin{eqnarray}\label{strong_DIFF_H_IK}
	&&\Bigg|\int_{t+\textcolor{black}{c_0}|(\x'-\x)_i|^{\frac {2}{2i-1}}}^T   ds \Big(H_{k}^\bxi(s,\x)-H_k^\bxi(s,\x')\Big)\Bigg|_{\textcolor{black}{\bxi=\x}}\notag \\
	&\le& C|(\x'-\x)_i|^{\alpha_i^k}\big(\|{\mathbf D} D_{1}u\|_\infty +\textcolor{black}{\|{\mathbf D}u\|_\infty}\big)\int_{t}^{T} ds(s-t)^{-1- \alpha_i^k (i-\frac 12)+\delta_i^k}\notag\\
	&\le& C|(\x'-\x)_i|^{\alpha_i^k}\big( \|{\mathbf D} D_{1}u\|_\infty+\textcolor{black}{\|{\mathbf D}u\|_\infty}\big) T^\delta,
	\end{eqnarray}
	for some $\delta>0 $ recalling for the last inequality that, from the bound following \eqref{strong_BOUNDS_FOR_H_OFF_DIAG}, $\delta_i^k\ge 1/2+\eta/2 $, \textcolor{black}{so that $-\alpha_i^k(i-1/2)+\delta_i^k=- (1+\eta/4)/2+\delta_i^k >0$}. The arguments needed to control this term are actually those already  exploited in \cite{chau:17} when $n=2$.

	Gathering equations 
	\eqref{strong_label_pourri_eqinter} and \eqref{strong_DIFF_H_IK}, we finally derive with the notations of \eqref{strong_TERME_SI_C}:
	\begin{equation}
	|\textcolor{black}{D_{\x_k}}u(t,\x)-D_{\x_k}u(t,\x')|\Big|_{{\mathcal S}_i^c}
	\le C T^\delta 
	|\textcolor{black}{(\x'-\x)_i}|^{\alpha_i^k}\Big( \|{\mathbf D} D_{1} u\|_\infty+\textcolor{black}{\|{\mathbf D}u\|_\infty}+\sup_{2\le \textcolor{black}{\ell}\le m\le n, s\in [0,T]}\| \textcolor{black}{\big( D_m   u\big)_\ell}(s,\cdot)\|_{\textcolor{black}{ B_{\infty,\infty}^{\alpha_\ell^m}}}\Big). 
	\label{strong_LE_LABEL_PRELIM_DIAG}
	\end{equation}
	\subsubsection{Discontinuity term associated with the regime time change: control of the term \eqref{strong_DEF_MOD_HOLD_SUR_SI_BORD}.} 
	\label{SUBSUB_BOUNDARY}
	We here aim at handl\textcolor{black}{ing}
	\begin{eqnarray*}
		&&\big(\textcolor{black}{D_{\x_k}}u(t,\x)-D_{\x_k}u(t,\x') \big)|_{\partial{\mathcal S}_i}
		=   \Big\{D_{\x_k} \tilde P_{t_0,t}^{\bxi'} u(t_0, \x')-D_{\x_k}\tilde P_{t_0,t}^{\tilde \bxi'} u(t_0, \x') \Big\}_{\textcolor{black}{(\bxi',\tilde \bxi')=(\x',\x)}},
	\end{eqnarray*}
	which we will actually handle like the off-diagonal components. Recall here that the transition time $t_0 = t + c_0\textcolor{black}{|(\x-\x')_i|^{2/(2i-1)}}
	$. 
	From Lemma \ref{strong_LEMME_SG} (\textcolor{black}{cancellation argument}), we write:
	\begin{eqnarray*}
		\!\!&\!\!\!\!&\!\! 
		D_{\x_k} \tilde P_{t_0,t}^{\bxi'} u(t_0, \x')-D_{\x_k}\tilde P_{t_0,t}^{\tilde \bxi'} u(t_0, \x') 
		\\
		\!\!&\!\!=\!\!&\!\! \int_{\R^{nd}}d\y D_{\x_k} \tilde p^{\bxi'}(t,t_0,\x',\y) [u(t_0,\y)-u(t_0,\y_{1:k-1},(\btheta_{t_0,t}(\x'))_{k:n})] \notag\\
		\!\!&\!\!\!\!&\!\!-\int_{\R^{nd}}d\y D_{\x_k} \tilde p^{\textcolor{black}{\tilde \bxi'}}(t,t_0,\x',\y) [u(t_0,\y)-u(t_0,\y_{1:k-1},(\m^{\textcolor{black}{\tilde \bxi'}}_{t_0,t}(\x'))_{k}, \btheta_{t_0,t}(\x')_{k+1:n})] .\notag
	\end{eqnarray*}
	We now split the above contribution into three terms:
	\begin{eqnarray}
	D_{\x_k} \tilde P_{t_0,t}^{\bxi'} u(t_0, \x')-D_{\x_k}\tilde P_{t_0,t}^{\tilde \bxi'} u(t_0, \x')  
	= ({\mathscr B}_1^{\bxi'\!, \tilde \bxi'}+{\mathscr B}_2^{\bxi'\!, \tilde \bxi'}+{\mathscr B}_3^{\bxi'\!, \tilde \bxi'})(t_0,\x'), \label{strong_decomp_diff_discontinu}
	\end{eqnarray}
	where
	\begin{eqnarray}
	{\mathscr B}_1^{\bxi'\!, \tilde \bxi'}(t_0,\x')\!\!&\!\!:=\!\!&\!\!
	\Bigg\{\int_{\R^{nd}} d\y D_{\x_k} \tilde p^{\bxi'}(t,t_0,\x',\y) [u(t_0,\y)-u(t_0,\y_{1:k},(\btheta_{t_0,t}(\x'))_{k+1:n})] 
	\notag\\
	\!\!&\!\!\!\!&\!\!-\int_{\R^{nd}}d\y D_{\x_k} \tilde p^{\textcolor{black}{\tilde \bxi'}}(t,t_0,\x',\y) \Big[u(t_0,\y)-u(t_0,\y_{1:k},( \btheta_{t_0,t}(\x'))_{k+1:n})
	\Big]  \label{TB_1}
	\Bigg\},
	\end{eqnarray}

	\begin{eqnarray}
	&&{\mathscr B}_2^{\bxi'\!, \tilde \bxi'}(t_0,\x')
	\nonumber \\
	&:=&\!\!\Bigg\{\Big[\int_{\R^{nd}} d\y D_{\x_k} \tilde p^{\bxi'}(t,t_0,\x',\y) \Big[u(t_0,\y_{1:k},(\btheta_{t_0,t}(\x'))_{k+1:n})-u(t_0,\y_{1:k-1},(\btheta_{t_0,t}(\x'))_{k:n})\notag\\
	\!\!&\!\!\!\!&\!\!\qquad -\langle D_{k}u(t_0,\y_{1:k-1},(\btheta_{t_0,t}(\x'))_{k:n}), (\y-\btheta_{t_0,t}(\x'))_k \rangle\Big] \Big]
	\notag\\
	\!\!&\!\!\!\!&\!\!-\Big[\int_{\R^{nd}} d\y D_{\x_k} \tilde p^{\textcolor{black}{\tilde \bxi'}}(t,t_0,\x',\y) [u(t_0,\y_{1:k},( \btheta_{t_0,t}(\x'))_{k+1:n})-u(t_0,\y_{1:k-1},(\m^{\textcolor{black}{\tilde \bxi'}}_{t_0,t}(\x'))_{k}, \btheta_{t_0,t}(\x')_{k+1:n})\notag\\
	\!\!&\!\!\!\!&\!\!\qquad -\langle D_{k}u(t_0,\y_{1:k-1},(\m_{t_0,t}^{\textcolor{black}{\tilde \bxi'}}(\x'))_{k}, \btheta_{t_0,t}(\x')_{k+1:n}), (\y-\m_{t_0,t}^{\textcolor{black}{\tilde \bxi'}}(\x'))_k \rangle] \Big]   
	\Bigg\}\label{TB_2},
	\end{eqnarray}
	\begin{eqnarray}
	{\mathscr B}_3^{\bxi'\!, \tilde \bxi'}(t_0,\x')&:=&\!\!\Bigg\{\int_{\R^{nd}}d\y D_{\x_k} \tilde p^{\bxi'}(t,t_0,\x',\y)\langle D_{k}u(t_0,\y_{1:k-1},(\btheta_{t_0,t}(\x'))_{k:n}), (\y-\btheta_{t_0,t}(\x'))_k \rangle  
	\notag\\
	\!\!&\!\!\!\!&\!\!-\int_{\R^{nd}} d\y D_{\x_k} \tilde p^{\textcolor{black}{\tilde \bxi'}}(t,t_0,\x',\y)\langle D_{k}u(t_0,\y_{1:k-1},(\m_{t_0,t}^{\textcolor{black}{\tilde \bxi'}}(\x'))_{k}, \btheta_{t_0,t}(\x')_{k+1:n}), (\y-\m^{\textcolor{black}{\tilde \bxi'}}_{t_0,t}(\x'))_k \rangle 
	\Bigg\}\textcolor{black}{.}\notag\\
	\label{TB_3}
	\end{eqnarray}

	We now exploit  the H\"older regularity of $D_{k}u$ \textcolor{black}{w.r.t.} the $k^{\rm th}$ variable to control the terms in ${\mathscr B}_2^{\bxi'\!, \tilde \bxi'}(t_0,\x') $ defined in \eqref{TB_2}. 
	Let us first write from the previous decomposition:
	\begin{eqnarray*}
		&&|{\mathscr B}_{2,1}^{\bxi'\!, \tilde \bxi'}(t_0,\x')|
		\nonumber \\
		&:=&\Big |\int_{\R^{nd}} d\y D_{\x_k} \tilde p^{\bxi'}(t,t_0,\x',\y) \Big[u(t_0,\y_{1:k},(\btheta_{t_0,t}(\x'))_{k+1:n})-u(t_0,\y_{1:k-1},(\btheta_{t_0,t}(\x'))_{k:n})
		\notag\\
		&&
		-\langle D_{k}u(t_0,\y_{1:k-1},(\btheta_{t_0,t}(\x'))_{k:n}), (\y-\btheta_{t_0,t}(\x'))_k \rangle\Big]  \Big |\notag\\
		&=& \int_{\R^{nd}}d\y \int_0^1 d\lambda \Big |D_{\x_k} \tilde p^{\bxi'}(t,t_0,\x',\y)\Big | [D_k u(t_0,\y_{1:k-1},\cdot, (\btheta_{t_0,t}(\x') )_{k+1:n})]_{\alpha_k^k}|(\y-\btheta_{t_0,t}(\x') \big )_k|^{1+\alpha_k^k}.
	\end{eqnarray*}
	From Proposition \ref{strong_THE_PROP}, we thus derive:
	\begin{eqnarray}\label{strong_Besov_D2k_u}
	\!\!&\!\!\!\!&\!\!|{\mathscr B}_{2,1}^{\bxi'\!, \tilde \bxi'}(t_0,\x')|_{\textcolor{black}{(\bxi',\tilde \bxi')=(\x',\x)}}
	\nonumber \\
	\!\!&\!\!\leq\!\! &\!\! C( t_0-t)^{\alpha_k^k(k-\frac 12)} \!\!\!\!\sup_{\z_j , j\in \leftB 1,n\rightB,\ j\neq k}[ D_ku(t_0, \z_{1:k}, \cdot,\z_{k+1:n})] _{\alpha_k^k}
	\int_{\R^{(n-1)d}} \!\!\!
	d\y_{1:k-1} d\y_{k+1:n}
	\hat q_{C^{-1} \setminus k}(t,s,\x,(\y_{1:k-1},\y_{k+1:n}))
	\nonumber \\
	\!\!&\!\!=\!\!&\!\!  C(t_0-t)^{\frac 12+\frac \eta 8} [ (D_ku)_k(t_0,  \cdot)] _{\alpha_k^k}, 
	\end{eqnarray}
	recalling from \eqref{strong_def_alpha} that $\alpha_k^k=(1+\eta/4)/(2k-1) $.
	The same arguments readily give:
	\begin{eqnarray}\label{strong_Besov_D2k_u_bis}
	&&|{\mathscr B}_{2,2}^{\bxi'\!, \tilde \bxi'}(t_0,\x')|
	\nonumber \\
	&:=&\Big |\int_{\R^{nd}}\!\!\!\!d\y D_{\x_k} \tilde p^{\textcolor{black}{\tilde \bxi'}}(t,t_0,\x',\y) [u(t_0,\y_{1:k},( \btheta_{t_0,t}(\x'))_{k+1:n})-u(t_0,\y_{1:k-1},(\m^{\textcolor{black}{\tilde \bxi'}}_{t_0,t}(\x'))_{k}, \btheta_{t_0,t}(\x')_{k+1:n})\notag\\
	\!\!&\!\!\!\!&\!\!-\langle D_{k}u(t_0,\y_{1:k-1},(\m_{t_0,t}^{\textcolor{black}{\tilde \bxi'}}(\x'))_{k}, \btheta_{t_0,t}(\x')_{k+1:n}), (\y-\m_{t_0,t}^{\textcolor{black}{\tilde \bxi'}}(\x'))_k \rangle] \Big) \Big |\notag\\
	\!\!&\!\!\leq \!\!&\!\!  C(t_0-t)^{\frac 12+\frac \eta 8} [ (D_ku)_k(t_0,  \cdot)] _{\alpha_k^k}.
	\end{eqnarray}
	\textcolor{black}{Observe also that for this term we do not need to exploit the specific choice $(\bxi',\tilde \bxi')=(\x',\x) $ for the freezing point.}
	
	Let us now deal with the contribution ${\mathscr B}_1^{\bxi'\!,\tilde \bxi'}(t_0,\x') $ in \eqref{TB_1}. Observe from this definition that this term is non-zero if and only if $k<n $.
	Write then 
	\begin{eqnarray}\label{strong_petit_label_trankilou}
	&&|{\mathscr B}_1^{\bxi'\!,\tilde \bxi'}(t_0,\x')|_{\textcolor{black}{(\bxi',\tilde \bxi')=(\x',\x)}}\notag\\
	\!\!&\le&\!\!\int_{\R^{nd}}d\y |D_{\x_k} \tilde p^{\bxi'}(t,t_0,\x',\y)|_{\textcolor{black}{ \bxi'=\x'}} |u(t_0,\y)-u(t_0,\y_{1:k},(\btheta_{t_0,t}(\x'))_{k+1:n})| \notag\\
	\!\!&\!\!\!\!&\!\!+\int_{\R^{nd}}d\y |D_{\x_k} \tilde p^{\textcolor{black}{\tilde \bxi'}}(t,t_0,\x',\y)|_{\textcolor{black}{\tilde \bxi'=\x}} |u(t_0,\y)-u(t_0,\y_{1:k},( \btheta_{t_0,t}(\x'))_{k+1:n})| \notag\\
	\!\!&\!\!\leq \!\!&\!\! C\|{\mathbf D} u\|_\infty\bigg(\int_{\R^{nd}} \frac{d\y}{(t_0-t)^{k-\frac 12}}\hat p_{C^{-1}}^{\textcolor{black}{\x'}}(t,t_0,\x',\y)|(\btheta_{t_0,t}(\x')-\y)_{k+1:n}| \\
	\!\!&\!\!\!\!&\!\!+\int_{\R^{nd}}\!\! \frac{d\y}{(t_0-t)^{k-\frac 12}}\hat p_{C^{-1}}^{\textcolor{black}{\x}}(t,t_0,\x',\y) \big ( |(\m_{t_0,t}^\x(\x')-\y)_{k+1:n}|\!+\!|(\m_{t_0,t}^\x(\x')-\btheta_{t_0,t}(\x'))_{k+1:n}| \big ) \!\bigg ).\notag
	\end{eqnarray}
	\textcolor{black}{
		To deal with the last contribution in the r.h.s., we will need some auxilliary lemmas already used in \cite{chau:hono:meno:18} for Schauder estimates. Namely, analogously to Lemmas 1 and 3 therein, we have the following result.
		\begin{lem}\label{strong_lem_mean_covariance}
			There exists $\vartheta= \vartheta(\A{A}) \in (0,1)$ s.t. for all $j \in \leftB \textcolor{black}{1} , n \rightB$, $\x\in \R^{nd}, \x'=(\x_{1:i-1},z,\x_{i+1:n})\in \R^{nd}$:
			\begin{equation}\label{strong_ineq_m_theta}
			|(\m_{t_0,t}^\x(\x')-\btheta_{t_0,t}(\x'))_{j}| \leq \textcolor{black}{c_0^\vartheta \d^{2j-1}(\x,\x') :=}  
			c_0^\vartheta |(\x-\x')_i|^{\frac{2j-1}{2i-1}}.
			\end{equation} 
	\end{lem}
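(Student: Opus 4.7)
My plan is to recognize $\Delta_v := \m_{v,t}^{\x}(\x') - \btheta_{v,t}(\x')$ as the solution of a linear ODE whose source term encodes the nonlinear error of the linearization of $\gF$ around $\btheta_{v,t}(\x)$, and then to integrate it through the subdiagonal resolvent $\tilde\gR^{(t,\x)}$. Combining the identity $\m_{v,t}^{(t,\x)}(\x) = \btheta_{v,t}(\x)$ (clear from \eqref{strong_FROZ_MOL_FOR_NO_SUPER_SCRIPTS}) with the affine property \eqref{strong_AFFINE_FLOW}, and differentiating in time using the ODE for $\btheta_{v,t}(\x')$, I obtain
\begin{equation*}
\dot \Delta_v = D\gF(v,\btheta_{v,t}(\x))\Delta_v - R_v,\qquad \Delta_t=0,
\end{equation*}
where
\begin{equation*}
R_v^\ell = \gF_\ell(v,\btheta_{v,t}(\x')) - \gF_\ell(v,\btheta_{v,t}(\x)) - D_{\ell-1}\gF_\ell(v,\btheta_{v,t}(\x))(\btheta_{v,t}(\x')-\btheta_{v,t}(\x))_{\ell-1}
\end{equation*}
for $\ell \ge 2$, and $R_v^1 = \gF_1(v,\btheta_{v,t}(\x'))-\gF_1(v,\btheta_{v,t}(\x))$. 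Variation of constants, together with the strictly lower-triangular structure of $D\gF$ (which makes the $(j,\ell)$-block of $\tilde\gR^{(t,\x)}(v,u)$ of size $(v-u)^{j-\ell}$ for $j \ge \ell$ and zero otherwise), then yields
\begin{equation*}
|\Delta_v^j| \le C\sum_{\ell=1}^j \int_t^v (v-u)^{j-\ell} |R_u^\ell|\,du.
\end{equation*}

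Next, I bound $|R_u^\ell|$ by expanding $\gF_\ell$ variable-by-variable: using \A{${\mathbf H}_\eta$} (the $\eta$-H\"older regularity of $D_{\ell-1}\gF_\ell$) for the corrected slot and \A{${\mathbf T}_\beta$} (the $\beta_{j'}$-H\"older regularity of $\gF_\ell$ in $\x_{j'}$) for the remaining slots $j' \ge \ell$, this gives
\begin{equation*}
|R_u^\ell| \le C\Big(|(\btheta_{u,t}(\x')-\btheta_{u,t}(\x))_{\ell-1}|^{1+\eta}\I_{\ell\ge 2} + \sum_{j'\ge \ell\vee 1} |(\btheta_{u,t}(\x')-\btheta_{u,t}(\x))_{j'}|^{\beta_{j'}}\Big).
\end{equation*}
The component-wise flow difference is in turn controlled through the same rescaling arguments as in the proof of Proposition \ref{strong_THE_PROP}: one derives $|(\btheta_{u,t}(\x')-\btheta_{u,t}(\x))_m| \le C(u-t)^{(m-i)\vee 0}|(\x-\x')_i|$ for $m \ge i$, and $\le C(u-t)|(\x-\x')_i|$ for $m < i$, the extra $(u-t)$ factor at low levels reflecting the \emph{non-subdiagonal} nature of the propagation of the perturbation from level $i$ downwards.

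Inserting these bounds into the integral and evaluating at $v = t_0$, with $(t_0-t) = c_0|(\x-\x')_i|^{2/(2i-1)}$, each contribution takes the shape $c_0^{\alpha}|(\x-\x')_i|^{\gamma}$ for explicit $\alpha,\gamma$. A direct inspection shows that the excess $\gamma - (2j-1)/(2i-1)$ is strictly positive, equalling $\eta(2\ell-3)/(2i-1)$ for the first-type term and $[\beta_{j'}(2j'-1)+1-2\ell]/(2i-1)$ for the second-type term; both are positive by the strict thresholds $\beta_{j'} > (2j'-2)/(2j'-1)$ in \A{${\mathbf T}_\beta$} and $\eta > 0$ in \A{${\mathbf H}_\eta$}. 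The $c_0$-exponent $\alpha$ is likewise bounded below by some $\vartheta = \vartheta(\A{A}) \in (0,1)$, crucially thanks to the lower-triangular constraint $\ell \le j$ forced by the resolvent, which rules out the \emph{a priori} dangerous regime $\ell > j$. Using $|\x - \x'| \le 1$ to absorb the spatial excess then yields the claim. The main technical difficulty is exactly this bookkeeping of exponents over all pairs $(\ell,j')$: the most delicate case is $\ell = 1$, $j' = 1$ with $i \ge 2$, where only the weak-propagation bound $|(\btheta_{u,t}(\x')-\btheta_{u,t}(\x))_1| \le C(u-t)|(\x-\x')_i|$ supplies the extra time factor needed to extract a positive power of $c_0$ in spite of $\beta_1$ being possibly arbitrarily small.
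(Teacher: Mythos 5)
Your reduction of the lemma to a Duhamel formula is sound and is essentially the paper's own route written in one step: the paper decomposes $\m_{t_0,t}^{\x}(\x')-\btheta_{t_0,t}(\x')$ into the resolvent-propagated initial difference plus the flow difference $\btheta_{t_0,t}(\x)-\btheta_{t_0,t}(\x')$, which is equivalent to your variation-of-constants identity with source $R_v$, and your bound on $|R_u^\ell|$ via \A{${\mathbf T}_\beta$} and \A{${\mathbf H}_\eta$} is exactly Lemma \ref{strong_SMOOTH_OPERATOR_EFFECTS}. The gap is in your flow sensitivity estimates. You assert $|(\btheta_{u,t}(\x')-\btheta_{u,t}(\x))_m|\le C(u-t)^{(m-i)\vee 0}|(\x-\x')_i|$, i.e.\ a Lipschitz dependence of the \emph{nonlinear} flow on its initial condition, justified by ``the same rescaling arguments as in Proposition \ref{strong_THE_PROP}''. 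Those rescaling arguments concern the resolvent of the \emph{linearized} system, not the flow $\btheta$ of $\dot\btheta=\gF(v,\btheta)$: since $\gF$ is only $\beta$-H\"older in most of its variables, the flow is not Lipschitz in $\x$ with a constant depending only on \A{A} (the Lipschitz constant of the mollified flow blows up as the mollification parameter vanishes; compare Peano's example $\dot y=|y|^\alpha$, for which $|y_t(x)-y_t(0)|\asymp t^{\alpha/(1-\alpha)}|x|^{1-\alpha}$ near $x=0$). Already the first iteration of the difference equation produces terms like $\int_t^u|(\x'-\x)_i|^{\beta_i}\,dv$ with exponent $\beta_i<1$, and a Gr\"onwall argument cannot close to a Lipschitz bound.

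The correct uniform statement is Lemma \ref{strong_lem_theta_theta_bis}: $|(\btheta_{u,t}(\x)-\btheta_{u,t}(\x'))_m|\le C\big((u-t)^{m-\frac 12}+\d^{2m-1}(\x,\x')\big)$, which carries an additive time term (intrinsic to the non-Lipschitz drift, produced by the mollification error) and a H\"older --- not Lipschitz --- spatial modulus; its proof is a separate mollification, Gr\"onwall and Young-inequality argument and is the real content of the appendix. Your exponent bookkeeping does survive if you substitute this estimate for your claimed one: each contribution still enters through at least one time integral and therefore picks up at least one factor $(t_0-t)=c_0\,\d^2(\x,\x')$, hence a positive power of $c_0$, while the spatial excess is controlled by the strict thresholds $\beta_{j'}(2j'-1)>2j'-2$ and $\eta>0$. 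But as written the proposal rests on a false intermediate estimate and is incomplete.
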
}
	\textcolor{black}{For the sake of completeness a proof is provided in Appendix \ref{strong_APP} below}.
	Recalling that $t_0=t+c_0 |(\x-\x')_i|^{ 2/(2i-1)} $ with $|(\x-\x')_i|\le 1 $, we obtain
	\begin{eqnarray*}
		\frac{\|{\mathbf D} u\|_\infty}{(t_0-t)^{k-\frac 12}} |(\m_{t_0,t}^\x(\x')-\btheta_{t_0,t}(\x'))_{k+1:n}| 
		&\le& \frac{\|{\mathbf D} u\|_\infty}{c_0^{k-\frac 12}|(\x-\x')_i|^{\frac{2k-1}{2i-1}}} c_0^\vartheta |(\x-\x')_i|^{\frac{2(k+1)-1}{2i-1}}
		\nonumber \\
		& \leq & \|{\mathbf D} u\|_\infty c_0^{\vartheta- (k-\frac 12)
		} |(\x-\x')_i|^{\frac{2}{2i-1}}\\
		&\leq& \|{\mathbf D} u\|_\infty c_0^{\vartheta- (k-\frac 12)
		} |(\x-\x')_i|^{\alpha_i^k}.
	\end{eqnarray*}
	Plugging the above control in \eqref{strong_petit_label_trankilou} and \textcolor{black}{using as well Lemma \ref{strong_LEMME_DENS} and Proposition \ref{strong_THE_PROP}}, we obtain:
	\begin{eqnarray}\label{strong_ineq_interlemmebesov}
	|{\mathscr B}_1^{\bxi'\!,\tilde \bxi'}(t_0,\x')|_{\textcolor{black}{(\bxi',\tilde \bxi')=(\x',\x)}}\!\!&\le&\!\! C\|{\mathbf D} u\|_\infty \left\{2(t_0-t) +  c_0^{\vartheta- (k-\frac 12)
	} |(\x-\x')_i|^{\alpha_i^k}\right\}.
	\end{eqnarray}
	Let us eventually control the term ${\mathscr B}_3^{\bxi'\!,\tilde \bxi'}(t_0,\x') $ defined in \eqref{TB_3} which we rewrite in the following way: 
	\begin{eqnarray*}\label{strong_Delta_mathscr_R2}
		&&{\mathscr B}_3^{\bxi'\!,\tilde \bxi'}(t_0,\x')
		\nonumber \\
		\!\!&\!\!=\!\!&\!\! \Bigg \{\int_{\R^{nd}}d\y  D_{\x_k} \tilde p^{\bxi'}(t,t_0,\x',\y) \Big \langle  \big [ D_{k}u(t_0,\y_{1:k-1},(\btheta_{t_0,t}(\x'))_{k:n})-D_{k}u(t_0,\btheta_{t_0,t}(\x')) \big ] , 
		(\y-\btheta_{t_0,t}(\x'))_k \Big \rangle  \notag \\
		\!\!&\!\!\!\!&\!\!- \int_{\R^{nd}} d\y D_{\x_k} \tilde p^{\textcolor{black}{\tilde \bxi'}}(t,t_0,\x',\y) \Big \langle \big[ D_{k}u(t_0,\y_{1:k-1},(\m_{t_0,t}^{\textcolor{black}{\tilde \bxi'}}(\x'))_{k}, \btheta_{t_0,t}(\x')_{k+1:n})\notag \\
		\!\!&\!\!\!\!&\!\! \quad - D_{k}u(t_0,(\m_{t_0,t}^{\textcolor{black}{\tilde \bxi'}}(\x'))_{1:k}, \btheta_{t_0,t}(\x')_{k+1:n}) \big] (\y-\m^{\textcolor{black}{\tilde \bxi'}}_{t_0,t}(\x'))_k \Big \rangle  
		\Bigg \}
		\nonumber \\
		\!\!&\!\!\!\!&\!\!+\Bigg \{ \int_{\R^{nd}}d\y  D_{\x_k} \tilde p^{\bxi'}(t,t_0,\x',\y)
		\nonumber \\
		\!\!&\!\!\!\!&\!\! \quad
		\Big \langle [D_{k}u(t_0,\btheta_{t_0,t}(\x'))-D_{k}u(t_0,(\m_{t_0,t}^{\textcolor{black}{\tilde \bxi'}}(\x'))_{1:k}, \btheta_{t_0,t}(\x')_{k+1:n})   , (\y-\btheta_{t_0,t}(\x'))_k \Big \rangle  
		\Bigg \} \nonumber \\
		\!\!&\!\!\!\!&\!\!- \Bigg \{\int_{\R^{nd}} d\y D_{\x_k} \tilde p^{\textcolor{black}{\tilde\bxi'}}(t,t_0,\x',\y) 
		\Big \langle  D_{k}u(t_0,(\m_{t_0,t}^{\textcolor{black}{\tilde \bxi'}}(\x'))_{1:k}, \btheta_{t_0,t}(\x')_{k+1:n}) , (\y-\m^{\textcolor{black}{\tilde \bxi'}}_{t_0,t}(\x'))_k \Big \rangle 
		\nonumber \\
		\!\!&\!\!\!\!&\!\! \quad -\int_{\R^{nd}}d\y  D_{\x_k} \tilde p^{\bxi'}(t,t_0,\x',\y) \Big \langle D_{k}u(t_0,(\m_{t_0,t}^{\textcolor{black}{\tilde \bxi'}}(\x'))_{1:k}, \btheta_{t_0,t}(\x')_{k+1:n})   , (\y-\btheta_{t_0,t}(\x'))_k \Big \rangle  
		\Bigg \}, \nonumber 
	\end{eqnarray*}
	where, thanks to Proposition \ref{strong_Prop_moment_D2_tilde_p} the last contribution is actually 0. For the first and second contributions in the above r.h.s. we have, thanks to the H\"older regularity of $\x_{1:k} \mapsto D_k u(\cdot, \x_{1:k}, \cdot)$, Proposition \ref{strong_THE_PROP} and Lemma  \ref{strong_lem_mean_covariance}:
	\begin{eqnarray}\label{strong_ineq_Delta_12}
	&&|{\mathscr B}_3^{\bxi'\!,\tilde \bxi'}(t_0,\x')|_{\textcolor{black}{(\bxi',\tilde \bxi')=(\x',\x)}}
	\nonumber \\
	&\leq&\!\!  C\int_{\R^{nd}}d\y  \left\{\hat p_{C^{-1}}^{\textcolor{black}{\x'}}(t,t_0,\x',\y) +  \hat p_{C^{-1}}^{\textcolor{black}{\x}}(t,t_0,\x',\y)\right\} \sum_{j=1}^{k-1} [(D_ku)_j(t_0,\cdot)]_{\alpha_j^k} (t_0-t)^{\alpha_j^k(j-\frac 12)} \notag\\
	\!\!&\!\!\!\!&\!\!
	+C  \int_{\R^{nd}}d\y  \hat p_{C^{-1}}^{\textcolor{black}{\x'}}(t,t_0,\x',\y) \sum_{j=1}^{k} [(D_ku)_j(t_0,\cdot)]_{\alpha_j^k} (t_0-t)^{\alpha_j^k(j-\frac 12)}\notag\\
	\!\!&\!\!\leq\!\!&\!\!  C \Big(\sum_{j=1}^{k} [(D_ku)_j(t_0,\cdot)]_{\alpha_j^k}\Big) (t_0-t)^{\frac 12+\frac \eta 8}.
	\end{eqnarray}
	Thus, plugging estimates \eqref{strong_Besov_D2k_u},  \eqref{strong_Besov_D2k_u_bis},  \eqref{strong_ineq_interlemmebesov} and \eqref{strong_ineq_Delta_12} in \eqref{strong_decomp_diff_discontinu} we deduce that
	\begin{eqnarray}\label{strong_ineq_Delta_12_inter}
	&&|D_{\x_k} \tilde P_{t_0,t}^{\bxi'} u(t_0, \x')-D_{\x_k}\tilde P_{t_0,t}^{\tilde \bxi'} u(t_0, \x')|_{\textcolor{black}{(\bxi',\tilde \bxi')=(\x',\x)}}\notag\\
	&\leq & 
	C\textcolor{black}{\Big[}\|{\mathbf D} u\|_\infty \left\{(t_0-t) +  c_0^{\vartheta- (k-\frac 12)
	} |(\x-\x')_i|^{\alpha_i^k}\right\} 
	+ \Big(\sum_{j=1}^{k}[(D_ku)_j(t_0,\cdot)]_{\alpha_j^k}\Big) (t_0-t)^{\frac 12+\frac \eta 8}\textcolor{black}{\Big]}.
	\end{eqnarray}
	
	From the definition of $t_0=t+c_0 |(\x-\x')_i|^{2/(2i-1)}$ and $\alpha_{i}^k=(1+\eta/4)/(2i-1)$, recalling that $t_0-t$ is small as well (i.e. $t_0-t\le C (t_0-t)^{1/2+\eta/8} $), we obtain from \eqref{strong_ineq_Delta_12_inter}:
	\begin{eqnarray*}
		&&|D_{\x_k} \tilde P_{t_0,t}^{\bxi'} u(t_0, \x')-D_{\x_k}\tilde P_{t_0,t}^{\tilde \bxi'} u(t_0, \x')|_{\textcolor{black}{(\bxi',\tilde \bxi')=(\x',\x)}}\\
		&\leq & C\Bigg\{c_0^{\frac 12+\frac \eta 8} \Big( \sum_{j=1}^{k} [\textcolor{black}{\big(D_k  u\big)_j(t_0,\cdot)} ]_{\textcolor{black}{{\alpha_j^k}}}\Big) +  (c_0^{\vartheta-  (k-\frac 12)
		}+c_0^{\frac 12+\frac \eta 8} )\|{\mathbf D} u\|_\infty   \Bigg\} |(\x-\x')_i|^{\alpha_i^k}.
	\end{eqnarray*}
	Thus, \textcolor{black}{from \eqref{strong_DEF_MOD_HOLD_SUR_SI_BORD}}, there exists $\tilde \delta := \tilde \delta(\A{A}) \textcolor{black}{\in (0,1)}$ such that
	\begin{equation}\label{strong_COOL}
	\Big |\big( \textcolor{black}{D_{\x_k}u}(t,\x)-D_{\x_k}u(t,\x') \big)|_{\partial{\mathcal S}_i} \Big |
	\leq |(\x-\x')_i|^{\textcolor{black}{\alpha_i^k}}   C (c_0^{-\tilde \delta^{\textcolor{black}{-1}} } \|{\mathbf D} u\|_\infty   + c_0^{\tilde \delta} \textcolor{black}{\max_{2\leq \ell \leq m \leq n} [ \textcolor{black}{\big(  D_m u\big)_\ell(t_0,\cdot) }]_{\textcolor{black}{ {\alpha_\ell^m}}} }
	).
	\end{equation}

	%
	%
	%

	\paragraph{Conclusion: control of \eqref{strong_DECOUP_TERMES}.} 
	
	Plugging \eqref{strong_CTR_PRELIM_OFF_DIAG_SI}, \eqref{strong_LE_LABEL_PRELIM_DIAG} and \eqref{strong_COOL} into \eqref{strong_DECOUP_TERMES}, \eqref{strong_NABLA_XK}, we eventually derive that for some positive $\tilde \delta := \tilde \delta(\A{A}) >0$:
	\begin{equation}
	\|\textcolor{black}{D_{\x_k}u}(t,\x_{1:i-1},\cdot,\x_{i+1,n})\|_{\textcolor{black}{ B_{\infty,\infty}^{\alpha_i^k}}}
	\le C\Big(c_0^{-\tilde \delta^{\textcolor{black}{-1}} }\|{\mathbf D} u\|_\infty+ \|{\mathbf D} D_{1} u\|_\infty+ \big( c_0^{\tilde \delta } + T^{\tilde \delta}\big) \sup_{s\in [0,T],  2\le \ell\le m\le n}\| \textcolor{black}{\big( D_m u\big)_\ell(s,\cdot)}\|_{\textcolor{black}{ B_{\infty,\infty}^{\alpha_\ell^m}}}\Big). \nonumber  \label{strong_CTR_BESOV_FINAL_AVANT_SOMMATION}
	\end{equation}
	The main point to close our circular argument then consists  in taking  the supremums \textcolor{black}{w.r.t. $\x_{1:i-1},\x_{i+1:n} $}, $i,k $ and $t\in [0,T] $ on the l.h.s. and to tune the constant $c_0$ and the terminal time $T$ in order to obtain $C \big( c_0^{\tilde \delta } + T^{\tilde \delta}\big) \leq  1/2$. We then derive that for all $2\le i\le k\le n $:
	\begin{eqnarray}
	\sup_{t\in [0,T]}\|\textcolor{black}{\big(D_{k}u\big)_i(t,\cdot)}\|_{\textcolor{black}{ B_{\infty,\infty}^{\alpha_i^k}}}\le   C  \{\|{\mathbf D} u\|_\infty+\|{\mathbf D} D_{1} u\|_\infty \}, \label{strong_CTR_BESOV_FINAL_APRES_SOMMATION}
	\end{eqnarray}
	which concludes the proof of Lemma \ref{strong_LEMME_CTR_HOLDER}.\hfill $\textcolor{black}{\square} $.
	
	\subsection{Proof of of Lemma \ref{strong_LEMME_CTR_DIF_BESOV}}
	
	We follow the proof of Lemma \ref{strong_LEMME_CTR_BESOV}, concentrating on the case $\ell\le k-1 $, the specific case $\ell=k $ could be treated similarly considering the slightly different cancellation terms already discussed in Lemma \ref{strong_LEMME_CTR_BESOV}. The quantity to estimate is now:
	\begin{equation}\label{strong_DIFF_DES_PSI}
	\textcolor{black}{D_\ell}\bigg[\Big[\Psi_{\ell,k,m}^{(s,\y_{1:\ell-1},\y_{\ell+1:n}),(t,\x,\textcolor{black}{\x})}(\cdot)-\Psi_{\ell,k,m}^{(s,\y_{1:\ell-1},\y_{\ell+1:n}),(t,\x',\textcolor{black}{\x})}(\cdot)\bigg].
	\end{equation}
	Splitting the thermic part of the Besov norm as in \eqref{strong_DECOUP_HK_INT}, we obtain the same kind estimate for the \textcolor{black}{non-singular in time} part. Indeed, we point out that the difference \eqref{strong_DIFF_DES_PSI}  does not involve $D_{\x_1} $, therefore we gain a singularity of order $1/2 $ w.r.t. 
	equation \eqref{strong_RES_CALIBRATION_BETA_I}. On the other hand, the difference of the derivatives of the frozen densities w.r.t. $\x_k$ can be handled with \eqref{strong_CTR_RECUP_SING_IK_DIAG}. Choosing $ \rho_{\ell,m}=2\ell -1$ as in the proof of Lemma \ref{strong_LEMME_CTR_BESOV}, and recalling that $\alpha_i^k(i- 1/2)=(1+\eta/4)/2$ (see \eqref{strong_def_alpha}), it is plain to check that:
	\begin{eqnarray}\label{strong_ESTI_DE_U_DES_DIFF}
	&&{\mathbf {Upper}}\bigg[\textcolor{black}{D_\ell\Big[}\Psi_{\ell,k,m}^{(s,\y_{1:\ell-1},\y_{\ell+1:n}),(t,\x,\textcolor{black}{\x})}(\cdot)-\Psi_{\ell,k,m}^{(s,\y_{1:\ell-1},\y_{\ell+1:n}),(t,\x',\textcolor{black}{\x})}(\cdot)\textcolor{black}{\Big]}\bigg]
	\nonumber \\
	&\le& \frac{\tilde C|(\x-\x')_i|^{\alpha_i^k}}{(s-t)^{1+\alpha_i^k(i-\frac 12) -{\gamma_\ell^m}}}\hat q_{c\setminus \ell}(t,s,\x,(\y_{1:\ell-1},\y_{\ell+1:n})),
	\end{eqnarray}
	where from \A{${\mathbf H} _\eta$}, $1+\alpha_i^k(i- 1/2) -{\gamma_\ell^m}<1 $.
	Turning to the \textcolor{black}{singular in time} contribution of the thermic part of the Besov norm of \eqref{strong_DIFF_DES_PSI} we decompose with the notations of Lemma \ref{strong_LEMME_CTR_BESOV} (\textcolor{black}{see e.g. \eqref{strong_DECOUP_CAR_THERMIC_T1}, \eqref{strong_DECOUP_CAR_THERMIC_T2} which exhibit an additional spatial derivative}):
	\begin{eqnarray*}
		&&{\mathbf {Lower}}\bigg[\textcolor{black}{D_\ell \Big[}\Psi_{\ell,k,m}^{(s,\y_{1:\ell-1},\y_{\ell+1:n}),(t,\x,\textcolor{black}{\x})}(\cdot)-\Psi_{\ell,k,m}^{(s,\y_{1:\ell-1},\y_{\ell+1:n}),(t,\x',\textcolor{black}{\x})}(\cdot)\textcolor{black}{\Big]}\bigg]\\
		&=:&\int_0^{(s-t)^{\rho_{\ell,m}}} \textcolor{black}{\frac{dw}{w} w^{\frac{\alpha_\ell^m}2+1}} \int_{\R^d} d\tilde z |({\mathscr T}_{1,\ell,k,m}^{(s,\y_{1:\ell-1},\y_{\ell+1:n}),(t,\x,\x')}+{\mathscr T}_{2,\ell,k,m}^{(s,\y_{1:\ell-1},\y_{\ell+1:n}),(t,\x,\x')}) \big(w,\tilde z\big)|,
	\end{eqnarray*}
	where  $\ell<k\leq m$ and:
	\begin{eqnarray}
	\!\!&\!\!&\!\!{\mathscr T}_{1,\ell,k,m}^{(s,\y_{1:\ell-1},\y_{\ell+1:n}),(t,\x,\x')} \big(w,\tilde z\big)\label{strong_DECOUP_CAR_THERMIC_T1_MODIF}
	\\
	\!\!&\!\!=\!\!&\!\!\int_{\R^d}d\y_\ell \textcolor{black}{D_\ell}\partial_w h_w(\tilde z-\y_\ell )\big(D_{\x_k} \tilde p^{\bxi}(t,s,\x,\y)-D_{\x_k} \tilde p^{\bxi}(t,s,\x',\y)\big)_{\textcolor{black}{\bxi=\x}}
	\notag\\
	\!\!&\!\!\!\!&\!\!  \big \langle \gF_{\ell}(s,\y_{1:k-1},\btheta_{s,t}^{k:n}(\textcolor{black}{\x}))-\gF_{\ell}(s,\y_{1:\ell-1},\tilde z,\y_{\ell+1:k-1},\btheta_{s,t}^{k:n}(\textcolor{black}{\x}))\big)
	, (\y-\btheta_{s,t}(\textcolor{black}{\x}))_m \big \rangle ,\notag
	\end{eqnarray}
	with a slight abuse of notation when $\ell=k-1$ and 
	\begin{eqnarray}
	\label{strong_DECOUP_CAR_THERMIC_T2_MODIF}
	\!\!&\!\!\!\!&\!\!{\mathscr T}_{2,\ell,k,m}^{(s,\y_{1:\ell-1},\y_{\ell+1:n}),(t,\x,\textcolor{black}{\x'})} \big(w,\tilde z\big)\\
	\!\!&\!\!=\!\!&\!\!\int_{\R^d}d\y_\ell \textcolor{black}{D_\ell}\partial_w h_w(\tilde z-\y_\ell)\Bigg[\Big(D_{\x_k} \tilde p^{\bxi}(t,s,\x,\y)-D_{\x_k}\tilde p^{\bxi}(t,s,\x,\y_{1:\ell-1},\tilde z,\y_{\ell+1:n})\Big)\notag\\
	\!\!&\!\!\!\!&\!\!-\Big(D_{\x_k} \tilde p^{\bxi}(t,s,\x',\y)-D_{\x_k}\tilde p^{\bxi}(t,s,\x',\y_{1:\ell-1},\tilde z,\y_{\ell+1:n})\Big)  
	\Bigg]_{\textcolor{black}{\bxi=\x}}\notag \\
	\!\!&\!\!\!\!&\!\! \Big\langle \gF_{\ell}(s,\y_{1:\ell-1},\tilde z,\y_{\ell+1:k-1},\btheta_{s,t}^{k:n}(\textcolor{black}{\x}))-\gF_{\ell}(s,\btheta_{s,t}(\textcolor{black}{\x})) -D_{\ell-1}\gF_{\ell}(s,\btheta_{s,t}(\textcolor{black}{\x}))(\y-\btheta_{s,t}(\textcolor{black}{\x}))_{\ell-1}, 
	(\y-\btheta_{s,t}(\textcolor{black}{\x}))_m\Big \rangle .\notag
	\end{eqnarray}
	
	Note now that, when proceeding first as in \eqref{strong_DVP_TAYLOR_DIFF_TILDEP}, \eqref{strong_DVP_TAYLOR_DIFF_TILDEP_2} and control\textcolor{black}{ling} then  the \textcolor{black}{associated} difference as in \eqref{strong_DECOUP_CAR_THERMIC_T2}, we get thanks to \eqref{strong_CTR_RECUP_SING_IK_DIAG} that:
	\begin{eqnarray*}
		\!\!&\!\!\!\!&\!\!\Bigg|\Big(D_{\x_k} \tilde p^{\bxi}(t,s,\x,\y)-D_{\x_k}\tilde p^{\bxi}(t,s,\x,\y_{1:\ell-1},\tilde z,\y_{\ell+1:n})\Big)
		\!-\!\Big(D_{\x_k} \tilde p^{\bxi}(t,s,\x',\y)-D_{\x_k}\tilde p^{\bxi}(t,s,\x',\y_{1:\ell-1},\tilde z,\y_{\ell+1:n})\Big)  \Bigg|\\
		\!\!&\!\!\leq \!\!&\!\!\frac{\tilde C}{(s-t)^{\ell-\frac 12 + i-\frac 12 + k-\frac 12}}\int_0^1 d \lambda  \hat p_{C^{-1}}^\bxi(t,s,\x,\y_{1:\ell-1},\tilde z\!+ \! \lambda (\y_\ell\!-\!z), \y_{\ell+1:n})|(\x'\!-\!\x)_i| |\tilde z\!-\!\y_\ell|\\
		\!\!&\!\!\leq \!\!&\!\!\frac{\tilde C}{(s\!-\!t)^{(\ell-\frac 12) + \alpha_i^k(i-\frac 12) + (k-\frac 12)}} \!\int_0^1\!\! d \lambda  \hat p_{C^{-1}}^\bxi(t,s,\x,\y_{1:\ell-1},\tilde z \!+\! \lambda (\y_\ell \!-\!z), \y_{\ell+1:n})|(\x'\!-\!\x)_i|^{\alpha_i^k} |\tilde z \!- \!\y_\ell|,
	\end{eqnarray*}
	using the fact we are in the diagonal regime in the last inequality. With this control at hand, together with estimate \eqref{strong_CTR_RECUP_SING_IK_DIAG}, to handle the contributions $({\mathscr T}_{1,\ell,k,m}^{(s,\y_{1:\ell-1},\y_{\ell+1:n}),(t,\x,\textcolor{black}{\x'})}+{\mathscr T}_{2,\ell,k,m}^{(s,\y_{1:\ell-1},\y_{\ell+1:n}),(t,\x,\textcolor{black}{\x'})}) \big(w,\tilde z\big)$, we can mimic the proof of the estimation of the contributions 
	$({\mathscr T}_{1,i,(l,1),k}^{(s,\y_{1:i-1},\y_{i+1:n}),(t,\x)}+{\mathscr T}_{2,i,(l,1),k}^{(s,\y_{1:i-1},\y_{i+1:n}),(t,\x)}) \big(v,z\big)$ done in Lemma \ref{strong_LEMME_CTR_BESOV} to obtain 
	\begin{eqnarray*}
		&&{\mathbf {Lower}}\bigg[\textcolor{black}{D_\ell\Big[}\Psi_{\ell,k,m}^{(s,\y_{1:\ell-1},\y_{\ell+1:n}),(t,\x, \textcolor{black}{\x})}(\cdot)-\Psi_{\ell,k,m}^{(s,\y_{1:\ell-1},\y_{\ell+1:n}),(t,\x',\textcolor{black}{\x})}(\cdot)\textcolor{black}{\Big]}\bigg] 
		\nonumber \\
		&\le& \tilde C\hat q_{c\setminus \ell}(t,s,\x,(\y_{1:\ell-1},\y_{\ell+1:n}))|(\x'-\x)_i|^{\alpha_i^k} 
		\int_0^{(s-t)^{\rho_{\ell,m}}} \textcolor{black}{\frac{dw}{w} w^{\frac{\alpha_{\ell}^m}{2}+1}} 
		\nonumber \\
		&&\Bigg\{\frac{1}{w^{\textcolor{black}{\frac 32}-\frac{\beta_\ell}{2}}(s-t)^{\alpha_i^k(i-\frac 12)}} +\frac{w^{\textcolor{black}{-1+\frac{\beta_\ell}{2}}}}{(s-t)^{\ell-\frac 12 + \alpha_i^k(i-\frac 12)}}    +\frac{w^{\textcolor{black}{-1}}}{(s-t)^{\ell-\frac 12+ \alpha_i^k(i-\frac 12)-(1+\eta)(\ell-3/2)}}\Bigg\}\\
		& \leq & \tilde C\hat q_{c\setminus \ell}(t,s,\x,(\y_{1:\ell-1},\y_{\ell+1:n})) (s-t)^{-1- (i-\frac 12)\alpha_i^k+ \gamma_\ell^m } |(\x'-\x)_i|^{\alpha_i^k},
	\end{eqnarray*}
	where $\gamma_\ell^m =  1/2 + \eta(\ell- 3/2)$. From \A{${\mathbf H} _\eta$} and the very definition of $\alpha_i^k$ we hence have $1-\alpha_i^k(i- 1/2) +{\gamma_\ell^m}<1$ which, together with \eqref{strong_ESTI_DE_U_DES_DIFF}, concludes the proof. \qed

	\appendix
	\mysection{Sensitivity  results for  the mean: Proof of Lemma \ref{strong_lem_mean_covariance}}
	\label{strong_APP}
	In order to prove Lemma \ref{strong_lem_mean_covariance}, we first
	need to establish some  controls on the sensitivity of the flows, see Lemma \ref{strong_lem_theta_theta_bis} below.  Those results are obtained under the sole assumption \A{A} and remain valid for the mollification procedure of the coefficients considered in \A{AM}. 
	We will then proceed to the final proof  of  Lemma \ref{strong_lem_mean_covariance} in Section \ref{strong_sec:ProofLemma_flow}.
	
	For our analysis, we now introduce the spatial homogeneous distance, which basically reflects the various scales of the system already seen e.g. in Proposition 
	\ref{strong_THE_PROP}. Namely, for $(\x,\x')\in \R^{nd} $, we define:
	\begin{equation}
	\label{DIST_HOMO}
	\d(\x,\x'):=\sum_{i=1}^n |(\x-\x')_i|^{\frac{1}{2i-1}}.
	\end{equation}
	The distance is homogeneous in the sense that, for any $\lambda>0,\ \d(\lambda^{- 1/2}\T_\lambda \x,\lambda^{-1/2}\T_\lambda \x')=\lambda^{1/2}\d(\x,\x') $.
	
	\subsection{A first sensitivity result for the flow}\label{strong_sec:ProofLem_d_theta}
	\begin{lem}[Control of the flows]\label{strong_lem_theta_theta_bis} Under \A{A}, there exists $C:=C(\A{A},T)$ s.t. for all spatial points $(\x,\x')\in (\R^{nd})^{2},\ \textcolor{black}{\d(\x,\x')\le 1}, \ 0\le t<s\le T\textcolor{black}{\le 1}$ and $i \in \leftB 1,n\rightB$:
		\begin{equation*}
		|(\btheta_{s,t}(\x)-\btheta_{s,t}(\x'))_i| \leq 
		C\Big( (s-t)^{i-\frac 12}+\d^{2i-1}(\x-\x') \Big).
		\end{equation*}
	\end{lem}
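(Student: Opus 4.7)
My plan is a Gronwall-type bootstrap argument, exploiting the anisotropic scaling of the degenerate chain. Working under \A{AM} (smooth coefficients, with constants depending only on \A{A} and $T$), the ODE \eqref{strong_DYN_DET_SMOOTH} yields componentwise, for $M_i(s):=|(\btheta_{s,t}(\x)-\btheta_{s,t}(\x'))_i|$,
\[
M_i(s)\le |(\x-\x')_i|+\int_t^s\big|\gF_i(u,\btheta_{u,t}(\x))-\gF_i(u,\btheta_{u,t}(\x'))\big|\,du.
\]
Since $\d(\x,\x')\ge |(\x-\x')_i|^{1/(2i-1)}$, the initial term is already dominated by $\d^{2i-1}(\x,\x')$. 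Using \A{T${}_\beta$} together with the Lipschitz continuity of $\gF_i$ in $\x_{i-1}$ (which follows from the boundedness of $D_{i-1}\gF_i$ in \A{H${}_\eta$}) I have the pointwise bounds
\[
|\gF_1(u,\z)-\gF_1(u,\z')|\le C\sum_{j=1}^n|(\z-\z')_j|^{\beta_j},\qquad |\gF_i(u,\z)-\gF_i(u,\z')|\le C|(\z-\z')_{i-1}|+C\sum_{j=i}^n|(\z-\z')_j|^{\beta_j}\ \ (i\ge 2).
\]

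The bootstrap: assume that, on some interval $[t,s_0]$, $M_j(u)\le K\,\phi_j(u-t,\d(\x,\x'))$ for all $j$, where $\phi_j(r,d):=r^{j-1/2}+d^{2j-1}$ and $K\ge 1$. Plugging into the integral inequality gives contributions of two types. For the Lipschitz term (when $i\ge 2$), $\int_t^sM_{i-1}(u)\,du\le K\big[(s-t)^{i-1/2}/(i-1/2)+(s-t)\d^{2i-3}\big]$. For the H\"older terms, $\int_t^s M_j(u)^{\beta_j}du\le CK^{\beta_j}\big[(s-t)^{\beta_j(j-1/2)+1}+(s-t)\d^{\beta_j(2j-1)}\big]$. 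The time-powers are dominated by $(s-t)^{i-1/2}$ times a factor of $T$, since $\beta_j(j-1/2)+1>i-1/2$ for $j\ge i$ (using $\beta_j>(2j-2)/(2j-1)$); the case $i=1$ is handled similarly since $\beta_1>0$ suffices and $T\le 1$. The mixed terms are controlled by Young's inequality with conjugate exponents $p=(2i-1)/2$, $q=(2i-1)/(2i-3)$:
\[
(s-t)\,\d^\alpha\le \frac{(s-t)^{i-1/2}}{p}+\frac{\d^{\alpha q}}{q},
\]
which yields $(s-t)\d^\alpha\le C_T((s-t)^{i-1/2}+\d^{2i-1})$ as soon as $\alpha\ge 2i-3$ (and $\d\le 1$). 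Precisely, $\beta_j(2j-1)>2j-2\ge 2i-3$ for $j\ge i$, and for $i=1$ the bound is trivial from $\d\le 1$. The threshold $\beta_j>(2j-2)/(2j-1)$ of \A{T${}_\beta$} is exactly what makes Young's inequality close.

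Combining, I obtain $M_i(s)\le \big(1+C\,T^{\delta_0}(K+K^{\max_j\beta_j})\big)\,\phi_i(s-t,\d(\x,\x'))$ for some $\delta_0>0$. To close the bootstrap rigorously, I would first establish the estimate on a small interval $[t,t+T_0]$ with $T_0$ chosen so that $C\,T_0^{\delta_0}\le 1/4$, yielding $M_i(s)\le 2\,\phi_i(s-t,\d)$, and then iterate over a finite partition of $[t,T]$ using the semigroup property $\btheta_{s,t}=\btheta_{s,t_k}\circ\btheta_{t_k,t}$. The constant $K=K(\A{A},T)$ picks up a factor exponential in the number of iterations (i.e. polynomial in $T$), which is admissible. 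An alternative route is a standard continuity argument: $\Psi(s):=\sum_i M_i(s)/\phi_i(s-t,\d)$ is continuous with $\Psi(t)\le n$; a maximal interval on which $\Psi\le K$ cannot have finite upper endpoint strictly before $T$ since the integral inequality above contradicts saturation.

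The main obstacle is the sharpness of the Young estimate: the condition $\beta_j(2j-1)\ge 2i-3$ that allows $(s-t)\d^{\beta_j(2j-1)}$ to be absorbed into $(s-t)^{i-1/2}+\d^{2i-1}$ is exactly equivalent to the critical H\"older exponent $\beta_j>(2j-2)/(2j-1)$. This is reassuring and not accidental: the same threshold governs the integrability of singularities throughout the forward parametrix analysis of Section~\ref{strong_SEC_PROOF_PDE}, confirming the coherence of the anisotropic homogeneous scaling encoded by $\d$ with the weak H\"ormander structure of \eqref{strong_SYST}.
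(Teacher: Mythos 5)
Your overall strategy --- a componentwise Gr\"onwall estimate against the anisotropic profiles $\phi_j(r,d)=r^{j-1/2}+d^{2j-1}$, with Young's inequality trading time for space at the homogeneous scales --- is sound and is essentially the mechanism of the paper's proof. The paper mollifies $\gF_i$ at the tuned scale $\delta_i=(s-t)^{(i-3/2)(2i-1)/(2i-2)}$ so as to run a genuinely linear Gr\"onwall and then iterates the resulting inequalities down the chain, whereas you keep the H\"older terms nonlinear and bootstrap; both routes are viable, and your observation that the thresholds $\beta_j>(2j-2)/(2j-1)$ are exactly what makes the Young step close is correct.

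The gap is in closing the bootstrap. The H\"older couplings $\int_t^s M_j^{\beta_j}$ do, as you say, come with a factor $T^{\delta_0}$ in front of $\phi_i$. The Lipschitz coupling does not: $\int_t^s M_{i-1}(u)\,du\le K\big[(s-t)^{i-1/2}/(i-1/2)+(s-t)\d^{2i-3}\big]$, and after Young this is $\le c\,K\,\phi_i(s-t,\d)$ with $c$ of order $\|D_{i-1}\gF_i\|_\infty/(i-1/2)$ --- a fixed constant, not $O(T^{\delta_0})$, since no power of $T$ can be extracted from $\int_t^s(u-t)^{i-3/2}\,du=(s-t)^{i-1/2}/(i-1/2)$ relative to the target $(s-t)^{i-1/2}$. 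Hence your displayed closing inequality $M_i\le\big(1+CT^{\delta_0}(K+K^{\max_j\beta_j})\big)\phi_i$ is not what the computation yields; one gets $M_i\le\big(1+cK+CT^{\delta_0}K^{\max_j\beta_j}\big)\phi_i$, and if $c\ge1$ (a large Lipschitz modulus of $\gF_i$ in $\x_{i-1}$ is allowed under \A{H${}_\eta$}) neither ``choose $T_0$ small'' nor the continuity argument on $\Psi$ produces a contradiction at the exit time. The repair uses the strict lower-triangularity (nilpotency) of the Lipschitz couplings: either run the bootstrap with level-dependent constants $K_j=\Lambda^j$ with $\Lambda$ large compared with the Lipschitz constants, so that $cK_{i-1}\le K_i/4$, and only then shrink $T$ to absorb the H\"older contributions $K_j^{\beta_j}$, $j\ge i$; or, as the paper does in \eqref{strong_CTR_N}--\eqref{strong_CTR_I}, substitute the inequality for $M_{i-1}$ into that for $M_i$ repeatedly until only $M_1$ and H\"older terms remain under iterated time integrals, and close afterwards. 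Either way the lemma survives, but the closing step as written would fail.
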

	The flow, $\btheta_{s,t}$ is, somehow, \textcolor{black}{locally} ``almost\textquotedblright \- Lipschitz continuous in space w.r.t. the homogeneous distance $\d$,  up to a time additive term.
	This time contribution is a consequence of the non-Lipschitz continuity of the drif $\gF$.
	The analysis which was already done for $\gF$ Lipschitz continuous  in Proposition 4.1 of \cite{meno:17}, and Appendix A.1 in \cite{chau:hono:meno:18} with different H\"older regularity of $\gF$. Actually, as we consider a \textit{smoother} drift than in \cite{chau:hono:meno:18}, the following lemma \textcolor{black}{can be seen as} a by-product of Lemma 12 therein. 
	For the sake of completeness, we provide the corresponding, and more direct, analysis below.
	
	\begin{proof}
		

		The analysis mainly relies on Gr\"onwall type arguments coupled with suitable mollification techniques, because $\gF $ is not Lipschitz continuous,  and appropriate Young inequalities in order to make the intrinsic scales associated to the spatial variables appear.
		\textcolor{black}{Let $\delta \in \R^n$ be the vector whose entries $\delta_{i} >0$ correspond to the mollification parameter of the drift $\gF_i $ for $i\in \leftB 2,n\rightB $. Namely,
			for all $\ v\in [0,T],\ \z\in \R^{nd} $, $i\in \leftB 2,n\rightB $, we define
			\begin{equation}
			\label{strong_DEF_CONV_ADHOC}
			\gF_i^\delta(v,\z^{i-1:n}):=\gF_i(v,\cdot)\star \rho_{\delta_{i}}(\z)=\int_{\R^{d}}dw \gF_i(v,\z_{i-1},\z_i-w,\z_{i+1} \textcolor{black}{,\hdots} ,\z_n)\rho_{\delta_{i}}(w),
			\end{equation} 
			with  $\rho_{\delta_{i}}(w):=(1/\delta_{i}^{d})\rho\left(w/\delta_{i} \right) $ where $ \rho:\R^{d}\rightarrow \R_{+}$ is a usual mollifier, namely $\rho$ has compact support and $\int_{\R^{d}} \rho(\z)d\z=1 $. }
		Eventually, we write $\gF^\delta(v,\z):=(\gF_1 (v,\z),\gF_2^\delta(v,\z), \textcolor{black}{\hdots},\gF_n^\delta(v,\z)) $. 
		With a slight abuse of notation in the previous definitions, since the first component $\gF_1$ is not mollified. 
		The  sublinearity of $\gF_1$ is actually enough to obtain the desired control.
		\\
		
		To be at the \textit{good} current time scale for the contributions associated with the mollification, we pick $\delta_{i}$ in order to have $C:=C(\A{A},T)>0$ s.t. for all $\z \in \R^{nd}$, $u \in [t,s]$:
		\begin{equation}\label{strong_approxrescaleddrfit}
		\Big|(s-t)^{\frac 12}\T_{s-t}^{-1}\Big(\gF(u,\z)-\gF^\delta(u,\z)\Big)\Big|\leq C(s-t)^{-1}.
		\end{equation}
		By the previous definition of $\gF^\delta$ in \eqref{strong_DEF_CONV_ADHOC}, 
		identity \eqref{strong_approxrescaleddrfit} is equivalent to:
		\begin{equation}\label{strong_eq:controleT1}
		\textcolor{black}{\sum_{i=2}^n (s-t)^{\frac 12-i}  \delta_{i}^{\frac{2i-2}{2i-1}} 
			\leq C(s-t)^{-1}.}
		\end{equation}
		Hence, we choose from now on, for each $i\in \leftB 2,n \rightB $:
		\begin{equation}\label{strong_choice_deltaij}
		\delta_{i}= (s-t)^{(i-\frac 32)\frac{2i-1}{2i-2}}.
		\end{equation}
		
		Next, let us control the last components of the flow.
		By the definition of $\btheta_{s,t}$ in \eqref{strong_DYN_DET_SMOOTH}, we get:
		\begin{eqnarray*}
			&&|(\btheta_{s,t}(\x)-\btheta_{s,t}(\x'))_n|
			\nonumber \\
			&\le& |(\x-\x')_n|
			+
			\int_{t}^s dv \Big ( |\gF_n^\delta (v,\btheta_{v,t}(\x))- \gF_n^\delta (v,\btheta_{v,t}(\x'))|
			\nonumber \\
			&&+ |\gF_n^\delta (v,\btheta_{v,t}(\x))- \gF_n (v,\btheta_{v,t}(\x))|
			+ |\gF_n^\delta (v,\btheta_{v,t}(\x'))- \gF_n (v,\btheta_{v,t}(\x'))|
			\Big) 
			\\
			&
			\leq& 
			|(\x-\x')_n|
			+ C
			\int_{t}^s dv \Big ( 
			\big  |\big (\btheta_{v,t}(\x)-\btheta_{v,t}(\x') \big )_{n-1} \big|
			+   \delta_{n}^{-1+\frac{2n-2}{2n-1}} \big  |\big (\btheta_{v,t}(\x)-\btheta_{v,t}(\x') \big )_n \big|
			\Big)
			+(s-t)  \delta_{n}^{\frac{2n-2}{2n-1}},
		\end{eqnarray*}
		observing for the last inequality that since $\beta_n>(2n-2)/(2n-1) $ and $\delta_n $ is meant to be small, $\delta_{n}^{\beta_n}\le \delta_n^{(2n-2)/(2n-1)} $.
		
		Hence by Gr\"onwall's lemma, we get:
		\begin{eqnarray}\label{strong_ineq_theta_x_x_n}
		\!\!&\!\!\!\!&\!\!|(\btheta_{s,t}(\x)-\btheta_{s,t}(\x'))_n|
		\nonumber \\
		\!\!&\!\!\le\!\!&\!\! C \exp \Big(C (s-t)  \delta_{n}^{-1+\frac{2n-2}{2n-1}} \Big)
		\Big(|(\x-\x')_n| + (s-t)  \delta_{n}^{\frac{2n-2}{2n-1}}  
		+\int_{t}^s dv 
		\big  |\big (\btheta_{v,t}(\x)-\btheta_{v,t}(\x') \big )_{n-1} \big| \Big)
		\nonumber \\
		\!\!&\!\!\le\!\!&\!\! C \exp \Big(C (s-t)^{\frac 1 2} \Big)
		\Big(|(\x-\x')_n| + (s-t)^{n-\frac 12} 
		+\int_t^s dv \big  |\big (\btheta_{v,t}(\x)-\btheta_{v,t}(\x') \big )_{n-1} \big|  \Big),
		\end{eqnarray}
		using 
		\textcolor{black}{\eqref{strong_choice_deltaij}} for the last inequality.
		For the $(n-1)^{\rm {th}}$ component, the situation is quite different in the sense that we have to handle the non-Lipschitz continuity of $\gF^\delta_{n-1}$ in its $n^{\rm {th}}$ variable. 
		Write:
		\begin{eqnarray}\label{strong_ineq_theta_x_x_n1}
		\!\!&\!\!\!\!&\!\!|(\btheta_{s,t}(\x)-\btheta_{s,t}(\x'))_{n-1}|
		\nonumber \\
		\!\!&\!\!\le\!\!&\!\! C \exp \Big (C (s-t)  \delta_{n-1}^{-1+\frac{2(n-1)-2}{2(n-1)-1}} \Big)
		\Big(|(\x-\x')_{n-1}| + (s-t)  \delta_{n-1}^{\frac{2(n-1)-2}{2(n-1)-1}}  
		\nonumber \\
		\!\!&\!\!\!\!&\!\!
		+\int_{t}^sdv  \big \{
		\big  |\big (\btheta_{v,t}(\x)-\btheta_{v,t}(\x') \big )_{n-2} \big| +  |\big (\btheta_{v,t}(\x)-\btheta_{v,t}(\x') \big )_{n} \big|^{\textcolor{black}{\beta_n}}  \big \}\Big)
		\nonumber \\
		\!\!&\!\!\le\!\!&\!\! C \exp(C (s-t)^{\frac 1 2} )
		\Big(|(\x-\x')_{n-1}| + (s-t)^{n-\frac 32} 
		+\int_{t}^sdv \bigg \{
		\big  |\big (\btheta_{v,t}(\x)-\btheta_{v,t}(\x') \big )_{n-2} \big| 
		\nonumber \\
		\!\!&\!\!\!\!&\!\!
		+|(\x-\x')_n|^{\textcolor{black}{\beta_n}} + (v-t)^{\textcolor{black}{\beta_n(n-\frac 12 )}}
		+
		\Big (\int_t^vdw\big  |\big (\btheta_{w,t}(\x)-\btheta_{w,t}(\x') \big )_{n-1} \big|  \Big )^{\textcolor{black}{\beta_n}}\bigg \} \bigg)
		\nonumber \\
		\!\!&\!\!\le\!\!&\!\! C \exp(C (s-t)^{\frac 1 2} )
		\bigg(|(\x-\x')_{n-1}| + (s-t)^{n-\frac 32} 
		+\int_{t}^s dv\bigg \{
		\big  |\big (\btheta_{v,t}(\x)-\btheta_{v,t}(\x') \big )_{n-2} \big| 
		\nonumber \\
		\!\!&\!\!\!\!&\!\!
		+|(\x-\x')_n|^{\textcolor{black}{\frac{2n-3}{2n-1}}} 
		+
		\Big (\int_t^vdw \big  |\big (\btheta_{w,t}(\x)-\btheta_{w,t}(\x') \big )_{n-1} \big|  \Big )^{\textcolor{black}{\beta_n}}\bigg \} \bigg),
		\end{eqnarray}
		from our choice of $\delta_{n-1} $ in \eqref{strong_choice_deltaij} for the second inequality. We also exploited for the last inequality that, since under \A{T${}_\beta $}, $\beta_n>(2n-2)/(2n-1) $, ${\beta_n(n-\frac 12)}>n-1>n-3/2 $ and $0\le t<s\le T $ where $T$ is \textit{small}, then $(v-t)^{\beta_n(n-1/2)}\le (v-t)^{n-3/2} $. Also, since $\d(\x,\x')\le 1 $, the same arguments yield $|(\x-\x')_n|^{\beta_n}\le |(\x-\x')_n|^{(2n-2)/(2n-1)}\le  |(\x-\x')_n|^{(2n-3)/(2n-1)} $.
		
		From  \eqref{strong_ineq_theta_x_x_n1}, which still holds true replacing $s$ by any $\tilde s\in [t,s] $, we deduce that taking the supremum over $\tilde s\in [t,s] $:
		\begin{eqnarray*}
			&&\sup_{\tilde s\in [t,s]}|(\btheta_{\tilde s,t}(\x)-\btheta_{\tilde s,t}(\x'))_{n-1}|
			\nonumber \\
			&\le& C \exp(C ( s-t)^{\frac 1 2} )
			\Big(|(\x-\x')_{n-1}| + ( s-t)^{n-\frac 32} 
			+\int_{t}^{ s} dv \bigg \{
			\big  |\big (\btheta_{v,t}(\x)-\btheta_{v,t}(\x') \big )_{n-2} \big| 
			\nonumber \\
			&&
			+ |(\x-\x')_n|^{\textcolor{black}{\frac{2n-3}{2n-1}}} 
			+
			\Big (\int_t^vdw\big  |\big (\btheta_{w,t}(\x)-\btheta_{w,t}(\x') \big )_{n-1} \big|  \Big )^{\textcolor{black}{\beta_n}}\bigg \} \bigg).
		\end{eqnarray*} 
		Taking then the supremum in \textcolor{black}{$w\in [t,s]$ in the above integral}, we obtain:
		\begin{eqnarray} 
		&&\sup_{\tilde s\in [t,s]}|(\btheta_{\tilde s,t}(\x)-\btheta_{\tilde s,t}(\x'))_{n-1}| 
		\nonumber \\
		&\le& C \exp(C (s-t)^{\frac 1 2} )
		\Big(|(\x-\x')_{n-1}| + (s-t)^{n-\frac 32} 
		+\int_{t}^s dv
		\big  |\big (\btheta_{v,t}(\x)-\btheta_{v,t}(\x')) \big )_{n-2} \big| 
		\nonumber \\
		&&
		+ |(\x-\x')_n|^{\frac{2n-3}{2n-1}}  +
		\sup_{w\in [t,s]} |\big (\btheta_{w,t}(\x)-\btheta_{w,t}(\x')) \big )_{n-1}\big|^{\textcolor{black}{\beta_n}} \textcolor{black}{(s-t)^{\beta_n+1}} \Big )  \bigg)
		.\label{FLOW_SUP_1}
		\end{eqnarray}
		From Young's inequality we now derive:
		\begin{eqnarray*}
			\sup_{w\in [t,s]} |\big (\btheta_{w,t}(\x)-\btheta_{w,t}(\x')) \big )_{n-1}\big|^{\textcolor{black}{\beta_n}} \textcolor{black}{(s-t)^{\beta_n+1}}&\le&\frac 12 \sup_{w\in [t,s]} |\big (\btheta_{w,t}(\x)-\btheta_{w,t}(\x')) \big )_{n-1}\big|(s-t)+C (s-t)^{\frac{1}{1-\beta_n}}\\
			&\le & \frac 12\sup_{w\in [t,s]} |\big (\btheta_{w,t}(\x)-\btheta_{w,t}(\x')) \big )_{n-1}\big|
			+C (s-t)^{n-\frac 32} ,
		\end{eqnarray*}
		recalling for the last inequality that $s-t\le 1 $, and since $\beta_n>(2n-2)/(2n-1), 1-\beta_n<1/(2n-1)$, we also have $(s-t)^{1/(1-\beta_n)}<(s-t)^{2n-1}<(s-t)^{n- 3/2} $. Plugging the above control into \eqref{FLOW_SUP_1}, we obtain:
		\begin{eqnarray}
		&&\frac 12\sup_{\tilde s\in [t,s] }|(\btheta_{\tilde s,t}(\x)-\btheta_{\tilde s,t}(\x'))_{n-1}|
		\\
		&\le& C \exp(C (s-t)^{\frac 1 2} )
		\Big(|(\x-\x')_{n-1}| + (s-t)^{n-\frac 32} 
		+\int_{t}^s dv
		\big  |\big (\btheta_{v,t}(\x)-\btheta_{v,t}(\x')) \big )_{n-2} \big| 
		\nonumber 
		+ |(\x-\x')_n|^{\frac{2n-3}{2n-1}}  
		\Big)\label{strong_I_N_1_2}.
		\end{eqnarray}
		
		We explicitly see from \eqref{strong_I_N_1_2} that each entry of the difference of the starting points appears at its intrinsic scale for the homogeneous distance $\d$ \textcolor{black}{introduced in \eqref{DIST_HOMO}}.
		
		Plugging the above inequality into \eqref{strong_ineq_theta_x_x_n} we derive:
		\begin{eqnarray*}
			&&|(\btheta_{s,t}(\x)-\btheta_{s,t}(\x'))_n|
			\nonumber \\
			\!\!&\!\!\leq\!\!& \!\!C \exp \Big(C (s-t)^{\frac 1 2} \Big)
			\Big(|(\x-\x')_n| + (s-t)^{n-\frac 12} + |(\x-\x')_{n-1}| (s-t) 
			\nonumber \\
			\!\!&\!\!\!\!&\!\!+ 
			|(\x-\x')_n|^{\frac{2n-3}{2n-1}} (s-t) +\! \int_{t}^s\!\! dv \int_{t}^v\! dw
			\big  |\big (\btheta_{w,t}(\x)-\btheta_{w,t}(\x')) \big )_{n-2} \big|   \Big)\nonumber\\
			\!\!&\!\!\leq\!\!& \!\!C \exp \Big(C (s-t)^{\frac 1 2} \Big)
			\Big(|(\x-\x')_n| + (s-t)^{n-\frac 12} + |(\x-\x')_{n-1}|^{\frac{2n-1}{2n-3}}  
			+ \!\int_{t}^s\!\! dv \int_{t}^v \!dw
			\big  |\big (\btheta_{w,t}(\x)-\btheta_{w,t}(\x')) \big )_{n-2} \big|   \Big),
		\end{eqnarray*}
		using again the Young inequalities $|(\x-\x')_n|^{(2n-3)/(2n-1)}(s-t)\le C(|(\x-\x')_n|+(s-t)^{n- 1/2}) $ and $|(\x-\x')_{n-1}|(s-t)\le C\big(|(\x-\x')_{n-1}|^{(2n-1)/(2n-3)}+(s-t)^{n- 1/2}\big) $ for the last inequality.
		Iterating the procedure,  we get:
		\begin{equation}\label{strong_CTR_N}
		|(\btheta_{s,t}(\x)-\btheta_{s,t}(\x'))_n|
		\le C
		\Bigg ( (s-t)^{n-\frac 12}+\sum_{j=2}^n |(\x-\x')_j|^{\frac{2n-1}{2j-1}}
		+ \int_ t^{v_n=s}\!\!\! dv_{n-1} \hdots \int_t^{v_{2}} \!\!\! dv_{1} \big| \big(\btheta_{v_1,t}(\x')-\btheta_{v_1,t}(\x)\big)_{1}\big|\Bigg). \nonumber
		\end{equation}
		Anagolously, for $i\in \leftB 2,n\rightB  $, we obtain:
		\begin{equation}\label{strong_CTR_I}
		|(\btheta_{s,t}(\x)-\btheta_{s,t}(\x'))_i|
		\le C
		\Bigg ( (s-t)^{i-\frac 12}+\sum_{j=2}^n |(\x-\x')_j|^{\frac{2i-1}{2j-1}}
		+ \int_ t^{v_i=s}\!\!\! dv_{i-1} \hdots \int_t^{v_{2}} \!\!\! dv_{1} \big| \big(\btheta_{v_1,t}(\x')-\btheta_{v_1,t}(\x)\big)_{1}\big|\Bigg). \nonumber
		\end{equation}
		\begin{REM}\label{strong_EXT_TEMP_GR}
			Observe that equations \eqref{strong_CTR_N} and \eqref{strong_CTR_I} are available for any fixed time $s\in [t,T] $. 
		\end{REM}

		The first term, i.e. for $i=1 $ is controlled slightly differently. In other words, for any $\tilde s\in [t,s] $, write:
		\begin{equation*}
		\label{strong_CTR_1}
		|(\btheta_{\tilde s,t}(\x)-\btheta_{\tilde s,t}(\x'))_1|
		\le |(\x-\x')_1|+C \sum_{j=1}^n\int_t^{\tilde s}dv |(\btheta_{v,t}(\x)-\btheta_{v,t}(\x'))_j|^{\textcolor{black}{\beta_j}},
		\end{equation*}
		which in turn implies from \eqref{strong_CTR_I}, Remark \ref{strong_EXT_TEMP_GR} and convexity inequalities:
		\begin{eqnarray*}
			&&\sup_{\tilde s\in [t,s]}|(\btheta_{\tilde s,t}(\x)-\btheta_{\tilde s,t}(\x'))_1|
			\nonumber\\
			&\le& |(\x-\x')_1|+C\Big( (s-t) \sup_{v\in [t,s]}|(\btheta_{v,t}(\x)-\btheta_{v,t}(\x'))_1|^{\beta_1} 
			+\sum_{j=2}^n\int_t^sdv |(\btheta_{v,t}(\x)-\btheta_{v,t}(\x'))_j|^{\beta_j}\Big)\nonumber\\
			&\le &|(\x-\x')_1|+\bigg( (s-t) \Big[\frac 12 \sup_{v\in [t,s]}|(\btheta_{\tilde s,t}(\x)-\btheta_{\tilde s,t}(\x'))_1|+ C\Big] \nonumber\\
			&&
			+ C\sum_{j=2}^n  (s-t) \Big( (s-t)^{j-\frac 12}+\sum_{k=2}^n |(\x-\x')_k|^{\frac{2j-1}{2k-1}}
			+(s-t)^{j-1} \sup_{v\in [t,s]}|(\btheta_{v,t}(\x)-\btheta_{v,t}(\x'))_1|\Big)^{\textcolor{black}{\beta_j}}\bigg).
		\end{eqnarray*}
		Recalling $\beta_j>(2j-2)/(2j-1) $ and $0\le t<s\le T\le 1 $, $\d(\x,\x')\le 1 $, we get:
		\begin{eqnarray}
		&&\frac 12 \sup_{\tilde s\in [t,s]}|(\btheta_{\tilde s,t}(\x)-\btheta_{\tilde s,t}(\x'))_1|
		\nonumber 
		\\
		&\le &|(\x-\x')_1|
		\\
		&&+C\bigg( (s-t) 
		+\!\sum_{j=2}^n  (s-t) \Big( (s-t)^{\frac 1 2} \!+\sum_{k=2}^n |(\x-\x')_k|^{\frac{1}{2k-1}}
		+(s-t)^{(j-1)\textcolor{black}{\beta_j}} \!\!\sup_{v\in [t,s]}|(\btheta_{v,t}(\x)-\btheta_{v,t}(\x'))_1|^{\textcolor{black}{\beta_j}}\Big)\bigg).\nonumber
		\label{strong_THE_CTR_1_AFTER_YOUNG}
		\end{eqnarray}
		Write now from the Young inequality:
		\begin{equation*}
		C (s-t)^{1+(j-1)\textcolor{black}{\beta_j}} \sup_{v\in [t,s]}|(\btheta_{v,t}(\x)-\btheta_{v,t}(\x'))_1|^{\textcolor{black}{\beta_j}}\le C(s-t)+\frac 14 \sup_{v\in [t,s]}|(\btheta_{v,t}(\x)-\btheta_{v,t}(\x'))_1|.
		\end{equation*}
		We eventually derive from \eqref{strong_THE_CTR_1_AFTER_YOUNG} that:
		\begin{equation*}
		\sup_{v\in [t,s]}|(\btheta_{v,t}(\x)-\btheta_{v,t}(\x'))_1|\le C\big((s-t)^{\frac 12}+\d(\x,\x')\big),
		\end{equation*}
		which gives the statement for $i=1$.
		Plugging now this inequality into \eqref{strong_CTR_I}, we get for each $i\in \leftB 2,n \rightB  $:
		\begin{eqnarray*}
			|(\btheta_{s,t}(\x)-\btheta_{s,t}(\x'))_i|
			&\le& C
			\big( (s-t)^{i-\frac 12}+\d^{2i-1}(\x,\x')+(s-t)^{i-1}\sup_{v\in [t,s]}|(\btheta_{v,t}(\x)-\btheta_{v,t}(\x'))_1|\big)
			\nonumber \\
			&\le& C\Big( (s-t)^{i-\frac 12}+\d^{2i-1}(\x,\x')+(s-t)^{i-1}\big( (s-t)^{\frac 12}+\d(\x,\x')\big)\Big)\\
			&\le& C\big( (s-t)^{i-\frac 12}+\d^{2i-1}(\x,\x')\big),
		\end{eqnarray*}
		using again for the last identity the Young inequality to derive that $(s-t)^{i-1}\d(\x,\x') \le C\big((s-t)^{i- 1/2}+\d^{2i-1}(\x,\x') \big) $. The proof is complete.
	\end{proof}
	
	\subsection{Sensitivity  results for  the mean: final proof of Lemma \ref{strong_lem_mean_covariance}}
	\label{strong_sec:ProofLemma_flow}
	
	Again through the analysis, we assume w.l.o.g. that $\d(\x,\x')\le 1$. 
	The control is done with a distinction of two contributions to handle.
	\begin{equation}\label{strong_decomp_tilde_m_theta}
	\m_{s,t}^{\x}(\x')-  \btheta_{s,t}(\x')=[ \m_{s,t}^{\x}(\x')-  \btheta_{s,t}(\x)]+   [\btheta_{s,t}(\x)-  \btheta_{s,t}(\x')].
	\end{equation}
	By the \textit{proxy} definition in \eqref{strong_FROZ_MOL_FOR_NO_SUPER_SCRIPTS}, we deduce that the mean value of $\tilde  \X^{m, \bxi}_v$, $ \m_{v,t}^{\bxi}$ is s.t.
	\begin{equation}
	\m_{s,t}^{\x}(\x')-  \btheta_{s,t}(\x)= \x'- \x+ \int_ t^s dv
	D\gF(v,\btheta_{v,t}(\x))[\m_{v,t}^\x(\x')-\btheta_ {v,t}(\x)].
	\end{equation}
	The sub-triangular structure of $D\gF$ yields that  for each $i \in \leftB2,n\rightB$:
	\begin{equation*}
	\big (\m_{s,t}^{\x}(\x')-  \btheta_{s,t}(\x) \big )_i= \x'_i- \x_i +\int_ t^s dv
	D_{i-1}\gF_i(v,\btheta_{v,t}(\x))[\m_{v,t}^\x(\x')_{i-1}-\btheta_ {v,t}(\x)_{i-1}]
	.
	\end{equation*}
	Also, since  $\m_{v,t}^{\x}(\x')_1= \x_1'+\int_t^s dv \gF_1(v,\btheta_{v,t}(\x))$, so we obtain that $[\m_{v,t}^\x(\x')_{1}-\btheta_ {v,t}(\x)_{1}]=\x'_1-\x_1 $,  we  then obtain by iteration that:
	\begin{equation*}
	\big (\m_{s,t}^{\x}(\x')-  \btheta_{s,t}(\x) \big )_i 
	= \x'_{i}-\x_{i}
	+ \sum_{k=2}^i \Big [ \int_ t^{v_i=s}\!\!\! dv_{i-1} \hdots \int_t^{v_{k}} \!\!\! dv_{k-1} \prod_{j=k}^{i}   D_{j-1}\gF_{j}(v_j,\btheta_{v_j,t}(\x)) \Big ] [\x'_{k-1}-\x_{k-1}],
	\end{equation*}
	with the convention that for $i=1$, $ \sum_{k=2}^i=0$. From the above control, equation \eqref{strong_decomp_tilde_m_theta} and the dynamics of the flow, and because  the starting points are the same,  the contributions involving differences of the spatial points ($\x'-\x)$ or flows only appear in iterated time integrals, we obtain: 
	\begin{eqnarray*}
		&&|\big (\m_{s,t}^{\x}(\x')-  \btheta_{s,t}(\x') \big )_i|
		\\
		&\le &
		\bigg |\sum_{k=2}^i \Big [ \int_ t^{v_i=s}\!\!\! dv_{i-1} \hdots \int_t^{v_{k}} \!\!\! dv_{k-1} \prod_{j=k}^{i}   D_{j-1}\gF_{j}(v_j,\btheta_{v_j,t}(\x)) \Big ] [\x'_{k-1}-\x_{k-1}] \bigg |
		\\
		&&
		+
		\int_{t}^s  |\gF_i(v,\btheta_{v,t}(\x))- \gF_i (v,\btheta_{v,t}(\x'))| dv
		\\
		&\leq& C \Big(\sum_{k=2}^{i-1} (s-t)^{i-k} |\x_k-\x_k'|
		+ \int_{t}^s dv  \Big(\sum_{j=i}^n 
		\big  |\big (\btheta_{v,t}(\x)-\btheta_{v,t}(\x') \big )_{j} \big|^{\textcolor{black}{\beta_j}}
		+\big  |\big (\btheta_{v,t}(\x)-\btheta_{v,t}(\x') \big )_{i-1} \big| \Big)
		\Big).
	\end{eqnarray*}
	We derive from the previous Lemma \ref{strong_lem_theta_theta_bis} (control of the flows) recalling again that $\beta_j>(2j-2)/(2j-1) $ and $\d(\x,\x')\le 1, 0\le t<s\le T\le 1 $:
	\begin{eqnarray*}
		\!\!&\!\!&\!\!|\big (\m_{s,t}^{\x}(\x')-  \btheta_{s,t}(\x') \big )_i|
		\nonumber \\
		\!\!&\!\!\le\!\!&\!\! 
		C \bigg(\sum_{k=2}^{i-1} (s-t)^{i-k} |\x_k-\x_k'|
		\!+ 
		(s-t)^{\frac{2i-2}{2}+1}
		\!+ \d^{2i-2}(\x,\x')(s-t)
		\!+ \big((s-t)^{(i-1)-\frac 12} \!+\d^{2(i-1)-1}(\x,\x')\big)(s-t) 
		\bigg).
	\end{eqnarray*}
	In particular, for $s=t_0=t+c_0\d^2(\x,\x')$ with $c_0<1$, the previous equation yields:
	\begin{equation*}
	|\big (\m_{t_0,t}^{\x}(\x')-  \btheta_{t_0,t}(\x') \big )_i|
	\le 
	C   \Big( c_0  \d^{2i-1}(\x,\x')  
	+ (c_0^{i}+c_0)\d^{2i-1}(\x,\x')+(c_0^{i- \frac 12}+c_0)\d^{2i-1}(\x,\x') \Big ),
	\end{equation*}
	using again $\d(\x,\x') \leq 1$ for the middle term. After summing and by convexity inequalities, we eventually deduce:
	\begin{equation*}
	\d\big (\m_{s,t}^{\x}(\x'),  \btheta_{s,t}(\x') \big ) \leq  Cc_0^{\frac 1{2n-1}} \d(\x,\x').
	\end{equation*}
	This concludes the proof of Lemma \ref{strong_lem_mean_covariance}. \hfill $\square $

	\mysection{Parametrix expansion with different freezing points}
	\label{THE_APP_PARAM}
	In this section we show how the parametrix expansion \eqref{strong_INTEGRATED_DIFF_BXI_UNICITE_FORTE} involving different freezing points can be derived. This can actually been done from the Duhamel formulation up to an additional discontinuity term. Restarting from  \eqref{strong_eq:repPDE} we can indeed rewrite from the Markov property that for given $(t,\x')\in [0,T]\times \R^{nd} $ and any $ r\in (t,T], \bxi'\in \R^{nd}$:
	\begin{eqnarray}
	u(t,\x')&=&\Big[ \tilde P_{r,t}^{\bxi'} u(r,\cdot )\Big](\x')+ \int_t^r ds \Big[ \tilde P_{s,t}^{\bxi'} f(s,\cdot)\Big](\x')+\int_t^{r} ds \Big[\tilde P_{s,t}^{\bxi'}\Big((L_s-\tilde L_s^{\bxi')})u\Big)(s,\cdot)\Big](\x').
	\end{eqnarray}
	Differentiating the above expression in $r\in (t,T]$ yields for any $\bxi'\in \R^{nd} $:
	\begin{eqnarray}
	\label{DER_DUHAMEL}
	0=\partial_r \Big[  \tilde P_{r,t}^{\bxi'} u(r,\cdot )\Big](\x')+  \Big[ \tilde P_{r,t}^{\bxi'} f(r,\cdot)\Big](\x')+ \Big[\tilde P_{r,t}^{\bxi'}\Big((L_r-\tilde L_r^{\bxi')})u\Big)(r,\cdot)\Big](\x').
	\end{eqnarray}
	Denoting by $t_0\in (t,T]$ the time at which we change the freezing point and
	integrating \eqref{DER_DUHAMEL} on $[t,t_0]$ for a first given $\bxi' $ and between $[t_0 ,T]$ with a possibly different $\tilde \bxi'  $ yields:
	\begin{eqnarray}
	0&=&\Big[\tilde P_{t_0,t}^{\bxi'} u(t_0, \cdot)\Big](\x')-u(t,\x')+\int_{t}^{t_0}ds \Big[  \tilde P_{s,t}^{\bxi'}f(s,\cdot)\Big](\x')+\int_t^{t_0} ds \Big[ \tilde P_{s,t}^{\bxi'}\Big((L_s-\tilde L_s^{\bxi'})u\Big)(s,\cdot)\Big](\x')\notag\\
	&&+\Big[\tilde P_{T,t}^{\tilde \bxi'} u(T, \cdot)\Big](\x')-\Big[\tilde P_{t_0,t}^{\tilde \bxi'}u(t_0,\cdot)\Big] (\x')+\int_{t_0}^Tds \Big[ \tilde P_{s,t}^{\tilde \bxi'}f(s,\cdot)\Big](\x')
	+\int_{t_0}^T ds \Big[ \tilde P_{s,t}^{\tilde\bxi'}\Big((L_s-\tilde L_s^{\tilde \bxi'})u\Big)(s,\cdot)\Big](\x')\notag.
	\end{eqnarray}
	Recalling that $u(T,\cdot)=0 $ (terminal condition), 
	the above equation rewrites:
	\begin{eqnarray*}
		&&u(t,\x')
		\nonumber \\
		&=&\int_{t}^Tds\bigg(\I_{s\le t_0} \Big[ \tilde P_{s,t}^{ \bxi'}f(s,\cdot)\Big](\x') +\I_{s> t_0} \Big[ \tilde P_{s,t}^{\tilde \bxi'}f(s,\cdot)\Big](\x')\bigg)
		+\Big[\tilde P_{t_0,t}^{\bxi'} u(t_0, \cdot)\Big](\x')-\Big[\tilde P_{t_0,t}^{\tilde \bxi'}u(t_0,\cdot)\Big] (\x')\notag\\
		&&+\int_{t}^T ds\bigg(\I_{s\le t_0}\Big[ \tilde P_{s,t}^{\bxi'}\Big((L_s-\tilde L_s^{ \bxi'})u\Big)(s,\cdot)\Big](\x') +\I_{s>t_0}\Big[ \tilde P_{s,t}^{\tilde\bxi'}\Big((L_s-\tilde L_s^{\tilde \bxi'})u\Big)(s,\cdot)\Big](\x')\bigg).
	\end{eqnarray*}
	We see that for $\bxi ' \neq\tilde  \bxi ' $ we have an additional discontinuity term deriving from the change of freezing point along the time variable. 
	Eventually, the above equation precisely gives \eqref{strong_INTEGRATED_DIFF_BXI_UNICITE_FORTE}, recalling that for $t_0=t+c_0 |(\x_i-z)|^{2/(2i-1)}$, $\I_{s\le t_0}=\I_{{\mathcal S}_i} $.

	\mysection{Auxiliary results concerning the multi-scale Gaussian densities, their derivatives and some related objects}
	\label{RES_DENS_SUM}
	
	In order to be self contained, we gather in this section the proof of some results related  to the Gaussian dynamics in \eqref{strong_FROZ_MOL_FOR_NO_SUPER_SCRIPTS}. Namely, we provide a complete proof of Proposition \ref{strong_PROP_SCALE_COV} and some auxiliary related results used throughout the previous proofs.  We here freely use the notations of Section
	\ref{strong_SEC_GAUSS_DENS}.

	\subsection{About the objects appearing in the multi-scale density}
	
	\subsubsection{Good scaling properties of the covariance matrix: proof of Proposition \ref{strong_PROP_SCALE_COV}}
	\label{PROOF_PROP_4}
	We recall the correspondence between the notations of \cite{dela:meno:10} and those of the current article.
	
	\begin{trivlist}
		\item[-] Notations and Assumptions from \cite{dela:meno:10}. Consider the Gaussian process with dynamics
		\begin{equation}\label{ALINEAR}\tag{LIN}
		d\mathbf G_t =\mathbf L_t dt +B\Sigma_t dW_t
		\end{equation}
		where $ (\Sigma_t)_{t\in[ 0,T]}$ is a measurable deterministic $\R^d\otimes \R^d $ valued family s.t. $\mathbf A_t:= \Sigma_t \Sigma_t^* $ has uniformly non-degenerate spectrum, i.e. there exists $\Lambda \ge 1 $ s.t. for any $t\in [0,T],\ {\rm Spec}(\mathbf A_t)\in [\Lambda^{-1},\Lambda] $,  and the measurable deterministic $\R^{nd}\otimes \R^{nd}$ valued family $(\mathbf L_t)_{t\in[ 0,T]} $ is such that for any $t\in [0,T] $:
		\begin{equation}\label{GEN_STRUCT_L}
		\mathbf L_t=\left(\begin{array}{cccccc} U_t^{1,1}& U_t^{1,2}&\cdots & \cdots&U_t^{1,n}\\
		\alpha_t^{1} & U_t^{2,2} &\cdots & \cdots& U_t^{2,n}\\
		\mathbf {0}_{d,d}&\alpha_t^{2}&U_t^{3,3}& \cdots &U_{t}^{3,n}\\
		\vdots & \ddots& \ddots &\ddots & \vdots\\
		\mathbf {0}_{d,d}& \cdots & \mathbf {0}_{d,d}&\alpha_t^{n-1}& U_{t}^{n,n}
		\end{array}\right),
		\end{equation}
		where the $(\alpha_{t}^{i})_{i\in \leftB 1, n-1\rightB} $  and $(U_{t}^{i,j})_{i\in \leftB 1,n\rightB, j\in \leftB i,n\rightB} $ are $\R^d\otimes \R^d $ valued.
		
		This is assumption \A{$\mathbf A^{{\rm linear}}$} in \cite{dela:meno:10}. Proposition 3.4 of that reference states that, whenever for all $i\in \leftB 1,n-1\rightB $ and $t\in [0,T] $, $\alpha_t^i $ belongs to $\mathcal E_i $ (closed convex subset of $GL_d(\R) $),  there exists a constant $c\ge 1$ depending on $\mathcal E_i $, $\Lambda, \kappa $ s.t. $\max_{i\in \leftB 1,n-1\rightB}\sup_{t\in [0,T]}|\alpha_t^i|\le \kappa $, $n$, $d$ such that the Gaussian process $(\mathbf G_t)_{t\in[0,T]} $   introduced in \eqref{ALINEAR} satisfies a good scaling property with constant $c$ in the sense of Definition 3.2 of \cite{dela:meno:10}. Precisely, denoting by $\big(\gR(s,t)\big)_{0\le t,s\le T } $ the resolvent matrix associated with $(\mathbf L_t)_{t\ge 0} $, the covariance matrix 
		$$\mathbf K_t=\int_0^tds \gR_{t,s}B \mathbf A_s B^*\gR_{t,s}^*  $$
		of the random variable $\mathbf G_t $ satisfies that for any $\y\in \R^{nd} $, 
		$$c^{-1}t^{-1}|\T_t \y|^2\le \langle \mathbf K_t\y,\y\rangle \le c t^{-1}|\T_t \y|^2, $$
		or equivalently, \textcolor{black}{for all $t\in (0,T] $}: 
		$$ c^{-1}t|\T_t^{-1} \y|^2\le \langle \mathbf K_t^{-1}\y,\y\rangle \le c t|\T_t^{-1} \y|^2.$$
		\item[-] Derivation of the Proposition \ref{strong_PROP_SCALE_COV} from the previous results of \cite{dela:meno:10}. From the dynamics of the process $(\tilde \X_v^{(\tau,\bxi)})_{v\in [t,T]} $ given in \eqref{strong_FROZ_MOL_FOR_NO_SUPER_SCRIPTS} and starting from $\x $ at time $t$, see also the associated integrated expression in \eqref{strong_INTEGRATED_FLOW}, it can be seen that the covariance matrix of the random variable $\tilde \X_v^{(\tau,\bxi)} $ writes, with the notations of Section \ref{strong_SEC_GAUSS_DENS}:
		$$\tilde \K_{v,t}^{(\tau,\bxi)}:=\int_t^vdu \textcolor{black}{\tilde  \gR}^{(\tau,\bxi)}(v,u) Ba(u,\btheta_{u,\tau}(\bxi))B^*\textcolor{black}{\tilde \gR}^{(\tau,\bxi)}(v,u)^* , 
		$$
		as given in the statement of Proposition 4. This covariance matrix also corresponds to the one of a Gaussian process with dynamics \eqref{ALINEAR} setting for fixed $0\le t<s\le T $ and for any $r\in [0,s-t]$ 
		\begin{equation}\label{PART_R}
		{\mathbf L}_r=\left (\begin{array}{ccccc}\0_{d,d} & \cdots & \cdots &\cdots  & \0_{d,d}\\
		D_{\z_1}\gF_{2}(t+r,\btheta_{t+r,\tau}(\bxi)) & \0_{d,d} &\cdots &\cdots &\0_{d,d}\\
		\0_{d,d} & D_{\z_2} \gF_{3}(t+r,\btheta_{t+r,\tau}^{2:n}(\bxi))& \0_{d,d}& \0_{d,d} &\vdots\\
		\vdots &  \0_{d,d}                     & \ddots & \ddots &\vdots\\
		\0_{d,d} &\cdots &     \0_{d,d}      & D_{\z_{n-1}}\gF_{n}(t+r,\btheta_{t+r,\tau}^{n-1:n}(\bxi)) & \0_{d,d}
		\end{array}\right) ,
		\end{equation}
		and
		$$ \Sigma_r=\sigma(t+r,\btheta_{t+r,\tau}(\bxi)).$$
		Since the resolvent $(\gR(r,v))_{v\in [0,s-t]}$ associated with $(\mathbf L_r)_{r\in [0,s-t]} $ writes $\gR(r,v)= \tilde \gR^{(\tau,\bxi)}(t+r,t+v)$, one readily derives that the covariance matrix $ \mathbf K_{s-t}$ of $\mathbf G_{t-s}=\gR(t-s,0)\x+\int_{0}^{t-s}\gR(t-s,r)B\Sigma_r dW_r$ coincides with $\tilde \K_{s,t}^{(\tau,\bxi)} $. Since $\mathbf K_{s-t}=\tilde \K_{s,t}^{(\tau,\bxi)} $ satisfies a good scaling property, this proves Proposition 4. 
	\end{trivlist}

	\subsubsection{Scaling Properties of the resolvent.}
	We here aim at proving the following control. There exists $\hat C_1:=\hat C_1(\A{A},T)$ s.t. 
	\begin{equation}\label{THE_PROOF_BD_SCALED_RES}
	\forall \bzeta \in \R^{nd}, |\big[\widehat{\tilde  \gR}^{(\tau,\bxi),s,t}(1,0)\big]^* \bzeta|\le\hat C_1 |\bzeta|.
	\end{equation}
	
	We will proceed following the arguments of Lemma 3.6 in \cite{dela:meno:10} specifying how they can apply to the current setting. Let us restart from the previous linear Gaussian dynamics. Namely,
	for fixed $t\in [0,T]$,  let $(\mathbf G_s)_{s\in [0,t]} $ be as in \eqref{ALINEAR} and  introduce the \textit{scaled process} $\hat {\mathbf G}_s^t=t^{1/2}\T_t^{-1} \mathbf G_{s t} ,\ s\in [0,1]$. Intuitively, from the previously established \textit{good scaling property of $\mathbf G $},  all the components of the process $\hat {\mathbf G}^t $ actually evolve at a \textit{macro} scale, i.e. its covariance matrix is of order  one at time $s=1$.
	
	It is then easily checked that $\hat {\mathbf G}_s^t $ satisfies \eqref{ALINEAR}, i.e. $ d\hat {\mathbf G}_s^t=\hat {\mathbf L}_s^t\hat {\mathbf G}_s^t+\hat \Sigma_s^t d\hat W_s^t$ with:
	$$ \hat {\mathbf L}_s^t=t\T_{t}^{-1}\mathbf L_{st}\T_t ,\ \hat \Sigma_s^t=\Sigma_{st},\ \hat W_s^t=t^{-1/2}W_{st},\ s\in [0,1] .$$
	From the general structure of $\mathbf L$ in \eqref{GEN_STRUCT_L}, and the definition of the scale matrix $\T_t$, one easily derives that $ |\hat {\mathbf L}_{s}^t| \le (1\vee T^n) |\mathbf L_{st}|$. More specifically, for the special structure considered in \eqref{PART_R}, i.e. without upper-triangular part, we even get $\hat {\mathbf L}_{s}^t=  \mathbf L_{st}$. In any case, we obtain
	$|\hat {\mathbf L}_{s}^t| \le (1\vee T^n)\kappa $ as soon as $\sup_{s\in [0,1] }| \mathbf L_{st}| \le \kappa$.
	
	It is then clear that denoting by $\hat \gR^t $ the resolvent associated with $ (\hat {\mathbf L}_{s}^t)_{\textcolor{black}{s\in [0,1]}}$ it holds that there exists $\hat C_1:=\hat C_1(\A{A},T) $ s.t. for all $s_0,s_1\in [0,1] $, $|\hat \gR^t(s_1,s_0)|\le \hat C_1 $. On the other hand, direct computations also yield that
	$$\hat \gR^t(s_1,s_0)=\T_t^{-1}\gR(s_1 t,s_0 t) \T_t\iff \gR(s_1 t,s_0 t)=\T_t\hat \gR^t(s_1,s_0)\T_t^{-1} \Longrightarrow [\gR(t,0)]^*=\textcolor{black}{\T_t^{-1}} [\hat \gR^t(1,0)]^*\textcolor{black}{\T_t},$$
	where $\gR$ stands for the resolvent associated with $\mathbf L $. The final bound \eqref{THE_PROOF_BD_SCALED_RES} on the rescaled resolvent associated with the frozen process $((s-t)^{1/2}\T_{s-t}^{-1}\tilde \X_v^{(\tau,\bxi),(t,\x)})_{v\in [t,s]} $ is eventually derived from the same previous correspondence exhibited to prove the good scaling property of Proposition 4 in the previous paragraph.

	\subsection{Control of the H\"older modulus of the frozen Green kernel.}
	\label{SEC_PREUVE_HOLD_MODULUS_G_FROZEN}
	We aim here at proving, with the notations of Section \ref{strong_SECTION_CTR_SENSI} that the H\"older regularity index $\textcolor{black}{\alpha_i^k}$ of $\y_i \mapsto D_{\y_k} \tilde G^{i,\bxi} f(s,\y_i):=D_{\y_k} \tilde G^\bxi f(s,\y)=D_{\y_k}\int_s^T dr\tP_{r,s} f(s,\y)$, with $i\le k$, can be taken equal to $\alpha_i^k=\frac{1-(k- 1/2)(1-\beta_k)}{i- 1/2}  $.
	
	First, we get from \eqref{strong_SMOOTHING_EFFECT} in Lemma \ref{strong_LEMME_SG} that:
	\begin{equation*}
	|D_{\y_k} \tilde G^{i,\bxi} f(s,\y_i)|=|\int_s^T dr\int_{\R^{nd}}d\z D_{\y_k}\tilde p^\bxi(s,r,\y,\z)f(s,\z) |\le C\sum_{j=k}^n\int_{s}^T dr (r-s)^{-(k-\frac 12)+\beta_j(j-\frac 12)}.
	\end{equation*} 
	Recall from Assumption \A{T${}_\beta $} that $\beta_j(j- 1/2)\ge \beta_k(k- 1/2)  $ for $j\in \leftB k,n\rightB $. The above equation thus yields:
	\begin{equation}\label{PREAL_HOLDER_HD_I}
	\begin{split}
	|D_{\y_k} \tilde G^{i,\bxi} f(s,\y_i)|&=|\int_s^T dr \int_{\R^{nd}}d\z D_{\y_k}\tilde p^\bxi(s,r,\y,\z)f(s,\z) |
	\\
	&\le C\int_{s}^T dr (r-s)^{-(k-\frac 12)+\beta_k(k-\frac 12)}\\
	&\le C(T-s)^{1-(k-\frac 12)(1-\beta_k)}.
	\end{split}
	\end{equation} 
	From this first bound we can then first draw a control of the H\"older modulus in the so-called \textit{off-diagonal case} w.r.t. 
	the current time scale of the considered variable $i$ in the oscillator chain. Namely, for \textcolor{black}{$\y_i,\y_i'\in \R^d $} s.t. $(T-s)/2\le  |\y_i-\y_i'|^{1/(i- 1/2)}$, equation \eqref{PREAL_HOLDER_HD_I} readily gives:
	\begin{align}\label{bd_hd}
	|D_{\y_k} \tilde G^{i,\bxi} f(s,\y_i)-D_{\y_k} \tilde G^{i,\bxi} f(s,\y_i')|
	&\le |D_{\y_k} \tilde G^{i,\bxi} f(s,\y_i)|+|D_{\y_k} \tilde G^{i,\bxi} f(s,\y_i')|\notag\\
	&\le 2C(T-s)^{1-(k-\frac 12)(1-\beta_k)}
	\nonumber \\
	&\le \tilde C|\y_i-\y_i'|^{\frac{1-(k-\frac 12)(1-\beta_k)}{i-\frac 12}}.
	\end{align}
	Assuming now that $(T-s)/2 \ge  |\y_i-\y_i'|^{1/(i-1/2}$, we split the integrals as follows. Denoting with a slight abuse of notation $\y':=(\y_{1:i-1},\textcolor{black}{\y_i'},\y_{i+1:n}) $,
	\begin{eqnarray*}
		&&|D_{\y_k} \tilde G^{i,\bxi} f(s,\y_i)-D_{\y_k} \tilde G^{i,\bxi} f(s,\y_i')|\notag\\
		&\le& \Big|\int_s^{s+|\y_i-\y_i'|^{\frac 1{i-\frac 12}}} dr \int_{\R^{nd}}d\z D_{\y_k}\tilde p^\bxi(s,r,\y,\z)f(s,\z) \Big|+\Big|\int_s^{s+|\y_i-\y_i'|^{\frac 1{i-\frac 12}}} dr \int_{\R^{nd}}d\z D_{\y_k}\tilde p^\bxi(s,r,\y',\z)f(s,\z) \Big|\notag\\
		&&+\Big| \int_{0}^1 d\lambda \int_{s+|\y_i-\y_i'|^{\frac 1{i-\frac 12}}}^T\int_{\R^{nd}}d\z D_{\y_k}D_{\y_i} \tilde p^\bxi(s,r,\y'+\lambda (\y-\y'),\z) \cdot (\y_i-\y_i') f(s,\z)\Big|.
	\end{eqnarray*}
	For the two first term, we have a \textit{local off-diagonal regime} within the global \textit{diagonal} one. Applying \eqref{PREAL_HOLDER_HD_I} with $T$ replaced by $s+|\y_i-\y_i'|^{ 1/(i-1/2)} $, we get:
	\begin{eqnarray*}
		&&|D_{\y_k} \tilde G^{i,\bxi} f(s,\y_i)-D_{\y_k} \tilde G^{i,\bxi} f(s,\y_i')|
		\nonumber \\
		&\le& C |\y_i-\y_i'|^{\frac{1-(k-\frac 12)(1-\beta_k)}{i-\frac 12}}
		+\Big| \int_{0}^1\!  d\lambda \int_{s+|\y_i-\y_i'|^{\frac 1{i-\frac 12}}}^T\! \textcolor{black}{dr} \int_{\R^{nd}}\!\!d\z D_{\y_k}D_{\y_i} \tilde p^\bxi(s,r,\y'+\lambda (\y-\y'),\z)  f(s,\z)\Big| \ |\y_i-\y_i'|.
	\end{eqnarray*} 
	Using again (2.19) from the new Lemma \ref{strong_LEMME_SG}, one eventually gets:
	\begin{eqnarray}\label{bd_d}
	&&|D_{\y_k} \tilde G^{i,\bxi} f(s,\y_i)-D_{\y_k} \tilde G^{i,\bxi} f(s,\y_i')|
	\nonumber \\
	&\le& C |\y_i-\y_i'|^{\frac{1-(k-\frac 12)(1-\beta_k)}{i-\frac 12}}
	+C\Big| \sum_{j=k}^n\int_{s+|\y_i-\y_i'|^{\frac 1{i-\frac 12}}}^T dr (r-s)^{-(k-\frac 12)-(i-\frac 12)+\beta_j(j-\frac 12)} \Big| \ |\y_i-\y_i'|\notag\\
	&\le& C |\y_i-\y_i'|^{\frac{1-(k-\frac 12)(1-\beta_k)}{i-\frac 12}}+C\int_{s+|\y_i-\y_i'|^{\frac 1{i-\frac 12}}}^T dr (r-s)^{-(i-\frac 12)-(k-\frac 12)(1-\beta_k)} \ |\y_i-\y_i'|\notag\\
	&\le&C \Big(|\y_i-\y_i'|^{\frac{1-(k-\frac 12)(1-\beta_k)}{i-\frac 12}}+(r-s)^{-(i-\frac 12)+1-(k-\frac 12)(1-\beta_k)}\Big|_{r=s+|\y_i-\y_i'|^{\frac 1{i-\frac 12}}}^{r=T}|\y_i-\y_i'|\Big)\notag\\
	&\le & C|\y_i-\y_i'|^{\frac{1-(k-\frac 12)(1-\beta_k)}{i-\frac 12}}.\label{FINAL_BD_HOLD_G_D}
	\end{eqnarray} 
	Equations \eqref{bd_hd} and \eqref{FINAL_BD_HOLD_G_D}  precisely give the claim.
	Since $D_{\y_k} \tilde G^{i,\bxi} f(s,\y_i)$ is bounded (see \eqref{PREAL_HOLDER_HD_I}), any $\alpha_i^k\le \frac{1-(k- 1/2)(1-\beta_k)}{i- 1/2}  $ actually fits.

	\section{Reverse Taylor formula}
	\label{PROOF_REV_TAYLOR}

	\begin{proof}[Proof of Lemma \ref{Lemme_Taylor_reverse}]
		We assume here, for the sake of simplicity and without loss of generality, that $d=1$ (scalar case). 
		When $d>1$, the proof below can be reproduced componentwise.
		Let us decompose the expression around the variables for which Lemma \ref{strong_LEMME_CTR_HOLDER} applies 
		and those for which it does not.
		Namely, we write:
		\begin{eqnarray} 
		&& D_{l} u(s, \y) -D_{l} u(s, \y_{1:l-1},\btheta_{s,t}^{l:n}(\bxi) )
		\nonumber \\
		&=&\big(D_{l} u(s, \y) -D_{l} u(s, \y_{1:l-1},\btheta_{s,t}^l(\bxi) ,\y_{l+1:n})\big)
		+\big(D_{l} u(s, \y_{1:l-1},\btheta_{s,t}^l(\bxi) ,\y_{l+1:n})-D_{l} u(s, \y_{1:l-1},\btheta_{s,t}^{l:n}(\bxi) )\big)
		\nonumber \\
		&=:&(\Delta_l^1+\Delta_l^2)(s,\y,\bxi)
		\end{eqnarray}
		We readily get:
		\begin{equation}\label{THE_FIRST_CTR_HOLD_APRES}
		|\Delta_l^1(s,\y,\bxi)|\le C [(D_l u)_l(s,\cdot)]_{\alpha_l^l}|(\y-\btheta_{s,t}(\bxi))_l|^{\alpha_l^l}
		\end{equation}
		which is controlled from Lemma \ref{strong_LEMME_CTR_HOLDER}. On the other hand, setting for any $\mathbf a\in \R^{(n-l+1)d} $, $\bXi_{l,\y,s}(\mathbf a):= D_l u(s,\y_{1:l-1},\mathbf a)$, and $\d:=\sum_{k=l+1}^n|(\btheta_{s,t}(\bxi)-\y)_k| $
		we now have for any $\delta_l>0$
		\begin{eqnarray}
		\label{def_mathcal_U}
		\Delta_l^2(s,\y,\bxi) &=&\bXi_{l,\y,s}\big((\btheta_{s,t}(\bxi))_l,\y_{l+1:n}\big)-\bXi_{l,\y,s}\big(\btheta_{s,t}^{l:n}(\bxi)\big)\nonumber \\
		&=&\int_0^1 d \mu \Big(
		\{D_{l} u(s, \y_{1:l-1},(\btheta_{s,t}(\bxi))_l,\y_{l+1:n} ) -D_{l} u(s,\y_{1:l-1},(\btheta_{s,t}(\bxi))_l+\mu\d^{\delta_l},\y_{l+1:n})  \}
		\nonumber \\
		&&+\{D_{l} u(s, \y_{1:l-1},(\btheta_{s,t}(\bxi))_l+\mu\d^{\delta_l },\btheta_{s,t}^{l+1:n}(\bxi)) -D_{l} u(s,\y_{1:l-1}, \btheta_{s,t}^{l:n} (\bxi) )  \}
		\nonumber \\
		&&+\{D_{l} u(s,\y_{1:l-1},(\btheta_{s,t}(\bxi))_l+\mu\d^{\delta_l},\y_{l+1:n})   -D_{l} u(s, \y_{1:l-1},(\btheta_{s,t}(\bxi))_l+\mu\d^{\delta_l },\btheta_{s,t}^{l+1:n}(\bxi))\}\Big)\nonumber \\
		&=:& \sum_{\ell=1}^3 \Delta_{l}^{2,\ell}(s,\y,\bxi) .
		\end{eqnarray}
		The  first two terms can be dealt directly.
		\begin{equation} \label{mathcal_U_L_12}
		|  \Delta_l^{2,1}(s,\y,\bxi)  |+|  \Delta_l^{2,2}(s,\y,\bxi)  |
		\leq 
		2 [(D_l u)_l(s,\cdot)]_{\alpha_l^l} \d^{\delta_l \alpha_l^l}.  
		\end{equation}
		For $\Delta_l^{2,3}(s,\y,\bxi)$, we use an explicit reverse Taylor expansion which yields together with the smoothness assumption of $\gF_i $ in \A{A}:
		\begin{eqnarray} \label{mathcal_U_L_3}
		| \Delta_l^{2,3}(s,\y,\bxi)|
		&=&\d^{-\delta_l} 
		\Big |\Big [ u(s,\y_{1:l-1}, (\btheta_{s,t}(\bxi))_l+\d^{\delta_l},\y_{l+1:n}) -u(s,\y_{1:l-1}, (\btheta_{s,t}(\bxi))_l,\y_{l+1:n}) 
		\nonumber \\
		&&-\big(u(s,\y_{1:l-1}, (\btheta_{s,t}(\bxi))_l+\d^{\delta_l},(\btheta_{s,t}(\bxi))^{l+1:n}) -u(s,\y_{1:l-1}, (\btheta_{s,t}(\bxi))_l,(\btheta_{s,t}(\bxi))^{l+1:n})\big)
		\Big ] \Big |
		\nonumber \\ 
		&\leq& 2
		\|\mathbf D u\|_{\infty}
		\d^{1-\delta_l},
		\end{eqnarray}
		using the Lipschitz property of $u$ w.r.t. the variables $l+1$ to $n $.
		Taking $\delta_l$ s.t. $\delta_l \alpha_l^l=1-\delta_l^l$, which implies that $\delta_l= (1+\alpha_l^l)^{-1}$, gives
		in \eqref{mathcal_U_L_12} and \eqref{mathcal_U_L_3} a global bound of order $2 (\|Du\|_\infty+[(D_l u)_l(s,\cdot)]_{\alpha_l^l})
		\d^{\delta_l \alpha_l^l} $. Since we now recall from Lemma \ref{strong_LEMME_CTR_HOLDER} that  $\alpha_l^l=(1+\eta/4)/(2l-1) $ so that 
		$$\delta_l\alpha_l^l= \frac{1}{\frac{2l-1+1+\frac \eta 4}{2l-1}}\frac{1+\frac{\eta}4}{2l-1}=\frac{1+\frac \eta 4}{2l+\frac \eta 4} =:\zeta_l.$$ 
		We then write from \eqref{def_mathcal_U} and the definiton of $\d$ that:
		\begin{equation*}
		| \Delta_l^{\textcolor{black}{2}}(s,\y,\bxi)|\le 2 (\|Du\|_\infty+[(D_l u)_l(s,\cdot)]_{\alpha_l^l}) \sum_{k=l+1}^n |(\btheta_{s,t}(\bxi)-\y)_k|^{\zeta_l},
		\end{equation*}
		which together with \eqref{THE_FIRST_CTR_HOLD_APRES} gives the result. 
	\end{proof}

	
	\section*{Acknowledgments.}
	For the first \textcolor{black}{a}uthor, this work has been partially supported by the ANR project ANR-15-IDEX-02. For the third author, the article was prepared within the framework of a subsidy granted to the HSE by the Government of the Russian Federation for the implementation of the Global Competitiveness Program.


	\bibliographystyle{alpha}
	\bibliography{bibli}

\end{document}